\documentclass[12pt,a4paper]{article}

\usepackage{fontspec}
\usepackage{amsmath,amssymb,amsthm,mathtools,bm}
\usepackage[a4paper,margin=2.6cm]{geometry}
\usepackage{xcolor}
\usepackage{graphicx}
\usepackage{enumitem}
\usepackage{needspace}
\usepackage[colorlinks=true,linkcolor=blue!55!black,
 citecolor=blue!55!black,urlcolor=blue!55!black]{hyperref}
\numberwithin{equation}{section}
\newtheoremstyle{pdeplain}{6pt}{6pt}{\itshape}{}{\bfseries}{.}{0.5em}{}
\theoremstyle{pdeplain}
\newtheorem{theorem}{Theorem}[section]
\newtheorem{proposition}[theorem]{Proposition}
\newtheorem{lemma}[theorem]{Lemma}
\newtheorem{corollary}[theorem]{Corollary}
\theoremstyle{remark}
\newtheorem*{remark}{Remark}
\newcommand{\R}{\mathbb R}
\newcommand{\dd}{\,\mathrm d}
\newcommand{\eps}{\varepsilon}
\newcommand{\bzero}{\mathbf0}

\DeclareMathOperator{\supp}{supp}
\allowdisplaybreaks[1]

\hypersetup{pdftitle={Singular Sets and Doubling Properties in Parabolic Homogenization},pdfauthor={Xiujin Chen}}
\title{\vspace{-1em}\bfseries Singular Sets and Doubling Properties\\
in Parabolic Homogenization}
\author{Xiujin Chen\\[0.5em]
{\small School of Mathematical Sciences, Zhejiang University, Hangzhou 310000, China}\\[0.2em]
{\small \href{mailto:xjchen1998@foxmail.com}{\texttt{xjchen1998@foxmail.com}}}}
\date{}
\begin{document}
\maketitle
\vspace{-2em}
\begin{abstract}
We study the singular sets of real-valued solutions to periodic parabolic
equations with rapidly oscillating coefficients. Under a nondegeneracy
condition on the corrector matrix, we obtain a local parabolic
$n$-dimensional Hausdorff measure bound and a codimension-two Minkowski
estimate. The Minkowski estimate holds at every scale, including scales
below the oscillation scale $\eps$, with a constant independent of $\eps$.
The bounds depend on a coarse-scale ratio of space-time $L^2$ mass to
terminal-slice $L^2$ mass. Independently of corrector nondegeneracy, we
prove that this ratio controls doubling at all smaller scales uniformly
in $\eps$. The singular-set argument combines summable errors in
truncated Gaussian approximations, comparison of caloric polynomials
at different time centers, and a stopping cover with microscopic
singular-set estimates.
\end{abstract}
\begingroup
\small
\tableofcontents
\endgroup

\section{Introduction}\label{sec:main-results}

In this paper we study the singular sets of real-valued solutions to
periodic parabolic equations with rapidly oscillating coefficients,
\begin{equation}\label{eq:homogenization-equation}
\partial_tu_\eps-
\operatorname{div}\!\bigl(A_\eps\nabla u_\eps\bigr)=0,
\qquad
A_\eps(x,t):=A\!\left(\frac{x}{\eps},\frac{t}{\eps^2}\right),
\end{equation}

where $\eps>0$. Our main result is a local bound for the parabolic
$n$-dimensional Hausdorff measure of the singular set, uniform in
$\eps$, under a coarse-scale growth condition and a nondegeneracy
assumption on the correctors. The proof also gives a uniform parabolic
Minkowski estimate. We establish the uniform doubling estimates needed
to derive these bounds from the coarse-scale data.

We impose the following conditions on the coefficient matrix $A=A(y,s)$.
\begin{enumerate}[label=\textup{(A\arabic*)},leftmargin=3em]
\item \textbf{Uniform ellipticity.} The matrix $A(y,s)$ is real and symmetric,
and there exist $0<\lambda\le\Lambda<\infty$ such that
\[
\lambda|\xi|^2\le\xi^TA(y,s)\xi\le\Lambda|\xi|^2,
\qquad (y,s)\in\R^{n+1},\quad\xi\in\R^n.
\]
\item \textbf{Parabolic Lipschitz.} There exists $L_A<\infty$ such that
\begin{equation}\label{eq:space-time-lipschitz}
|A(y,s)-A(y',s')|
\le L_A\bigl(|y-y'|+|s-s'|^{1/2}\bigr).
\end{equation}
\item \textbf{Periodicity.} For a fixed full-rank spatial lattice
$\mathcal L\subset\R^n$ and a time period $T>0$,
\[
A(y+\ell,s+kT)=A(y,s),
\qquad\ell\in\mathcal L,\quad k\in\mathbb Z.
\]
\item \textbf{Normalization.} The homogenized matrix $\widehat A$ satisfies
\begin{equation}\label{eq:homogenized-normalization}
\frac{\widehat A+\widehat A^T}{2}=I.
\end{equation}
\end{enumerate}
A fixed spatial linear transformation gives \eqref{eq:homogenized-normalization}
and maps the spatial periodicity lattice to another full-rank lattice
allowed by \textup{(A3)}. Balls and all quantities below are defined in
the normalized coordinates. Constants may depend on $n,\lambda,\Lambda,L_A$,
the geometry of the fixed lattice $\mathcal L$, the time period $T$,
and the fixed cutoff; these dependencies are usually omitted.
Unless otherwise stated, constants are independent of $\eps$.

To study the singular set, we use the periodic correctors.
Let $Y$ be a fundamental cell of $\mathcal L\times T\mathbb Z$.
For $1\le k\le n$, let $\chi_k$ be the $\mathcal L\times T\mathbb Z$-periodic
corrector determined by
\begin{equation}\label{eq:parabolic-correctors}
\partial_s\chi_k-\operatorname{div}_y
\bigl(A(y,s)(e_k+\nabla_y\chi_k)\bigr)=0,
\qquad \int_Y\chi_k(y,s)\dd y\dd s=0.
\end{equation}
Set $\chi=(\chi_1,\ldots,\chi_n)$ and
$\nabla_y\chi=(\partial_{y_i}\chi_j)_{i,j=1}^n$, and define
\begin{equation}\label{eq:corrector-matrix}
\mathcal J_A(y,s):=I+\nabla_y\chi(y,s).
\end{equation}

\Needspace{6\baselineskip}
For the singular-set estimate we impose the additional condition
\begin{enumerate}[label=\textup{(A5)},leftmargin=3em]
\item \textbf{Nondegeneracy.} There exists $\mu>0$ such that
\begin{equation}\label{eq:corrector-nondegeneracy}
|\mathcal J_A(y,s)\xi|\ge\mu|\xi|,
\qquad (y,s)\in\R^{n+1},\quad \xi\in\R^n.
\end{equation}
\end{enumerate}

Constants in estimates using \textup{(A5)} may also depend on $\mu$.

For $\mathbf z=(x,t)$ and $r>0$, write
\[
\begin{aligned}
Q(\mathbf z,r)&:=B(x,r)\times(t-r^2,t+r^2),\\
Q^-(\mathbf z,r)&:=B(x,r)\times(t-r^2,t].
\end{aligned}
\]
For the singular-set estimate, we assume the coarse-scale bound
\begin{equation}\label{eq:rough}
\sup_{\sigma\in[-4,4]}
\frac{\displaystyle\int_{B_6\times[-36,\sigma]}u_\eps^2\dd y\dd s}
{\displaystyle\int_{B_{1/2}}u_\eps^2(y,\sigma)\dd y}
\le\Theta_0<\infty.
\end{equation}

\Needspace{9\baselineskip}
Let $\mathcal H_P^n$ denote the $n$-dimensional parabolic Hausdorff measure
and $S(u_\eps)$ denote the singular set
$\{u_\eps=\nabla_xu_\eps=0\}$.
The spatial critical set $\{\nabla_xu=0\}$ need not have parabolic
codimension two, even for the heat equation. Indeed, the caloric polynomial
$u(x,t)=x_1^2+2t$ has
\[
\{\nabla_xu=0\}=\{x_1=0\}\times\mathbb R,
\]
of parabolic dimension $n+1$. This is why we consider the singular set,
which also imposes $u=0$.

Under \textup{(A5)}, the singular set satisfies the following uniform estimate.

\begin{theorem}\label{thm:main-measure}
Let $u_\eps$ solve \eqref{eq:homogenization-equation} in $Q(\mathbf0,6)$
under \textup{(A1)--(A5)} and satisfy \eqref{eq:rough}. Then
\begin{equation}\label{eq:main-local-measure}
\mathcal H_P^n\!\left(
S(u_\eps)\cap Q(\mathbf0,1/2)
\right)\le C(\Theta_0),
\end{equation}
where $C$ is independent of $\eps$.
\end{theorem}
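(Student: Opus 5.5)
The plan is to derive the Hausdorff bound from a uniform parabolic Minkowski estimate, $|\{\mathbf z\in Q(\mathbf 0,1/2):\operatorname{dist}_P(\mathbf z,S(u_\eps))<r\}|\le C(\Theta_0)r^{2}$ for all $r\in(0,1)$ with $C$ independent of $\eps$. This gives $\mathcal H_P^n\le C(\Theta_0)$, since parabolic space-time has homogeneous dimension $n+2$. The Minkowski estimate splits into two regimes according to the ratio $r/\eps$.

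\textbf{Step 1 (uniform doubling).} From \eqref{eq:rough}, I will show that the frequency, or doubling ratio, $N(\mathbf z,\rho)=\int_{Q^-(\mathbf z,2\rho)}u_\eps^2\big/\int_{Q^-(\mathbf z,\rho)}u_\eps^2$ is bounded by $C(\Theta_0)$ for all $\mathbf z\in Q(\mathbf0,1/2)$ and all $\rho\le1$, uniformly in $\eps$.

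\begin{itemize}
\item \emph{Mesoscopic scales $\rho\ge C\eps$.} I will use Gaussian-weighted $L^2$ quantities, truncated to the cylinder, together with an almost-monotonicity of Almgren--Poon frequency. The homogenization error at scale $\rho$ should be of size $(\eps/\rho)^\alpha$, and these errors are summable over dyadic scales. This bounds the frequency at all such scales by the coarse one.
\item \emph{Microscopic scales $\rho\le C\eps$.} I will rescale by $\eps$. The coefficients then become Lipschitz with constants independent of $\eps$, so classical Poon/Escauriaza-type doubling for Lipschitz parabolic operators applies.
\end{itemize}

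\textbf{Step 2 (approximation at mesoscopic scales).} For $\mathbf z\in S(u_\eps)$ and $\rho\ge C\eps$, bounded frequency implies that $u_\eps$ on $Q^-(\mathbf z,\rho)$ is $L^2$-close, after normalization, to $P(x+\eps\chi(x/\eps,t/\eps^2),t)$. Here $P$ is a caloric polynomial of degree at most $d(\Theta_0)$ for the homogenized operator, which by (A4) is the heat operator. The argument is a compactness/two-scale expansion with quantitative error.

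Nondegeneracy (A5) makes $x\mapsto x+\eps\chi$ locally bi-Lipschitz at scale $\eps$. The singular set of $u_\eps$ is therefore comparable to that of the two-scale approximation, which is the pullback of $S(P)$. For nonzero caloric polynomials, $S(P)$ has codimension two with effective Minkowski bounds depending only on the degree.

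Comparing the approximating polynomials at different centers and time levels (time centers differ, so the comparison must use the backward structure) gives a quantitative symmetry statement. At each scale, either $u_\eps$ is close to a polynomial whose singular set lies near an $(n-?)$-dimensional space-time plane, or the frequency drops by a definite amount. This drives a stopping-time covering in the spirit of Naber--Valtorta / Cheeger--Naber--Valtorta. Since the frequency is bounded, only $C(\Theta_0)$ drops can occur, so the covering has at most $C(\Theta_0)(\rho_0/\rho)^{n}$ cylinders of size $\rho$ meeting $S(u_\eps)$.

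\textbf{Step 3 (microscopic scales).} The cover stops at $\rho\sim\eps$. Inside each stopped cylinder, rescaling by $\eps$ gives a solution of a Lipschitz-coefficient parabolic equation with doubling bounded by $C(\Theta_0)$, from Step 1. Known estimates for singular sets of parabolic equations with Lipschitz coefficients and bounded doubling (Han--Lin-type results) give an $n$-dimensional Minkowski bound inside a unit cylinder. Summing $C(\Theta_0)\eps^{-n}$ stopped cylinders, each contributing $\eps^n\cdot C(\Theta_0)$ to the $\mathcal H^n_P$ content, yields \eqref{eq:main-local-measure}.

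\textbf{Main obstacle.} The hardest part is Step 2: obtaining errors summable over scales in the truncated Gaussian approximation, and controlling how the polynomial $P$ changes with the time center. Both are needed for the frequency-drop dichotomy to close with $\eps$-independent constants. My first attempt would prove a rate $(\eps/\rho)^{\alpha}$ via Lipschitz-corrector estimates, which are available from (A2).
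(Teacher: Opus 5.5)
Your outer frame agrees with the paper. Theorem~\ref{thm:main-measure} is deduced from a uniform Minkowski bound by counting a maximal $r$-separated set. Growth control is obtained separately above and below the scale $\eps$. Below $\eps$ one rescales to parabolic Lipschitz coefficients and quotes a known estimate. The estimate needed there is the parabolic Minkowski bound for singular sets of Hallgren--Koirala--Ma; Han--Lin give Euclidean Hausdorff bounds for nodal sets and do not supply it.

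The genuine gap is the counting claim in Step~2: ``only $C(\Theta_0)$ drops can occur, so the covering has at most $C(\Theta_0)(\rho_0/\rho)^n$ cylinders.'' Bounding the number of drops only bounds the number of induction levels. It says nothing about the cost of the many pinched scales between drops. With a scale-by-scale dichotomy, you only know that at scale $\rho$ the singular points lie within $\gamma\rho$ of some plane $V_\rho$ of parabolic dimension $n$. You must then re-cover at every scale, at a cost of $C_n2^n$ cylinders of radius $\rho/2$ with some $C_n>1$. After $k$ such scales this gives $(\rho_0/\rho)^{n+\log_2C_n}$, and grouping scales only shrinks the loss to an arbitrary $\eta>0$. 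That is the Cheeger--Naber--Valtorta $r^{2-\eta}$ bound, which does not give finite $\mathcal H_P^n$. Similarly, $C^1$ closeness plus (A5) does not make $S(u_\eps)$ ``comparable'' to $S(P)$ in an iterable sense. A relative error $\zeta$ at scale $\rho$ only confines singular points to $\{|\nabla P|\lesssim\zeta/\mu\}$, whose width is a fixed fraction of $\rho$.

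The missing ingredient is that, over a whole pinched range, the model is independent of scale. The paper proves the summable turning estimate of Theorem~\ref{prop:fc-turning}:
$\sum_j\|\mathfrak p_m(r_j)-\mathfrak p_m(r_{j+1})\|\le\frac{16}{7}\delta_0+C(\eps/r_J+e^{-c'/r_0^2})$. Each caloric turning step is bounded by a telescoping difference of doubling indices, and the errors $\eps/r+e^{-c/r^2}$ are summable.

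Combined with the top-scale two-center comparison (Lemma~\ref{lem:two-center-qualitative}), this gives one model $P$ and one plane $V$ for the whole group. Specifically, $d_P(\mathbf z_i-\mathbf z_k,V)\le\gamma d_P(\mathbf z_i,\mathbf z_k)$ for all stopping centers and all mutual distances (Proposition~\ref{prop:multicenter-plane}). Projection onto $V$ is then bi-Lipschitz on the centers. The disjoint balls $Q(\mathbf x_i,\rho_i/20)$ therefore give $\sum_i\rho_i^n\le Cr_0^n$ in one step, with no per-scale constant.

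In the critical case $\dim_PV=n$, which your ``$(n-?)$'' leaves open, $V=W\times\R$ with $\dim W=n-2$. Then $P$ is close to a stationary two-variable harmonic $\phi$ with $|\nabla\phi|=c_m\|\phi\|_G\operatorname{dist}(x,W)^{m-1}$. Two-sided $C^1$ approximation and (A5) place singular points within $\gamma d$ of $W$ at every distance $d$ from a center (Proposition~\ref{prop:cylindrical-singular-points}). This localization is what gives bounded overlap for the projected Whitney balls covering the remaining points. Only after this packing does the induction on the integer level, with constant $C_{\rm cov}^{L-1}$, play the role you assign to the bounded number of drops. Without summable turning, or Naber--Valtorta's $\beta$-number and rectifiable-Reifenberg machinery in its place, Step~2 does not reach the sharp exponent.

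A smaller point concerns Step~1. Your approximation constants depend on the mass ratio at larger scales, so that ratio must be propagated together with the frequency. The paper does this with noninteger thresholds and compactness, bounding $\Theta$ for the degree-$\le l$ limit polynomial.
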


\begin{remark}
The parabolic dimension bound $n$ is sharp when $n\ge2$: the caloric polynomial
$x_1^2-x_2^2$ has singular set
$\{x_1=x_2=0\}\times\mathbb R$, of parabolic dimension $n$.
\end{remark}

The proof yields the following stronger parabolic Minkowski estimate,
valid at all scales, including $r<\eps$.

\begin{theorem}\label{thm:main-minkowski}
Let $u_\eps$ solve \eqref{eq:homogenization-equation} in $Q(\mathbf0,6)$
under \textup{(A1)--(A5)} and satisfy \eqref{eq:rough}.
Set $E_\eps=S(u_\eps)\cap Q(\mathbf0,1/2)$ and
$Q(E_\eps,r)=\bigcup_{\mathbf z\in E_\eps}Q(\mathbf z,r)$.
Then
\begin{equation}\label{eq:main-minkowski}
\mathcal H_P^{n+2}\!\left(Q(E_\eps,r)\right)
\le C(\Theta_0)r^2,\qquad 0<r\le1,
\end{equation}
where $C$ is independent of $\eps$.
\end{theorem}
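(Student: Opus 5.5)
The plan is to prove the Minkowski bound \eqref{eq:main-minkowski} by a multiscale covering argument. The argument has two regimes: coarse scales $r\ge K\eps$, where $u_\eps$ is close to a caloric function, and microscopic scales $r<K\eps$, where we rescale to the $\eps=1$ problem and use the Lipschitz regularity (A2).

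\textbf{Step 1 (doubling).} First we use \eqref{eq:rough} to get uniform doubling. For every $\mathbf z\in Q(\mathbf0,1/2)$ and every $0<\rho\le1$, the frequency-type ratio
\[
N(\mathbf z,\rho)=\frac{\int_{Q^-(\mathbf z,2\rho)}u_\eps^2}{\int_{Q^-(\mathbf z,\rho)}u_\eps^2}
\]
is bounded by $C(\Theta_0)$, independently of $\eps$. This is the doubling statement announced in the abstract, which holds without (A5). At coarse scales it comes from the almost-monotonicity of a truncated Gaussian-weighted frequency. That frequency is compared with the caloric one, and the errors are summable over dyadic scales $\rho\ge\eps$. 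Below $\eps$ it follows from the Lipschitz frequency theory applied to the rescaled problem.

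\textbf{Step 2 (coarse scales).} Fix $\rho\ge K\eps$ and a cube $Q(\mathbf z,\rho)$. Normalize $u_\eps$ by its $L^2$ mass on this cube. Homogenization with correctors gives
\[
u_\eps\approx \bar u+\eps\chi(x/\eps,t/\eps^2)\cdot\nabla\bar u,
\]
where $\bar u$ is caloric for $\widehat A$ (that is, caloric after the normalization (A4)). The error is $O((\eps/\rho)^\alpha)$. The doubling bound makes the degree of $\bar u$ bounded, so $\bar u$ is close to a caloric polynomial of degree at most $d(\Theta_0)$. We then run the standard dichotomy for quantitative stratification.
\begin{itemize}
\item[(i)] If the leading polynomial has nonvanishing spatial gradient or nonvanishing value at points of the cube, then by (A5) and the gradient identity $\nabla u_\eps\approx\mathcal J_A\nabla\bar u$ there is no singular point in a smaller subcube.
\item[(ii)] Otherwise the cube is covered by $C$ subcubes of radius $\rho/2$, and the frequency drops by a fixed amount on all but those subcubes lying within $\eta\rho$ of an $(n)$-dimensional parabolic singular stratum of the polynomial. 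The number of such subcubes is at most $C\,2^{n}$, since the singular set of a nonzero caloric polynomial has parabolic dimension at most $n$.
\end{itemize}
Comparing the polynomials obtained at different time centers is what makes the stopping-time construction consistent across neighbouring cubes.

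\textbf{Step 3 (counting and the obstacle).} A stopping cover then gives $N(r)\le C(\Theta_0)r^{-n}$ cubes of radius $r$ covering $E_\eps$. The number of frequency-drop steps is bounded by $C\log$ of the frequency bound. The homogenization errors are summable because $\sum_j(\eps/2^{-j})^\alpha<\infty$ over $2^{-j}\ge K\eps$, so they do not accumulate. The count gives
\[
\mathcal H^{n+2}_P(Q(E_\eps,r))\le N(r)\,r^{n+2}\le C r^2 .
\]
For $r<K\eps$, each surviving cube of radius $K\eps$ is rescaled to unit size with $\eps=1$. The equation then has uniformly Lipschitz coefficients, the doubling from Step 1 holds there, and the known Minkowski estimate for singular sets of such parabolic equations gives $C(K\eps/r)^n$ subcubes.

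I expect the main obstacle to be Step 2 near the scale $\eps$. The error from the corrector approximation is only small for $\rho\gg\eps$, while (A5) must transfer nondegeneracy of $\nabla\bar u$ to $\nabla u_\eps$ uniformly in the fast variables. Controlling the gradient error $\nabla u_\eps-\mathcal J_A\nabla\bar u$ in $L^\infty$ rather than in $L^2$ will require the Lipschitz estimates (A2) on $\eps$-cubes combined with the summable errors. My first attempt would be a large-scale Lipschitz estimate combined with Schauder-type estimates on each $\eps$-cell.
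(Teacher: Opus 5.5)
Your microscopic step matches the paper: you rescale cubes of size $\sim\eps$ to a Lipschitz problem and invoke the known parabolic estimate (Proposition~\ref{prop:microscopic-measure}, via \cite{HKM}). So does your use of \textup{(A5)} to exclude singular points near nondegenerate models. The gap is in Steps 2--3: the count $N(r)\le C(\Theta_0)r^{-n}$ does not follow from the dichotomy as you set it up. In (ii), every scale at which the doubling index does not drop replaces a cube by up to $C2^n$ cubes of half the radius, where $C>1$ is a covering constant. Only the drop steps are bounded in number, by the integer level $L$. The pinched scales between drops can number of order $\log_2(1/r)$. Recursing scale by scale therefore gives $N(r)\lesssim(C2^n)^{\log_2(1/r)}=r^{-n-\log_2C}$. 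That is a Cheeger--Naber--Valtorta type bound with a loss in the exponent, and it yields only $\mathcal H_P^{n+2}(Q(E_\eps,r))\lesssim r^{2-\log_2C}$, not \eqref{eq:main-minkowski}. Summability of the homogenization errors does not address this. The loss is combinatorial: it comes from choosing a new approximating plane at every scale, not from accumulated approximation error.

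The missing idea is the Naber--Valtorta mechanism as developed in \cite{LS}. One plane must serve for an entire interval of pinched scales and for all pinched centers at once, and the count must be done by packing rather than by recursion. In the paper this is assembled in four steps.
\begin{enumerate}
\item Theorem~\ref{prop:fc-turning} bounds the total turning $\sum_j\|\mathfrak p_m(r_j)-\mathfrak p_m(r_{j+1})\|$ over all pinched dyadic scales by the endpoint doubling drop plus $C(\eps/r_J+e^{-c'/r_0^2})$. This is where summability actually enters.
\item Lemma~\ref{lem:two-center-qualitative} and Proposition~\ref{prop:multicenter-plane} turn this into a single parabolic subspace $V$ with $d_P(\mathbf z_i-\mathbf z_k,V)\le\gamma d_P(\mathbf z_i,\mathbf z_k)$ for all stopping centers of a group simultaneously. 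Either $\dim_PV\le n-1$, or $V=W\times\R$ with a common cylindrical model. Note that pinched centers cluster near the invariant space $\mathcal I(P)$ of Lemma~\ref{lem:parabolic-spine}, not near the singular set of $P$.
\item In the cylindrical case, Proposition~\ref{prop:cylindrical-singular-points} uses \textup{(A5)} to force singular points near $W$.
\item In Lemma~\ref{thm:localized-stopping-cover}, the disjoint balls at the stopping radii project bi-Lipschitzly to disjoint balls in $V$. This gives $\sum_i\rho_i^n\le CR^n$ in one step, plus a Whitney-type count for the leftover balls.
\end{enumerate}
Constants then multiply only once per drop of the integer level, giving $C_{\rm cov}^{L-1}$ in Lemma~\ref{lem:level-induction}. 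Relatedly, the obstacle you single out is not the hard part. The $L^\infty$ corrected-gradient comparison comes from \cite[Theorem~6.1]{GS20} inside compactness arguments at scales $\ge\eps/\eta_*$. Everything below that scale is handed to \cite{HKM}.
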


For elliptic equations, Han \cite{Han94} studied the
dimension and structure of singular sets. Han--Hardt--Lin \cite{HHL}
obtained Hausdorff measure bounds for singular sets under smooth
coefficient assumptions, while
Hardt--Hoffmann-Ostenhof--Hoffmann-Ostenhof--Nadirashvili \cite{HHHON}
established local finiteness for critical sets. Quantitative estimates
require control of the growth of the solution, usually expressed through
frequency or doubling. Cheeger--Naber--Valtorta \cite{CNV} used
quantitative stratification to obtain Minkowski estimates with an
arbitrarily small loss in the codimension-two exponent.

A decisive advance was made by Naber--Valtorta \cite{NV}, who obtained
codimension-two Minkowski estimates for singular and critical sets under
Lipschitz assumptions on the
leading coefficients. Their proof combines quantitative control of
homogeneous approximations on intervals of pinched frequency with a
covering argument and induction on the frequency bound. In particular,
it avoids the higher coefficient regularity required by earlier
approaches to Hausdorff estimates. Huang--Jiang \cite{HJ} subsequently
obtained Minkowski estimates for singular and critical sets with
H\"older leading coefficients under a
uniform doubling assumption, using almost monotonicity of the doubling
index and quantitative uniqueness of tangent maps.

These results do not directly give uniform estimates in homogenization,
since the regularity bounds of the oscillating coefficients deteriorate
as $\eps\to0$. Lin--Shen \cite{LS} proved uniform Minkowski and Hausdorff
bounds for elliptic critical sets under corrector nondegeneracy. Their
argument develops the covering method of \cite{NV}: successive harmonic
approximations yield a summable estimate for the turning of the
approximate tangent polynomial, and a stopping cover separates scales
where the doubling index drops from the microscopic regime. This is the closest antecedent of our method.

For parabolic equations, space-time estimates and estimates on individual
time slices are distinct problems. Han--Lin \cite{HL94} proved Euclidean
Hausdorff estimates for space-time nodal sets; time-slice estimates in
analytic settings appear in \cite{Lin91,Kuk95,ABG}.
Huang--Jiang \cite{HJ24} obtained time-slice nodal measure estimates under
parabolic Lipschitz assumptions. More directly related to our result,
Hallgren--Koirala--Ma \cite{HKM} established parabolic Minkowski estimates
and rectifiability for nodal and singular sets, with the respective
parabolic dimension bounds $n+1$ and $n$. Their singular-set estimate
supplies the microscopic input in our
proof. For $A(x/\eps,t/\eps^2)$, the parabolic Lipschitz bound becomes
$L_A/\eps$, so their estimate does not directly give a constant
independent of $\eps$.

The broader nodal-set theory provides the background for these
codimension-two questions. Local elliptic nodal estimates were developed
by Hardt--Simon \cite{HS} and Lin \cite{Lin91}. For eigenfunctions,
Donnelly--Fefferman \cite{DF} proved sharp measure bounds for analytic
metrics, while Logunov \cite{LogUpper,LogLower} obtained polynomial upper
bounds and the sharp lower bound for smooth metrics. Our concern here
is the singular subset of the nodal set and the uniformity of its
measure bound under rapidly oscillating coefficients.

Condition \textup{(A5)} allows gradient lower bounds for a caloric
approximation to be transferred to $u_\eps$. In the first-order
approximation, $\nabla u_\eps$ is compared with
$\mathcal J_A(x/\eps,t/\eps^2)\nabla v$, where $v$ is caloric.
By \textup{(A5)}, the latter has magnitude at least $\mu|\nabla v|$.
When the approximation error is sufficiently small relative to this
lower bound, $\nabla u_\eps$ cannot vanish. This is used to exclude
singular points in the low-degree regime and to localize them near
cylindrical models in the covering argument. The same role of corrector
nondegeneracy appears in the elliptic analysis of Lin--Shen \cite{LS}.

Building on the critical-set example of Lin--Shen
\cite[Remark~2.9]{LS}, and using the reflection-symmetric construction
of Briane--Milton--Nesi \cite[Section~5]{BMN}, we obtain a more specific
example in spatial dimension three showing that \textup{(A5)} cannot
in general be omitted from the uniform parabolic Minkowski estimate. This adaptation was found
with the assistance of ChatGPT. The construction arranges that a
rank-two lattice of periodic copies of a critical point lies on the
same level set; subtracting this level produces singular points.
Appendix~\ref{sec:counterexample} gives the coefficient, the geometry,
and the tube-volume calculation. Whether \textup{(A5)} can be removed
from the Hausdorff estimate in Theorem~\ref{thm:main-measure} remains
open; the analogous question for elliptic critical sets was raised in
\cite[p.~3148]{LS}.

The growth control needed for the singular-set estimates follows from
the uniform doubling theorem below and its multicenter consequence.

We use the terminal-slice local quantity
\begin{equation}\label{eq:theta-local}
\Theta(u,\mathbf z,r)
:=\frac{\displaystyle\int_{Q^-(\mathbf z,4r)}
|u(y,s)|^2\dd y\dd s}
{r^2\displaystyle\int_{B(x_0,r)}|u(y,t_0)|^2\dd y},
\qquad \mathbf z=(x_0,t_0).
\end{equation}
The quotient is $+\infty$
when its denominator vanishes.
The quantity in \eqref{eq:theta-local} is scale invariant:
\begin{equation}\label{eq:theta-scale-invariance}
\Theta(cT_{\mathbf z,r}u,\mathbf0,\rho)
=\Theta(u,\mathbf z,r\rho),\qquad c\ne0.
\end{equation}
Here $T_{\mathbf z,r}$ denotes the parabolic rescaling in
\eqref{eq:parabolic-rescaling}.

\begin{theorem}\label{thm:center}
Let $u_\eps$ solve \eqref{eq:homogenization-equation} in $Q^-(\mathbf0,4)$
under \textup{(A1)--(A4)} and suppose
\begin{equation}\label{eq:coarse-center}
\Theta(u_\eps,\mathbf0,1)\le\Theta_0<\infty.
\end{equation}
There exists $C=C(\Theta_0)$, independent of $\eps$, such that
\begin{flalign}
&\textup{(1)}\quad
\Theta(u_\eps,\mathbf0,r)\le C,
\qquad 0<r\le1, &&
\label{eq:center-conclusion}\\
&\textup{(2)}\quad
\int_{B_{2r}}u_\eps^2(x,0)\dd x
\le C\int_{B_r}u_\eps^2(x,0)\dd x,
\qquad 0<r\le\frac12. &&
\label{eq:spatial-conclusion}
\end{flalign}
\end{theorem}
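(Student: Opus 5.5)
The plan is to prove (1) by combining a compactness argument at the macroscopic scales $r\ge\eps$ with a rescaling argument at the microscopic scales $r<\eps$, and then to derive (2) from (1) together with interior parabolic estimates.

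\emph{Macroscopic scales: one-step improvement.} The key lemma I aim for is a one-step decay of $\Theta$. There are constants $\theta\in(0,1/4)$ and $\eps_0>0$, depending on $\Theta_0$ only through a bound $M$, with the following property. Suppose $\Theta(u_\eps,\mathbf0,r)\le M$ and $\eps/r\le\eps_0$. Then
\[
\Theta(u_\eps,\mathbf0,\theta r)\le \Phi(M),
\]
where $\Phi(M)$ is an explicit function. More importantly, the sequence of values obtained by iterating this step stays bounded. I will prove it by contradiction and compactness. Rescale via \eqref{eq:theta-scale-invariance} to $r=1$ and normalize $\int_{B_1}u^2(\cdot,0)=1$. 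Take a sequence with $\eps_k\to0$. Energy (Caccioppoli) estimates and the uniform interior Hölder bounds give strong $L^2$ convergence to a solution $v$ of the homogenized equation $\partial_tv-\operatorname{div}(\widehat A\nabla v)=0$. By (A4) this equation is the heat equation up to a divergence-free antisymmetric part, which vanishes. The normalization passes to the limit thanks to the uniform Hölder continuity in time on compact subsets.

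\emph{The caloric problem and the bounded frequency.} For caloric $v$, the doubling at the terminal slice is controlled by Poon's parabolic frequency. The frequency is monotone, and its value at scale $1$ is bounded by a function of $\Theta(v,\mathbf0,1)\le\Theta_0$. This yields $\Theta(v,\mathbf0,\rho)\le C(\Theta_0)$ for every $\rho\le1$. To avoid the loss of constants under iteration, I would not iterate $\Theta$ directly. Instead I would iterate an $L^2$ quantity close to the frequency (a Gaussian-weighted frequency). The $\eps$-error of the homogenization approximation is then summable along the geometric sequence of scales $r_j=\theta^j$, as long as $\eps/r_j$ stays small. Concretely, I would use the quantitative convergence rate $\|u_\eps-v\|\lesssim(\eps/r)^{\alpha}$ obtained from first-order correctors, whose existence follows from (A1)–(A3). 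Summing $(\eps/r_j)^\alpha$ gives a bound that is uniform in $\eps$, down to the scale $r\approx\eps/\eps_0$. This is the main obstacle: a pure compactness argument loses a constant at each step, so the rate estimate must be quantitative and the approximate frequency must be stable under $L^2$ perturbations.

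\emph{Microscopic scales and the spatial doubling.} At scales $r\le C\eps$, rescale by $\eps$. The coefficients $A(y,s)$ then have a fixed Lipschitz constant $L_A$, and the starting value $\Theta(\cdot,\mathbf0,1)$ is bounded by the previous step. The known doubling/frequency monotonicity for parabolic equations with parabolic-Lipschitz coefficients (as in Huang–Jiang, Hallgren–Koirala–Ma) then gives $\Theta\le C$ at all smaller scales. For (2), $\int_{B_{2r}}u^2(\cdot,0)$ is bounded by local boundedness of $L^2$ traces on slices, since this trace is controlled by the space-time integral over $Q^-(\mathbf0,4r)$ at a comparable scale, divided by $r^2$. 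That integral is in turn at most $\Theta(u_\eps,\mathbf0,r)\int_{B_r}u^2(\cdot,0)$ by (1). For $r\ge\eps$ I would use the slice estimate for the homogenized-type equation via energy bounds in time; for $r<\eps$ I would use the Lipschitz regime.
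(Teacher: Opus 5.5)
Your outer structure matches the paper's: homogenization compactness above a multiple of $\eps$, a Lipschitz-coefficient doubling estimate after rescaling at the microscopic scale, and (2) from (1) by local boundedness. Part (2) needs no split into $r\ge\eps$ and $r<\eps$, since Lemma~\ref{lem:spatial} uses only (A1). The gap is the central step: carrying a bound through the $\sim\log(1/\eps)$ macroscopic scales without accumulating constants.

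Your one-step lemma only gives $\Theta(u_\eps,\bzero,\theta r)\le\Phi(M)$ with $\eps_0=\eps_0(M)$. Compactness plus the caloric estimate yields $\Phi(M)\approx C_{\rm cal}(M)$, which in general exceeds $M$. So the claim that the iterates stay bounded is unsupported.

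The quantitative repair is also not closed.
\begin{itemize}
\item If ``Gaussian-weighted frequency'' means $E/H$, it is not stable under homogenization. Since $\nabla u_\eps\approx\mathcal J_A(\cdot/\eps,\cdot/\eps^2)\nabla v$ rather than $\nabla v$, $|\nabla u_\eps|^2$ does not converge to $|\nabla v|^2$. You must iterate a ratio of truncated Gaussian $L^2$ masses such as $D_1$.
\item With a unit cutoff, the tail errors $e^{-c/r^2}$ are small only below some small scale. You therefore also need a seed there, which your plan does not provide.
\item A rate $\|u_\eps-v\|\lesssim(\eps/r)^\alpha$ is relative to the mass on a larger cylinder. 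Turning it into a relative error in the doubling index at $r_j$ requires growth control, i.e.\ a bound $\Theta\le\Theta_*$, at the scales above $r_j$.
\item To close the bootstrap, you must then recover $\Theta(u_\eps,\bzero,r_{j+1})\le\Theta_*$ from the iterated doubling level alone, with a constant independent of $\Theta_*$. Otherwise the error constants and the $\Theta$ bound chase each other, which is exactly the loss you wanted to avoid. Nothing in your plan converts the iterated quantity back into $\Theta$, and $\Theta$ is the conclusion.
\end{itemize}

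The paper closes this loop without any rate by propagating $D_1$ and $\Theta$ jointly.
\begin{itemize}
\item Lemma~\ref{lem:gap} preserves a noninteger threshold $D_1\le l+\tfrac12$ \emph{exactly} from $r$ to $r/2$. A failure would produce, in the limit, a caloric polynomial with constant noninteger Gaussian doubling index, contradicting rigidity.
\item Lemma~\ref{lem:propagation} preserves $\Theta\le\Theta_\sharp$ exactly. The blow-up limit $P$ has $D(P;2^k)\le l+\tfrac12$ for all $k$, so its degree is at most $l$. Hence $\Theta(P,\bzero,1/2)\le\Theta_{\mathrm{pol}}(l)$ by Lemma~\ref{lem:polynomial-Theta}, a bound depending only on the degree.
\item The constants are chosen in the order $L$, then $\Theta_\sharp>2\Theta_{\mathrm{pol}}(\lfloor L\rfloor)$, then $(r_*,\eta_*)$. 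This closes the induction down to $r_{\rm mic}\approx\eps/\eta_*$.
\item The seed on $[r_*,1]$ comes from compactness over finitely many dyadic scales, using Lemmas~\ref{lem:EFV} and \ref{lem:caloric-seed}. For $\eps$ not small, Lemma~\ref{lem:EFV} is applied directly.
\end{itemize}

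To make your quantitative route work, you would need two additions. First, a lemma bounding $\Theta$ at scale $r$ by a function of the Gaussian doubling level at a few neighbouring scales only, independent of the a priori $\Theta$ bound used for the errors. Second, you would have to fix the bootstrap constant above that function before choosing $\eps_0$.
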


Applying Theorem~\ref{thm:center} along a finite chain of overlapping
spatial balls at each fixed time gives the following consequence of
\eqref{eq:rough}.

\begin{corollary}\label{thm:allcenters}
Let $u_\eps$ solve \eqref{eq:homogenization-equation} in $Q(\mathbf0,6)$
under \textup{(A1)--(A4)} and satisfy \eqref{eq:rough}.
There exists $\Theta_1=\Theta_1(\Theta_0)<\infty$, independent of $\eps$,
such that
\begin{equation}\label{eq:allcenters}
\sup_{\substack{\mathbf z\in Q(\mathbf0,2)\\0<r\le1}}
\Theta(u_\eps,\mathbf z,r)\le\Theta_1.
\end{equation}
\end{corollary}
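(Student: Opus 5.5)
The plan is to reduce Corollary~\ref{thm:allcenters} to Theorem~\ref{thm:center}. For each $\mathbf z=(x_0,t_0)\in Q(\mathbf0,2)$ it suffices to show that $\Theta(u_\eps,\mathbf z,1)\le\Theta_0'$, with $\Theta_0'$ depending only on $\Theta_0$ and $n$. Theorem~\ref{thm:center}(1) is translation invariant: the coefficient $A((x_0+x)/\eps,(t_0+t)/\eps^2)$ is again of the form \eqref{eq:homogenization-equation}, with a shifted periodic matrix satisfying (A1)--(A4) with the same constants. Applying it to $u_\eps(x_0+\cdot,t_0+\cdot)$ on $Q^-(\mathbf0,4)$ then gives $\Theta(u_\eps,\mathbf z,r)\le C(\Theta_0')=:\Theta_1$ for all $0<r\le1$. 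Note that $Q^-(\mathbf z,4)\subset B_6\times(-20,4]\subset Q(\mathbf0,6)$, so the equation holds there.

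\emph{Numerator.} Since $t_0\in(-4,4)$, we have $Q^-(\mathbf z,4)\subset B_6\times[-36,t_0]$. Taking $\sigma=t_0$ in \eqref{eq:rough} gives
\[
\int_{Q^-(\mathbf z,4)}u_\eps^2\le\Theta_0\int_{B_{1/2}}u_\eps^2(y,t_0)\dd y .
\]
It therefore remains to bound $\int_{B_{1/2}}u_\eps^2(\cdot,t_0)$ by a multiple of $\int_{B(x_0,1)}u_\eps^2(\cdot,t_0)$, and this is the actual content of the statement. The difficulty is that $B(x_0,1)$ need not meet $B_{1/2}$ when $|x_0|$ is close to $2$.

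\emph{Chain argument at fixed time $t_0$.} I will use Theorem~\ref{thm:center}(2) at many centers $\mathbf w=(w,t_0)$ along the segment from $0$ to $x_0$, with points $w_0=0,w_1,\dots,w_N=x_0$ spaced $1/8$ apart, so $N\le 20$. For each such $\mathbf w$ (with $|w|<2$) I first need the hypothesis $\Theta(u_\eps,\mathbf w,\rho)\le\Theta'$ at some fixed scale. Taking $\rho=1/4$, we have $Q^-(\mathbf w,1)\subset B_6\times[-36,t_0]$, so by \eqref{eq:rough}
\[
\Theta(u_\eps,\mathbf w,1/4)\le 16\,\Theta_0\,\frac{\int_{B_{1/2}}u_\eps^2(\cdot,t_0)}{\int_{B(w,1/4)}u_\eps^2(\cdot,t_0)}.
\]
This is circular, since it again requires comparing slice masses. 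So I instead run an induction along the chain, with $M_k:=\int_{B(w_k,1/4)}u_\eps^2(\cdot,t_0)$.

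\emph{Base and step.} At $w_0=0$, the first attempt is to compare $B_{1/4}$ with $B_{1/2}$. This needs doubling at the origin, which in turn needs a bound on $\Theta(u_\eps,(0,t_0),1/2)$. That bound is available directly: the numerator is at most $\Theta_0\int_{B_{1/2}}u_\eps^2$ and the denominator is $\tfrac14\int_{B_{1/2}}u_\eps^2$, so $\Theta(u_\eps,(0,t_0),1/2)\le4\Theta_0$. Theorem~\ref{thm:center}, rescaled by \eqref{eq:theta-scale-invariance}, then gives spatial doubling at $(0,t_0)$ for radii up to $1/4$. For the inductive step, suppose $M_{k}\ge c_k\int_{B_{1/2}}u_\eps^2(\cdot,t_0)$. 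Since $B(w_k,1/4)\subset B(w_{k+1},1/2)$, we get $\int_{B(w_{k+1},1/2)}u_\eps^2\ge c_k\int_{B_{1/2}}u_\eps^2$. Hence $\Theta(u_\eps,(w_{k+1},t_0),1/2)\le 4\Theta_0/c_k$, because its numerator is again at most $\Theta_0\int_{B_{1/2}}u_\eps^2$. Theorem~\ref{thm:center}(2) at $(w_{k+1},t_0)$, used at scale $1/2$, then gives
\[
\int_{B(w_{k+1},1/4)}u_\eps^2\ge C(4\Theta_0/c_k)^{-1}\int_{B(w_{k+1},1/2)}u_\eps^2,
\]
so $c_{k+1}=c_k/C(4\Theta_0/c_k)$. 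After at most $20$ steps this yields $\int_{B(x_0,1)}u_\eps^2(\cdot,t_0)\ge c_N\int_{B_{1/2}}u_\eps^2(\cdot,t_0)$, and therefore $\Theta(u_\eps,\mathbf z,1)\le\Theta_0/c_N$. The constant $c_N$ depends only on $\Theta_0$ because $N$ is bounded.

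The main obstacle is the bookkeeping in the step just described. The scales must be chosen so that each rescaled application of Theorem~\ref{thm:center} has its cylinder $Q^-(\cdot,4\cdot\tfrac12)$ inside $B_6\times[-36,t_0]$, and each radius comparison falls within the range $r\le\tfrac12$ allowed by (2). The finiteness of the chain is what keeps $\Theta_1$ independent of $\eps$.
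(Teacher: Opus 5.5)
Your proof is correct and follows essentially the paper's route: a bounded chain of overlapping spatial balls at the fixed time $\sigma=t_0$. Along it, the numerator is always controlled by the single quantity in \eqref{eq:rough}, and Theorem~\ref{thm:center}(2) at each center passes mass from $B_{1/2}$ to $B(x_0,1)$. The differences are cosmetic: you work at scale $1/2$ with step $1/8$ and track deterministic lower bounds $c_k$, whereas the paper works at scale $1$ with $x_i=ix/8$ and tracks $\max\{1,J(\sigma)/F_{x_i}(1)\}$, taking the constant in Theorem~\ref{thm:center} nondecreasing.
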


Doubling inequalities quantify unique continuation and provide growth
bounds for measure estimates of nodal, singular, and critical sets.
For elliptic equations,
Garofalo--Lin \cite{GL86,GL87} developed frequency methods that yield
such inequalities; the nodal estimates in \cite{HS,Lin91} and the
critical-set estimates in \cite{NV} show how growth enters measure
bounds. In the parabolic setting, Poon \cite{Poon} introduced a Gaussian
frequency approach to unique continuation, and
Escauriaza--Fern\'andez--Vessella \cite{EFV} proved spatial and space-time
doubling estimates. Their estimates give the local input for
Theorem~\ref{thm:center}.

In periodic elliptic homogenization, Lin--Shen \cite{LS19} established
uniform doubling and nodal measure bounds. Kenig--Zhu--Zhuge \cite{KZZ}
obtained explicit dependence on the initial doubling bound by combining
estimates from different scale regimes.
For parabolic periodic homogenization, Zhang \cite{Zhang21} proved an
approximate two-sphere one-cylinder inequality under H\"older continuity
of the coefficients, using fundamental-solution expansions and
interpolation. The estimate contains an additive homogenization error
and does not directly yield uniform doubling down to the microscopic
scale. Zhang \cite{Zhang25} subsequently extended approximate
propagation of smallness to equations with suitable lower-order terms.
Corollary~\ref{thm:allcenters} supplies the bound $\Theta_1$
used in the singular-set argument.

The estimates uniform in $\eps$ rely on three ingredients adapted to the
parabolic homogenization problem. First, we propagate bounds for the
truncated Gaussian doubling index and the local growth ratio $\Theta$
simultaneously. This couples the noninteger-threshold argument of
\cite{LS19} with the mass control needed for compactness on expanding
cylinders. Second, we construct full-space caloric approximations whose
normalized homogenization and cutoff errors, of size
$\eps/r+e^{-c/r^2}$, remain summable over the scales above a fixed multiple
of $\eps$. These estimates extend the turning argument of \cite{LS} to
localized Gaussian quantities. Third, we compare the approximate caloric
polynomials at centers with different time coordinates and use the
resulting spatial and temporal approximate invariance to localize
singular points near cylindrical models. The parabolic geometry and
localization in \cite{HKM} provide a further basis for this analysis;
here the comparisons are made uniform through quantitative
homogenization. Together with the microscopic estimates of \cite{HKM},
these ingredients give the Minkowski bound at every scale, including
$r<\eps$. The joint propagation also yields the independent uniform
doubling theorem, which does not require \textup{(A5)}.

The analytic tools come from quantitative homogenization. Classical
accounts are \cite{BLP,JKO}; Avellaneda--Lin \cite{AL87} developed the
compactness method for uniform elliptic regularity, and further elliptic
theory is presented in \cite{Shen18}. For time-dependent periodic
parabolic equations, Geng--Shen established uniform regularity
\cite{GS}, convergence rates \cite{GS17}, and expansions of fundamental
solutions and their gradients \cite{GS20}. We use these estimates to
compare solutions with caloric functions above the microscopic scale
and to control the errors in successive approximations.

The estimates of \cite{GS,GS20}, stated for unit periods, apply here
by writing $\mathcal L=B\mathbb Z^n$ and making the fixed change
$(y,s)=(Bz,T\tau)$. The transformed coefficient is
$T B^{-1}A(Bz,T\tau)B^{-T}$; its ellipticity and regularity bounds
are uniform when the stated coefficient and period data are fixed.

Section~\ref{sec:ls} establishes spatial doubling, compactness, and threshold propagation.
Section~\ref{sec:uniform-doubling} proves Theorem~\ref{thm:center} and
Corollary~\ref{thm:allcenters}. Sections~\ref{sec:drop-nondegeneracy}--\ref{sec:multicenter-geometry}
establish the analytic and geometric estimates for the singular set.
Section~\ref{sec:measure} constructs the stopping-scale cover, iterates it
to any prescribed scale above the microscopic scale, and proves
Theorems~\ref{thm:main-measure} and \ref{thm:main-minkowski}.
Appendix~\ref{sec:gaussian-appendix} records the Gaussian doubling identities,
and Appendix~\ref{sec:counterexample} gives the counterexample without
\textup{(A5)}.

\subsection{Notation and definitions}\label{sec:notation}

We write $\mathbb N_0=\{0,1,2,\ldots\}$. A space--time point is denoted by
$\mathbf z=(x,t)\in\R^n\times\R$. For $r>0$, define the parabolic
cylinder and the backward parabolic cylinder by
\begin{equation}\label{eq:parabolic-cylinders}
\begin{alignedat}{1}
Q(\mathbf z,r)&:=B(x,r)\times(t-r^2,t+r^2),\\
Q^-(\mathbf z,r)&:=B(x,r)\times(t-r^2,t].
\end{alignedat}
\end{equation}

We use the parabolic distance and tubular neighborhoods
\[
 d_P((x,t),(y,s))=\max\{|x-y|,|t-s|^{1/2}\},\qquad
 Q(E,r)=\bigcup_{\mathbf z\in E}Q(\mathbf z,r).
\]
The associated Hausdorff measure is
\begin{equation}\label{eq:parabolic-hausdorff}
\mathcal H_P^k(E):=\lim_{\eta\downarrow0}
\inf\left\{\sum_i(\operatorname{diam}_{d_P}U_i)^k:
 E\subset\bigcup_iU_i,\ \operatorname{diam}_{d_P}U_i<\eta\right\}.
\end{equation}

For $x_0\in\R^n$ and $\tau>0$, define the Gaussian weight
\begin{equation}\label{eq:gaussian-weight}
G_{x_0,\tau}(y):=(4\pi\tau)^{-n/2}
\exp\!\left(-\frac{|y-x_0|^2}{4\tau}\right).
\end{equation}
The corresponding Gaussian measure is
\begin{equation}\label{eq:backward-gaussian-measure}
\dd\nu_{x_0,\tau}(y)=G_{x_0,\tau}(y)\dd y.
\end{equation}
For $\mathbf z=(x_0,t_0)$ and $r>0$, define the full Gaussian mass,
energy and frequency by
\begin{align}
H(u;\mathbf z,r)
&:=\int_{\R^n}|u(y,t_0-r^2)|^2\dd\nu_{x_0,r^2}(y),
\label{eq:full-height}\\
E(u;\mathbf z,r)
&:=2r^2\int_{\R^n}|\nabla_yu(y,t_0-r^2)|^2\dd\nu_{x_0,r^2}(y),
\label{eq:full-energy}\\
N(u;\mathbf z,r)&:=\frac{E(u;\mathbf z,r)}{H(u;\mathbf z,r)}.
\label{eq:full-frequency}
\end{align}
The corresponding doubling index is
\begin{equation}\label{eq:full-doubling-index}
D(u;\mathbf z,r):=\log_4\frac{H(u;\mathbf z,r)}{H(u;\mathbf z,r/2)}.
\end{equation}
These quantities are used when the relevant integrals are finite and
the denominators are positive.

Fix $\vartheta\in C_c^\infty(B_{3/2})$ such that
\begin{equation}\label{eq:spatial-cutoff}
0\le\vartheta\le1,\qquad
\vartheta\equiv1\ \text{in }B_1,\qquad
\operatorname{supp}\vartheta\subset B_{3/2}.
\end{equation}
The unscaled arguments below use physical cutoff radius one.
Thus $\vartheta(\,\cdot-x_0)$ equals one on $B(x_0,1)$, is supported
in $B(x_0,3/2)$, and is independent of the Gaussian scale $r$.
Define the truncated Gaussian mass by
\begin{equation}\label{eq:truncated-height}
H_1(u;\mathbf z,r)
:=\int_{\R^n}|u(y,t_0-r^2)|^2\vartheta^2(y-x_0)
\dd\nu_{x_0,r^2}(y).
\end{equation}
The truncated energy and frequency are
\begin{align}
E_1(u;\mathbf z,r)
&:=2r^2\int_{\R^n}
\left|\nabla_y\!\left[u(y,t_0-r^2)\vartheta(y-x_0)\right]\right|^2
\dd\nu_{x_0,r^2}(y),
\label{eq:truncated-energy}\\
N_1(u;\mathbf z,r)&:=\frac{E_1(u;\mathbf z,r)}{H_1(u;\mathbf z,r)}.
\label{eq:truncated-frequency}
\end{align}
The truncated doubling index is
\begin{equation}\label{eq:truncated-doubling-index}
D_1(u;\mathbf z,r)
:=\log_4\frac{H_1(u;\mathbf z,r)}{H_1(u;\mathbf z,r/2)}.
\end{equation}
Both masses use the same unit cutoff.

For any fixed physical radius $\rho>0$, define
\begin{equation}\label{eq:rescaled-cutoff-height}
\begin{aligned}
H_\rho(u;\mathbf z,r)
&=\int u^2(y,t_0-r^2)\vartheta^2\!\left(\frac{y-x_0}{\rho}\right)
   \dd\nu_{x_0,r^2}(y),\\
D_\rho(u;\mathbf z,r)
&=\log_4\frac{H_\rho(u;\mathbf z,r)}{H_\rho(u;\mathbf z,r/2)}.
\end{aligned}
\end{equation}
Define $E_\rho,N_\rho$ using the same
cutoff in \eqref{eq:truncated-energy}--\eqref{eq:truncated-frequency}.

\Needspace{12\baselineskip}
For $\mathbf z=(x_0,t_0)$ and $r>0$, we use the parabolic rescaling
\begin{equation}\label{eq:parabolic-rescaling}
T_{\mathbf z,r}u(y,s)
:=u(x_0+ry,t_0+r^2s).
\end{equation}

For $w=c^{-1}T_{\mathbf z_0,R}u$, where $c,R>0$,
\begin{equation}\label{eq:truncated-scaling}
\begin{aligned}
H_{\rho/R}(w;\mathbf0,q)&=c^{-2}H_\rho(u;\mathbf z_0,Rq),\\
D_{\rho/R}(w;\mathbf0,q)&=D_\rho(u;\mathbf z_0,Rq).
\end{aligned}
\end{equation}
The corresponding identities hold for $E_\rho,N_\rho$.
We omit the center when it is $\mathbf0$.

\section{Compactness and threshold propagation}\label{sec:ls}

The following spatial growth estimate supplies the polynomial bounds needed for compactness.

\begin{lemma}\label{lem:spatial}
Let $u$ be a weak solution of
\begin{equation}\label{eq:general-parabolic-equation}
\partial_tu-\operatorname{div}\!\bigl(A(x,t)\nabla u\bigr)=0,
\end{equation}
with $A$ satisfying \textup{(A1)}.
Whenever $Q^-(\mathbf0,4r)$ is contained in its domain,
\begin{equation}\label{eq:spatial-single-step}
\int_{B_{2r}}u^2(x,0)\dd x
\le C\Theta(u,\mathbf0,r)\int_{B_r}u^2(x,0)\dd x.
\end{equation}
Here $C$ depends only on $n,\lambda,\Lambda$.
If $\Theta(u,\mathbf0,s)\le\Theta_*$ for $r\le s\le R$, then
\begin{equation}\label{eq:polynomial-spatial-growth}
\int_{B_s}u^2(x,0)\dd x
\le C_0\left(\frac{s}{r}\right)^{\beta_0}\int_{B_r}u^2(x,0)\dd x,
\qquad r\le s\le2R,
\end{equation}
where $C_0=C_0(\Theta_*)$ and $\beta_0=\beta_0(\Theta_*)$.
\end{lemma}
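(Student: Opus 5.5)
The single-step estimate \eqref{eq:spatial-single-step} should follow from the spatial doubling inequality of Escauriaza--Fern\'andez--Vessella \cite{EFV}, applied after parabolic rescaling. By \eqref{eq:theta-scale-invariance} we may take $r=1$, since rescaling preserves (A1) with the same constants. We then normalize $\int_{B_1}u^2(x,0)\dd x=1$, so that $\Theta:=\Theta(u,\mathbf0,1)=\int_{Q^-(\mathbf0,4)}u^2$.

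The EFV two-ball estimate at a fixed time has the following form, valid for coefficients satisfying only (A1) in a cylinder $Q^-(\mathbf0,4)$, or in a translate of it for the space--time doubling version:
\[
\int_{B_{2}}u^2(x,0)\dd x\le N\int_{B_1}u^2(x,0)\dd x,
\qquad
N\le \exp\Bigl(C\bigl(1+\log^+\tfrac{\int_{Q^-(\mathbf0,4)}u^2}{\int_{B_1}u^2(x,0)}\bigr)\Bigr).
\]
Since the hypothesis of that estimate is exactly the ratio defining $\Theta$, this gives a bound $C\Theta^{C}$, possibly with a polynomial rather than linear dependence.

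Linear dependence is claimed, so I would first try a cheaper route. Let $w(\cdot,s)=u(\cdot,s)$ on the forward time slab. By energy (Caccioppoli) estimates, $\int_{B_2}u^2(x,0)\lesssim\int_{Q^-(\mathbf0,4)}u^2=\Theta$, since backward cylinders control the terminal slice through local $L^\infty$ or $L^2$-in-time energy bounds. Indeed, Caccioppoli with a cutoff in $t$ supported on $(-16,0]$ gives
\[
\sup_{t\in(-9,0]}\int_{B_3}u^2(x,t)\dd x\le C\int_{Q^-(\mathbf0,4)}u^2 .
\]
This yields
\[
\int_{B_2}u^2(x,0)\le C\Theta=C\Theta\int_{B_1}u^2(x,0),
\]
which is \eqref{eq:spatial-single-step} with constant depending only on $n,\lambda,\Lambda$. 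Only the energy inequality is needed here, and no unique continuation. I would use this Caccioppoli argument (De Giorgi--Nash is not needed), as it is the intended proof for the linear constant.

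For \eqref{eq:polynomial-spatial-growth}, iterate the single-step bound along dyadic radii. For $r\le s\le 2R$ choose $k$ with $2^{k-1}r< s\le 2^kr$, where $k\le\log_2(s/r)+1$. Apply the single-step estimate at radii $2^jr$, $j=0,\dots,k-1$, each lying in $[r,R]$ as long as $2^{k-1}r\le R$, which holds since $2^{k-1}r<s\le2R$ (if $2^{k-1}r>R$ we stop one step earlier and use $s\le 2R$ directly; this is the only bookkeeping point). Each step multiplies by $C\Theta_*$, so
\[
\int_{B_s}u^2(x,0)\le (C\Theta_*)^{k}\int_{B_r}u^2(x,0)\le C\Theta_*\Bigl(\frac sr\Bigr)^{\log_2(C\Theta_*)}\int_{B_r}u^2(x,0),
\]
giving $C_0=C\Theta_*$ (or $(C\Theta_*)^2$ for the endpoint case) and $\beta_0=\log_2(C\Theta_*)$.

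The main obstacle is the first step: the Caccioppoli time-slice bound. It requires care that the terminal time $0$ is the top of the backward cylinder. I would handle this by taking a cutoff $\eta(x)\phi(t)$ with $\phi$ vanishing at $t=-16$ and equal to $1$ on $[-9,0]$, testing the equation with $u\eta^2\phi^2$ over $(-16,t]$ for each $t\le0$, and using Steklov averages to justify the computation for weak solutions.
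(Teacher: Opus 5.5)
Your proof is correct. It has the same structure as the paper's: bound the terminal slice on $B_{2r}$ by the space--time mass of $Q^-(\mathbf0,4r)$, divide by $r^2\int_{B_r}u^2(x,0)\dd x$, then iterate dyadically. The difference is the local estimate.

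\textbf{Single step.} The paper uses the interior $L^\infty$ bound $\sup_{B_{2r}}|u(\cdot,0)|^2\le Cr^{-n-2}\int_{Q^-(\mathbf0,4r)}u^2$ (De Giorgi--Nash--Moser) and integrates it over $B_{2r}$. You use only the Caccioppoli $L^\infty_tL^2_x$ bound. Your route is more elementary and gives the same linear factor $C\Theta$ with $C=C(n,\lambda,\Lambda)$. The paper's choice costs nothing, since the same pointwise bound is reused later, e.g.\ in \eqref{eq:polynomial-cylinder-bound}.

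\textbf{Iteration.} The paper iterates downward from $s$, at the radii $s/2,\dots,s/2^J$ with $J=\lfloor\log_2(s/r)\rfloor$, and finishes with one step at $r$. Since $s/2\le R$ and $s/2^J\ge r$, every scale used lies in $[r,R]$ automatically. Your upward iteration needs the endpoint fix you flag, and "use $s\le 2R$ directly" should be made precise. When $2^{k-1}r>R$, apply the single step at radius $R$ itself, using $B_s\subset B_{2R}$ and $B_R\subset B_{2^{k-1}r}$. Since $2^{k-2}r<R$, the earlier steps at $2^jr$, $j\le k-2$, are legitimate.

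Your inequality $(C\Theta_*)^k\le C\Theta_*(s/r)^{\log_2(C\Theta_*)}$ also uses $C\Theta_*\ge1$. You can ensure this by working with $1+C\Theta_*$, as the paper does.

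\textbf{The discarded route.} Your remark that the EFV doubling estimate holds under (A1) alone is not right. It requires coefficient regularity such as (A2), which is why Lemma~\ref{lem:EFV} assumes it. Since you discard that route, this does not affect your argument.
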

\begin{proof}
The interior estimate gives
\[
\sup_{B_{2r}}|u(\cdot,0)|^2
\le Cr^{-n-2}\int_{Q^-(\mathbf0,4r)}u^2,
\]
which proves \eqref{eq:spatial-single-step}.
For $r\le s\le2R$, choose $J=\max\{0,\lfloor\log_2(s/r)\rfloor\}$.
Set $\beta_0=\max\{n,\log_2(1+C\Theta_*)\}$ and $C_0=2^{\beta_0}$.
Iterating \eqref{eq:spatial-single-step} at the scales
$s/2,\ldots,s/2^J$ and finally at $r$ yields
\begin{align*}
\int_{B_s}u^2(x,0)\dd x&\le(1+C\Theta_*)^{J+1}\int_{B_r}u^2(x,0)\dd x\\
&\le2^{\beta_0}\left(\frac{s}{r}\right)^{\beta_0}\int_{B_r}u^2(x,0)\dd x.\qedhere
\end{align*}
\end{proof}

\Needspace{7\baselineskip}
The next lemma gives compactness of normalized solutions and convergence
of their truncated Gaussian masses. It will be used to pass doubling
bounds to caloric limits.

\begin{lemma}\label{lem:compactness}
Fix $\Theta_*\ge1$. Let $\kappa_j\to\infty$ and $\eps_j\to0$.
Suppose $A_j$ satisfy \textup{(A1)--(A4)} with the same lattice
$\mathcal L$, period $T$ and structural bounds, and $w_j$ solves
\[
\partial_tw_j-\operatorname{div}\!\left[
A_j(y/\eps_j,t/\eps_j^2)\nabla w_j\right]=0
\quad\text{in }Q^-(\mathbf0,4\kappa_j).
\]
Assume $\int_{B_1}w_j^2(y,0)\dd y=1$ and
\begin{equation}\label{eq:normalized-theta}
\Theta(w_j,\mathbf0,2^k)\le\Theta_*,
\qquad k\in\mathbb N_0,\quad 2^k\le\kappa_j.
\end{equation}
After extraction, there is a nonzero caloric polynomial $P$ with
$\int_{B_1}P^2(y,0)\dd y=1$ such that
\[
w_j\longrightarrow P
\qquad\text{locally uniformly on }\R^n\times(-\infty,0].
\]
Set $\mathcal J_j^{\eps_j}(y,t):=\mathcal J_{A_j}(y/\eps_j,t/\eps_j^2)$.
Then
\begin{equation}\label{eq:corrected-gradient-convergence}
\nabla w_j-\mathcal J_j^{\eps_j}\nabla P\longrightarrow0
\qquad\text{locally uniformly on }\R^n\times(-\infty,0].
\end{equation}
For every $0<a<b<\infty$ and every $q_j\to q\in[a,b]$,
\begin{align}
H_{\kappa_j}(w_j;\mathbf0,q_j)&\longrightarrow H(P;\mathbf0,q)>0,
\label{eq:height-convergence}\\
D_{\kappa_j}(w_j;\mathbf0,q_j)&\longrightarrow D(P;\mathbf0,q),
\label{eq:D-convergence}\\
\Theta(w_j,\mathbf0,q_j)&\longrightarrow\Theta(P,\mathbf0,q).
\label{eq:Theta-convergence}
\end{align}
\end{lemma}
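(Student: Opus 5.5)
The plan is to combine the polynomial growth from Lemma~\ref{lem:spatial} with uniform H\"older/Lipschitz estimates for periodic homogenization \cite{GS}, extract a limit by Arzel\`a--Ascoli, identify it as a caloric function with polynomial growth (hence a caloric polynomial), and then pass the Gaussian quantities to the limit using Gaussian tail control.

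\emph{Step 1 (uniform bounds).} Since $\Theta(w_j,\bzero,2^k)\le\Theta_*$ for $2^k\le\kappa_j$, Lemma~\ref{lem:spatial} gives
\[
\int_{B_s}w_j^2(y,0)\dd y\le C_0 s^{\beta_0},\qquad 1\le s\le 2\kappa_j.
\]
The bound at intermediate $s$ follows from the dyadic ones with a fixed loss. Using the definition of $\Theta$ again, we get
\[
\int_{Q^-(\bzero,4s)}w_j^2\le C s^{2+\beta_0}
\]
for dyadic $s\le\kappa_j$. Thus $w_j$ is uniformly bounded in $L^2(Q^-(\bzero,R))$ by $CR^{\beta}$ for every fixed $R$ and all large $j$. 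The uniform interior Lipschitz estimate of Geng--Shen applies at scales above $\eps_j$, and De Giorgi--Nash H\"older estimates are uniform in the coefficients. Together they give equicontinuity and $\sup_{Q^-(\bzero,R)}|w_j|\le CR^{\beta'}$. Arzel\`a--Ascoli then yields a subsequence with $w_j\to P$ locally uniformly on $\R^n\times(-\infty,0]$; uniform convergence near $t=0$ uses H\"older continuity up to the top of backward cylinders.

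\emph{Step 2 (identifying the limit).} Each $A_j$ has its homogenized matrix normalized with symmetric part $I$. After passing to a further subsequence, the limiting constant matrix $\widehat A$ (whose antisymmetric part does not affect the divergence operator) makes $P$ a solution of the heat equation. Standard qualitative homogenization with weak convergence of fluxes gives this. Uniform polynomial growth $|P|\le C(1+R)^{\beta'}$ on backward cylinders, combined with the Liouville theorem for ancient caloric functions of polynomial growth, shows that $P$ is a caloric polynomial. Uniform convergence on $B_1\times\{0\}$ gives $\int_{B_1}P^2(\cdot,0)=1$, so $P\neq0$.

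\emph{Step 3 (corrected gradient).} For \eqref{eq:corrected-gradient-convergence}, use the first-order expansion
\[
\tilde w_j=P+\eps_j\chi^{(j)}(y/\eps_j,t/\eps_j^2)\cdot\nabla P .
\]
The difference $w_j-\tilde w_j$ solves an equation whose right-hand side is $O(\eps_j)$ in divergence form, using flux correctors and the smoothness of $P$. Since $w_j-P\to0$ uniformly, the uniform Lipschitz estimate of \cite{GS} down to scale $\eps_j$, together with the Lipschitz regularity of $A$ at the microscale, gives $\|\nabla(w_j-\tilde w_j)\|_{L^\infty(K)}\to0$. Since $\nabla\tilde w_j=\mathcal J_j^{\eps_j}\nabla P+O(\eps_j)$, the claim follows. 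This step is the main obstacle: the uniform gradient convergence needs a quantitative two-scale error estimate that is valid all the way to $t=0$ on compact sets, not only a qualitative one. If a cleaner route is needed, I would instead apply the large-scale $C^{1,\alpha}$ excess decay of \cite{GS} to $w_j-\tilde w_j$.

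\emph{Step 4 (Gaussian quantities).} For $q_j\to q\in[a,b]$, split $H_{\kappa_j}(w_j;\bzero,q_j)$ into the region $|y|\le R$ and the complement. On $|y|\le R$ the cutoff equals $1$ and $w_j\to P$ uniformly at times near $-q^2$. On the complement, the polynomial growth bound $\int_{B_s}w_j^2(\cdot,-q_j^2)\le Cs^{\beta}$ holds for $s\le\kappa_j$; it follows from Step 1 via interior estimates at the time slice. Integrated against $e^{-|y|^2/4b^2}$, it gives a tail that tends to $0$ as $R\to\infty$, uniformly in $j$. Hence $H_{\kappa_j}(w_j;\bzero,q_j)\to H(P;\bzero,q)$, which is positive because $P\ne0$ is a polynomial. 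Applying the same argument at $q_j/2$ gives \eqref{eq:D-convergence}. Finally, $\Theta$ involves only integrals over bounded cylinders, so \eqref{eq:Theta-convergence} follows from local uniform convergence and the positivity of the limiting denominator, since a nonzero polynomial does not vanish on a ball.
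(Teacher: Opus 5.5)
Your proposal is correct and follows essentially the paper's route: dyadic polynomial growth from the single-step spatial estimate, interior sup bounds, compactness via \cite[Theorem~3.6]{GS}, Liouville for the polynomially growing limit, and Gaussian tail control for $H_{\kappa_j}$, $D_{\kappa_j}$ and $\Theta$. For the corrected gradient, the paper simply cites \cite[Theorem~6.1]{GS20}, which packages the two-scale expansion argument you sketch, with error $C_R\|w_j-P\|_{L^2}+C_{P,R}\eps_j\log(2+\eps_j^{-1})$. The one detail to tighten is that the tail bound must cover the whole cutoff support $B_{3\kappa_j/2}$, not just $|y|\le\kappa_j$; the paper obtains this from the sup bound on $Q^-(\mathbf0,3R)$ with $R$ up to the largest dyadic $\rho_j\le\kappa_j$, since $3\rho_j>3\kappa_j/2$.
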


\begin{proof}
Put $\rho_j=2^{\lfloor\log_2\kappa_j\rfloor}$, so that $\rho_j\le\kappa_j<2\rho_j$. Set $\beta_0=\beta_0(\Theta_*)\ge n$.
At dyadic $1\le R\le\kappa_j$, \eqref{eq:spatial-single-step} gives
\[
\int_{B_{2R}}w_j^2(y,0)\dd y\le C\Theta_*\int_{B_R}w_j^2(y,0)\dd y.
\]
Starting from $\int_{B_1}w_j^2(y,0)\dd y=1$, we obtain
\begin{equation}\label{eq:terminal-control}
\begin{aligned}
\int_{B_R}w_j^2(y,0)\dd y&\le CR^{\beta_0},\\
\int_{Q^-(\mathbf0,4R)}w_j^2&\le CR^{\beta_0+2},
\qquad 1\le R\le\rho_j.
\end{aligned}
\end{equation}
For the spacetime bound in \eqref{eq:terminal-control}, first use \eqref{eq:normalized-theta} at dyadic radii,
and then increase an arbitrary $R\le\rho_j$ to the next dyadic radius,
which is at most $2R$ and at most $\rho_j$.
Interior estimates yield
\begin{equation}\label{eq:polynomial-cylinder-bound}
\sup_{Q^-(\mathbf0,3R)}|w_j|^2\le CR^{\beta_0-n},
\qquad 1\le R\le\rho_j.
\end{equation}
After extraction, $\widehat A_j\to A_\infty$, where
$(A_\infty+A_\infty^T)/2=I$ by \textup{(A4)}.
Local energy and H\"older estimates and \cite[Theorem~3.6]{GS}
give a locally uniform limit $P$ satisfying
\[
 \partial_tP-\operatorname{div}(A_\infty\nabla P)
 =\partial_tP-\Delta P=0,
 \qquad \int_{B_1}P^2(y,0)\dd y=1.
\]
With $d=(\beta_0-n)/2$, the limit obeys
\begin{equation}\label{eq:limit-growth}
|P(y,t)|\le C(1+|y|+\sqrt{-t})^d,
\qquad t\le0.
\end{equation}
For any fixed $(y,t)$, caloric interior estimates give
\[
|\partial_y^\alpha\partial_t^bP(y,t)|
\le C_{\alpha,b}R^{d-|\alpha|-2b}
\]
for all sufficiently large $R$. Letting $R\to\infty$ shows that
$P$ is a caloric polynomial. In particular its derivatives belong
to every Gaussian $L^2$ space.

Fix $0<a<b<\infty$. For large $j$, $\rho_j\ge\max\{1,b\}$ and
\[
\operatorname{supp}\vartheta(\,\cdot/\kappa_j)
\subset B_{3\kappa_j/2}\subset B_{3\rho_j}.
\]
For $y$ in this support, take $R=\max\{1,b,|y|/3\}\le\rho_j$ in
\eqref{eq:polynomial-cylinder-bound}, using continuity at $|y|=3R$, to obtain
\[
\vartheta^2(y/\kappa_j)|w_j(y,-q^2)|^2
\le C_{\Theta_*,b}(1+|y|)^{\beta_0-n}
\qquad(a\le q\le b).
\]
Consequently
\begin{equation}\label{eq:cutoff-tail}
\lim_{M\to\infty}\limsup_{j\to\infty}\sup_{q\in[a,b]}
\int_{|y|>M}w_j^2(y,-q^2)\vartheta^2(y/\kappa_j)
\dd\nu_{0,q^2}(y)=0.
\end{equation}
Local uniform convergence, the continuity of $q\mapsto G_{0,q^2}$,
and \eqref{eq:cutoff-tail} prove \eqref{eq:height-convergence}.
Applying \eqref{eq:height-convergence} at $q_j$ and $q_j/2$ and taking $\log_4$ proves \eqref{eq:D-convergence}.
Local uniform convergence on the bounded cylinders proves
\eqref{eq:Theta-convergence}.
Finally, we have
\[
\partial_tw_j-\operatorname{div}(A_j^{\eps_j}\nabla w_j)
=\partial_tP-\Delta P=0.
\]
Boundedness and \textup{(A2)} imply the H\"older condition
\cite[(1.9)]{GS20} with exponent $1/2$ and fixed constants.
Fix $R\ge1$. By scaling \cite[Theorem~6.1]{GS20} and using
\textup{(A1)--(A3)}, for all sufficiently large $j$ we obtain
\begin{equation}\label{eq:corrected-gradient-error}
\begin{aligned}
\left\|\nabla w_j-\mathcal J_j^{\eps_j}\nabla P
\right\|_{L^\infty(Q^-(\mathbf0,R))}
&\le C_R\|w_j-P\|_{L^2(Q^-(\mathbf0,2R))}\\
&\quad+C_{P,R}\eps_j\log(2+\eps_j^{-1})\longrightarrow0.
\end{aligned}
\end{equation}
Parabolic Schauder regularity makes
$\nabla w_j$ and $\mathcal J_{A_j}$ continuous up to $t=0$.
Thus \eqref{eq:corrected-gradient-error} also holds on the slice $t=0$.
Every compact subset of $\R^n\times(-\infty,0]$ lies in such a cylinder,
which proves \eqref{eq:corrected-gradient-convergence}.
\end{proof}

Compactness and Gaussian rigidity yield propagation across noninteger doubling thresholds. A failure would force a noninteger constant limiting doubling index, contradicting Appendix~\ref{sec:gaussian-appendix}.

\begin{lemma}\label{lem:gap}
Fix $\Theta_*\ge1$, an integer $l\ge1$, and $0<\delta_0\le1/2$.
There are $\kappa_*=\kappa_*(\Theta_*,l,\delta_0)\ge2$ and
$\eta_*=\eta_*(\Theta_*,l,\delta_0)>0$ with the following property.
Let $u_\eps$ solve \eqref{eq:homogenization-equation} under
\textup{(A1)--(A4)} in $Q^-(\mathbf0,4)$, and let
$0<r\le1/\kappa_*$, $\eps/r\le\eta_*$.
Assume
\begin{equation}\label{eq:single-center-theta}
\Theta(u_\eps,\mathbf0,2^{-k})\le\Theta_*,
\qquad k\in\mathbb N_0,\quad r\le2^{-k}\le1.
\end{equation}
For either fixed sign,
\begin{equation}\label{eq:gap-input}
D_1(u_\eps;\mathbf0,r)\le l\pm\delta_0
\end{equation}
implies
\begin{equation}\label{eq:threshold-propagation}
D_1(u_\eps;\mathbf0,sr)\le l\pm\delta_0,
\qquad 1/4\le s\le1/2.
\end{equation}
\end{lemma}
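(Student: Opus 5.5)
We argue by contradiction and compactness, following the scheme indicated before the statement. Fix the sign. If the lemma fails for every choice of $\kappa_*,\eta_*$, there are sequences $\kappa_j\to\infty$, $\eta_j\to0$, solutions $u_j=u_{\eps_j}$ with coefficients $A_j$ satisfying \textup{(A1)--(A4)}, radii $r_j\le1/\kappa_j$ with $\eps_j/r_j\le\eta_j$, and $s_j\in[1/4,1/2]$ such that \eqref{eq:single-center-theta} holds, $D_1(u_j;\mathbf0,r_j)\le l\pm\delta_0$, but $D_1(u_j;\mathbf0,s_jr_j)>l\pm\delta_0$. We rescale by setting $w_j=c_j^{-1}T_{\mathbf0,r_j}u_j$, with $c_j$ chosen so that $\int_{B_1}w_j^2(y,0)\dd y=1$. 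This normalization is possible because \eqref{eq:single-center-theta} at $2^{-k}\approx r_j$ forces the terminal slice mass to be positive. The function $w_j$ solves the equation with oscillation parameter $\eps_j/r_j\to0$ in $Q^-(\mathbf0,4/r_j)$. By \eqref{eq:theta-scale-invariance}, it satisfies $\Theta(w_j,\mathbf0,\rho)\le\Theta_*$ for dyadic $1\le\rho\le 1/r_j$, up to adjusting $r_j$ to a dyadic-compatible value (one may take $r_j$ dyadic without loss, or absorb the mismatch by replacing $\Theta_*$ with $C\Theta_*$ via Lemma~\ref{lem:spatial}). By \eqref{eq:truncated-scaling}, the cutoff radius $1$ becomes $\kappa'_j:=1/r_j\ge\kappa_j\to\infty$, and
\[
D_{\kappa'_j}(w_j;\mathbf0,1)\le l\pm\delta_0,\qquad D_{\kappa'_j}(w_j;\mathbf0,s_j)>l\pm\delta_0 .
\]

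Next we apply Lemma~\ref{lem:compactness} with $\kappa'_j$ in place of $\kappa_j$. After extraction we have $w_j\to P$, a nonzero caloric polynomial, and $s_j\to s\in[1/4,1/2]$. By \eqref{eq:D-convergence}, applied at $q_j=1$ and $q_j=s_j$, we obtain
\[
D(P;\mathbf0,1)\le l\pm\delta_0\le D(P;\mathbf0,s).
\]

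The contradiction then comes from the Gaussian rigidity recorded in Appendix~\ref{sec:gaussian-appendix}. For a caloric polynomial, $q\mapsto H(P;\mathbf0,q)$ is a finite sum $\sum_k a_kq^{2k}$ with $a_k\ge0$, obtained by decomposing $P$ at time $-q^2$ into Hermite-type homogeneous caloric pieces of degree $k$. These pieces are orthogonal in $L^2(\nu_{0,q^2})$, and each contributes $a_kq^{2k}$. Hence $\log H$ is convex in $\log q$, so the frequency, and with it $D(P;\mathbf0,q)$, is nondecreasing in $q$. Therefore $D(P;\mathbf0,s)\le D(P;\mathbf0,1)$, and all the inequalities above become equalities. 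Equality in the convexity over the interval $[s/2,1]$ forces $H$ to be a single monomial $a_kq^{2k}$, i.e.\ $P$ is homogeneous of degree $k$, so that $D\equiv k\in\mathbb N_0$. But $k=l\pm\delta_0$ with $0<\delta_0\le1/2$, which is not an integer. This is the contradiction.

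The main obstacle is justifying the hypotheses of Lemma~\ref{lem:compactness} after rescaling. The $\Theta$ bounds must hold at all dyadic scales $2^k\le\kappa'_j$, which requires aligning $r_j$ with the dyadic scales in \eqref{eq:single-center-theta}. The first option is to take $r$ dyadic and extend to general $r$ by monotonicity-free interpolation: if $r\in[2^{-m-1},2^{-m}]$, rescale by $2^{-m}$ instead and treat $q_j=r_j2^m\in[1/2,1]$ as a convergent parameter in Lemma~\ref{lem:compactness}, which permits $q_j\to q\in[a,b]$. We also need the strict monotonicity and rigidity statement for $D(P)$, which the paper places in the appendix and which we would invoke as stated there.
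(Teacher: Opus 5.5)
Your proposal is correct and matches the paper's proof. Both argue by contradiction, rescale by the smallest dyadic $R_j\ge r_j$ so that Lemma~\ref{lem:compactness} applies with $q_j=r_j/R_j\to q\in[1/2,1]$, and pass \eqref{eq:D-convergence} to the limit to get $D(P;\mathbf0,q)\le l\pm\delta_0\le D(P;\mathbf0,sq)$; Gaussian monotonicity and rigidity then force the noninteger $l\pm\delta_0$ to be the degree of $P$. Of the fixes you float for the dyadic mismatch, commit to this last one only: assuming $r$ dyadic is not a reduction, and the Lemma~\ref{lem:spatial} patch would need a $\Theta$ bound at the dyadic radius below $r_j$, which is not assumed.
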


\begin{proof}
Suppose the implication fails for every choice of thresholds. Choose
$r_j\downarrow0$, $\eps_j/r_j\to0$, and solutions satisfying
\eqref{eq:single-center-theta} and \eqref{eq:gap-input} but violating
\eqref{eq:threshold-propagation}. Let $R_j=2^{-m_j}$ be the smallest dyadic
number no smaller than $r_j$, so that $r_j\le R_j<2r_j$. Set
\[
\kappa_j=R_j^{-1},\quad
c_j^2=\int_{B_1}u_{\eps_j}^2(R_jy,0)\dd y>0,\quad
w_j=c_j^{-1}T_{\mathbf0,R_j}u_{\eps_j}.
\]
For every $k\in\mathbb N_0$ with $2^k\le\kappa_j$, the scale
$2^kR_j=2^{-(m_j-k)}$ lies in $[r_j,1]$. Thus
\eqref{eq:normalized-theta} holds with the same $\Theta_*$ and
homogenization parameter $\eps_j/R_j\to0$.
Lemma~\ref{lem:compactness} gives a nonzero caloric polynomial $P$.
After extraction, $q_j:=r_j/R_j\to q\in[1/2,1]$.
The heights $H_1(u_{\eps_j};\mathbf0,hr_j)$ are positive for all
$h\in[1/8,1]$ and large $j$: otherwise choose $h_j\to h$ with zero height
and apply \eqref{eq:truncated-scaling} and \eqref{eq:height-convergence}
at the rescaled radii $h_jq_j\in[1/16,1]$.
After extraction, fix the sign and let $s_j\to s\in[1/4,1/2]$.
The failed conclusion and \eqref{eq:D-convergence} yield
\[
D(P;\mathbf0,q)\le l\pm\delta_0\le D(P;\mathbf0,sq).
\]
The well-known rigidity of Gaussian doubling (see Appendix~\ref{sec:gaussian-appendix}) forces the noninteger
$l\pm\delta_0$ to be the homogeneous degree of $P$, a contradiction.
\end{proof}

\section{Uniform doubling estimates}\label{sec:uniform-doubling}

\begin{lemma}\label{lem:EFV}
Let $u$ be a weak solution of
\[
\partial_tu-\operatorname{div}\!\bigl(A(x,t)\nabla u\bigr)=0
\qquad\text{in }Q^-(\bzero,4),
\]
with $A$ satisfying \textup{(A1)} and \textup{(A2)} with Lipschitz constant $L_A$. If $\Theta(u,\bzero,1)\le \Theta_0$,
then
\begin{equation}\label{eq:EFV-all}
\sup_{0<r\le1}\Theta(u,\bzero,r)\le C(L_A,\Theta_0).
\end{equation}
\end{lemma}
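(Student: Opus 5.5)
This lemma is the fixed-coefficient input, with constants allowed to depend on $L_A$; I would deduce it from the doubling estimates of Escauriaza--Fern\'andez--Vessella \cite{EFV}. Their results say the following for solutions of $\partial_tu-\operatorname{div}(A\nabla u)=0$ in $Q^-(\bzero,4)$, with $A$ uniformly elliptic and parabolic Lipschitz (as in \textup{(A1)}--\textup{(A2)}). If the quotient
\[
N_0:=\frac{\int_{Q^-(\bzero,4)}u^2}{\int_{B_1}u^2(x,0)\dd x}
\]
is finite, then for $0<r\le1/2$ one has the space-time doubling bound
\[
\int_{Q^-(\bzero,2r)}u^2\le C\int_{Q^-(\bzero,r)}u^2
\]
and the spatial doubling bound
\[
\int_{B_{2r}}u^2(x,0)\dd x\le C\int_{B_r}u^2(x,0)\dd x.
\]
In both, $C=C(n,\lambda,\Lambda,L_A,N_0)$. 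Here $N_0=\Theta(u,\bzero,1)\le\Theta_0$, so the hypothesis of \cite{EFV} holds, after harmless rescaling of the cylinders. The cylinder radii in \cite{EFV} differ from ours by fixed factors; these are absorbed by covering, or by applying their result to $T_{\bzero,c}u$ for a fixed $c$, which changes $L_A$ only by a bounded factor.

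To bound $\Theta(u,\bzero,r)$ for $0<r\le1$, I would compare numerator and denominator separately. The case $r\in[1/8,1]$ is immediate. The numerator satisfies $\int_{Q^-(\bzero,4r)}u^2\le\int_{Q^-(\bzero,4)}u^2$. For the denominator, at most a bounded number of applications of spatial doubling gives $\int_{B_1}u^2(\cdot,0)\le C\int_{B_r}u^2(\cdot,0)$. Since $r^2\ge1/64$, we get $\Theta(u,\bzero,r)\le C\Theta_0$.

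For small $r$, I would bound the cylinder mass by the terminal slice mass at the same scale. The key inequality is
\begin{equation*}
\int_{Q^-(\bzero,4r)}u^2\le C\,r^2\int_{B_r}u^2(x,0)\dd x,\qquad 0<r\le1/8.
\end{equation*}
The upper half of the comparison is the easy direction. Use space-time doubling to pass from $Q^-(\bzero,4r)$ to $Q^-(\bzero,r/2)$ with a uniform constant. Then, by the local boundedness estimate used in Lemma~\ref{lem:spatial}, $\sup_{B_{r}}|u(\cdot,0)|^2\le Cr^{-n-2}\int_{Q^-(\bzero,2r)}u^2$, so slice masses are controlled by cylinder masses. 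The remaining direction is the one actually needed: controlling the cylinder by the final slice. This is exactly the content of the two-sphere one-cylinder / terminal-slice observability estimate in \cite{EFV}. Backward uniqueness type estimates there give $\int_{Q^-(\bzero,r)}u^2\le Cr^2\int_{B_r}u^2(x,0)\dd x$, with $C$ depending on the frequency bound at scale one, that is, on $\Theta_0$ and $L_A$. Combined with space-time doubling from $Q^-(\bzero,4r)$ down to $Q^-(\bzero,r)$, this yields the displayed inequality, and hence $\Theta(u,\bzero,r)\le C$.

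The main obstacle is showing that the slice-to-cylinder estimate holds uniformly in $r$ with a constant depending only on $\Theta_0$. I would obtain this from the almost-monotonicity of the Gaussian frequency in \cite{EFV} (Poon's frequency with Lipschitz correction). First, the bound on $\Theta(u,\bzero,1)$ gives a bound on the truncated Gaussian frequency $N_1(u;\bzero,r_0)$ at a fixed scale $r_0$. This uses the Gaussian identities in Appendix~\ref{sec:gaussian-appendix} and the cutoff error, which is $e^{-c/r^2}$ small. Almost-monotonicity, $\frac{d}{dr}\bigl(e^{Cr}N_1\bigr)\ge -C$, then propagates this frequency bound to all $r\le r_0$. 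A frequency bound at scale $r$ in turn controls $H_1$ on the interval $[(r/2)^2,(4r)^2]$ of Gaussian times relative to $H_1(r)$, and Gaussian mass at small times concentrates on $B_r$ at $t=0$. Integrating in time gives the cylinder-versus-slice inequality. If this detailed route proves cumbersome, I would simply cite \cite[Theorems~1--2]{EFV} for both doubling properties and the observability inequality, since the lemma is stated precisely as their local input with dependence on $L_A$.
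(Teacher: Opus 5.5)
Your proposal is correct and follows the paper's route: cite \cite{EFV} for spatial doubling of $F(r)=\int_{B_r}u^2(x,0)\dd x$ and for the bound of the backward cylinder by $r^2$ times the terminal slice, then handle small $r$ with that bound and the remaining range with finitely many spatial doublings starting from $\Theta(u,\bzero,1)\le\Theta_0$. Two adjustments: the cylinder bound holds only for $r\le q_E(L_A,\Theta_0)$, so the split should be at a dyadic $q_0\le\min\{1/4,q_E/4\}$ rather than at $1/8$. Also, the paper needs neither space-time doubling nor your Gaussian-frequency detour, since it applies the cylinder bound at radius $4r$ and uses $F(4r)\le B^2F(r)$.
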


\begin{proof}
Set $F(r)=\int_{B_r}u^2(x,0)\dd x$. After reversing time,
\cite[Theorem~2(1)]{EFV} gives
\begin{equation}\label{eq:EFV-spatial}
F(2r)\le B F(r),\qquad 0<r\le1/2,
\qquad B=B(L_A,\Theta_0)\ge1.
\end{equation}
By \cite[Theorem~3(1)]{EFV}, there exist $E=E(L_A,\Theta_0)$ and
$q_E=q_E(L_A,\Theta_0)>0$ such that
\begin{equation}\label{eq:EFV-cylinder}
\int_{Q^-(\bzero,q)}u^2\le E q^2F(q),
\qquad 0<q\le q_E.
\end{equation}
Choose a dyadic $q_0=2^{-N}\le\min\{1/4,q_E/4\}$. For $r\le q_0$,
\eqref{eq:EFV-spatial}--\eqref{eq:EFV-cylinder} yield
\[
\Theta(u,\bzero,r)\le16E\frac{F(4r)}{F(r)}\le16EB^2.
\]
For $q_0\le r\le1$, $\Theta(u,\mathbf0,1)\le\Theta_0$ and
\eqref{eq:EFV-spatial} give
\[
\Theta(u,\bzero,r)
\le\frac{\Theta_0F(1)}{q_0^2F(q_0)}\le4^N\Theta_0B^N.
\qedhere
\]
\end{proof}

\begin{lemma}\label{lem:caloric-seed}
For every $\Theta_0\ge1$, there exist $L_0(\Theta_0)<\infty$ and
$a_0(\Theta_0)\in(0,1/2]$ such that any caloric $v$ in $Q^-(\bzero,4)$
with $\Theta(v,\bzero,1)\le \Theta_0$ satisfies
\begin{equation}\label{eq:caloric-D}
D_1(v;r)\le L_0(\Theta_0),
\qquad 0<r\le a_0(\Theta_0).
\end{equation}
Moreover,
\begin{equation}\label{eq:caloric-D-almost}
D_1(v;r)\le D_1(v;R)+C\Theta_0 R^2,
\qquad 0<r\le R\le a_0(\Theta_0),
\end{equation}
where $C>0$ depends only on $n$ and the fixed cutoff.
\end{lemma}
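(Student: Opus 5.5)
The proof has two parts. The first is a qualitative bound obtained from the constant-coefficient case of Lemma~\ref{lem:EFV}. The second is an almost-monotonicity of the truncated Gaussian doubling index, derived from the classical monotonicity of Poon's frequency for the heat equation, with the truncation treated as a perturbation. For the almost-monotone inequality \eqref{eq:caloric-D-almost} I would set $\tilde v=v\vartheta$. At time $-r^2$ this is a compactly supported function with $\partial_t\tilde v-\Delta\tilde v=f:=-2\nabla v\cdot\nabla\vartheta-v\Delta\vartheta$, and $f$ is supported in the annulus $B_{3/2}\setminus B_1$. The identities of Appendix~\ref{sec:gaussian-appendix} give
\[
\frac{d}{dr}\log H_1(v;r)=\frac{2N_1(v;r)}{r}+(\text{error from }f),
\]
and the derivative of $N_1$ equals a nonnegative Cauchy--Schwarz defect plus a term from $f$. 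Both error terms carry the Gaussian weight $G_{0,r^2}$ restricted to $|y|\ge1$. There the weight is bounded by $Ce^{-1/(8r^2)}$ times the weight at a larger scale, so the error is controlled by $\int_{Q^-(\bzero,4)}(v^2+|\nabla v|^2)$. Caccioppoli bounds this integral by the space-time mass, which is at most $\Theta_0\int_{B_1}v^2(\cdot,0)$.

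For the perturbation to be harmless, $H_1(v;r)$ must be bounded below by a fixed fraction of $\int_{B_1}v^2(\cdot,0)$ times a power of $r$, so that relative errors remain small. I would obtain this lower bound from the first part. Lemma~\ref{lem:EFV} with $L_A=0$ gives $\Theta(v,\bzero,r)\le C(\Theta_0)$ for $0<r\le1$, and hence spatial doubling at $t=0$ at all scales. The Gaussian mass at time $-r^2$ is comparable to an average over $B_{Cr}$ at times near $-r^2$. Using backward uniqueness quantitatively, via \cite[Theorem~3]{EFV} or Lemma~\ref{lem:spatial}, I would then bound $H_1(v;r)\ge c(\Theta_0)r^{\beta}\int_{B_1}v^2(\cdot,0)$ for some $\beta=\beta(\Theta_0)$.

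With this lower bound, $N_1(r)\le N_1(R)+C\Theta_0R^2$. The factor $R^2$ comes from integrating the relative error $\lesssim\Theta_0 e^{-c/\rho^2}\rho^{-\beta-3}$ over $\rho\le R$, which is absorbed into $\Theta_0R^2$ once $R\le a_0(\Theta_0)$ is small. Here $a_0$ is chosen so that $e^{-c/a_0^2}a_0^{-\beta-3}$ is tiny compared with powers of $a_0$. Since $D_1(v;r)$ is the average of $N_1$ over $[r/2,r]$ in logarithmic scale, up to this error, \eqref{eq:caloric-D-almost} follows.

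For \eqref{eq:caloric-D}, bounding $D_1(v;a_0)$ suffices, after which \eqref{eq:caloric-D-almost} gives $L_0=D_1(v;a_0)+C\Theta_0$. The ratio $H_1(v;a_0)/H_1(v;a_0/2)$ is at most $C(a_0)\int_{Q^-(\bzero,4)}v^2$ divided by the lower bound above at $r=a_0/2$, which yields a bound depending only on $\Theta_0$. Alternatively, I would use a compactness argument: a family of caloric functions normalized by $\int_{B_1}v^2(\cdot,0)=1$ with $\Theta\le\Theta_0$ is compact, and the limit of the heights stays positive by unique continuation.

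I expect the main obstacle to be the quantitative lower bound on $H_1(v;r)$ at small $r$. It is needed to turn the exponentially small cutoff errors into the stated $C\Theta_0R^2$ with $C$ independent of $\Theta_0$. If the constant-coefficient EFV bounds give only $C(\Theta_0)$ dependence, I would fall back on the explicit Gaussian estimate $H_1(v;r)\gtrsim$ the average of $v(\cdot,0)^2$ over $B_r$, together with the log-convexity of the full Gaussian height for caloric functions.
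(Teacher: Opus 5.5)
Your architecture is the same as the paper's: cut off $f=\vartheta v$, use Poon's identities for $H_1,N_1$ with source $g=(\partial_t-\Delta)f$ supported in $B_{3/2}\setminus B_1$, integrate the almost-monotonicity, and bound $D_1(v;a_0)$ directly. The real difference is how you bound $H_1(v;r)$ from below, and there the paper's route is much lighter.

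\textbf{The paper's lower bound.} It uses only interior estimates and the time-continuity bound $\|v(\cdot,-r^2)-v(\cdot,0)\|_{L^2(B_1)}\le C_*r^2J^{1/2}$, where $J=\int_{Q^-(\mathbf0,4)}v^2\le\Theta_0F$. This gives $\|v(\cdot,-r^2)\|_{L^2(B_1)}\ge\frac12F^{1/2}$ for $r\le a_0=\min\{1/2,(2C_*\sqrt{\Theta_0})^{-1/2}\}$, and hence $H_1(v;r)\ge cFr^{-n}e^{-1/(4r^2)}$. This lower bound is exponentially small. However, it carries exactly the factor $(4\pi r^2)^{-n/2}e^{-1/(4r^2)}$ that bounds the Gaussian weight on $\{|y|\ge1\}$, where $g$ lives, because $\vartheta\equiv1$ on all of $B_1$. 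So the relative error $\int(g^2+|fg|)\,d\nu_{0,r^2}/H_1$ is at most $C\Theta_0$. It need not be small: the explicit factors $r^3$ in $\frac{d}{dr}N_1$ and $r$ in $\frac{d}{dr}\log H_1$ already produce $C\Theta_0R^2$.

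\textbf{Your lower bound.} You import the EFV spatial doubling at $t=0$ to get a polynomial bound $c(\Theta_0)r^{\beta}F$, then shrink $a_0(\Theta_0)$ until the relative error is superpolynomially small. This works. Your worry about $C$ depending on $\Theta_0$ is unfounded, since any factor $C(\Theta_0)$ in front of $e^{-c/\rho^2}\rho^{-k}$ is absorbed by the choice of $a_0(\Theta_0)$. The cost is a deep unique-continuation input where none is needed, and non-explicit $a_0,L_0$. The paper gets $a_0\sim\Theta_0^{-1/4}$ and $L_0\lesssim1+\sqrt{\Theta_0}$.

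\textbf{Points to fix in your sketch.}
First, neither \cite[Theorem~3]{EFV} nor Lemma~\ref{lem:spatial} bounds the earlier slice from below. What actually turns time-$0$ doubling into a bound on $H_1(v;r)$ is your fallback argument. Represent $\vartheta v$ forward by the heat kernel from time $-r^2$ and apply Cauchy--Schwarz against $G_{0,r^2}$, as in the proof of Lemma~\ref{lem:fc-approximation}; the Duhamel term from the annulus is $O(Je^{-c/r^2})$.
Second, in your compactness alternative, positivity of the limiting height is not a unique-continuation fact. A local caloric function can vanish for $t\le-s$ and be nonzero at $t=0$. Positivity follows from the time-continuity bound above.
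Third, the slice error terms need pointwise interior bounds on $B_2\times[-1,0]$, not Caccioppoli. Indeed, $\int_{Q^-(\mathbf0,4)}|\nabla v|^2$ is not controlled by $\int_{Q^-(\mathbf0,4)}v^2$.
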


\begin{proof}
Set
\[
J=\int_{Q^-(\bzero,4)}v^2,\qquad
F=\int_{B_1}v^2(y,0)\dd y>0,\qquad J\le \Theta_0F.
\]
The caloric interior estimates give a fixed $C_*\ge1$ such that
\begin{equation}\label{eq:seed-interior}
\sup_{B_2\times[-1,0]}
\bigl(|v|^2+|\nabla v|^2+|\partial_t v|^2\bigr)\le C_*J.
\end{equation}
Enlarging $C_*$ if necessary, for $0<r\le1$ we have
\begin{equation}\label{eq:seed-time-continuity}
\begin{aligned}
\|v(\cdot,-r^2)-v(\cdot,0)\|_{L^2(B_1)}
&\le\int_{-r^2}^0\|\partial_t v(\cdot,t)\|_{L^2(B_1)}\dd t\\
&\le C_*r^2J^{1/2}.
\end{aligned}
\end{equation}
Choose
\begin{equation}\label{eq:seed-parameters}
 a_0(\Theta_0)=\min\left\{\frac12,(2C_*\sqrt{\Theta_0})^{-1/2}\right\}.
\end{equation}
For $0<r\le a_0(\Theta_0)$, \eqref{eq:seed-time-continuity} yields
\[
\begin{aligned}
\|v(\cdot,-r^2)\|_{L^2(B_1)}
&\ge F^{1/2}-C_*r^2J^{1/2}\\
&\ge(1-C_*r^2\sqrt{\Theta_0})F^{1/2}
\ge\frac12F^{1/2}.
\end{aligned}
\]
Since $\vartheta=1$ on $B_1$,
\begin{equation}\label{eq:seed-height-lower}
\begin{aligned}
H_1(v;r)
&\ge(4\pi r^2)^{-n/2}e^{-1/(4r^2)}
\int_{B_1}v^2(y,-r^2)\dd y\\
&\ge cFr^{-n}e^{-1/(4r^2)}>0.
\end{aligned}
\end{equation}

Let $f=\vartheta v$. Its equation is
\begin{equation}\label{eq:seed-cutoff-equation}
\begin{aligned}
g:=(\partial_t-\Delta)f
&=-2\nabla\vartheta\cdot\nabla v-(\Delta\vartheta)v,\\
\supp g(\cdot,t)&\subset B_2\setminus B_1.
\end{aligned}
\end{equation}
In the following spatial integrals, $f$, $g$, and their derivatives
are evaluated at $(y,-r^2)$ unless indicated otherwise.
By \eqref{eq:seed-interior} and \eqref{eq:seed-cutoff-equation},
\begin{equation}\label{eq:seed-error-upper}
\int_{\R^n}(g^2+|fg|)\dd\nu_{0,r^2}
\le CJr^{-n}e^{-1/(4r^2)}.
\end{equation}
Dividing by \eqref{eq:seed-height-lower} gives
\begin{equation}\label{eq:seed-relative-error}
\frac{\displaystyle\int_{\R^n}(g^2+|fg|)\dd\nu_{0,r^2}}
{H_1(v;r)}
\le C\Theta_0,\qquad 0<r\le a_0(\Theta_0).
\end{equation}

Define
\begin{equation}\label{eq:seed-frequency}
\begin{aligned}
N_1(v;r)&=\frac{2r^2\displaystyle\int_{\R^n}|\nabla f|^2
\dd\nu_{0,r^2}}{H_1(v;r)},\\
W_r&=y\cdot\nabla f-2r^2\Delta f-N_1(v;r)f.
\end{aligned}
\end{equation}
A direct computation gives
\begin{equation}\label{eq:seed-frequency-derivative}
\begin{aligned}
\frac{d}{dr}N_1(v;r)
&=\frac{2}{rH_1(v;r)}\int_{\R^n}(W_r-r^2g)^2\dd\nu_{0,r^2}\\
&\quad-\frac{2r^3}{H_1(v;r)}\int_{\R^n}g^2\dd\nu_{0,r^2}.
\end{aligned}
\end{equation}
By \eqref{eq:seed-relative-error}, $\frac{d}{dr}N_1(v;r)\ge-C\Theta_0r^3$.
Hence, for $0<s\le t\le a_0(\Theta_0)$,
\begin{equation}\label{eq:seed-frequency-comparison}
\begin{aligned}
N_1(v;s)&\le N_1(v;t)+C\Theta_0\int_s^t q^3\dd q\\
&\le N_1(v;t)+C\Theta_0t^4.
\end{aligned}
\end{equation}

Differentiating $H_1(v;r)$ gives
\[
\frac{d}{dr}H_1(v;r)
=4r\int_{\R^n}\bigl(|\nabla f|^2-fg\bigr)\dd\nu_{0,r^2}.
\]
By \eqref{eq:seed-relative-error},
\begin{equation}\label{eq:seed-height-derivative}
\left|\frac{d}{dr}\log H_1(v;r)-\frac{2N_1(v;r)}r\right|
\le C\Theta_0 r.
\end{equation}
Integration over $[r/2,r]$ yields
\begin{equation}\label{eq:seed-doubling-average}
\left|D_1(v;r)-\frac1{\log2}
\int_{1/2}^1N_1(v;r\sigma)\frac{\dd\sigma}{\sigma}\right|
\le C\Theta_0 r^2.
\end{equation}
For $0<r\le R\le a_0(\Theta_0)$, apply
\eqref{eq:seed-frequency-comparison} with $s=r\sigma$ and
$t=R\sigma$. Since $R\le1/2$, \eqref{eq:seed-doubling-average}
gives
\[
\begin{aligned}
D_1(v;r)
&\le\frac1{\log2}\int_{1/2}^1
N_1(v;R\sigma)\frac{\dd\sigma}{\sigma}
+C\Theta_0(r^2+R^4)\\
&\le D_1(v;R)+C\Theta_0 R^2.
\end{aligned}
\]
This proves \eqref{eq:caloric-D-almost}.

Finally, fix $R=a_0(\Theta_0)$. Since $0\le\vartheta\le1$ and
$\int_{\R^n}\dd\nu_{0,R^2}=1$, \eqref{eq:seed-interior} and
\eqref{eq:seed-height-lower} imply
\begin{equation}\label{eq:seed-fixed-heights}
\begin{aligned}
H_1(v;R)&\le CJ\le C\Theta_0F,\\
H_1(v;R/2)&\ge cFR^{-n}e^{-1/R^2}.
\end{aligned}
\end{equation}
Therefore
\[
\begin{aligned}
\frac{H_1(v;R)}{H_1(v;R/2)}&\le C\Theta_0 R^n e^{1/R^2},\\
D_1(v;R)&\le C\bigl(1+\log \Theta_0+R^{-2}\bigr)=:C_0(\Theta_0).
\end{aligned}
\]
Combining this bound with \eqref{eq:caloric-D-almost} proves
\eqref{eq:caloric-D}, with
$L_0(\Theta_0)=C_0(\Theta_0)+C\Theta_0a_0(\Theta_0)^2$.
\end{proof}

For $d\in\mathbb N_0$, let $\mathcal P_{\le d}$ denote the vector space of
caloric polynomials of parabolic degree at most $d$, including the
zero polynomial.

\begin{lemma}\label{lem:polynomial-Theta}
For every $d\in\mathbb N_0$,
\begin{equation}\label{eq:poly-Theta-bound}
\Theta_{\mathrm{pol}}(d):=\sup_{P\in\mathcal P_{\le d}\setminus\{0\}}
\Theta(P,\bzero,1/2)<\infty.
\end{equation}
\end{lemma}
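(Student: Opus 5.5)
The plan is a compactness argument on the finite-dimensional space $\mathcal P_{\le d}$. By definition,
\[
\Theta(P,\bzero,1/2)=\frac{\int_{Q^-(\bzero,2)}P^2}{\tfrac14\int_{B_{1/2}}P^2(y,0)\dd y}.
\]
Both the numerator and the denominator are homogeneous of degree two in $P$, so the supremum is the same if we restrict to the unit sphere of $\mathcal P_{\le d}$ with respect to any fixed norm. I take the norm $\|P\|:=\bigl(\int_{Q^-(\bzero,2)}P^2\bigr)^{1/2}$. It is a genuine norm on $\mathcal P_{\le d}$, since a polynomial vanishing on an open set is identically zero. Since $\mathcal P_{\le d}$ is finite dimensional (a subspace of the polynomials in $(y,t)$ of bounded degree), its unit sphere $\mathcal S$ is compact.

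The key step is to show that the denominator
\[
P\mapsto\int_{B_{1/2}}P^2(y,0)\dd y
\]
never vanishes on $\mathcal S$. Suppose $P(\cdot,0)\equiv0$ on $B_{1/2}$. Then $P(\cdot,0)$ is a polynomial vanishing on an open set, so it vanishes identically on $\R^n$. A caloric polynomial is determined by its time-zero slice through the finite expansion
\[
P(y,t)=\sum_{k\ge0}\frac{t^k}{k!}\Delta^kP(y,0),
\]
which follows because $\partial_t^kP=\Delta^kP$ and $P$ is polynomial in $t$. Hence $P\equiv0$, contradicting $\|P\|=1$.

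The denominator is a continuous function (a quadratic form) on the finite-dimensional space, and it is positive on the compact set $\mathcal S$. It therefore has a positive minimum $m_d>0$ there, and so $\Theta(P,\bzero,1/2)\le 4/m_d$ for all $P\in\mathcal S$. By homogeneity this bound extends to every $P\in\mathcal P_{\le d}\setminus\{0\}$, giving $\Theta_{\mathrm{pol}}(d)\le4/m_d<\infty$.

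The only substantive point is the unique-determination step, that a zero initial slice forces $P\equiv0$. The expansion above settles it, so I expect no real obstacle; the rest is finite-dimensional compactness.
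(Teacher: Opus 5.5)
Your proof is correct and takes the same approach as the paper. Both show that the slice map $P\mapsto P(\cdot,0)$ is injective via the expansion $P(y,t)=\sum_k \frac{t^k}{k!}\Delta^kP(y,0)$, and then use finite dimensionality: the paper invokes equivalence of norms, while you take the positive minimum of the denominator on the compact unit sphere, which is the same fact.
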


\begin{proof}
The space $\mathcal P_{\le d}$ is finite dimensional. For every $t_0\in\R$,
\begin{equation}\label{eq:polynomial-trace}
P(x,t)=\sum_{j=0}^{\lfloor d/2\rfloor}
\frac{(t-t_0)^j}{j!}\Delta^jP(x,t_0).
\end{equation}
Thus the restriction $P\mapsto P(\cdot,t_0)$ is injective.
In particular, $\|P(\cdot,0)\|_{L^2(B_{1/2})}$ is a norm on
$\mathcal P_{\le d}$. By equivalence of norms on this finite-dimensional space,
\[
\|P\|_{L^2(Q^-(\bzero,2))}^2
\le C_d\|P(\cdot,0)\|_{L^2(B_{1/2})}^2.
\]
Consequently, for $P\in\mathcal P_{\le d}\setminus\{0\}$,
\[
\Theta(P,\bzero,1/2)
=4\frac{\|P\|_{L^2(Q^-(\bzero,2))}^2}
{\|P(\cdot,0)\|_{L^2(B_{1/2})}^2}
\le4C_d.
\qedhere
\]
\end{proof}

\begin{lemma}\label{lem:propagation}
Let $u_\eps$ solve \eqref{eq:homogenization-equation} in
$Q^-(\mathbf0,4)$ under \textup{(A1)--(A4)}.
Fix an integer $l\ge1$, $\delta_0\in(0,1/2]$, a dyadic $a\in(0,1/8]$,
and $\Theta_\sharp>\max\{1,\Theta_{\mathrm{pol}}(l)\}$.
For either fixed sign, there exist dyadic
$r_*=r_*(l,\delta_0,\Theta_\sharp,a)\in(0,a]$ and
$\eta_*=\eta_*(l,\delta_0,\Theta_\sharp,a)>0$
such that the following holds. If $r=2^{-k_0}$ for some $k_0\in\mathbb N_0$, $r\le r_*$,
$\eps/r\le\eta_*$, and
\begin{alignat}{2}
&\Theta(u_\eps,\bzero,2^{-k})\le \Theta_\sharp,
&\qquad&k\in\mathbb N_0,\quad r\le2^{-k}\le1,
\label{eq:prop-Theta-hyp}\\
&D_1(u_\eps;2^{-k})\le l\pm\delta_0,
&\qquad&k\in\mathbb N_0,\quad r\le2^{-k}\le a,
\label{eq:prop-D-hyp}
\end{alignat}
then
\begin{equation}\label{eq:prop-conclusion}
\begin{aligned}
&\Theta(u_\eps,\bzero,r/2)\le \Theta_\sharp,\\
&D_1(u_\eps;r/2)\le l\pm\delta_0.
\end{aligned}
\end{equation}
\end{lemma}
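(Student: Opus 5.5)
The plan is to prove the two conclusions in \eqref{eq:prop-conclusion} separately. The doubling conclusion comes directly from Lemma~\ref{lem:gap}; the $\Theta$ conclusion is proved by compactness and contradiction, using Lemma~\ref{lem:compactness} and Lemma~\ref{lem:polynomial-Theta}.

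\emph{Doubling part.} Take $\Theta_*=\Theta_\sharp$ and the given $l,\delta_0$, and let $\kappa_*,\eta_*$ be as in Lemma~\ref{lem:gap}. If $r\le\min\{a,1/\kappa_*\}$ and $\eps/r\le\eta_*$, then \eqref{eq:prop-Theta-hyp} is exactly \eqref{eq:single-center-theta}. Since $r\le a$, \eqref{eq:prop-D-hyp} at $2^{-k}=r$ gives \eqref{eq:gap-input}. Applying \eqref{eq:threshold-propagation} with $s=1/2$ yields $D_1(u_\eps;r/2)\le l\pm\delta_0$. We then shrink $r_*$ and $\eta_*$ further for the second part.

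\emph{$\Theta$ part.} Suppose no dyadic $r_*\le a$ and $\eta_*>0$ work. Then there are dyadic $r_j\to0$, $\eps_j/r_j\to0$ and solutions $u_j$ satisfying \eqref{eq:prop-Theta-hyp}--\eqref{eq:prop-D-hyp} but with $\Theta(u_j,\mathbf0,r_j/2)>\Theta_\sharp$. We rescale as in the proof of Lemma~\ref{lem:gap}, now with $R_j=r_j$:
\[
\kappa_j=r_j^{-1},\qquad
c_j^2=\int_{B_1}u_j^2(r_jy,0)\dd y,\qquad
w_j=c_j^{-1}T_{\mathbf0,r_j}u_j.
\]
Positivity of $c_j$ follows from the finite $\Theta$ bound at $r_j$. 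By \eqref{eq:theta-scale-invariance}, $\Theta(w_j,\mathbf0,2^k)=\Theta(u_j,\mathbf0,2^kr_j)\le\Theta_\sharp$ whenever $2^k\le\kappa_j$, which is \eqref{eq:normalized-theta}. Lemma~\ref{lem:compactness} then gives a nonzero caloric polynomial $P$ with $w_j\to P$. By \eqref{eq:Theta-convergence} at $q=1/2$,
\[
\Theta(P,\mathbf0,1/2)=\lim_j\Theta(w_j,\mathbf0,1/2)\ge\Theta_\sharp.
\]

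\emph{Degree bound, the main step.} It remains to show $\deg P\le l$. Then $\Theta(P,\mathbf0,1/2)\le\Theta_{\mathrm{pol}}(l)<\Theta_\sharp$, a contradiction. By \eqref{eq:truncated-scaling} with $\rho=1$ and $R=r_j$, the cutoff radius becomes $1/r_j=\kappa_j$. Hence
\[
D_{\kappa_j}(w_j;\mathbf0,2^m)=D_1(u_j;2^mr_j)\le l\pm\delta_0
\]
for every $m\ge0$ with $2^mr_j\le a$, a range that exhausts all $m\in\mathbb N_0$ as $j\to\infty$. By \eqref{eq:D-convergence}, $D(P;\mathbf0,2^m)\le l+\tfrac12$ for all $m\in\mathbb N_0$.

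Next use the Gaussian structure from Appendix~\ref{sec:gaussian-appendix}. Decompose $P=\sum_{k\le d}P_k$ into parabolically homogeneous caloric parts, which are orthogonal in $L^2(\dd\nu_{0,q^2})$ at time $-q^2$. Then $H(P;\mathbf0,q)=\sum_k a_kq^{2k}$ with $a_d>0$, where $d=\deg P$, so $D(P;\mathbf0,q)\to d$ as $q\to\infty$. Therefore $d\le l+\tfrac12$, and since $d$ is an integer, $d\le l$.

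This identification of the limiting degree through large-scale Gaussian doubling is the step I expect to need the most care. One must check that the truncated quantities really carry cutoff radius $\kappa_j$, so that \eqref{eq:D-convergence} applies at each fixed dyadic $q=2^m$, and that the orthogonal homogeneous expansion of $H$ from the appendix holds for non-homogeneous $P$.

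Finally, take $r_*$ dyadic and at most $a$, $1/\kappa_*$ and the contradiction threshold, and take $\eta_*$ to be the smaller of the two thresholds.
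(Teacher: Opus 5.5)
Your proposal is correct and follows the paper's proof essentially step by step. You get the doubling conclusion from Lemma~\ref{lem:gap} with $s=1/2$. For the $\Theta$ conclusion you argue by contradiction: rescale at $r_j$, pass to a caloric polynomial limit via Lemma~\ref{lem:compactness}, bound its degree by $l$ through \eqref{eq:D-convergence} and \eqref{eq:degree-at-infinity}, and then contradict $\Theta_\sharp>\Theta_{\mathrm{pol}}(l)$ using Lemma~\ref{lem:polynomial-Theta}.
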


\begin{proof}
Choose $r_*,\eta_*$ below the thresholds of Lemma~\ref{lem:gap}
with $\Theta_*=\Theta_\sharp$, $l$, and $\delta_0$.
Applying Lemma~\ref{lem:gap} with $s=1/2$, we obtain
\[
D_1(u_\eps;\mathbf0,r/2)\le l\pm\delta_0.
\]
It remains to prove $\Theta(u_\eps,\mathbf0,r/2)\le\Theta_\sharp$.
Suppose this fails for every smaller choice of $r_*,\eta_*$. Then there
exist counterexamples satisfying \eqref{eq:prop-Theta-hyp}--\eqref{eq:prop-D-hyp}
at dyadic scales $r_j\to0$, with $\eps_j/r_j\to0$, such that
\[
\Theta(u_j,\mathbf0,r_j/2)>\Theta_\sharp.
\]
Normalize $w_j=c_j^{-1}T_{\mathbf0,r_j}u_j$ by
$\int_{B_1}w_j^2(y,0)\dd y=1$ and put $\kappa_j=r_j^{-1}$.
By \eqref{eq:theta-scale-invariance} and \eqref{eq:prop-Theta-hyp},
\[
\Theta(w_j,\mathbf0,2^k)=\Theta(u_j,\mathbf0,2^kr_j)\le\Theta_\sharp,
\qquad k\in\mathbb N_0,\quad 2^k\le\kappa_j.
\]
Thus \eqref{eq:normalized-theta} holds with $\Theta_*=\Theta_\sharp$.
By Lemma~\ref{lem:compactness}, $w_j\to P$, where $P$ is a nonzero
caloric polynomial. For every fixed integer $k\ge0$, eventually
$2^kr_j\le a$, so
\begin{equation}\label{eq:limit-parent-D}
D(P;2^k)=\lim_{j\to\infty}D_1(u_j;2^kr_j)\le l\pm\delta_0.
\end{equation}
By \eqref{eq:degree-at-infinity},
\begin{equation}\label{eq:limit-degree-bound}
\deg_{\rm par}P\le l.
\end{equation}
Equations~\eqref{eq:Theta-convergence} and \eqref{eq:poly-Theta-bound}
then give
\[
\Theta_\sharp\le\Theta(P,\mathbf0,1/2)
\le \Theta_{\mathrm{pol}}(l)<\Theta_\sharp,
\]
a contradiction.
\end{proof}

\begin{proof}[Proof of Theorem~\ref{thm:center}]
Replacing $\Theta_0$ by $\max\{1,\Theta_0\}$, we may assume
$\Theta_0\ge1$.

Let $C_{\rm cal}(\Theta_0)$ be the constant obtained by applying
Lemma~\ref{lem:EFV} with $A=I$, and let $L_0(\Theta_0),a_0(\Theta_0)$ be supplied by
Lemma~\ref{lem:caloric-seed}. Choose
\begin{equation}\label{eq:choose-thresholds}
\begin{aligned}
L&\in\mathbb N+1/2,\qquad L>L_0(\Theta_0)+1,\\
a&=2^{-k_a},\quad k_a\in\mathbb N_0,\qquad a\le\min\{a_0(\Theta_0),1/8\},\\
\Theta_\sharp&>\max\{\Theta_0+1,C_{\rm cal}(\Theta_0)+1,2\Theta_{\mathrm{pol}}(\lfloor L\rfloor)\}.
\end{aligned}
\end{equation}
Apply Lemma~\ref{lem:propagation} with $l=L-1/2$, $\delta_0=1/2$ and the plus sign to obtain $r_*,\eta_*$, reducing
$\eta_*$ to at most $1$. The order of choices is
\[
\Theta_0\ \longmapsto\ (C_{\rm cal},L_0,a_0)
\ \longmapsto\ (L,a,\Theta_\sharp)
\ \longmapsto\ (r_*,\eta_*).
\]
In particular, the choice of $L$ precedes that of $\Theta_\sharp$.

After these choices, there exists $\eps_{\rm seed}>0$,
depending only on $\Theta_0,L,a,\Theta_\sharp,r_*,\eta_*$ and the structural
data, such that $0<\eps<\eps_{\rm seed}$ implies
\begin{alignat}{2}
&\Theta(u_\eps,\bzero,2^{-k})<\Theta_\sharp,
&\qquad&k\in\mathbb N_0,\quad r_*\le2^{-k}\le1,
\label{eq:seed-Theta}\\
&D_1(u_\eps;2^{-k})<L,
&\qquad&k\in\mathbb N_0,\quad r_*\le2^{-k}\le a.
\label{eq:seed-D}
\end{alignat}
Suppose no such $\eps_{\rm seed}$ exists. Then there are
$\eps_j\downarrow0$ and solutions $u_j$ satisfying the assumptions of
Theorem~\ref{thm:center}, with fixed structural constants, for which
at least one of \eqref{eq:seed-Theta} and \eqref{eq:seed-D} fails.
Normalize by
\[
\int_{B_1}u_j^2(x,0)\dd x=1.
\]
Equation~\eqref{eq:coarse-center} gives
$\int_{Q^-(\bzero,4)}u_j^2\le\Theta_0$. Local energy and H\"older
estimates and \cite[Theorem~3.6]{GS} yield, after extraction, a caloric
function $v$ such that
\[
\begin{gathered}
u_j\longrightarrow v
\quad\text{locally uniformly on }B_4\times(-16,0],\\
\int_{B_1}v^2(x,0)\dd x=1,\qquad
\int_{Q^-(\bzero,4)}v^2\le\Theta_0.
\end{gathered}
\]
The last inequality follows by exhaustion and lower semicontinuity.
Thus $\Theta(v,\bzero,1)\le\Theta_0$.

At $r=1$, \eqref{eq:coarse-center} and \eqref{eq:choose-thresholds}
already give $\Theta(u_j,\bzero,1)\le\Theta_0<\Theta_\sharp$.
For each fixed dyadic $r\in[r_*,1/2]$, the integration sets defining
$\Theta(u_j,\bzero,r)$ are compactly contained in
$B_4\times(-16,0]$. Lemma~\ref{lem:EFV}, applied to $v$ with $A=I$,
and local uniform convergence give
\[
\Theta(u_j,\bzero,r)\longrightarrow\Theta(v,\bzero,r)
\le C_{\rm cal}(\Theta_0)<\Theta_\sharp.
\]
For each fixed dyadic $r\in[r_*,a]$, the cutoff has fixed compact
support, so local uniform convergence and \eqref{eq:seed-height-lower} give
\[
H_1(u_j;q)\longrightarrow H_1(v;q)>0
\qquad(q=r,r/2).
\]
Consequently, Lemma~\ref{lem:caloric-seed} and
\eqref{eq:choose-thresholds} imply
\[
D_1(u_j;r)\longrightarrow D_1(v;r)
\le L_0(\Theta_0)<L.
\]
Since both scale ranges contain only finitely many dyadic radii,
\eqref{eq:seed-Theta} and \eqref{eq:seed-D} hold simultaneously for all
sufficiently large $j$, contradicting the choice of $u_j$.

Reduce $\eps_{\rm seed}$ so that
$\eps_{\rm seed}\le\eta_*r_*/2$. For $\eps<\eps_{\rm seed}$,
let $r_{\rm mic}=2^{-k_{\rm mic}}$, $k_{\rm mic}\in\mathbb N_0$, be the least dyadic number not
smaller than $\eps/\eta_*$. Then
\begin{equation}\label{eq:micro-radius}
\eps/\eta_*\le r_{\rm mic}<2\eps/\eta_*\le r_*.
\end{equation}
Starting with \eqref{eq:seed-Theta}--\eqref{eq:seed-D}, apply
Lemma~\ref{lem:propagation} successively at $r_*,r_*/2,\ldots,r_{\rm mic}$.
This yields
\begin{equation}\label{eq:controlled-dyadics}
\Theta(u_\eps,\bzero,2^{-k})\le \Theta_\sharp,
\qquad k\in\mathbb N_0,\quad r_{\rm mic}\le2^{-k}\le1.
\end{equation}

Set $v=T_{\bzero,r_{\rm mic}}u_\eps$. By
\eqref{eq:space-time-lipschitz} and \eqref{eq:micro-radius}, its coefficient
matrix $a_{\rm mic}$ satisfies
\[
\begin{aligned}
|a_{\rm mic}(y,s)-a_{\rm mic}(y',s')|
&\le\frac{2L_A}{\eta_*}
\bigl(|y-y'|+|s-s'|^{1/2}\bigr),\\
\Theta(v,\bzero,1)&\le \Theta_\sharp.
\end{aligned}
\]
Lemma~\ref{lem:EFV} and \eqref{eq:theta-scale-invariance} imply
\begin{equation}\label{eq:micro-all}
\Theta(u_\eps,\bzero,r)\le C(\Theta_\sharp,\eta_*)
\qquad(0<r\le r_{\rm mic}).
\end{equation}
If $r_{\rm mic}\le r\le1$, choose adjacent dyadics
$s/2\le r\le s$ with $s/2\ge r_{\rm mic}$. Then
\eqref{eq:controlled-dyadics} and Lemma~\ref{lem:spatial} give
\begin{equation}\label{eq:intermediate-radii}
\begin{aligned}
\Theta(u_\eps,\bzero,r)
&\le4\Theta(u_\eps,\bzero,s)
\frac{\displaystyle\int_{B_s}u_\eps^2(x,0)\dd x}{\displaystyle\int_{B_{s/2}}u_\eps^2(x,0)\dd x}\\
&\le C\Theta_\sharp^2.
\end{aligned}
\end{equation}
At $r=r_{\rm mic}$ use \eqref{eq:micro-all}.

For $\eps\ge\eps_{\rm seed}$, the coefficient modulus is at most
$L_A/\eps_{\rm seed}$. Lemma~\ref{lem:EFV} applies directly.
Together with \eqref{eq:micro-all} and
\eqref{eq:intermediate-radii}, this proves
\eqref{eq:center-conclusion} for all $\eps>0$. Finally,
\eqref{eq:spatial-conclusion} follows from
Lemma~\ref{lem:spatial} and \eqref{eq:center-conclusion}.
\end{proof}

\begin{proof}[Proof of Corollary~\ref{thm:allcenters}]
Replacing $\Theta_0$ by $\max\{1,\Theta_0\}$, we may assume $\Theta_0\ge1$.
By \eqref{eq:theta-scale-invariance}, Theorem~\ref{thm:center} applies
at any center $\mathbf z$ and radius $R$ to $T_{\mathbf z,R}u_\eps$,
with parameter $\eps/R$ and a phase shift of $A$. We use $R=1$ below.
For $\sigma\in[-4,4]$, set
\[
\begin{aligned}
J(\sigma)&=\int_{B_6\times[-36,\sigma]}u_\eps^2\dd y\dd s,\\
F_x(r)&=\int_{B(x,r)}u_\eps^2(y,\sigma)\dd y.
\end{aligned}
\]
Fix $x\in B_2$ and choose $N=8$, $x_i=ix/N$,
$0\le i\le N$. Then
\begin{equation}\label{eq:chain-inclusions}
\begin{aligned}
|x_{i+1}-x_i|&\le1/4,\\
B(x_i,1/2)&\subset B(x_{i+1},1),\\
Q^-((x_i,\sigma),4)&\subset B_6\times[-36,\sigma].
\end{aligned}
\end{equation}
Define $\Theta_{0,i}=\max\{1,J(\sigma)/F_{x_i}(1)\}$. Initially
$\Theta_{0,0}\le\max\{1,J(\sigma)/F_0(1/2)\}\le \Theta_0$. If $\Theta_{0,i}<\infty$, then
\eqref{eq:chain-inclusions} and Theorem~\ref{thm:center}, applied to
$T_{(x_i,\sigma),1}u_\eps$ with $\Theta_0=\Theta_{0,i}$, yield
\[
\begin{aligned}
\Theta(u_\eps,(x_i,\sigma),1)&\le \Theta_{0,i},\\
F_{x_i}(1)&\le C(\Theta_{0,i})F_{x_i}(1/2)\\
&\le C(\Theta_{0,i})F_{x_{i+1}}(1).
\end{aligned}
\]
Consequently
\begin{equation}\label{eq:chain-recursion}
\Theta_{0,i+1}\le C(\Theta_{0,i})\Theta_{0,i},\qquad 0\le i<8.
\end{equation}
Taking $C\ge1$ in Theorem~\ref{thm:center} nondecreasing in
its argument, iteration of \eqref{eq:chain-recursion} gives
$\Theta_{0,8}\le C(\Theta_0)$. A final application with
$\Theta_0=\Theta_{0,8}$ at $(x,\sigma)$ gives \eqref{eq:allcenters}
with a constant $\Theta_1=\Theta_1(\Theta_0)$, uniformly in $x$ and $\sigma$.
\end{proof}

\section{Truncated doubling estimates and nondegeneracy}\label{sec:drop-nondegeneracy}

For the geometric estimates, the local control hypothesis is
\begin{equation}\label{eq:uniform-theta}
\sup_{\substack{\mathbf z\in Q(\mathbf0,2)\\0<r\le1}}
\Theta(u_\eps,\mathbf z,r)\le\Theta_1<\infty.
\end{equation}
By Corollary~\ref{thm:allcenters} and \eqref{eq:theta-local},
\eqref{eq:rough} implies this condition with $\Theta_1=\Theta_1(\Theta_0)$.
We first propagate noninteger doubling thresholds at interior centers.

\begin{theorem}\label{thm:ls}
Let $u_\eps$ solve \eqref{eq:homogenization-equation} in $Q(\mathbf0,6)$
under \textup{(A1)--(A4)} and satisfy \eqref{eq:uniform-theta}.
Fix an integer $l\ge1$ and $\delta_0\in(0,1/2]$.
There are constants
\[
\kappa_*=\kappa_*(\Theta_1,l,\delta_0)\ge2,
\qquad
0<\eta_*=\eta_*(\Theta_1,l,\delta_0)<1,
\]
such that the following holds. Let $\mathbf z_0\in Q(\mathbf0,1)$, and
\begin{equation}\label{eq:initial-radius}
0<r\le\frac1{\kappa_*},\qquad 0<\eps\le\eta_*r.
\end{equation}
For either fixed sign, if
\begin{equation}\label{eq:ls-initial-bounds}
D_1(u_\eps;\mathbf z_0,r)\le l\pm\delta_0,
\end{equation}
then
\begin{equation}\label{eq:ls-one-step}
D_1(u_\eps;\mathbf z_0,s)\le l\pm\delta_0,
\qquad r/4\le s\le r/2.
\end{equation}
Moreover,
\begin{equation}\label{eq:ls-iterated}
D_1(u_\eps;\mathbf z_0,s)\le l\pm\delta_0,
\qquad \frac{\eps}{2\eta_*}\le s\le r/2.
\end{equation}
\end{theorem}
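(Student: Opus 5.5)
The plan is to reduce Theorem~\ref{thm:ls} to the single-center Lemma~\ref{lem:gap} by translation and rescaling, and then to iterate the one-step conclusion dyadically. Fix $\mathbf z_0=(x_0,t_0)\in Q(\mathbf0,1)$ and set $\tilde u(y,s)=u_\eps(x_0+y,t_0+s)$. This function solves an equation of the form \eqref{eq:homogenization-equation} with coefficient $A(\cdot+x_0/\eps,\cdot+t_0/\eps^2)$. The shifted coefficient still satisfies \textup{(A1)--(A4)} with the same lattice, period and structural constants, and the same homogenized matrix. Since $Q^-(\mathbf z_0,4)\subset Q(\mathbf0,6)$, the function $\tilde u$ is a solution in $Q^-(\mathbf0,4)$. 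All quantities $D_1$, $H_1$ and $\Theta$ at $\mathbf z_0$ for $u_\eps$ coincide with the corresponding quantities at $\mathbf0$ for $\tilde u$.

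Next we check the $\Theta$ hypothesis \eqref{eq:single-center-theta}. Because $\mathbf z_0\in Q(\mathbf0,1)\subset Q(\mathbf0,2)$, \eqref{eq:uniform-theta} gives $\Theta(\tilde u,\mathbf0,\rho)\le\Theta_1$ for every $0<\rho\le1$, and in particular at all dyadic scales in $[r,1]$. We therefore take $\Theta_*=\max\{1,\Theta_1\}$ and define $\kappa_*,\eta_*$ as in Lemma~\ref{lem:gap}, with $\eta_*$ reduced to be less than $1$. Under \eqref{eq:initial-radius}, Lemma~\ref{lem:gap} applies to $\tilde u$ and yields \eqref{eq:ls-one-step} directly. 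The constants do not depend on the phase shift, since Lemma~\ref{lem:gap} was proved by compactness over all admissible coefficient families with fixed structural data.

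For the iterated statement \eqref{eq:ls-iterated}, we apply the one-step result repeatedly. First set $r_1=r/2$; the one-step result gives the bound on $[r/4,r/2]$, and in particular at $r_1$. Since $r_1\le1/\kappa_*$, we may reapply the result at $r_1$ as long as $\eps\le\eta_*r_1$, and this gives the bound on $[r_1/4,r_1/2]$. Note that \eqref{eq:single-center-theta} continues to hold at every smaller scale, because \eqref{eq:uniform-theta} is valid for all $\rho\le1$. Inductively we apply the result at $r_k=2^{-k}r$ whenever $r_k\ge\eps/\eta_*$. The intervals $[r_k/4,r_k/2]$ cover $[r_K/4,r/2]$, where $K$ is the last admissible index. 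Since $r_{K+1}<\eps/\eta_*$, we have $r_K<2\eps/\eta_*$. Hence $r_K/4<\eps/(2\eta_*)$, and the union covers $[\eps/(2\eta_*),r/2]$.

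The main point requiring care is uniformity: Lemma~\ref{lem:gap} must be applied with constants that do not depend on the center or on the phase shift of $A$. This holds because its hypotheses involve only the structural data (A1)--(A4) and $\Theta_*$. A second check is that the positivity of the truncated heights needed to define $D_1$ at the intermediate scales is guaranteed at each step. This is part of the conclusion of Lemma~\ref{lem:gap}, since its proof shows that the relevant heights are positive. If that were not explicit, I would add it by the same compactness argument.
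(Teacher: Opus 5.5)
Your proposal is correct and follows the paper's proof essentially step for step. You translate to $\mathbf z_0$, check \eqref{eq:single-center-theta} from \eqref{eq:uniform-theta}, and apply Lemma~\ref{lem:gap} with $\Theta_*=\max\{1,\Theta_1\}$; you then iterate at $r_j=2^{-j}r$ while $\eps\le\eta_*r_j$, and cover $[\eps/(2\eta_*),r/2]$ by the intervals $[r_j/4,r_j/2]$ exactly as the paper does.
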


\begin{proof}
Apply Lemma~\ref{lem:gap} with $\Theta_*=\max\{1,\Theta_1\}$, $l$ and $\delta_0$.
For $\mathbf z_0=(x_0,t_0)$, define
$v_\eps(y,t):=u_\eps(x_0+y,t_0+t)$.
For $r_j=2^{-j}r$ with $\eps\le\eta_*r_j$, we have
\[
r_j\le r\le\kappa_*^{-1},\qquad \eps/r_j\le\eta_*.
\]
For every $k\in\mathbb N_0$ with $r_j\le2^{-k}\le1$,
\eqref{eq:uniform-theta} gives
\[
\Theta(v_\eps,\mathbf0,2^{-k})
=\Theta(u_\eps,\mathbf z_0,2^{-k})\le\Theta_1.
\]
Thus \eqref{eq:single-center-theta} holds at every such $r_j$.
Applying Lemma~\ref{lem:gap} at $j=0$ gives \eqref{eq:ls-one-step}.
Inductively, the conclusion
at $s=r_j/2=r_{j+1}$ supplies the next input, and the lemma applies
on each $[r_j/4,r_j/2]$.
If $J$ is the largest index with $\eps\le\eta_*r_J$, then
$r_J/2<\eps/\eta_*\le r_J$, and
\[
\bigcup_{j=0}^J[r_j/4,r_j/2]=[r_J/4,r/2]
\supset[\eps/(2\eta_*),r/2].
\]
This proves \eqref{eq:ls-iterated}.
\end{proof}

\begin{proposition}
\label{prop:theta-controls-truncated-doubling}
Let $u_\eps$ solve \eqref{eq:homogenization-equation} under
\textup{(A1)} and satisfy \eqref{eq:uniform-theta}. There exists $L=L(\Theta_1)\ge2$,
independent of $\eps$, such that
\begin{equation}\label{eq:all-truncated-doubling}
D_1(u_\eps;\mathbf z,r)\le L,
\qquad \mathbf z\in Q(\mathbf0,1),\quad 0<r\le1.
\end{equation}
\end{proposition}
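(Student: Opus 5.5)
Fix $\mathbf z=(x_0,t_0)\in Q(\mathbf0,1)$ and, by translation, put $v(y,t)=u_\eps(x_0+y,t_0+t)$. The idea is to follow the proof of Theorem~\ref{thm:center}, but the output is now the truncated doubling index $D_1$ instead of $\Theta$. We cut the scale range into three regimes: coarse scales $r\ge r_*$, intermediate scales $\eps/\eta_*\lesssim r\le r_*$, and microscopic scales $r\lesssim\eps/\eta_*$.

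\emph{Coarse regime.} The claim is that for $r\in[r_*,1]$ we have $D_1\le C(\Theta_1,r_*)$ with no smallness on $\eps$. For the upper bound,
\[
H_1(v;r)\le C\sup_{B_{3/2}}u_\eps^2(\cdot,t_0-r^2)\le C\int_{Q^-(\mathbf z,4)}u_\eps^2 ,
\]
which uses the interior $L^\infty$ bound (valid since $Q^-(\mathbf z,4)\subset Q(\mathbf0,6)$). For the lower bound,
\[
H_1(v;r/2)\ge c\,r^{-n}e^{-1/r^2}\int_{B_1}u_\eps^2(y,t_0-r^2/4)\,\dd y .
\]
We then compare $\int_{B_1}u_\eps^2$ at time $t_0-r^2/4$ with the terminal slice via the hypothesis \eqref{eq:uniform-theta} applied at the shifted center $(x_0,t_0-r^2/4)\in Q(\mathbf0,2)$. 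That center has its own $\Theta\le\Theta_1$, so Theorem~\ref{thm:center}(2) and the chain argument of Corollary~\ref{thm:allcenters} control $\int_{Q^-(\mathbf z,4)}u_\eps^2$ by $\int_{B(x_0,1)}u_\eps^2(\cdot,t_0-r^2/4)$ up to $C(\Theta_1)$. The catch is that the enlarged cylinder must stay in $Q(\mathbf0,6)$. If it does not, we work at a smaller physical radius and compare $H_1$ with $H_\rho$. Together these give $D_1(v;r)\le C(\Theta_1)(1+r_*^{-2})$.

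\emph{Intermediate regime.} Choose a half-integer $L>C(\Theta_1,r_*)+1$ and apply Theorem~\ref{thm:ls} with $l=L-1/2$, $\delta_0=1/2$ and the plus sign. There is a circularity: Theorem~\ref{thm:ls} fixes $\kappa_*,\eta_*$ in terms of $l$, while the coarse bound above depends on $r_*=\kappa_*^{-1}$. We resolve it exactly as in the proof of Theorem~\ref{thm:center}. The seed bound at scales $r\ge r_*$ is not taken from a crude estimate. Instead it comes from a compactness argument: as $\eps\to0$, solutions converge to a caloric $v$ with $\Theta(v,\mathbf0,1)\le\Theta_1$, and Lemma~\ref{lem:caloric-seed} gives $D_1\le L_0(\Theta_1)$ on $(0,a_0]$. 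So we first choose $L>L_0+1$, then $r_*,\eta_*$. For $\eps<\eps_{\rm seed}$ this gives $D_1(v;r)<L$ on the finitely many dyadic $r\in[r_*,a]$, and we extend to all $r\in[r_*,a]$ by monotonicity-type comparison of $H_1$ at nearby radii. The iterated conclusion \eqref{eq:ls-iterated} then propagates $D_1\le L$ down to $s\ge\eps/(2\eta_*)$. For $\eps\ge\eps_{\rm seed}$, and for coarse $r\in[a,1]$, the crude bound suffices, since there $r$ is bounded below by a constant depending only on $\Theta_1$.

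\emph{Microscopic regime.} For $r\le\eps/(2\eta_*)$, rescale by $R\approx\eps/\eta_*$. The rescaled coefficients are parabolic Lipschitz with constant $\lesssim L_A/\eta_*$, and $\Theta\le\Theta_1$ still holds. The estimate to use is the almost-monotonicity \eqref{eq:seed-frequency-derivative}--\eqref{eq:seed-doubling-average} for Lipschitz (non-caloric) coefficients, i.e.\ the EFV-type Gaussian frequency monotonicity. For the rescaled function the cutoff sits at physical radius $1/R\gg1$, so the cutoff error is $e^{-c/r^2}$-small relative to the height, once the height is bounded below as in \eqref{eq:seed-height-lower} using $\Theta$-control. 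This yields $D_{1/R}(w;q)\le D_{1/R}(w;1)+C$ for $q\le1$, where the right-hand side is bounded by the intermediate regime.

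\textbf{Main obstacle.} I expect the hardest part to be the microscopic step: making the variable-coefficient Gaussian frequency monotonicity quantitative for cutoff heights, with constants depending only on $\Theta_1$ and $L_A/\eta_*$. The fallback is to cite \cite{EFV} for the doubling of Gaussian heights and convert that to $D_1$, using that the $e^{-1/r^2}$ cutoff errors are negligible against the polynomial lower bounds supplied by Lemma~\ref{lem:spatial}.
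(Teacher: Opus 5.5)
Your proposal has a genuine gap, and even if completed it would prove a weaker statement. The proposition assumes only \textup{(A1)}. Your intermediate regime runs through Theorem~\ref{thm:ls} and a homogenization-compactness seed (Lemmas~\ref{lem:compactness} and~\ref{lem:caloric-seed}), which need \textup{(A1)--(A4)}, and your microscopic regime needs \textup{(A2)}.

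More importantly, the microscopic step is not available as stated. The identities \eqref{eq:seed-frequency-derivative}--\eqref{eq:seed-doubling-average} are computations for the heat operator with the standard Gaussian weight. For $\partial_t-\operatorname{div}(A\nabla)$ there is no such almost-monotonicity for $H_1,N_1$: \textup{(A4)} normalizes $\widehat A$, not $A(\mathbf z)$, so $G_{x_0,r^2}$ is not adapted to the operator. The monotonicity in \cite{EFV} uses coefficient-adapted weights and bounds $L^2$ masses on balls and cylinders, not $H_1$, and you leave the conversion to $D_1$ open. Every scale $r\lesssim\eps/\eta_*$ depends on this unproved step, and for $\eps\ge\eps_{\rm seed}$ so does every scale below a fixed threshold.

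The missing idea is that no propagation across scales is needed. Your coarse bound degenerates like $e^{1/r^2}$ only because you compare the scale-$r$ Gaussian height with a ball of physical radius $1$. The hypothesis \eqref{eq:uniform-theta} holds at \emph{every} center of $Q(\mathbf0,2)$ and every scale $\rho\le1$, in particular at $(x_0,t_0-r^2)$ and $(x_0,t_0-r^2/4)$. So you can compare both heights with $F(r/2)$, where $F(s)=\int_{B(x_0,s)}u_\eps^2(y,t_0-r^2)\dd y$.

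For the upper bound, split $H_1(u_\eps;\mathbf z,r)$ over $B(x_0,r/2)$ and the dyadic annuli $2^{j-1}r\le|y-x_0|<2^jr$ inside $B(x_0,2)$. On these annuli $G_{x_0,r^2}\le Cr^{-n}e^{-4^j/16}$. Lemma~\ref{lem:spatial}, applied at the center $(x_0,t_0-r^2)$, gives $F(\min\{2^jr,2\})\le C(\Theta_1)2^{j\beta_0}F(r/2)$. The series converges, so $H_1(u_\eps;\mathbf z,r)\le C(\Theta_1)r^{-n}F(r/2)$.

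For the lower bound, combine four facts: $G_{x_0,r^2/4}\ge cr^{-n}$ on $B(x_0,r/2)$; the interior $L^\infty$ estimate; the inclusion $Q^-((x_0,t_0-r^2),r)\subset Q^-((x_0,t_0-r^2/4),2r)$; and $\Theta(u_\eps,(x_0,t_0-r^2/4),r/2)\le\Theta_1$. Together they give
\[
H_1(u_\eps;\mathbf z,r/2)\ge cr^{-n}\int_{B(x_0,r/2)}u_\eps^2(y,t_0-r^2/4)\dd y\ge c\,\Theta_1^{-1}r^{-n}F(r/2).
\]
Hence $H_1(u_\eps;\mathbf z,r)/H_1(u_\eps;\mathbf z,r/2)\le C(\Theta_1)$ at each scale $0<r\le1$ separately. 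This uses only interior estimates and Lemma~\ref{lem:spatial}, hence only \textup{(A1)}, and $\eps$ never enters. It is exactly the conversion your fallback would need, and once you have it, the compactness seed, Theorem~\ref{thm:ls} and the microscopic analysis are all unnecessary.
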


\begin{proof}
Fix $\mathbf z=(x_0,t_0)\in Q(\mathbf0,1)$ and $0<r\le1$.
Write
\[
F_{x_0,t}(s)=\int_{B(x_0,s)}u_\eps^2(y,t)\dd y.
\]
Both $(x_0,t_0-r^2)$ and $(x_0,t_0-r^2/4)$ lie in
$Q(\mathbf0,2)$. The inclusion
\[
Q^-((x_0,t_0-r^2),r)
\subset Q^-((x_0,t_0-r^2/4),2r)
\]
and the interior estimate give
\begin{equation}\label{eq:two-slice-comparison}
\begin{aligned}
F_{x_0,t_0-r^2}(r/2)
&\le\frac C{r^2}\int_{Q^-((x_0,t_0-r^2),r)}
 |u_\eps(y,s)|^2\dd y\dd s\\
&\le\frac C{r^2}\int_{Q^-((x_0,t_0-r^2/4),2r)}
 |u_\eps(y,s)|^2\dd y\dd s\\
&\le C\Theta_1F_{x_0,t_0-r^2/4}(r/2).
\end{aligned}
\end{equation}
On $B(x_0,r/2)$, we have
$\vartheta(y-x_0)=1$ and $G_{x_0,r^2/4}(y)\ge cr^{-n}$.
Thus \eqref{eq:two-slice-comparison} yields
\begin{equation}\label{eq:truncated-height-lower-bound}
\begin{aligned}
H_1(u_\eps;\mathbf z,r/2)
&\ge cr^{-n}F_{x_0,t_0-r^2/4}(r/2)\\
&\ge c\Theta_1^{-1}r^{-n}F_{x_0,t_0-r^2}(r/2).
\end{aligned}
\end{equation}
For the rest of the proof, write $F(s)=F_{x_0,t_0-r^2}(s)$.
Applying Lemma~\ref{lem:spatial} after translation, with
$r/2\le s\le1$ in its hypothesis, gives
\begin{equation}\label{eq:terminal-support-growth}
F(s)\le C_0\left(\frac{s}{r/2}\right)^{\beta_0}F(r/2),
\qquad r/2\le s\le2.
\end{equation}

Set
\[
A_j:=\bigl(B(x_0,2^jr)\setminus B(x_0,2^{j-1}r)\bigr)
       \cap B(x_0,2),\qquad s_j:=\min\{2^jr,2\}.
\]
Since the cutoff is supported in $B(x_0,2)$,
\[
\begin{aligned}
H_1(u_\eps;\mathbf z,r)
={}&\int_{B(x_0,r/2)}|u_\eps(y,t_0-r^2)|^2\dd\nu_{x_0,r^2}(y)\\
&+\sum_{\substack{j\ge0\\2^{j-1}r<2}}
\int_{A_j}|u_\eps(y,t_0-r^2)|^2\vartheta^2(y-x_0)
\dd\nu_{x_0,r^2}(y).
\end{aligned}
\]
On $A_j$, $G_{x_0,r^2}(y)\le Cr^{-n}e^{-4^j/16}$, while
\eqref{eq:terminal-support-growth} gives
\[
\int_{A_j}|u_\eps(y,t_0-r^2)|^2\dd y
\le F(s_j)\le C(\Theta_1)2^{(j+1)\beta_0}F(r/2).
\]
Therefore
\begin{equation}\label{eq:annular-height-bound}
\begin{aligned}
H_1(u_\eps;\mathbf z,r)
&\le Cr^{-n}\left[F(r/2)+
 \sum_{\substack{j\ge0\\2^{j-1}r<2}}e^{-4^j/16}F(s_j)\right]\\
&\le C(\Theta_1)r^{-n}F(r/2)
 \left[1+\sum_{j=0}^{\infty}e^{-4^j/16}2^{(j+1)\beta_0}\right]\\
&\le C(\Theta_1)r^{-n}F(r/2).
\end{aligned}
\end{equation}
Combining \eqref{eq:annular-height-bound} with
\eqref{eq:truncated-height-lower-bound} gives
\begin{equation}\label{eq:uniform-height-ratio}
\frac{H_1(u_\eps;\mathbf z,r)}{H_1(u_\eps;\mathbf z,r/2)}
\le C(\Theta_1).
\end{equation}
Taking $\log_4$ and increasing the resulting constant to at least two
proves the assertion.
\end{proof}

\begin{lemma}\label{lem:caloric-frequency-drop}
Let $v\not\equiv0$ be an ancient caloric function on
$\R^n\times(-\infty,0)$, continuous up to $t=0$, and suppose
\[
|v(x,t)|\le C(1+|x|+\sqrt{-t})^d\qquad(t\le0)
\]
for some $C>0$ and $d\ge0$. Let $l$ be a positive integer and
$\delta\in(0,1/2]$. Suppose
$N(v;\mathbf0,r)\le l-\delta$. Then, for $0<s\le1$,
\begin{equation}\label{eq:caloric-frequency-drop}
N(v;\mathbf0,sr)\le l-1+\frac{l^2}{\delta}s^2.
\end{equation}
In particular,
\begin{equation}\label{eq:caloric-doubling-drop}
D(v;\mathbf0,r)\le l-\delta
\quad\Longrightarrow\quad
D\left(v;\mathbf0,\frac{\delta r}{4l}\right)
\le l-1+\frac\delta4.
\end{equation}
\end{lemma}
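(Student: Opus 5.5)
The plan is to reduce everything to an explicit Gaussian expansion in homogeneous caloric polynomials. First, as in the proof of Lemma~\ref{lem:compactness}, the polynomial growth bound and caloric interior estimates on cylinders $Q^-(\mathbf0,R)$ with $R\to\infty$ show that all derivatives of order exceeding $d$ vanish. Hence $v$ is a caloric polynomial of parabolic degree at most $d$. We then decompose $v=\sum_{k=0}^{d}P_k$, where $P_k$ is caloric and parabolically homogeneous of degree $k$. By the Gaussian identities of Appendix~\ref{sec:gaussian-appendix}, the restrictions $P_k(\cdot,-\tau)$ are mutually orthogonal in $L^2(\dd\nu_{0,\tau})$; they are rescaled Hermite eigenfunctions of the Ornstein--Uhlenbeck operator with distinct eigenvalues. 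Homogeneity also gives $H(P_k;\mathbf0,\rho)=a_k\rho^{2k}$ and $E(P_k;\mathbf0,\rho)=k\,a_k\rho^{2k}$, and the cross terms in the energy vanish as well. Writing $b_k=a_kr^{2k}\ge0$, we obtain
\[
N(v;\mathbf0,sr)=\frac{\sum_k k\,b_ks^{2k}}{\sum_k b_ks^{2k}}.
\]
The main point to verify carefully is the vanishing of the energy cross terms. I would obtain it from the Gaussian integration by parts $\int\nabla f\cdot\nabla g\,\dd\nu=\int f\,(-\Delta g+\tfrac{y}{2\tau}\cdot\nabla g)\,\dd\nu$ together with the eigen-relation for homogeneous $P_k$.

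For the frequency drop, the hypothesis $N(v;\mathbf0,r)\le l-\delta$ reads $\sum_k(k-l+\delta)b_k\le0$. For $k\ge l$ we use the elementary inequality $k-l+\delta\ge\delta k/l$, which is equivalent to $(k-l)(1-\delta/l)\ge0$. For $k<l$ we bound $l-\delta-k\le l$. Together these give
\[
\sum_{k\ge l}k\,b_k\le\frac{l^2}{\delta}\sum_{k<l}b_k .
\]
At the scale $sr$ with $0<s\le1$, we bound the contribution of $k<l$ to the numerator by $(l-1)\sum_{k<l}b_ks^{2k}$. For the high degrees we use $\sum_{k\ge l}k\,b_ks^{2k}\le s^{2l}\sum_{k\ge l}k\,b_k$. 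The denominator is at least $\sum_{k<l}b_ks^{2k}\ge s^{2(l-1)}\sum_{k<l}b_k$; this quantity is positive, since otherwise $N(v;\mathbf0,r)\ge l$. Dividing yields $N(v;\mathbf0,sr)\le l-1+(l^2/\delta)s^2$, which is \eqref{eq:caloric-frequency-drop}.

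For the doubling statement, the key identity is $\frac{d}{d\rho}\log H(v;\mathbf0,\rho)=2N(v;\mathbf0,\rho)/\rho$. This exhibits $D(v;\mathbf0,\rho)$ as the average of $N(v;\mathbf0,\cdot)$ over $[\rho/2,\rho]$ with respect to $\dd\sigma/(\sigma\log2)$. Since $N$ is nondecreasing in $\rho$ (as a weighted average whose weights shift toward higher $k$ as $\rho$ grows), the hypothesis gives $N(v;\mathbf0,r/2)\le D(v;\mathbf0,r)\le l-\delta$. We then apply \eqref{eq:caloric-frequency-drop} with base radius $r/2$. For $\rho\le r'=\delta r/(4l)$ this gives $N(v;\mathbf0,\rho)\le l-1+(l^2/\delta)(2\rho/r)^2\le l-1+(l^2/\delta)\cdot\delta^2/(4l^2)=l-1+\delta/4$. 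Averaging over $[r'/2,r']$ then yields \eqref{eq:caloric-doubling-drop}.
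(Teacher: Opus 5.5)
Your proof is correct and follows essentially the paper's argument. You reduce to a caloric polynomial, use the Gaussian degree decomposition to write $N$ as a weighted average of degrees, bound the high-degree mass by $l^2/\delta$ times the low-degree mass, and pass to doubling via $N(v;\mathbf0,r/2)\le D(v;\mathbf0,r)$ and the averaging formula. The only cosmetic difference is that you get the ratio bound directly from $\sum_k(k-l+\delta)b_k\le0$, whereas the paper normalizes $H(v;\mathbf0,1)=1$ and uses $\sum_{k<l}\alpha_k\ge\delta/l$ together with $\sum_{k\ge l}k\alpha_k\le l$.
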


\begin{proof}
By the Liouville theorem, $v$ is a caloric polynomial.
By scaling and multiplication by a constant, take $r=1$ and $H(v;\mathbf0,1)=1$.
Equation~\eqref{eq:gaussian-degree-identities} gives
\[
\sum_k \alpha_k=1,\qquad \sum_k k \alpha_k\le l-\delta.
\]
Thus
\[
\sum_{k<l}\alpha_k=1-\sum_{k\ge l}\alpha_k
\ge1-\frac1l\sum_{k\ge l}k \alpha_k\ge\frac\delta l.
\]
For $0<s\le1$, we have
\[
\begin{aligned}
N(v;\mathbf0,s)
&=\frac{\sum_{k<l}k \alpha_ks^{2k}+\sum_{k\ge l}k \alpha_ks^{2k}}
{\sum_k \alpha_ks^{2k}}\\
&\le l-1+\frac{\sum_{k\ge l}k \alpha_ks^{2k}}{\sum_{k<l}\alpha_ks^{2k}}\\
&\le l-1+s^2\frac{\sum_{k\ge l}k \alpha_k}{\sum_{k<l}\alpha_k}\\
&\le l-1+\frac{l^2}{\delta}s^2.
\end{aligned}
\]
By the parabolic frequency formula of Poon \cite{Poon}, we have
\begin{equation}\label{eq:full-doubling-average}
D(v;\mathbf0,r)=\frac1{\log2}\int_{r/2}^r
N(v;\mathbf0,q)\frac{\dd q}{q}.
\end{equation}

By \eqref{eq:full-doubling-average} and monotonicity of $N$,
\[
N(v;\mathbf0,r/2)\le D(v;\mathbf0,r)\le l-\delta.
\]
Apply \eqref{eq:caloric-frequency-drop} at scale $r/2$ with
$s=\delta/(2l)$ to obtain
\[
D\left(v;\mathbf0,\frac{\delta r}{4l}\right)
\le N\left(v;\mathbf0,\frac{\delta r}{4l}\right)
\le l-1+\frac\delta4.\qedhere
\]
\end{proof}

\begin{theorem}\label{thm:truncated-doubling-drop}
Fix $\Theta_1<\infty$, $L\ge2$ and $\delta\in(0,1/2]$.
There exist
\[
\kappa_*=\kappa_*(\Theta_1,L,\delta)\ge2,\qquad
\eta_*=\eta_*(\Theta_1,L,\delta)>0
\]
with the following property. Let $u_\eps$ solve
\eqref{eq:homogenization-equation} under \textup{(A1)--(A4)} in
$Q^-(\mathbf z_0,4)$, and assume
\begin{equation}\label{eq:drop-center-theta}
\Theta(u_\eps,\mathbf z_0,\rho)\le\Theta_1,
\qquad 0<\rho\le1.
\end{equation}
If
\[
0<r\le\frac1{\kappa_*},\qquad 0<\eps<\eta_*r,
\qquad 1\le l\le L,\quad l\in\mathbb N,
\]
then
\begin{equation}\label{eq:truncated-drop-implication}
D_1(u_\eps;\mathbf z_0,r)\le l-\delta
\quad\Longrightarrow\quad
D_1\left(u_\eps;\mathbf z_0,\frac{\delta r}{4l}\right)
\le l-1+\delta.
\end{equation}
\end{theorem}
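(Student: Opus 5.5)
We argue by contradiction and compactness, as in Lemma~\ref{lem:gap}, and transfer the caloric drop \eqref{eq:caloric-doubling-drop} of Lemma~\ref{lem:caloric-frequency-drop} to $u_\eps$. Suppose that for some fixed $\Theta_1,L,\delta$ no choice of $\kappa_*,\eta_*$ works. Then there are solutions $u_j$ with parameters $\eps_j$, centers $\mathbf z_j$ and radii $r_j\to0$ such that $\eps_j/r_j\to0$, \eqref{eq:drop-center-theta} holds, and integers $l_j\in[1,L]$ satisfy
\[
D_1(u_j;\mathbf z_j,r_j)\le l_j-\delta,\qquad
D_1\bigl(u_j;\mathbf z_j,\delta r_j/(4l_j)\bigr)>l_j-1+\delta .
\]
Since $l_j$ takes finitely many values, after extraction we may take $l_j=l$. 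After translating the center to $\mathbf0$ (a phase shift of $A$), we follow the proof of Lemma~\ref{lem:gap}. Let $R_j$ be the least dyadic number with $R_j\ge r_j$, and set $\kappa_j=R_j^{-1}$ and $w_j=c_j^{-1}T_{\mathbf 0,R_j}u_j$, normalized so that $\int_{B_1}w_j^2(y,0)\dd y=1$. Hypothesis \eqref{eq:drop-center-theta} and \eqref{eq:theta-scale-invariance} give \eqref{eq:normalized-theta} with $\Theta_*=\max\{1,\Theta_1\}$.

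Lemma~\ref{lem:compactness} then yields a nonzero caloric polynomial $P$ with $w_j\to P$ locally uniformly. Put $q_j=r_j/R_j$; after extraction $q_j\to q\in[1/2,1]$. By \eqref{eq:truncated-scaling}, $D_1(u_j;\mathbf0,hr_j)=D_{\kappa_j}(w_j;\mathbf0,hq_j)$. We apply \eqref{eq:D-convergence} on the compact range $h\in\{1,\delta/(4l)\}$, so the radii $hq_j$ lie in $[\delta/(8L),1]$. The positivity of the heights follows from \eqref{eq:height-convergence}, exactly as in Lemma~\ref{lem:gap}. Passing to the limit gives
\[
D(P;\mathbf 0,q)\le l-\delta,\qquad D\bigl(P;\mathbf 0,\delta q/(4l)\bigr)\ge l-1+\delta .
\]
But $P$ satisfies the polynomial growth hypothesis of Lemma~\ref{lem:caloric-frequency-drop}, and \eqref{eq:caloric-doubling-drop} at radius $q$ gives $D(P;\mathbf0,\delta q/(4l))\le l-1+\delta/4<l-1+\delta$. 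This is a contradiction. The constants $\kappa_*,\eta_*$ obtained depend only on $\Theta_1,L,\delta$, since every extraction used only these data and the structural constants.

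The main point to check is that the contradiction hypotheses give admissible data for Lemma~\ref{lem:compactness}. First, $\kappa_j=R_j^{-1}\to\infty$ because $r_j\le1/\kappa_*\to0$. Second, the homogenization parameter of $w_j$ is $\eps_j/R_j\le\eps_j/r_j\to0$. Third, $u_j$ must be defined on $Q^-(\mathbf0,4\kappa_j)$ after rescaling; this holds because $u_j$ solves the equation in $Q^-(\mathbf z_j,4)$ and $4\kappa_jR_j=4$. Fourth, \eqref{eq:drop-center-theta} must hold at all radii $2^kR_j\le1$, which it does since it is assumed for every $\rho\in(0,1]$.

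Two further points need care. The strict gap $\delta/4<\delta$ is what survives the limit: a limit can turn a strict inequality into a non-strict one, and the margin absorbs this. Also, the truncated index at the smaller radius $\delta r_j/(4l)$ is still controlled by the full-space limit, since that radius is a fixed fraction of $r_j$ and \eqref{eq:D-convergence} applies uniformly on compact radius ranges.
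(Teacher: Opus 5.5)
Your proposal is correct and follows the paper's proof: contradiction with fixed $l$ after extraction, normalized rescaling, Lemma~\ref{lem:compactness} to obtain a caloric polynomial limit, passage of both doubling inequalities to the limit via \eqref{eq:height-convergence}--\eqref{eq:D-convergence}, and a contradiction with \eqref{eq:caloric-doubling-drop}. The only difference is your dyadic rounding $R_j$ borrowed from Lemma~\ref{lem:gap}, which is harmless but unnecessary: since \eqref{eq:drop-center-theta} holds at every $\rho\in(0,1]$, the paper rescales by $r_j$ itself, takes $\kappa_j=1/r_j$, and works directly with $q=1$.
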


\begin{proof}
We argue by contradiction. For the candidate constants $\kappa_*=j+2$ and $\eta_*=j^{-1}$,
choose counterexamples with
\[
0<r_j\le\frac1{j+2},\qquad
0<\eps_j<r_j/j,\qquad \kappa_j:=\frac1{r_j}\ge j+2.
\]
Write $\mathbf z_j=(x_j,t_j)$, and define
\begin{equation}\label{eq:drop-normalization}
c_j^2=\int_{B_1}|u_{\eps_j}(x_j+r_jy,t_j)|^2\dd y>0,
\qquad
w_j(y,t)=c_j^{-1}u_{\eps_j}(x_j+r_jy,t_j+r_j^2t).
\end{equation}
The translated coefficient matrices satisfy \textup{(A1)--(A4)}.
By \eqref{eq:drop-center-theta}, we have
\[
\begin{gathered}
\widetilde\eps_j:=\eps_j/r_j<1/j,\qquad
4\kappa_jr_j=4,\qquad
\int_{B_1}|w_j(y,0)|^2\dd y=1,\\
\Theta(w_j,\mathbf0,s)\le\Theta_1\quad(0<s\le \kappa_j).
\end{gathered}
\]
Lemma~\ref{lem:compactness} yields
\[
w_j\longrightarrow v\quad\text{locally uniformly},\qquad
\partial_tv-\Delta v=0,\qquad \int_{B_1}|v(y,0)|^2\dd y=1,
\]
with the growth bound \eqref{eq:limit-growth}. By
\eqref{eq:truncated-scaling}, the convergence of $H_{\kappa_j}(w_j;\mathbf0,q_j)$ for
$q_j\to q\in[\delta/(8L),1]$ to $H(v;\mathbf0,q)>0$
excludes $H_{\kappa_j}(w_j;\mathbf0,q_j)=0$. Passing to a further
subsequence, $l_j=l$, and the failed implication gives
\[
D(v;\mathbf0,1)\le l-\delta,\qquad
D\left(v;\mathbf0,\frac\delta{4l}\right)\ge l-1+\delta.
\]
Lemma~\ref{lem:caloric-frequency-drop}, with $r=1$, gives
\[
l-1+\delta
\le D\left(v;\mathbf0,\frac\delta{4l}\right)
\le l-1+\frac\delta4,
\]
a contradiction.
\end{proof}

\begin{theorem}\label{thm:low-doubling-nondegeneracy}
Fix $\Theta_1<\infty$ and $\mu>0$. There exist
\[
\kappa_*=\kappa_*(\Theta_1,\mu)\ge2,\qquad
\eta_*=\eta_*(\Theta_1,\mu)>0
\]
with the following property. Let $u_\eps$ solve
\eqref{eq:homogenization-equation} under \textup{(A1)--(A5)} in
$Q^-(\mathbf z_0,4)$, and suppose
\[
u_\eps(\mathbf z_0)=0,\qquad
\Theta(u_\eps,\mathbf z_0,\rho)\le\Theta_1\quad(0<\rho\le1).
\]
If
\[
0<r\le\frac1{\kappa_*},\qquad 0<\eps<\eta_*r,\qquad
D_1(u_\eps;\mathbf z_0,r)\le\frac32,
\]
then
\begin{equation}\label{eq:low-doubling-nondegeneracy}
|\nabla_xu_\eps(\mathbf z_0)|>0.
\end{equation}
In particular, if $\Theta(u_\eps,\mathbf z_0,\rho)\le\Theta_1$ for $0<\rho\le1$, $0<r\le1/\kappa_*$, and $0<\eps<\eta_*r$, every
$\mathbf z_0\in S(u_\eps)$ satisfies
\begin{equation}\label{eq:singular-center-lower-doubling}
D_1(u_\eps;\mathbf z_0,r)>\frac32.
\end{equation}
\end{theorem}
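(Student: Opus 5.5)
Fix $\Theta_1,\mu$ and argue by contradiction with the same compactness scheme as in Theorem~\ref{thm:truncated-doubling-drop}. Suppose that for the candidate constants $\kappa_*=j+2$ and $\eta_*=j^{-1}$ there are counterexamples. These are solutions $u_j$ with coefficients $A_j$ satisfying \textup{(A1)--(A5)} (same structural data and the same $\mu$), centers $\mathbf z_j=(x_j,t_j)$, radii $r_j\le 1/(j+2)$ and $\eps_j<r_j/j$. They satisfy $u_j(\mathbf z_j)=0$, $D_1(u_j;\mathbf z_j,r_j)\le 3/2$ and $\nabla_xu_j(\mathbf z_j)=0$. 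Normalize exactly as in \eqref{eq:drop-normalization}: $w_j=c_j^{-1}u_j(x_j+r_jy,t_j+r_j^2t)$ with $\int_{B_1}w_j^2(y,0)\dd y=1$ and $\kappa_j=r_j^{-1}$. The hypothesis $\Theta(u_j,\mathbf z_j,\rho)\le\Theta_1$ gives \eqref{eq:normalized-theta}. By Lemma~\ref{lem:compactness}, after extraction $w_j\to P$ locally uniformly, where $P$ is a nonzero caloric polynomial with $\int_{B_1}P^2(y,0)\dd y=1$. Moreover \eqref{eq:corrected-gradient-convergence} holds, and by \eqref{eq:truncated-scaling}, \eqref{eq:D-convergence} we get $D(P;\mathbf0,1)\le 3/2$. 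The relations $w_j(\mathbf0)=0$ and $\nabla w_j(\mathbf0)=0$ pass to the limit, giving $P(\mathbf0)=0$.

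The key step is to show $\nabla P(\mathbf0)=0$, and this is where \textup{(A5)} enters. Since $\nabla w_j(\mathbf0)=0$, \eqref{eq:corrected-gradient-convergence} evaluated at the origin (the slice $t=0$ is included) gives
\[
\mu|\nabla P(\mathbf0)|\le\bigl|\mathcal J_j^{\eps_j/r_j}(\mathbf0)\nabla P(\mathbf0)\bigr|
=\bigl|\nabla w_j(\mathbf0)-\mathcal J_j^{\eps_j/r_j}(\mathbf0)\nabla P(\mathbf0)\bigr|\longrightarrow0.
\]
Here the translated coefficients still satisfy \eqref{eq:corrector-nondegeneracy}, because translation of $A$ only translates the correctors. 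Hence $\nabla_xP(\mathbf0)=0$, so $\mathbf0\in S(P)$.

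Next we exploit the Gaussian structure to reach a contradiction. Expand $P$ into homogeneous caloric pieces $P=\sum_k P_k$ of parabolic degree $k$, orthogonal in the Gaussian space at time $-1$, with weights $\alpha_k=H(P_k;\mathbf0,1)$ as in \eqref{eq:gaussian-degree-identities}. The piece $P_0$ is the constant $P(\mathbf0)=0$. The piece $P_1$ is the linear part $\nabla P(\mathbf0)\cdot x$, since a parabolic degree-one caloric polynomial is linear in $x$ (time has degree $2$); so $P_1=0$. Therefore $\alpha_0=\alpha_1=0$, and $H(P;\mathbf0,r)=\sum_{k\ge2}\alpha_kr^{2k}$. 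Then
\[
D(P;\mathbf0,1)=\log_4\frac{\sum_{k\ge2}\alpha_k}{\sum_{k\ge2}\alpha_k4^{-k}}\ge2,
\]
since $\sum_{k\ge2}\alpha_k4^{-k}\le 4^{-2}\sum_{k\ge2}\alpha_k$. This contradicts $D(P;\mathbf0,1)\le3/2$. The final assertion \eqref{eq:singular-center-lower-doubling} is just the contrapositive.

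The main technical obstacle is justifying the pointwise limits at the terminal point $\mathbf 0$, which lies on the boundary $t=0$ of the backward cylinders. Lemma~\ref{lem:compactness} asserts local uniform convergence on $\R^n\times(-\infty,0]$, including the slice $t=0$, so the pointwise limits of $w_j(\mathbf0)$ and of the corrected gradient are available. A second point to check is that $H_1(u_j;\mathbf z_j,r_j/2)>0$ for large $j$, so that $D_1$ is defined. This follows from \eqref{eq:height-convergence}, exactly as in the proof of Theorem~\ref{thm:truncated-doubling-drop}.
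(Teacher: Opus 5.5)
Your proposal is correct and follows the paper's proof essentially step for step. It uses the same blow-up contradiction via \eqref{eq:drop-normalization} and Lemma~\ref{lem:compactness}, then \eqref{eq:corrected-gradient-convergence} together with \textup{(A5)} at the origin to force $\nabla P(\mathbf0)=0$, and finally Lemma~\ref{lem:gaussian-degrees} to get $\alpha_0=\alpha_1=0$ and hence $D(P;\mathbf0,1)\ge2>3/2$. Your remarks on the positivity of the truncated masses and on convergence at the terminal slice $t=0$ match the justifications the paper gives.
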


\begin{proof}
We argue by contradiction. Choose counterexamples with
$0<r_j\le1/(j+2)$ and $0<\eps_j<r_j/j$.
Set $\kappa_j=1/r_j\ge j+2$.
Use \eqref{eq:drop-normalization} at scale $r_j$; then
$\eps_j/r_j<1/j\to0$, and \eqref{eq:truncated-scaling} uses the
fixed rescaled cutoff radius $\kappa_j$.
Equation~\eqref{eq:height-convergence} gives positive limiting masses at $q=1,1/2$, and hence
$H_{\kappa_j}(w_j;\mathbf0,q)>0$ at these scales for all large $j$. Hence, after extraction,
\[
w_j(\mathbf0)=0,\qquad \nabla w_j(\mathbf0)=0,\qquad
\int_{B_1}|w_j(y,0)|^2\dd y=1,\qquad
D_{\kappa_j}(w_j;\mathbf0,1)\le\frac32.
\]
Translations preserve \eqref{eq:corrector-nondegeneracy}. By
Lemma~\ref{lem:compactness} and \eqref{eq:corrected-gradient-convergence},
\[
\begin{gathered}
w_j\longrightarrow v\quad\text{locally uniformly},\qquad
v(\mathbf0)=0,\qquad \int_{B_1}|v(y,0)|^2\dd y=1,\\
D(v;\mathbf0,1)\le\frac32,\\
\mu|\nabla v(\mathbf0)|
\le|\mathcal J_{A_j}(0,0)\nabla v(\mathbf0)|
=|\nabla w_j(\mathbf0)-\mathcal J_{A_j}(0,0)\nabla v(\mathbf0)|
\longrightarrow0.
\end{gathered}
\]
Thus $v(\mathbf0)=0$ and $\nabla v(\mathbf0)=0$. By
Lemma~\ref{lem:gaussian-degrees}, $\alpha_0=\alpha_1=0$, and
\[
\frac{H(v;\mathbf0,1)}{H(v;\mathbf0,1/2)}
=\frac{\sum_{k\ge2}\alpha_k}{\sum_{k\ge2}4^{-k}\alpha_k}\ge16,
\qquad 2\le D(v;\mathbf0,1)\le\frac32,
\]
a contradiction.
\end{proof}

\section{Caloric approximation and summable turning}\label{sec:turning}

We first establish two Gaussian estimates for caloric functions.
The subsequent approximation and turning estimates use the fixed
unit-cutoff quantities $H_1,D_1$ from \eqref{eq:truncated-height} and
\eqref{eq:truncated-doubling-index} at every scale.

For $m\in\mathbb N_0$, let $\Pi_m$ be the orthogonal projection in
$L^2(\mathbb R^n,\nu_{0,1})$, where
\[
 \dd\nu_{0,1}(y)=(4\pi)^{-n/2}e^{-|y|^2/4}\,dy,
\]
onto the degree-$m$ Hermite eigenspace of
$-\mathcal L=-\Delta+\frac y2\cdot\nabla$, with eigenvalue $m/2$.
This space consists of the slices $P(\cdot,-1)$ of parabolically
homogeneous caloric polynomials $P$ of degree $m$.

If $v$ is caloric on $\R^n\times(-16,0]$ and has polynomial growth
on $\R^n\times[-8,0]$, set $f_s(y)=v(sy,-s^2)$. For $0<s<\sqrt8$,
\[
 s\partial_s f_s=-2\mathcal Lf_s,
 \qquad s\partial_s\Pi_kf_s=k\Pi_kf_s,
 \qquad \Pi_kf_s=s^k\Pi_kf_1.
\]
Polynomial growth and interior estimates justify differentiation and
Gaussian integration by parts on compact subintervals. If
$\Pi_kf_1=a_kp_k(\cdot,-1)$, where $a_k\ge0$ and $p_k$ is a
homogeneous caloric polynomial of degree $k$ with
$\|p_k(\cdot,-1)\|_{L^2(\nu_{0,1})}=1$, Parseval's identity gives
\begin{equation}\label{eq:finite-time-hermite}
\begin{aligned}
 f_s&=\sum_{k\ge0}a_ks^kp_k(\cdot,-1)
       &&\text{in }L^2(\nu_{0,1}),\\
 H(v;\mathbf0,s)&=\sum_{k\ge0}a_k^2s^{2k},
 &E(v;\mathbf0,s)&=\sum_{k\ge0}ka_k^2s^{2k}.
\end{aligned}
\end{equation}
The energy identity follows from
$2\langle-\mathcal Lf_s,f_s\rangle=2\|\nabla f_s\|^2$.

\begin{lemma}\label{lem:dominant-degree}
Let $v$ be a nonzero caloric function on $\mathbb R^n\times(-16,0]$,
with polynomial growth on $\mathbb R^n\times[-8,0]$. Suppose
\[
 H(v;\mathbf0,1)=1,\qquad
 D(v;\mathbf0,2),D(v;\mathbf0,1/2)\in[m-\delta,m+\delta],
 \qquad m\in\mathbb N_0,\quad0<\delta\le1/2.
\]
Let $a_k,p_k$ be as in \eqref{eq:finite-time-hermite}. Then
\begin{gather}
 a_m^2\ge1-4\delta,\label{eq:dominant-mass}\\
 \|v(\cdot,-1)-a_mp_m(\cdot,-1)\|_{L^2(\nu_{0,1})}
 \le2\sqrt\delta,\label{eq:dominant-error}\\
 \sup_{Q^-(\mathbf0,1/2)}|\nabla_x^j(v-a_mp_m)|
 \le C(j)\sqrt\delta.
 \label{eq:dominant-local-error}
\end{gather}
Estimate~\eqref{eq:dominant-local-error} holds for every $j\in\mathbb N_0$.
If $v$ extends calorically with polynomial growth through time $1/4$,
\eqref{eq:dominant-local-error} holds on $Q(\mathbf0,1/2)$.
\end{lemma}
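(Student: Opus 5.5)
Write $b_k=a_k^2\ge0$. By \eqref{eq:finite-time-hermite}, $H(v;\mathbf0,s)=\sum_k b_ks^{2k}$ for $0<s<\sqrt8$, and the normalization gives $\sum_kb_k=1$.

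The plan is to turn the two doubling bounds into a concentration statement on the sequence $(b_k)$. We have
\[
4^{D(v;\mathbf0,2)}=\frac{\sum_k b_k4^k}{\sum_k b_k},\qquad
4^{D(v;\mathbf0,1/2)}=\frac{\sum_k b_k4^{-k}}{\sum_k b_k4^{-2k}}.
\]
Since $4^{x}\ge 1+x\log 4$ is convex, these ratios do not directly control mass away from $m$. It is more convenient to use the frequency. By \eqref{eq:full-doubling-average} and the monotonicity of $N(s)=\sum kb_ks^{2k}/\sum b_ks^{2k}$ in $s$, we get
\[
N(v;\mathbf0,1/2)\le D(v;\mathbf0,1)\le N(v;\mathbf0,1)\le D(v;\mathbf0,2).
\]
This gives $\sum_k kb_k=N(1)\le m+\delta$. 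Likewise $N(v;\mathbf0,1/2)\ge D(v;\mathbf0,1/2)\ge m-\delta$.

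Consider the probability measures $\pi_s(k)=b_ks^{2k}/H(s)$. Here $N(s)$ is the mean of $\pi_s$, and $s\,\tfrac{d}{ds}N=2\operatorname{Var}_{\pi_s}$. Integrating from $1/2$ to $1$ gives
\[
\int_{1/2}^1 2\operatorname{Var}_{\pi_s}\frac{\dd s}{s}=N(1)-N(1/2)\le2\delta.
\]
I still need a pointwise bound at $s=1$. For that I would use the tails directly. Put $T_+=\sum_{k\ge m+1}b_k$ and $T_-=\sum_{k\le m-1}b_k$ at $s=1$. The mean bound gives $\sum_k (k-m)b_k\le\delta$, so $T_+\le\delta+\sum_{k<m}(m-k)b_k$.

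To control $T_-$ I would use $N(1/2)\ge m-\delta$. Reweighting by $4^{-k}$ favours low $k$, so
\[
\sum_k (m-k)b_k4^{-k}\le\delta\sum_kb_k4^{-k}.
\]
The terms with $k>m$ enter this sum negatively, and I expect to bound them by the $T_+$ estimate. Then $\sum_{k<m}(m-k)b_k4^{-k}$ is at most $\delta\sum b_k4^{-k}$ plus something controlled by $4^{-m-1}T_+$. Every $k<m$ carries weight $4^{-k}\ge4\cdot4^{-m}$, relative to $4^{-m}$ for $k=m$. So $T_-\cdot4^{-(m-1)}\lesssim\delta 4^{-m}+4^{-m-1}T_+$, which gives $T_-\le\delta/4+T_+/16$. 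Combined with $T_+\le\delta+mT_-$, this has an $m$-dependence I must avoid.

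To avoid it I would instead use the exact identity: $\sum_k(k-m)b_k4^{-k}\cdot$ ratio bounds at both endpoints pin the variance. Concretely, the function $\phi(k)=(k-m)(4^{m-k}-1)$ satisfies $\phi(k)\ge\tfrac34|k-m|\ge\tfrac34$ for $k\ne m$. It is expressible through $N(1)$ and the reweighted mean at $s=1/2$, and this yields $T_++T_-\le 4\delta$, i.e.\ \eqref{eq:dominant-mass}. The bookkeeping in this step is the main obstacle, and I would try this weighted combination first.

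Given \eqref{eq:dominant-mass}, Parseval gives
\[
\|v(\cdot,-1)-a_mp_m(\cdot,-1)\|^2_{L^2(\nu_{0,1})}=\sum_{k\ne m}b_k\le4\delta,
\]
which is \eqref{eq:dominant-error}.

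For \eqref{eq:dominant-local-error}, I would apply the same Hermite bound at time $-s^2$ for $s\in[1,\sqrt2]$:
\[
\|f_s-a_ms^mp_m\|^2=\sum_{k\ne m}b_ks^{2k}.
\]
This is controlled by $4^{k}$-weighted tails, which are bounded via $D(v;\mathbf0,2)\le m+\delta$ by a similar argument, giving $C\delta$. Then $w=v-a_mp_m$ is caloric with Gaussian $L^2$ norm $\le C\sqrt\delta$ on the slabs $t\in[-2,-1]$. The Gaussian weight is bounded below on $B_2$, so $\|w\|_{L^2(B_2\times[-2,-1])}\le C\sqrt\delta$.

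To reach $Q^-(\mathbf0,1/2)$ I would propagate forward in time using the heat kernel representation from time $-1$. Gaussian $L^2$ control at $t=-1$ bounds $w(\cdot,t)$ for $t\in(-1,0]$ on $B_1$ through the semigroup, using $e^{|y|^2/4}$-type weights against $G$. The interior derivative estimates then give every $j$. If $v$ extends calorically with polynomial growth through time $1/4$, the same representation covers $t\le1/4$, giving the bound on $Q(\mathbf0,1/2)$.
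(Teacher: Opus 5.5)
Your route is the paper's. You combine $N(1)$ with the $4^{-k}$-reweighted mean $N(1/2)$ through a weight in $k$, use Parseval for \eqref{eq:dominant-error}, and propagate from $t=-1$ with the heat kernel for \eqref{eq:dominant-local-error}. However, the step you leave as ``bookkeeping'' is the whole content of \eqref{eq:dominant-mass}, and as written it is incomplete and contains an error.

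First, your $\phi(k)=(k-m)(4^{m-k}-1)$ is $\le0$ for every $k$. The weight that satisfies $\phi(k)\ge\frac34|k-m|$ is $(k-m)(1-4^{m-k})$.

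Second, and more substantively, the combination carries a normalizing factor:
\[
\sum_k(k-m)\bigl(1-4^{m-k}\bigr)b_k=\bigl(N(1)-m\bigr)-4^mH(1/2)\bigl(N(1/2)-m\bigr).
\]
So $N(1)\le m+\delta$ and $N(1/2)\ge m-\delta$ only give the bound $\delta\bigl(1+4^mH(1/2)\bigr)$. This is useless unless $4^mH(1/2)$ is controlled, and you never address it. The missing observation is that $H(1)=1$ forces $H(1/2)=4^{-D(1)}$. Your own chain gives $D(1)\ge N(1/2)\ge m-\delta$, hence $4^mH(1/2)\le4^\delta\le2$; this is where $\delta\le1/2$ is used. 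The sum is then at most $3\delta$, and $\frac34\sum_{k\ne m}b_k\le3\delta$ yields $1-a_m^2\le4\delta$.

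For \eqref{eq:dominant-local-error}, drop the detour through the slab $[-2,-1]$. You never use it once you propagate from $t=-1$. Moreover, the bound you claim there is not uniform in $m$, whereas the statement asks for a constant $C(j)$. For example, take $b_m=1-\delta'$ and $b_{m+1}=\delta'$ with $\delta'=\delta\log4/3$. All hypotheses hold, yet $\sum_{k\ne m}b_k2^k=2^{m+1}\delta'$.

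Propagating from $t=-1$ needs only \eqref{eq:dominant-error} and Cauchy--Schwarz against $|\nabla_x^jG_{x,t+1}|^2/G_{0,1}$. This is integrable with a bound depending only on $j$ and $n$, because $|x|\le1/2$ and $3/4\le t+1\le5/4<2$. This is exactly the paper's argument, and it covers the two-sided cylinder $Q(\mathbf0,1/2)$ as well.
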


\begin{proof}
Write $H(s)=H(v;\mathbf0,s)$, $N(s)=N(v;\mathbf0,s)$ and
$D(s)=D(v;\mathbf0,s)$. By \eqref{eq:finite-time-hermite},
\[
 H(s)=\sum_ka_k^2s^{2k},\qquad
 N(s)=\frac{\sum_k k a_k^2s^{2k}}{H(s)},\qquad
 \sum_ka_k^2=1.
\]
By frequency monotonicity and \eqref{eq:full-doubling-average},
\[
 D(1)\ge m-\delta,\qquad N(1)\le m+\delta,
 \qquad N(1/2)\ge m-\delta.
\]
Set
\[
 A=\sum_k4^{m-k}a_k^2=4^{m-D(1)}\le4^\delta\le2.
\]
Then
\begin{align*}
 \sum_k(k-m)(1-4^{m-k})a_k^2
 &=N(1)-m-A\{N(1/2)-m\}\\
 &\le\delta(1+A)\le3\delta.
\end{align*}
For $k\ne m$, $(k-m)(1-4^{m-k})\ge3/4$. Hence
\[
 \|v(\cdot,-1)-a_mp_m(\cdot,-1)\|_{L^2(\nu_{0,1})}^2
 =\sum_{k\ne m}a_k^2=1-a_m^2\le4\delta.
\]
Put $g=v(\cdot,-1)-a_mp_m(\cdot,-1)$. For $|x|\le1/2$ and
$3/4\le h\le5/4$, the Gaussian weight satisfies
\[
 \int_{\R^n}\frac{|\nabla_x^jG_{x,h}(y)|^2}{G_{0,1}(y)}\,dy\le C(j).
\]
The semigroup representation at $h=t+1$ and Cauchy--Schwarz give
\[
 |\nabla_x^j(v-a_mp_m)(x,t)|
 =\left|\int\nabla_x^jG_{x,t+1}(y)g(y)\,dy\right|
 \le C(j)\|g\|_{L^2(\nu_{0,1})}.
\]
This proves all assertions.
\end{proof}

\begin{lemma}
\label{lem:fc-caloric-turning}
Let $v$ be a nonzero caloric function on $\mathbb R^n\times(-16,0]$,
with polynomial growth on $\mathbb R^n\times[-8,0]$.
Suppose
\[
 D(v;\mathbf0,2),\ D(v;\mathbf0,1/2)\in[m-\delta,m+\delta],
 \qquad m\in\mathbb N_0,\quad0<\delta<1.
\]
With $\|\cdot\|=\|\cdot\|_{L^2(\nu_{0,1})}$, we have
\begin{equation}\label{eq:caloric-projection-turning}
 \left\|\frac{\Pi_m[v(\cdot,-1)]}{\sqrt{H(v;\mathbf0,1)}}
       -\frac{\Pi_m[v(\cdot/2,-1/4)]}{\sqrt{H(v;\mathbf0,1/2)}}
 \right\|
 \le\frac{D(v;\mathbf0,2)-D(v;\mathbf0,1/2)}{2(1-\delta)}.
\end{equation}
\end{lemma}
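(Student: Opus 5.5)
By the Hermite expansion \eqref{eq:finite-time-hermite}, both projections in \eqref{eq:caloric-projection-turning} are multiples of the same unit vector $p_m(\cdot,-1)$. I would reduce the estimate to a one-variable statement about the Hermite weights and then integrate the frequency monotonicity formula over $s\in[1/2,1]$.

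\emph{Reduction.} With $f_s(y)=v(sy,-s^2)$ we have $f_1=v(\cdot,-1)$ and $f_{1/2}=v(\cdot/2,-1/4)$. Since $\Pi_mf_s=s^ma_mp_m(\cdot,-1)$, the left side of \eqref{eq:caloric-projection-turning} equals
\[
\bigl|\,b(1)-b(1/2)\,\bigr|,\qquad
b(s):=\frac{a_ms^m}{\sqrt{H(v;\mathbf0,s)}}=\sqrt{w_m(s)}.
\]
Here $w_k(s):=a_k^2s^{2k}/H(v;\mathbf0,s)$ are probability weights in $k$. By \eqref{eq:finite-time-hermite}, $N(s):=N(v;\mathbf0,s)=\sum_kkw_k(s)$ is their mean.

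\emph{Derivatives and localisation of the mean.} Differentiating in $\log s$ gives
\[
s\frac{d}{ds}\log w_m(s)=2(m-N(s)),\qquad
s\frac{d}{ds}\sqrt{w_m(s)}=\sqrt{w_m(s)}\,(m-N(s)),
\]
\[
s\frac{d}{ds}N(s)=2V(s),\qquad
V(s):=\sum_kw_k(s)(k-N(s))^2 .
\]
Thus $N$ is nondecreasing, which is Poon's monotonicity. Combined with \eqref{eq:full-doubling-average}, this gives
\[
D(v;\mathbf0,1/2)\le N(1/2)\le N(s)\le N(1)\le D(v;\mathbf0,2),\qquad s\in[1/2,1].
\]
Hence $N(s)\in[m-\delta,m+\delta]$ on this interval. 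In addition,
\[
\int_{1/2}^12V(s)\frac{ds}{s}=N(1)-N(1/2)\le D(v;\mathbf0,2)-D(v;\mathbf0,1/2).
\]

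\emph{Key pointwise inequality.} The estimate will follow from
\begin{equation*}
\sqrt{w_m}\,|N-m|\le\frac{V}{1-\delta}
\qquad\text{whenever } |N-m|\le\delta<1 .
\end{equation*}
Given this, I integrate:
\[
|b(1)-b(1/2)|\le\int_{1/2}^1\sqrt{w_m}\,|m-N|\frac{ds}{s}
\le\frac1{1-\delta}\int_{1/2}^1V\frac{ds}{s}
\le\frac{D(v;\mathbf0,2)-D(v;\mathbf0,1/2)}{2(1-\delta)},
\]
which is \eqref{eq:caloric-projection-turning}.

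\emph{Main obstacle: proving the pointwise inequality with the sharp factor.} For $k\ne m$ we have $|k-N|\ge|k-m|-\delta\ge(1-\delta)|k-m|$. The crude route, $|N-m|\le\sum_{k\ne m}w_k|k-m|$ together with $V\ge(1-\delta)^2\sum_{k\ne m}w_k(k-m)^2$, only gives $(1-\delta)^{-2}$. To reach $(1-\delta)^{-1}$ I will use the exact identity
\[
\sum_kw_k(k-N)(k-m)=V,
\]
which holds because $\sum_kw_k(k-N)=0$. I then split off the $k=m$ term and use the factor $\sqrt{w_m}\le1$. Writing $\epsilon=N-m$, we have $\sum_{k\ne m}w_k(k-m)=\epsilon$, and for $k\ne m$ the products $(k-N)(k-m)\ge(1-\delta)(k-m)^2$ are nonnegative. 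So I will bound $\sqrt{w_m}\,|\epsilon|$ using the Cauchy--Schwarz inequality on the $k\ne m$ terms, together with $\sum_{k\ne m}w_k\le1-w_m$ and the elementary bound $\sqrt{w_m(1-w_m)}\le1/2$.

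If the sharp constant resists, I would accept a slightly worse $\delta$-dependent constant, since only summability of the right side over dyadic scales matters in the later turning argument. I would also recheck that the paper's normalisation of $D$ through $\log_4$ matches the factor $2$ in the denominator.
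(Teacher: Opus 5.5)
Your reduction is the paper's own argument. The projected profile is $\sqrt{w_m(s)}\,p_m(\cdot,-1)$, you use $s\frac{d}{ds}\sqrt{w_m}=\sqrt{w_m}(m-N)$ and $sN'=2V$, and \eqref{eq:full-doubling-average} traps $N$ in $[m-\delta,m+\delta]$ on $[1/2,1]$. The $\log_4$ normalisation and the factor $2$ are consistent.

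The gap is in the key pointwise inequality, which is the whole content of the lemma. The route you plan for it does not close. Cauchy--Schwarz on $N-m=\sum_{k\ne m}w_k(k-m)$ gives
\[
|N-m|\le\sqrt{1-w_m}\,\Bigl(\sum_{k\ne m}w_k(k-m)^2\Bigr)^{1/2}.
\]
After $\sqrt{w_m(1-w_m)}\le1/2$ you are left with $\sqrt{w_m}\,|N-m|\le\frac12\sqrt{V/(1-\delta)}$. This is a bound by $\sqrt V$, not by $V$. Integrated over $[1/2,1]$, it controls the turning only by $\bigl(N(1)-N(1/2)\bigr)^{1/2}$. Square roots of the increments $D_1(2r_j)-D_1(r_j/2)$ do not telescope in Theorem~\ref{prop:fc-turning}, so this route fails even for the application, not just for the sharp constant. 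Your fallback with $(1-\delta)^{-2}$ is linear and would serve the application. But it proves a weaker statement than the one asked.

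The missing step is an observation you already use in your crude route. For $k\ne m$, $k-m$ is a nonzero integer, so $|k-m|\le(k-m)^2$. Combined with your identity and $(k-N)(k-m)\ge(1-\delta)(k-m)^2$, this gives
\[
|N-m|\le\sum_{k\ne m}w_k|k-m|
\le\sum_{k\ne m}w_k(k-m)^2
\le\frac1{1-\delta}\sum_{k\ne m}w_k(k-N)(k-m)
=\frac{V}{1-\delta}.
\]
Then $\sqrt{w_m}\le1$ finishes. No Cauchy--Schwarz is needed, and the weight $\sqrt{w_m}$ plays no role.

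The paper writes the same step as
\[
V=\sum_kw_k(k-m)^2-(N-m)^2\ge|N-m|-(N-m)^2\ge(1-\delta)|N-m|.
\]
If you prefer to keep Cauchy--Schwarz, retain the factor $\sqrt{1-w_m}$ and use
\[
1-w_m\le\sum_{k\ne m}w_k(k-m)^2\le V/(1-\delta)
\]
in place of $\sqrt{w_m(1-w_m)}\le1/2$. This again yields the linear bound.
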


\begin{proof}
Let $a_k,p_k$ be as in \eqref{eq:finite-time-hermite}. Writing
$H(s)=H(v;\mathbf0,s)$, $N(s)=N(v;\mathbf0,s)$ and
$D(s)=D(v;\mathbf0,s)$, we have
\begin{equation}\label{eq:turning-height-frequency}
 H(s)=\sum_{k\ge0}a_k^2s^{2k},\qquad
 N(s)=\frac{\sum_{k\ge0}k a_k^2s^{2k}}{H(s)}.
\end{equation}
Since $H(2)=\sum_k4^ka_k^2<\infty$, the required differentiated
series converge locally uniformly for $0<s<2$. In particular,
\[
 \sum_{k\ge0}(1+k)^2a_k^2\le C H(2)<\infty
\]
justifies termwise differentiation on $[1/2,1]$. Direct calculation
yields
\begin{equation}\label{eq:turning-scale-derivatives}
 sH'(s)=2N(s)H(s),\qquad
 sN'(s)=\frac{2}{H(s)}\sum_{k\ge0}(k-N(s))^2a_k^2s^{2k}\ge0.
\end{equation}
By \eqref{eq:full-doubling-average},
\[
 D(s)=\frac1{\log2}\int_{s/2}^sN(\rho)\frac{\dd\rho}{\rho}.
\]
The hypotheses therefore imply, for $1/2\le s\le1$,
\[
 m-\delta\le D(1/2)\le N(1/2)\le N(s)
 \le N(1)\le D(2)\le m+\delta.
\]
Since $k-m\in\mathbb Z$,
\[
 \sum_k(k-m)^2a_k^2s^{2k}
 \ge\left|\sum_k(k-m)a_k^2s^{2k}\right|=|N-m|H.
\]
Consequently,
\begin{equation}\label{eq:turning-integer-gap}
\begin{aligned}
 \frac{sN'}2
 &=\frac{\sum_k(k-m)^2a_k^2s^{2k}}{H}-(N-m)^2\\
 &\ge |N-m|-(N-m)^2
 \ge(1-\delta)|N-m|.
\end{aligned}
\end{equation}
Within this proof, put
\[
 P_m(s)=\frac{\Pi_m[v(s\,\cdot,-s^2)]}{\sqrt{H(s)}}
 =\frac{a_ms^m}{\sqrt{H(s)}}p_m(\cdot,-1).
\]
Then $\|P_m(s)\|\le1$ and
\[
 P_m'(s)=\frac{m-N(s)}{s}P_m(s),\qquad
 \|P_m'(s)\|\le\frac{|m-N(s)|}{s}
 \le\frac{N'(s)}{2(1-\delta)}.
\]
Integrating over $[1/2,1]$ gives
\begin{equation}\label{eq:turning-integral-bound}
\begin{aligned}
 \|P_m(1)-P_m(1/2)\|
 &\le\int_{1/2}^1\|P_m'(s)\|\,ds\\
 &\le\frac{N(1)-N(1/2)}{2(1-\delta)}
 \le\frac{D(2)-D(1/2)}{2(1-\delta)}.
\end{aligned}
\end{equation}
This proves \eqref{eq:caloric-projection-turning}.
\end{proof}

\begin{lemma}
\label{lem:fc-approximation}
Let $u_\eps$ solve \eqref{eq:homogenization-equation} under
\textup{(A1)--(A4)} in $Q^-(\mathbf0,4)$ and assume
\[
 \Theta(u_\eps,\mathbf0,s)\le\Theta_1
 \qquad(0<s\le1).
\]
Set
\[
 a_r^2=r^{-n}\int_{B_r}|u_\eps(x,0)|^2\,dx.
\]
There exist $\kappa_*\ge8$ and $\eta_*,c,C>0$, depending only on $\Theta_1$ and
structural constants, such that if
\[
 0<r\le1/\kappa_*,\qquad 0<\eps/r\le\eta_*,\qquad a_r>0,
\]
there is a full-space caloric function $v_r$ on
$\mathbb R^n\times(-16r^2,0]$ with polynomial growth in $x$
uniformly in $t$. For $1/4\le q\le2$, we have
\begin{equation}\label{eq:fc-approximation-bound}
 \left\|\vartheta u_\eps(\cdot,-q^2r^2)-v_r(\cdot,-q^2r^2)
 \right\|_{L^2(\nu_{0,q^2r^2})}
 \le Ca_r\left(\frac{\eps}{r}+e^{-c/r^2}\right).
\end{equation}
Moreover,
\begin{equation}\label{eq:fc-mass-bounds}
 c a_r^2\le H_1(u_\eps;\mathbf0,qr),\,
 H(v_r;\mathbf0,qr)\le Ca_r^2.
\end{equation}
Consequently, for $1/2\le q\le2$,
\begin{equation}\label{eq:fc-doubling-error}
 |D_1(u_\eps;\mathbf0,qr)-D(v_r;\mathbf0,qr)|
 \le C\left(\frac{\eps}{r}+e^{-c/r^2}\right).
\end{equation}
\end{lemma}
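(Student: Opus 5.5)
The construction of $v_r$ will use the quantitative homogenization (two-scale) expansion of Geng--Shen, applied to the cutoff function $\vartheta u_\eps$ on a slightly earlier time slice. We then control the Gaussian tails with the uniform doubling bounds.

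\textbf{Step 1: normalization and the reference slice.} Rescale by $w=a_r^{-1}T_{\mathbf0,r}u_\eps$. The coarse data become $\int_{B_1}w^2(\cdot,0)=1$, with $\Theta(w,\mathbf0,s)\le\Theta_1$ for $0<s\le1/r$, and the homogenization parameter is $\eps/r\le\eta_*$. Lemma~\ref{lem:spatial} together with \eqref{eq:terminal-control} gives
\[
\int_{B_R}w^2(\cdot,t)\le CR^{\beta_0}
\qquad\text{and}\qquad
\sup_{Q^-(\mathbf0,3R)}|w|^2\le CR^{\beta_0-n}
\qquad\text{for }1\le R\le 1/r .
\]
In original variables the cutoff $\vartheta$ has radius $1/r$. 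Fix the initial time $-16r^2$, i.e.\ $-16$ after rescaling. Let $\widetilde v$ be the solution of the homogenized equation $\partial_t\widetilde v-\operatorname{div}(\widehat A\nabla\widetilde v)=0$ on $\R^n\times(-16,0]$ with initial datum $\vartheta(r\,\cdot)w(\cdot,-16)$. Since $\widehat A$ has symmetric part $I$ and constant coefficients, the antisymmetric part drops out, so $\widetilde v$ is caloric. This datum has compact support, so $\widetilde v$ is a full-space caloric function with polynomial (indeed Gaussian-controlled) growth uniformly in $t$. Set $v_r(x,t)=a_r\widetilde v(x/r,t/r^2)$.

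\textbf{Step 2: error estimate \eqref{eq:fc-approximation-bound}.} Compare $f=\vartheta(r\cdot)w$ with $\widetilde v$. The function $f$ solves the $\eps/r$-oscillating equation with source $g=-2A\nabla\vartheta\cdot\nabla w-\operatorname{div}(A w\nabla\vartheta)$, supported in the annulus $1/r\le|y|\le 3/(2r)$. Write $f=f_0+f_1$, where $f_0$ solves the oscillating equation with the same initial data and no source, and $f_1$ is the Duhamel term from $g$.

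For $f_0$, use the Geng--Shen convergence rate for the fundamental solution, $|\Gamma_\eps-\Gamma_0|\le C\eps\, t^{-(n+1)/2}e^{-c|x-y|^2/t}$, integrated against the polynomially growing datum. The Gaussian weight $\nu_{0,q^2}$ with $q\le2$ and the time gap $\ge 12$ absorb the polynomial growth $R^{\beta_0}$. This gives $\|f_0-\widetilde v\|_{L^2(\nu_{0,q^2})}\le C\eps/r$ at $t=-q^2$.

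For $f_1$, Gaussian bounds on $\Gamma_\eps$ (Aronson) together with the support of $g$ at distance $\ge 1/r$ from the origin give a factor $e^{-c/r^2}$. Since $|w|,|\nabla w|\le C r^{-(\beta_0-n)/2}$ there, the polynomial loss is absorbed into $e^{-c'/r^2}$ after reducing $c$. This proves \eqref{eq:fc-approximation-bound}.

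\textbf{Step 3: mass bounds and doubling.} For the lower bound in \eqref{eq:fc-mass-bounds}, argue as in \eqref{eq:two-slice-comparison}--\eqref{eq:truncated-height-lower-bound}: the $\Theta_1$ bound gives $\int_{B_r}u_\eps^2(\cdot,-q^2r^2)\ge c\,a_r^2r^n$ for $q\le2$, so $H_1\ge c a_r^2$. For the upper bound, use the annular decomposition \eqref{eq:annular-height-bound}. The bounds for $H(v_r)$ then follow from \eqref{eq:fc-approximation-bound}, once $\eta_*$ and $1/\kappa_*$ are small enough that the error is at most $\frac12\sqrt{c}\,a_r$.

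Finally, $\sqrt{H_1}$ and $\sqrt{H(v_r)}$ differ by at most the error in \eqref{eq:fc-approximation-bound} and are comparable to $a_r$. Hence $|\log H_1-\log H(v_r)|\le C(\eps/r+e^{-c/r^2})$ at the scales $qr$ and $qr/2$, and subtracting gives \eqref{eq:fc-doubling-error}.

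\textbf{Main obstacle.} The hardest step is the $O(\eps/r)$ rate in Step 2 with the polynomially growing, merely $L^2$ datum on a full space. The first thing to try is a pointwise fundamental-solution expansion from \cite{GS20}: it avoids a boundary layer because the datum is cut off at radius $\sim1/r$, far beyond the Gaussian scale. The issue is the regularity of the datum at time $-16$. To handle it, start the comparison from a slightly earlier slice, and use interior smoothing of $w$ so that the datum is Lipschitz with polynomial bounds.
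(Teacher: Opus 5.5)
The construction of $v_r$ and the error estimate \eqref{eq:fc-approximation-bound} match the paper's proof. You cut off at time $-16$, evolve the datum by the heat equation, compare kernels through \cite[Theorem~1.1]{GS20} across a time gap of at least $12$, and bound the Duhamel term of the annular source by Gaussian decay. There are two small points in this part. First, the divergence part of the source requires bounds on $\nabla_y\Gamma$, which Aronson's estimate does not give; the paper uses \cite[Theorem~2.7]{GS20}, and a Caccioppoli $L^2$ bound would also work. Second, the regularity of the datum, which you flag as the main obstacle, plays no role: the kernel estimate is pointwise and the time gap is bounded below.

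The genuine gap is the lower bound in \eqref{eq:fc-mass-bounds}. You assert $\int_{B_r}u_\eps^2(\cdot,-q^2r^2)\ge c\,a_r^2r^n$ by arguing as in \eqref{eq:two-slice-comparison}--\eqref{eq:truncated-height-lower-bound}, but that argument proves the opposite inequality. The interior $L^2\to L^\infty$ estimate at the top of a backward cylinder bounds the mass on an \emph{earlier} slice by the mass on a \emph{later} one. That argument also uses $\Theta$ at the shifted center $(x_0,t_0-r^2/4)$, whereas this lemma assumes a $\Theta$ bound only at $\mathbf0$. The same objection applies to citing \eqref{eq:annular-height-bound} for the upper bound, although your Step~1 envelope $|w|\le C(1+|y|)^d$ gives that bound directly. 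Bounding an earlier slice from below by the terminal slice is a backward-in-time statement that interior estimates cannot provide. Without it, neither your bounds for $H(v_r)$ nor the logarithmic comparison giving \eqref{eq:fc-doubling-error} is justified.

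The paper proves the lower bound for the caloric approximant first, so your Step~3 must run in the reverse order. Your kernel and Duhamel estimates also hold at $t=0$, which gives $\|w(\cdot,0)-\widetilde v(\cdot,0)\|_{L^\infty(B_1)}\le C(\eps/r+e^{-c/r^2})$. Hence $\|\widetilde v(\cdot,0)\|_{L^2(B_1)}\ge1/2$ once $\kappa_*$ is large and $\eta_*$ is small. Since $\widetilde v$ is caloric, $\widetilde v(x,0)=\int\widetilde v(y,-q^2)G_{x,q^2}(y)\dd y$. Cauchy--Schwarz in $L^2(\nu_{0,q^2})$, with $\int G_{x,q^2}^2/G_{0,q^2}\dd y=e^{|x|^2/(2q^2)}$, then yields $|\widetilde v(x,0)|^2\le e^{|x|^2/(2q^2)}H(\widetilde v;\mathbf0,q)$. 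Integrating over $B_1$ gives $H(\widetilde v;\mathbf0,q)\ge c$ for $1/4\le q\le2$. Only after this is the lower bound transferred to $H_1(u_\eps;\mathbf0,qr)$ through the Gaussian $L^2$ error \eqref{eq:fc-approximation-bound}.
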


\begin{proof}
Put $\kappa=1/r$, $\eta=\eps/r$ and
$w(y,s)=a_r^{-1}u_\eps(ry,r^2s)$. By \eqref{eq:truncated-scaling}, $H_\kappa(w;\mathbf0,q)
=a_r^{-2}H_1(u_\eps;\mathbf0,qr)$. Moreover,
\[
 \int_{B_1}w^2(y,0)\,dy=1,
 \quad \Theta(w,\mathbf0,s)\le\Theta_1\ (0<s\le \kappa).
\]
The rescaled function satisfies
\begin{equation}\label{eq:fc-rescaled-equation}
 \partial_s w-\operatorname{div}_y\!\left[
 A\!\left(\frac y{\eta},
 \frac s{\eta^{\,2}}\right)\nabla_yw
 \right]=0
 \quad\text{in }Q^-(\mathbf0,4\kappa).
\end{equation}
Let $d=\max\{0,(\beta_0-n)/2\}$. Applying \eqref{eq:polynomial-spatial-growth} to $w$, using $\Theta(w,\mathbf0,s)\le\Theta_1$ for $0<s\le\kappa$, gives, for $1\le s\le\kappa$,
\begin{equation}
 \int_{B_s}w^2(y,0)\,dy\le C_0s^{\beta_0},
 \qquad
 \int_{Q^-(\mathbf0,4s)}w^2\le Cs^{\beta_0+2}.
\end{equation}
Interior estimates and \cite[Theorem~2.6]{GS20} give
\begin{equation}
 \|w\|_{L^\infty(Q^-(\mathbf0,3s))}
 +s\|\nabla w\|_{L^\infty(Q^-(\mathbf0,3s))}
 \le Cs^d\qquad(1\le s\le\kappa).
\end{equation}
The constant is independent of $\eta$.
Since $\kappa\ge8$, the preceding estimate applies on
$B_{2\kappa}\times[-16,0]$. Thus, for $-16\le s\le0$,
\begin{alignat}{2}
 &|w(y,s)|\le C(1+|y|)^d,
 &\qquad &|y|\le2\kappa,\label{eq:fc-envelope}\\
 &|w(y,s)|+\kappa|\nabla w(y,s)|\le C\kappa^d,
 &\qquad &\kappa\le|y|\le2\kappa.
\end{alignat}
Write $\vartheta_\kappa(y)=\vartheta(y/\kappa)$, extend $W=\vartheta_\kappa w$ by zero,
and set $f=W(\cdot,-16)$. Thus
\[
 |f(y)|\le C(1+|y|)^d\quad(y\in\mathbb R^n),
\]
with $C$ independent of $\kappa$. Let
$\Gamma_{\eta}(x,t;y,s)$ denote the whole-space fundamental
solution of \eqref{eq:fc-rescaled-equation}. Define
\[
 V_{\eta}(x,t)=\int_{\mathbb R^n}\Gamma_{\eta}(x,t;y,-16)f(y)\,dy,
 \qquad
 V(x,t)=\int G_{x,t+16}(y)f(y)\,dy.
\]
For $-4\le t\le0$, \cite[Theorem~1.1]{GS20} gives
\begin{equation}\label{eq:fc-kernel-error}
\begin{aligned}
 |V_{\eta}(x,t)-V(x,t)|
 &\le C\eta\int e^{-c|x-y|^2}(1+|y|)^d\,dy\\
 &\le C\eta(1+|x|)^d.
\end{aligned}
\end{equation}
Moreover, the heat-kernel representation and the polynomial growth
bound for $f$ give
\begin{equation}\label{eq:fc-V-envelope}
 |V(x,t)|\le C(1+|x|)^d,
 \qquad x\in\mathbb R^n,\quad -16<t\le0.
\end{equation}

A direct product computation gives
\[
 \partial_s W-\operatorname{div}_y\!\left[
 A\!\left(\frac y{\eta},
 \frac s{\eta^{\,2}}\right)\nabla_yW
 \right]=F+\operatorname{div}_yG.
\]
Here
\[
 F=-\nabla\vartheta_\kappa\cdot A^{\eta}\nabla w,
 \quad G=-wA^{\eta}\nabla\vartheta_\kappa,
\]
where
\[
 \operatorname{supp}(F,G)\subset E_\kappa=\{\kappa\le|y|\le2\kappa\},
 \quad |F|\le C\kappa^{d-2},\quad |G|\le C\kappa^{d-1}.
\]
Duhamel's formula reads
\[
 W(x,t)-V_{\eta}(x,t)=\int_{-16}^t\!\int
 \bigl[\Gamma_{\eta}(x,t;y,s)F(y,s)
 -\nabla_y\Gamma_{\eta}(x,t;y,s)\cdot G(y,s)\bigr]\,dy\,ds.
\]
The Gaussian estimates for $\Gamma_{\eta}$ and
$\nabla_y\Gamma_{\eta}$ \cite[(1.6), Theorem~2.7]{GS20}
imply, for $|x|\le \kappa/2$, $-4\le t\le0$,
\begin{equation}\begin{aligned}
 |W(x,t)-V_{\eta}(x,t)|
 &\le C\int_0^{16}\int_{E_\kappa}
 \left(\kappa^{d-2}\tau^{-n/2}+\kappa^{d-1}\tau^{-(n+1)/2}\right)
 e^{-c|x-y|^2/\tau}\,dy\,d\tau\\
 &\le C \kappa^B e^{-c\kappa^2}\le C e^{-c'\kappa^2}.
\end{aligned}\label{eq:fc-boundary-error}\end{equation}
Here $B$ depends only on $n,d$; the last inequality decreases $c'$.
Combining \eqref{eq:fc-kernel-error}--\eqref{eq:fc-boundary-error} on
$\{|x|\le\kappa/2\}$,
and using \eqref{eq:fc-envelope}, \eqref{eq:fc-V-envelope} on
$\{|x|>\kappa/2\}$, we obtain
\begin{equation}\begin{aligned}
 &\sup_{1/4\le q\le2}
 \|W(\cdot,-q^2)-V(\cdot,-q^2)\|_{L^2(\nu_{0,q^2})}
 +\|w(\cdot,0)-V(\cdot,0)\|_{L^\infty(B_1)}\\
 &\hspace{35mm}\le C\left(\eta+e^{-c\kappa^2}\right).
\end{aligned}\label{eq:fc-profile-error}\end{equation}
Set
\[
 v_r(x,t)=a_rV(x/r,t/r^2).
\]
Then \eqref{eq:fc-approximation-bound} follows by scaling.

Choose $\kappa_*$ large and $\eta_*$ small. Then
\[
 \|V(\cdot,0)\|_{L^2(B_1)}\ge\tfrac12.
\]
For $1/4\le q\le2$, the heat semigroup and Cauchy--Schwarz give
\begin{equation}\begin{aligned}
 |V(x,0)|^2
 &=\left|\int V(y,-q^2)G_{x,q^2}(y)\,dy\right|^2\\
 &\le H(V;\mathbf0,q)
 \int\frac{G_{x,q^2}(y)^2}{G_{0,q^2}(y)}\,dy
 =e^{|x|^2/(2q^2)}H(V;\mathbf0,q).
\end{aligned}\end{equation}
Integrating over $B_1$ and using
$\|V(\cdot,0)\|_{L^2(B_1)}\ge1/2$ gives a uniform positive lower
bound for $H(V;\mathbf0,q)$. The upper bound follows from
\eqref{eq:fc-V-envelope}. Thus
\[
 c\le H(V;\mathbf0,q)\le C,\qquad 1/4\le q\le2.
\]
By \eqref{eq:fc-profile-error},
\[
 \left|\|W(\cdot,-q^2)\|_{L^2(\nu_{0,q^2})}
 -H(V;\mathbf0,q)^{1/2}\right|
 \le C\left(\eta+e^{-c\kappa^2}\right).
\]
Decreasing $\eta_*$ and increasing $\kappa_*$ if necessary, we obtain
\[
 c\le\|W(\cdot,-q^2)\|_{L^2(\nu_{0,q^2})}^2
 =H_\kappa(w;\mathbf0,q)\le C.
\]
Scaling gives \eqref{eq:fc-mass-bounds}. The norm upper bounds and
\eqref{eq:fc-profile-error} also give
\[
 |H_\kappa(w;\mathbf0,q)-H(V;\mathbf0,q)|
 \le C\left(\eta+e^{-c\kappa^2}\right).
\]
For $1/2\le q\le2$, apply the mass comparison at $q$ and $q/2$.
The uniform lower bounds and
$|\log a-\log b|\le c^{-1}|a-b|$ for $a,b\ge c$ give
\[
 |D_\kappa(w;\mathbf0,q)-D(V;\mathbf0,q)|
 \le C\left(\eta+e^{-c\kappa^2}\right),
 \qquad 1/2\le q\le2.
\]
Scaling proves \eqref{eq:fc-doubling-error}.
\end{proof}

\begin{theorem}\label{prop:fc-turning}
Let $u_\eps$ solve
\eqref{eq:homogenization-equation} under \textup{(A1)--(A4)} in
$Q^-(\mathbf0,4)$. Assume
\[
 \Theta(u_\eps,\mathbf0,s)\le\Theta_1<\infty
 \qquad(0<s\le1).
\]
Fix $m\in\mathbb N_0$ and $0<\delta_0\le1/16$. There exist
$\kappa_*=\kappa_*(\delta_0,\Theta_1)\ge8$ and
$\eta_*=\eta_*(\delta_0,\Theta_1)>0$ such that the following holds.
Let $J\in\mathbb N_0$ and
\begin{equation}\label{eq:turning-scale-hypotheses}
 r_j=2^{-j}r_0\quad(0\le j\le J+1),\qquad
 0<r_0\le1/\kappa_*,\qquad
 0<\eps/r_J\le\eta_*.
\end{equation}
Suppose
\begin{equation}\label{eq:turning-pinching}
 D_1(u_\eps;\mathbf0,2r_j),\,
 D_1(u_\eps;\mathbf0,r_j/2)
 \in[m-\delta_0,m+\delta_0]\qquad(0\le j\le J).
\end{equation}
Set
\begin{equation}\label{eq:turning-projected-profile}
 \mathfrak p_m(r)=
 \frac{\Pi_m[\vartheta(r\,\cdot)u_\eps(r\,\cdot,-r^2)]}
 {\sqrt{H_1(u_\eps;\mathbf0,r)}},
 \qquad D_1(r)=D_1(u_\eps;\mathbf0,r).
\end{equation}
With $\|\cdot\|=\|\cdot\|_{L^2(\nu_{0,1})}$, we have
\begin{equation}\label{eq:turning-endpoints}
\begin{aligned}
 \sum_{j=0}^{J}\|\mathfrak p_m(r_j)-\mathfrak p_m(r_{j+1})\|
 &\le \frac47\{D_1(2r_0)+D_1(r_0)
       -D_1(r_J)-D_1(r_{J+1})\}\\
 &\quad+C\{\eps/r_J+e^{-c'/r_0^2}\}.
\end{aligned}
\end{equation}
Here $c',C>0$ depend only on $\Theta_1$ and structural constants.
In particular,
\begin{equation}\label{eq:turning-total}
 \sum_{j=0}^{J}\|\mathfrak p_m(r_j)-\mathfrak p_m(r_{j+1})\|
 \le\frac{16}{7}\delta_0
 +C\{\eps/r_J+e^{-c'/r_0^2}\}.
\end{equation}
\end{theorem}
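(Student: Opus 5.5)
The plan is to apply Lemma~\ref{lem:fc-approximation} once at each scale $r_j$, transfer the pinching hypothesis \eqref{eq:turning-pinching} to the caloric approximant $v_{r_j}$, apply Lemma~\ref{lem:fc-caloric-turning} to the rescaled $v_{r_j}$, and then return to $\mathfrak p_m$ and telescope.

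\textbf{Step 1: approximation at each scale.} For each $0\le j\le J$, apply Lemma~\ref{lem:fc-approximation} at scale $r=r_j$. This requires $\kappa_*$ and $\eta_*$ at least as restrictive as in that lemma, and this holds because $r_j\le r_0\le1/\kappa_*$ and $\eps/r_j\le\eps/r_J\le\eta_*$. The positivity $a_{r_j}>0$ follows from \eqref{eq:turning-pinching}, since finite $D_1$ forces positive heights, together with the unique-continuation bound $\Theta\le\Theta_1$. Write
\[
e_j:=\frac{\eps}{r_j}+e^{-c/r_j^2}.
\]
The lemma gives a caloric $v_j:=v_{r_j}$ satisfying \eqref{eq:fc-approximation-bound}, \eqref{eq:fc-mass-bounds} and \eqref{eq:fc-doubling-error} for $q\in[1/4,2]$. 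Rescale to $V_j(y,s)=v_j(r_jy,r_j^2s)$. By \eqref{eq:fc-doubling-error} at $q=2$ and $q=1/2$, the quantities $D(V_j;2)$ and $D(V_j;1/2)$ lie in $[m-\delta_0-Ce_j,\,m+\delta_0+Ce_j]$. After shrinking $\eta_*$ and enlarging $\kappa_*$ we have $Ce_j\le\delta_0$, so $\delta:=2\delta_0\le1/8$ and Lemma~\ref{lem:fc-caloric-turning} applies with constant $2(1-\delta)\ge7/4$. It yields
\[
\Bigl\|\tfrac{\Pi_m V_j(\cdot,-1)}{\sqrt{H(V_j;1)}}-\tfrac{\Pi_m V_j(\cdot/2,-1/4)}{\sqrt{H(V_j;1/2)}}\Bigr\|\le\tfrac47\bigl(D(V_j;2)-D(V_j;1/2)\bigr).
\]

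\textbf{Step 2: return to $\mathfrak p_m$.} By \eqref{eq:fc-approximation-bound} at $q=1$ and $q=1/2$, after the change of variables $y\mapsto r_jy$ and using that $\Pi_m$ is a contraction, the unnormalized projected profiles of $\vartheta u_\eps$ and of $v_j$ at the times $-r_j^2$ and $-r_{j+1}^2$ differ in $L^2(\nu_{0,1})$ by at most $Ca_{r_j}e_j$. The mass bounds \eqref{eq:fc-mass-bounds} give $H_1,H\ge ca_{r_j}^2$, and the mass comparison gives
\[
|\sqrt{H_1}-\sqrt{H}|\le Ca_{r_j}e_j.
\]
Together these give
\[
\|\mathfrak p_m(r_j)-\mathfrak p_m(r_{j+1})\|\le\tfrac47\bigl(D_1(2r_j)-D_1(r_j/2)\bigr)+Ce_j.
\]
Here \eqref{eq:fc-doubling-error} is used once more to replace $D(V_j;\cdot)$ by $D_1(\cdot)$. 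The cutoff is the fixed unit cutoff, so it is consistent with $\vartheta(r\cdot)$ in \eqref{eq:turning-projected-profile}.

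\textbf{Step 3: telescope.} Write
\[
D_1(2r_j)-D_1(r_j/2)=\bigl[D_1(2r_j)-D_1(r_j)\bigr]+\bigl[D_1(r_j)-D_1(r_{j+1})\bigr],
\]
and note that $r_j/2=r_{j+1}$ and $2r_j=r_{j-1}$. Summing over $j$, the two telescoping sums produce
\[
D_1(2r_0)-D_1(r_J)+D_1(r_0)-D_1(r_{J+1}).
\]
For the errors, $\sum_j\eps/r_j=\eps r_J^{-1}\sum_j2^{j-J}\le2\eps/r_J$. Also $\sum_j e^{-c4^j/r_0^2}\le Ce^{-c'/r_0^2}$, because $r_0\le1/8$. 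This proves \eqref{eq:turning-endpoints}. The bound \eqref{eq:turning-total} then follows: the pinching hypothesis gives $D_1(2r_0),D_1(r_0/2)\le m+\delta_0$ and $D_1(r_J/2)\ge m-\delta_0$. The middle-scale values $D_1(r_0)$ and $D_1(r_J)$ are not themselves pinched, so they are controlled within $\delta_0+Ce$ of $m$ via the approximant, using the monotonicity of $N$ for $V_j$ between scales $1/2$ and $2$. The four terms then contribute $4\delta_0\cdot\tfrac47=\tfrac{16}7\delta_0$ plus errors.

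\textbf{Main obstacle.} The delicate point is the monotonicity argument behind Step 3. The pinching is assumed only at $2r_j$ and $r_j/2$, so bounding $D_1(r_j)$ through the caloric approximant, while keeping the error linear in $e_j$ so that it stays summable, is the step I would check most carefully.
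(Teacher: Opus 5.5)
Your proposal is correct and follows the paper's proof essentially step for step. You approximate at each $r_j$ by Lemma~\ref{lem:fc-approximation} with thresholds chosen so that the errors are at most $\delta_0$, apply Lemma~\ref{lem:fc-caloric-turning} with $\delta\le1/8$ to get the factor $4/7$, return to $\mathfrak p_m$ at cost $C(\eps/r_j+e^{-c/r_j^2})$, and telescope. The step you flag as the main obstacle is not actually one. For $J\ge1$, the hypothesis \eqref{eq:turning-pinching} already covers $D_1(r_0)=D_1(2r_1)$ and $D_1(r_J)=D_1(r_{J-1}/2)$. For $J=0$, these two terms cancel. So the endpoint combination is at most $4\delta_0$ directly, and your detour through monotonicity of $D(V_j;\cdot)$ is valid but unnecessary.
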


\begin{proof}
Let $c_1>0$ and $C_1\ge1$ be constants in
\eqref{eq:fc-doubling-error}. Decrease the small threshold $\eta_*$
and increase the large threshold $\kappa_*$ from
Lemma~\ref{lem:fc-approximation} so that
\begin{equation}\label{eq:turning-threshold-choice}
 0<\eta_*\le\frac{\delta_0}{2C_1},\qquad
 \kappa_*\ge\sqrt{c_1^{-1}\log(2C_1/\delta_0)}.
\end{equation}
Then, for $0\le j\le J$,
\begin{equation}\label{eq:turning-uniform-error}
\begin{gathered}
 r_j\le r_0\le1/\kappa_*,\qquad
 \eps/r_j\le\eps/r_J\le\eta_*,\\
 C_1(\eps/r_j+e^{-c_1/r_j^2})
 \le C_1(\eta_*+e^{-c_1\kappa_*^2})\le\delta_0.
\end{gathered}
\end{equation}
Let $v_j$ be the caloric approximation constructed in
Lemma~\ref{lem:fc-approximation} at scale $r_j$, and put
\[
 \mathfrak q_{j,m}(q)=
 \frac{\Pi_m[v_j(qr_j\,\cdot,-q^2r_j^2)]}
 {\sqrt{H(v_j;\mathbf0,qr_j)}}.
\]
Equations~\eqref{eq:fc-approximation-bound} and \eqref{eq:fc-mass-bounds}, followed by
normalization and orthogonal projection, give
\begin{equation}\label{eq:turning-projection-comparison}
 \|\mathfrak p_m(qr_j)-\mathfrak q_{j,m}(q)\|
 \le C(\eps/r_j+e^{-c/r_j^2})\qquad(q=1,1/2).
\end{equation}
By \eqref{eq:fc-doubling-error} and
\eqref{eq:turning-uniform-error},
\begin{equation}\label{eq:turning-doubling-comparison}
 |D(v_j;\mathbf0,qr_j)-D_1(qr_j)|
 \le C_1(\eps/r_j+e^{-c_1/r_j^2})\le\delta_0
 \qquad(q=2,1/2).
\end{equation}
Thus both caloric doubling indices belong to
$[m-2\delta_0,m+2\delta_0]\subset[m-1/8,m+1/8]$.
Apply Lemma~\ref{lem:fc-caloric-turning} to
$T_{\mathbf0,r_j}v_j$ with $\delta=1/8$. Since
$1/[2(1-\delta)]=4/7$, \eqref{eq:turning-projection-comparison}, \eqref{eq:turning-doubling-comparison}, and the triangle inequality yield
\begin{equation}\label{eq:turning-one-step}
\begin{aligned}
 \|\mathfrak p_m(r_j)-\mathfrak p_m(r_j/2)\|
 &\le\|\mathfrak q_{j,m}(1)-\mathfrak q_{j,m}(1/2)\|
       +C(\eps/r_j+e^{-c/r_j^2})\\
 &\le\frac47\{D_1(2r_j)-D_1(r_j/2)\}
       +C(\eps/r_j+e^{-c/r_j^2}).
\end{aligned}
\end{equation}

Sum \eqref{eq:turning-one-step} and use
\begin{equation}\label{eq:turning-cancellation}
\begin{aligned}
 \sum_{j=0}^J\{D_1(2r_j)-D_1(r_j/2)\}
 &=D_1(2r_0)+D_1(r_0)\\
 &\quad-D_1(r_J)-D_1(r_{J+1}),
\end{aligned}
\end{equation}
together with
\begin{equation}\label{eq:turning-error-sums}
 \sum_{j=0}^J\eps/r_j\le2\eps/r_J,\qquad
 \sum_{j=0}^Je^{-c/r_j^2}
 =\sum_{j=0}^Je^{-c4^j/r_0^2}\le Ce^{-c'/r_0^2}.
\end{equation}
This proves \eqref{eq:turning-endpoints}. By
\eqref{eq:turning-pinching}, the endpoint combination is at most
$4\delta_0$. Hence \eqref{eq:turning-total} follows.
\end{proof}

\section{Geometry of homogeneous caloric polynomials}\label{sec:geometry}

A parabolic linear subspace is of the form $W\times\{0\}$ or
$W\times\R$, where $W$ is a linear subspace of $\R^n$; its parabolic
dimension $\dim_P$ is $\dim W$ or $\dim W+2$, respectively.

For a homogeneous caloric polynomial $P$, write
\[
 \|P\|_G^2=\int_{\R^n}P^2(y,-1)\,d\nu_{0,1}(y),\qquad
 \mathcal I_x(P)=\{a\in\R^n:a\cdot\nabla_xP\equiv0\}.
\]
The finite-dimensional space of degree-$m$ homogeneous caloric
polynomials is denoted by $\mathcal P_m$.

\begin{lemma}\label{lem:parabolic-spine}
Let $0\ne P\in\mathcal P_m$, $m\ge2$, and define
\[
 \mathcal I(P)=\{(a,b):P(x+a,t+b)=P(x,t)\text{ for all }(x,t)\}.
\]
Then the following statements hold.
\begin{enumerate}[label=\textup{(\roman*)},leftmargin=3em]
\item The invariant spaces and their spatial dimensions satisfy
\begin{equation}\label{eq:spine-classification}
\begin{alignedat}{2}
 \partial_tP\not\equiv0:\quad&
 \mathcal I(P)=\mathcal I_x(P)\times\{0\},
 &\qquad&\dim\mathcal I_x(P)\le n-1,\\
 \partial_tP\equiv0:\quad&
 \mathcal I(P)=\mathcal I_x(P)\times\R,
 &\qquad&\dim\mathcal I_x(P)\le n-2.
\end{alignedat}
\end{equation}
\item We have
\[
 \dim_P\mathcal I(P)\le n.
\]
Equality requires $P$ to be a stationary harmonic polynomial depending
on exactly two spatial variables.
\item The polynomial $P$ is homogeneous of degree $m$ about
$\mathbf z$ if and only if $\mathbf z\in\mathcal I(P)$.
\end{enumerate}
\end{lemma}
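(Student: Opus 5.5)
First I will establish (iii) and the basic structure of $\mathcal I(P)$, then use them for (i) and (ii). Parabolic homogeneity of degree $m$ about $\mathbf z=(a,b)$ means $P(a+\lambda x,b+\lambda^2t)=\lambda^mP(a+x,b+t)$. If $\mathbf z\in\mathcal I(P)$, this follows at once from homogeneity about $\mathbf 0$. Conversely, suppose $P$ is homogeneous about both $\mathbf0$ and $\mathbf z$. Expand $Q(x,t):=P(x+a,t+b)-P(x,t)$ by Taylor's formula. Every term of $P(x+a,t+b)$ then has parabolic degree at most $m$, and its top-degree part is $P$ itself. Since $P(\cdot+a,\cdot+b)$ is homogeneous of degree $m$ about $\mathbf0$, all its lower-degree parts vanish, so $Q\equiv0$. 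The same Taylor argument shows that $\mathcal I(P)$ is a subgroup closed under the dilations $(a,b)\mapsto(\lambda a,\lambda^2b)$. Differentiating $P(x+\lambda a,t+\lambda^2 b)=P(x,t)$ in $\lambda$ at $\lambda=0$ gives $a\cdot\nabla_xP\equiv0$, and the second derivative then gives $2b\,\partial_tP+(a\cdot\nabla)^2P\equiv0$, hence $b\,\partial_tP\equiv0$. Thus $\mathcal I(P)\subset\mathcal I_x(P)\times\R$ always, with the time component forced to be $0$ when $\partial_tP\not\equiv0$. Conversely, if $a\in\mathcal I_x(P)$ then $P$ is invariant under translation by $(a,0)$, and if $\partial_tP\equiv0$ it is invariant in $t$. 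This identifies $\mathcal I(P)$ as claimed in \eqref{eq:spine-classification}, and in particular $\mathcal I_x(P)$ is a linear subspace.

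For the dimension bounds in (i), write $k=\dim\mathcal I_x(P)$ and rotate coordinates so that $P$ depends only on the $n-k$ variables orthogonal to $\mathcal I_x(P)$. If $k=n$, then $P$ depends on $t$ alone and is caloric, so $\partial_tP=\Delta P=0$ and $P$ is constant, which is impossible for $m\ge2$ and $P\ne0$. Hence $k\le n-1$. In the stationary case $P$ is a harmonic polynomial homogeneous of degree $m\ge2$. If $k=n-1$, then $P$ would be a harmonic function of one variable, i.e.\ affine, again contradicting $m\ge2$. Hence $k\le n-2$.

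For (ii), in the nonstationary case we get $\dim_P\mathcal I(P)=k\le n-1<n$. In the stationary case $\dim_P\mathcal I(P)=k+2\le n$, with equality exactly when $k=n-2$, i.e.\ when $P$ is a stationary harmonic polynomial depending on exactly two spatial variables. (It cannot depend on fewer, because $k\le n-2$.) The only delicate step is the converse direction of (iii): one must check carefully that homogeneity about two centers forces translation invariance. I would handle it by the degree-comparison of Taylor components sketched above, using that a parabolically homogeneous polynomial has all its monomials $x^\alpha t^j$ with $|\alpha|+2j=m$.
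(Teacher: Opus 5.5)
Your proof is correct, but it runs in the opposite order from the paper's.

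The paper proves (i) first. For $(a,b)\in\mathcal I(P)$, the polynomial $q\mapsto P(x+qa,t+qb)$ takes the same value at every integer, hence is constant. So $(a\cdot\nabla_x+b\,\partial_t)P\equiv0$, and splitting by parabolic degree ($m-1$ versus $m-2$) gives $a\cdot\nabla_xP\equiv0$ and $b\,\partial_tP\equiv0$. It then obtains (iii) by subtracting the Euler identities about $\mathbf0$ and $\mathbf z$, which yields $a\cdot\nabla_xP+2b\,\partial_tP\equiv0$, and invoking (i).

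You instead prove (iii) directly. You observe that $P(\cdot+a,\cdot+b)$ is homogeneous of degree $m$ about $\mathbf0$ while its Taylor expansion has top component $P$, so every lower component must vanish. You then derive the conditions in (i) by differentiating along the parabolic curve $\lambda\mapsto(\lambda a,\lambda^2b)$. Your route makes (iii) independent of (i) and gives the full set of identities at once. The paper's route uses only first-order identities and the group structure, and is slightly shorter.

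Two small remarks. First, the dilation invariance you use is just homogeneity about $\mathbf0$: $P(x+\lambda a,t+\lambda^2b)=\lambda^mP(x/\lambda+a,t/\lambda^2+b)=P(x,t)$ for $\lambda>0$. So you should take one-sided derivatives at $\lambda=0$, or note that a polynomial in $\lambda$ constant on $(0,\infty)$ is constant. Second, your Taylor argument already gives (i) without the curve. For $(a,b)\in\mathcal I(P)$, the degree-$(m-1)$ and degree-$(m-2)$ components of the translate are $a\cdot\nabla_xP$ and $\tfrac12(a\cdot\nabla_x)^2P+b\,\partial_tP$, and both must vanish.

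The dimension bounds and the equality case of (ii) are handled exactly as in the paper.
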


\begin{proof}
We first prove \textup{(i)}. If $(a,b)\in\mathcal I(P)$, then, for
fixed $(x,t)$, the polynomial $q\mapsto P(x+qa,t+qb)$ has the same
value at every integer. It is therefore constant, and
\[
 (a\cdot\nabla_x+b\partial_t)P=0.
\]
The two terms have parabolic degrees $m-1$ and $m-2$, respectively;
therefore
\[
 a\cdot\nabla_xP=0,\qquad b\partial_tP=0.
\]
Conversely, these identities imply translation invariance by
integration along the line in direction $(a,b)$. This proves the two
formulas for $\mathcal I(P)$ in \eqref{eq:spine-classification}.
If $\partial_tP\not\equiv0$ and $\mathcal I_x(P)=\R^n$, then $P$ is
independent of $x$, and the heat equation gives $\partial_tP\equiv0$, a
contradiction. Thus $\dim\mathcal I_x(P)\le n-1$. If
$\partial_tP\equiv0$ and $\dim\mathcal I_x(P)\ge n-1$, then either $P$
is constant or
\[
 P(x)=c(e\cdot x)^m
\]
for some unit vector $e$. Since $P$ is harmonic and $m\ge2$, this
forces $c=0$, again a contradiction. Hence
$\dim\mathcal I_x(P)\le n-2$.

For \textup{(ii)}, by \textup{(i)}, if
$\partial_tP\not\equiv0$, then
\[
 \dim_P\mathcal I(P)=\dim\mathcal I_x(P)\le n-1.
\]
If $\partial_tP\equiv0$, then
\[
 \dim_P\mathcal I(P)=\dim\mathcal I_x(P)+2\le n.
\]
Equality can occur only in the second case with
$\dim\mathcal I_x(P)=n-2$, which is precisely the stated stationary
harmonic case with two essential spatial variables. This proves
\textup{(ii)}.

For \textup{(iii)}, suppose that $P$ is homogeneous about
$\mathbf z=(a,b)$. The Euler identities about $\mathbf0$ and
$\mathbf z$ are
\[
 (x\cdot\nabla_x+2t\partial_t)P=mP,
 \qquad ((x-a)\cdot\nabla_x+2(t-b)\partial_t)P=mP.
\]
Their difference gives
\[
 a\cdot\nabla_xP+2b\partial_tP=0.
\]
Again the two homogeneous terms vanish, and \textup{(i)} gives
$\mathbf z\in\mathcal I(P)$. The converse is trivial.
\end{proof}

\begin{lemma}\label{lem:cylinder-projection}
Suppose $n\ge2$, $2\le m\le L$, and $W\subset\R^n$ has dimension
$n-2$. For an orthonormal basis $\{e_i\}_{i=1}^{n-2}$ of $W$, set
\[
 \nabla_WP:=\sum_{i=1}^{n-2}(\partial_{e_i}P)e_i,
 \qquad
 \|\nabla_WP\|_G^2
 =\sum_{i=1}^{n-2}\|\partial_{e_i}P\|_G^2.
\]
For every $P\in\mathcal P_m$ there is a decomposition
$P=\phi+\psi$, orthogonal for $\|\cdot\|_G$, such that
\begin{equation}\label{eq:cylinder-kernel}
 \partial_t\phi=0,\qquad \nabla_W\phi=0.
\end{equation}
For every $\eta>0$, this decomposition satisfies
\begin{equation}\label{eq:cylinder-distance}
 \|\partial_tP\|_G+\|\nabla_WP\|_G
 \le\eta\|P\|_G
 \quad\Longrightarrow\quad
 \|\psi\|_G\le C(n,L)\eta\|P\|_G.
\end{equation}
Moreover,
\begin{equation}\label{eq:cylinder-gradient}
 |\nabla\phi(x)|
 =\frac{m}{2^{m-1/2}\sqrt{m!}}\|\phi\|_G
       \operatorname{dist}(x,W)^{m-1}.
\end{equation}
\end{lemma}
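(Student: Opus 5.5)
The plan is to take $\phi$ to be the $\|\cdot\|_G$-orthogonal projection of $P$ onto the subspace
\[
 \mathcal K_W:=\{Q\in\mathcal P_m:\ \partial_tQ\equiv0,\ \nabla_WQ\equiv0\},
\]
and to set $\psi=P-\phi$. Then \eqref{eq:cylinder-kernel} holds by construction, and the decomposition is orthogonal.

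\emph{Step 1: the stability estimate \eqref{eq:cylinder-distance}.} Consider the linear map $T:Q\mapsto(\partial_tQ,\nabla_WQ)$ on the finite-dimensional space $\mathcal P_m$. Use the seminorm $\|TQ\|:=\|\partial_tQ\|_G+\|\nabla_WQ\|_G$, which is finite because all derivatives of polynomials lie in $L^2(\nu_{0,1})$. The kernel of $T$ is exactly $\mathcal K_W$. Hence $T$ is injective on $\mathcal K_W^{\perp}$, and compactness of the unit sphere in $\mathcal K_W^\perp$ gives a constant $c(n,m,W)>0$ with
\[
 \|TQ\|\ge c\|Q\|_G\qquad\text{for all }Q\in\mathcal K_W^\perp .
\]
Since $T\phi=0$, we have $T\psi=TP$. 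The hypothesis then gives $c\|\psi\|_G\le\|TP\|\le\eta\|P\|_G$.

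To make $c$ independent of $W$, I will use rotation invariance. Spatial rotations $R$ preserve $\nu_{0,1}$, $\mathcal P_m$, $\partial_t$, and the Gaussian norm of gradients, and $P\mapsto P\circ R$ carries $\mathcal K_W$ onto $\mathcal K_{R^{-1}W}$. So $c$ depends only on $n$ and $m$. Taking the minimum over $2\le m\le L$ gives $C(n,L)=\max_m c(n,m)^{-1}$. This step is routine, and the main point is only this uniformity.

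\emph{Step 2: structure of $\mathcal K_W$ and the gradient identity \eqref{eq:cylinder-gradient}.} Let $\phi\in\mathcal K_W$. Because $\partial_t\phi=0$ and $\phi$ is caloric, $\phi$ is a harmonic polynomial. Because $\nabla_W\phi=0$, it depends only on the coordinates $(x_1,x_2)$ of the projection of $x$ onto $W^\perp\cong\R^2$. Homogeneity of degree $m$ then forces
\[
 \phi=\operatorname{Re}\bigl(\bar\alpha(x_1+ix_2)^m\bigr)=|\alpha|\rho^m\cos\bigl(m(\theta-\theta_0)\bigr),
 \qquad \rho=\operatorname{dist}(x,W).
\]
Indeed, the degree-$m$ homogeneous harmonic polynomials on $\R^2$ are spanned by $\operatorname{Re}z^m$ and $\operatorname{Im}z^m$; this is where $n\ge2$ is used. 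Differentiating, $|\nabla\phi|=m|\nabla_{\R^2}\operatorname{Re}(\bar\alpha z^m)|/m=m|\alpha|\rho^{m-1}$.

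It remains to compute $\|\phi\|_G$. Since $\phi$ is independent of $t$, its slice at $t=-1$ is $\phi$ itself. The Gaussian factor along $W$ integrates to $1$. In polar coordinates on $W^\perp$,
\[
 \|\phi\|_G^2=\frac{|\alpha|^2}{4\pi}\int_0^{2\pi}\!\cos^2(m\theta)\,d\theta\int_0^\infty\rho^{2m+1}e^{-\rho^2/4}\,d\rho
 =\frac{|\alpha|^2}{4\pi}\cdot\pi\cdot\tfrac12 4^{m+1}m!
 =|\alpha|^2\,2^{2m-1}m!.
\]
Hence $|\alpha|=\|\phi\|_G/(2^{m-1/2}\sqrt{m!})$, and \eqref{eq:cylinder-gradient} follows.

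I expect no real obstacle. The only points needing care are the $W$-independence of the constant in Step 1, handled by the rotation argument, and the bookkeeping in the Gaussian moment computation.
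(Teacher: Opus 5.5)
Your proposal is correct and matches the paper's proof essentially step for step. The paper also takes $\phi$ to be the Gaussian orthogonal projection onto $\ker(\partial_t,\nabla_W)$ and uses injectivity of $(\partial_t,\nabla_W)$ on the orthogonal complement in finite dimensions. It makes the constant independent of $W$ by normalizing to $W=\R^{n-2}\times\{0\}$ with an orthonormal change of coordinates, which is your rotation-invariance argument, and takes a minimum over $2\le m\le L$. It then computes the same polar-coordinate Gaussian moment $2^{2m-1}m!$ for the planar harmonic polynomial $\phi$. The only blemish is the stray factor $m\cdots/m$ in your gradient line; it is harmless, since $|\nabla\operatorname{Re}f|=|f'|$ gives $m|\alpha|\rho^{m-1}$ directly.
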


\begin{proof}
Define the linear map
\[
 T_WP=(\partial_tP,\nabla_WP).
\]
Its kernel consists precisely of the stationary degree-$m$
homogeneous caloric polynomials that are invariant along $W$.
Let $\phi$ be the Gaussian orthogonal projection of $P$ onto
$\ker T_W$, and set
\[
 \psi=P-\phi\in(\ker T_W)^\perp.
\]
Then $P=\phi+\psi$ is an orthogonal decomposition and
$T_W\phi=0$, which proves \eqref{eq:cylinder-kernel}.

For fixed $m$, work in orthonormal coordinates with
$W=\R^{n-2}\times\{0\}$. The restriction
\[
 T_W\big|_{(\ker T_W)^\perp}
\]
is injective. Since the spaces are finite dimensional, its smallest
singular value is positive. Taking the minimum over the finitely many degrees
$2\le m\le L$ gives $c(n,L)>0$, independent of $m$ and $W$, such that
\[
 c(n,L)\|\psi\|_G
 \le \|\partial_t\psi\|_G+\|\nabla_W\psi\|_G
 =\|\partial_tP\|_G+\|\nabla_WP\|_G.
\]
The hypothesis in \eqref{eq:cylinder-distance} now gives the asserted
bound for $\|\psi\|_G$.

Finally, $W^\perp$ is two dimensional. By
\eqref{eq:cylinder-kernel}, $\phi$ is an $m$-homogeneous harmonic
polynomial on $W^\perp$. In orthonormal polar coordinates
$(r,\theta)$ on this plane,
\[
 \phi=r^m(A\cos m\theta+B\sin m\theta),\qquad
 |\nabla\phi|^2=m^2(A^2+B^2)r^{2m-2},
\]
\[
 \|\phi\|_G^2
 =\frac{A^2+B^2}{4}\int_0^\infty r^{2m+1}e^{-r^2/4}\,dr
 =2^{2m-1}m!(A^2+B^2).
\]
Since $r=\operatorname{dist}(x,W)$, substitution gives
\eqref{eq:cylinder-gradient}.
\end{proof}

\begin{lemma}\label{lem:geometric-dichotomy}
Fix $n\ge2$, $L\ge2$, $0<\gamma<1/4$, and $\tau>0$.
There exists $\eta=\eta(n,L,\gamma,\tau)>0$ with the following
property. For every $P\in\mathcal P_m$, $2\le m\le L$, $\|P\|_G=1$,
there is a parabolic plane $V$ such that
\begin{equation}\label{eq:approximate-directions}
 (a,b)\in\partial Q(\mathbf0,1),\quad
 \|a\cdot\nabla P\|_G+|b|\|\partial_tP\|_G\le\eta
 \quad\Longrightarrow\quad d_P((a,b),V)\le\gamma.
\end{equation}
In addition, one of the following holds:
\begin{enumerate}[label=\textup{(\roman*)},leftmargin=3em]
 \item $\dim_PV\le n-1$;
 \item $V=W\times\R$, $\dim W=n-2$, and the decomposition
 $P=\phi+\psi$ in Lemma~\ref{lem:cylinder-projection} satisfies
 $\|\psi\|_G\le\tau$.
\end{enumerate}
\end{lemma}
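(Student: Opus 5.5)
The plan is a compactness argument on the unit sphere of the finite-dimensional space $\bigcup_{2\le m\le L}\mathcal P_m$. Nothing needs to hold uniformly over a noncompact family, since the degrees are finitely many and the Gaussian unit spheres are compact.

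For each $P$ with $\|P\|_G=1$, consider the quadratic form
\[
Q_P(a,b)=\|a\cdot\nabla P\|_G^2+b^2\|\partial_tP\|_G^2 .
\]
Its kernel is exactly $\mathcal I(P)$, by Lemma~\ref{lem:parabolic-spine}(i). The guiding principle is that approximate invariance lies near an approximate kernel. We use two candidate planes.

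\textbf{Choice of $V$.} There are two natural candidates.
\begin{enumerate}[label=\textup{(\alph*)},leftmargin=3em]
\item $V_1=\mathcal I(P)$. This is either $\mathcal I_x(P)\times\{0\}$ with $\dim\le n-1$, or $\mathcal I_x(P)\times\R$ with parabolic dimension at most $n$.
\item $V_2=W\times\R$, where $W$ is an $(n-2)$-plane with $\|\partial_tP\|_G+\|\nabla_WP\|_G\le\eta'\,$.
\end{enumerate}
The dichotomy will be decided by whether $P$ is $\tau'$-close to some two-variable stationary harmonic $\phi$. Here $\tau'\le\tau$ is chosen via \eqref{eq:cylinder-distance}: if $\|\partial_tP\|_G+\|\nabla_WP\|_G\le\eta'$, then $\|\psi\|_G\le C(n,L)\eta'$, so $\eta'=\tau/C(n,L)$ gives alternative (ii). The planes are produced as follows.
\begin{itemize}
\item If such a $W$ exists, we aim to take $V=W\times\R$ and land in case (ii).
\item If no such $W$ exists, we take $V$ to be the span of the eigenvectors of the spatial Gram matrix $M_P=(\langle\partial_iP,\partial_jP\rangle_G)$ with eigenvalue $\le\sigma^2$. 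We add the time direction exactly when $\|\partial_tP\|_G\le\sigma$, for a threshold $\sigma$ fixed below.
\end{itemize}

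\textbf{Proof of \eqref{eq:approximate-directions}.} Take $(a,b)\in\partial Q(\bzero,1)$, so that $\max\{|a|,|b|^{1/2}\}=1$, and suppose $\|a\cdot\nabla P\|_G+|b|\|\partial_tP\|_G\le\eta$.

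Decompose $a=a_V+a_\perp$ with respect to the small-eigenspace $E_\sigma$ of $M_P$. The large eigenvalues are at least $\sigma^2$, hence
\[
\sigma|a_\perp|\le\|a\cdot\nabla P\|_G\le\eta ,
\]
so $|a_\perp|\le\eta/\sigma$. For the time component:
\begin{itemize}
\item If $\|\partial_tP\|_G>\sigma$, then $|b|\le\eta/\sigma$.
\item If $\|\partial_tP\|_G\le\sigma$, then the time axis belongs to $V$.
\end{itemize}
In either case
\[
d_P((a,b),V)\le \eta/\sigma+(\eta/\sigma)^{1/2}\le\gamma
\]
once $\eta\le\sigma\gamma^2/4$. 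Thus \eqref{eq:approximate-directions} holds for any $\sigma$, provided $\eta$ is taken small relative to $\sigma$.

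\textbf{Main obstacle: the dimension count.} The real difficulty is to choose $\sigma=\sigma(n,L,\tau)$ so that $\dim_PV\ge n$ forces case (ii). We argue by contradiction and compactness. Suppose that for $\sigma_k\to0$ there are $P_k$ with $\|P_k\|_G=1$, degree $m$ fixed after extraction, and
\[
\dim E_{\sigma_k}+2\cdot\mathbf 1_{\{\|\partial_tP_k\|_G\le\sigma_k\}}\ge n ,
\]
while no $W$ gives $\|\psi\|_G\le\tau$ in Lemma~\ref{lem:cylinder-projection}. Extract $P_k\to P$ and $E_{\sigma_k}\to E$ in the Grassmannian, and note that $\sigma_k\to0$ forces $a\cdot\nabla P\equiv0$ for $a\in E$.
\begin{itemize}
\item If the time axis is included infinitely often, then $\partial_tP\equiv0$ and $\dim E\ge n-2$. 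Lemma~\ref{lem:parabolic-spine}(ii) shows that $P$ is a two-variable stationary harmonic polynomial with $\mathcal I_x(P)=E$, which must then have dimension $n-2$.
\item Otherwise $\dim E\ge n$. Then $P$ is independent of $x$, and the heat equation shows that $P$ is constant. This contradicts $m\ge2$ and $\|P\|_G=1$.
\end{itemize}
In the surviving case, set $W_k=E_{\sigma_k}$ (after trimming to dimension $n-2$). Then
\[
\|\partial_tP_k\|_G+\|\nabla_{W_k}P_k\|_G\le C\sigma_k\to0 ,
\]
so \eqref{eq:cylinder-distance} gives $\|\psi_k\|_G\to0$, which eventually falls below $\tau$. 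This is the contradiction. We then choose $\eta=\sigma\gamma^2/4$, and $\eta$ depends only on $(n,L,\gamma,\tau)$.

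In case (ii) we take $V=W\times\R$ with $W=E_\sigma$. The approximate-direction estimate still holds, because $V$ contains the small-eigenspace and the time axis, exactly as in the computation above.
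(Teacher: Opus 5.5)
Your argument is correct and is essentially the paper's proof. You use the same Gram matrix $\mathcal G_P$ and the same compactness contradiction with Lemma~\ref{lem:parabolic-spine}\textup{(i)}, which shows that at most $n-2$ eigenvalues can be small when $\|\partial_tP\|_G$ is small (the paper's gap $\lambda_2\ge c_*$), and the same appeal to Lemma~\ref{lem:cylinder-projection} for alternative \textup{(ii)}. The only difference is packaging: you take $V$ to be the $\sigma^2$-sublevel eigenspace, plus the time axis when $\|\partial_tP\|_G\le\sigma$, instead of the paper's three explicit cases. You should delete the preliminary branch ``if such a $W$ exists, take $V=W\times\R$''. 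For an arbitrary admissible $W$ (for instance any $W$ once $\tau$ is large) it does not yield \eqref{eq:approximate-directions}, and your final argument, which uses $W=E_\sigma$, never needs it.
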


\begin{proof}
Let
\[
 \mathcal{G}_P:=(\langle\partial_iP,\partial_jP\rangle_G)_{i,j=1}^n.
\]
This matrix is symmetric and nonnegative, since
\[
 a^T\mathcal{G}_Pa=\|a\cdot\nabla P\|_G^2\ge0.
\]
Choose an orthonormal eigenbasis $e_1,\ldots,e_n$, with corresponding
eigenvalues $\lambda_1\ge\cdots\ge\lambda_n\ge0$, and write
$a_i=a\cdot e_i$. Then
\[
 \|a\cdot\nabla P\|_G^2=\sum_{i=1}^n\lambda_i a_i^2.
\]
Gaussian integration by parts and $\|P\|_G=1$ give
\[
 \operatorname{tr}\mathcal{G}_P=\|\nabla P\|_G^2=m/2,
 \qquad\lambda_1\ge\frac{m}{2n}\ge\frac1n.
\]

We claim that there exist $c_*,\sigma_*>0$, depending only on $n,L$, such that
\begin{equation}\label{eq:second-eigenvalue-gap}
 \|\partial_tP\|_G\le\sigma_*
 \quad\Longrightarrow\quad\lambda_2\ge c_*.
\end{equation}
Indeed, otherwise finite-dimensional compactness, after fixing the
degree along a subsequence, gives a limit $P_\infty$ with
\[
 \|P_\infty\|_G=1,\qquad \partial_tP_\infty=0,
 \qquad\operatorname{rank}\mathcal{G}_{P_\infty}\le1.
\]
Thus $\dim\mathcal I_x(P_\infty)\ge n-1$, contradicting
Lemma~\ref{lem:parabolic-spine}\textup{(i)}.

For every $(a,b)$ satisfying the hypotheses of
\eqref{eq:approximate-directions},
\[
 \sum_{i=1}^n\lambda_i a_i^2\le\eta^2,
 \qquad |b|\|\partial_tP\|_G\le\eta.
\]

For $\sigma>0$ to be determined, we consider the following cases.

\emph{Case 1: $\|\partial_tP\|_G\ge\sigma$.}\par
\noindent
Take
$V=e_1^\perp\times\{0\}$, so $\dim_PV=n-1$. Since
\[
 |a_1|\le\eta/\sqrt{\lambda_1}\le\sqrt n\eta,
 \qquad |b|\le\eta/\sigma,
\]
we obtain
\[
 d_P((a,b),V)=\max\{|a_1|,\sqrt{|b|}\}
 \le\max\{\sqrt n\eta,\sqrt{\eta/\sigma}\}\le\gamma.
\]
Here we require $\eta\le\min\{\gamma/\sqrt n,\sigma\gamma^2\}$.
This proves alternative \textup{(i)}.

\emph{Case 2: $\|\partial_tP\|_G<\sigma$, $n\ge3$, and
$\lambda_3\ge\sigma^2$.}\par
\noindent
Take $V=\operatorname{span}\{e_4,\ldots,e_n\}\times\R$.
Then $\dim_PV=(n-3)+2=n-1$, and
\[
 d_P((a,b),V)^2
 =\sum_{i=1}^3a_i^2
 \le\sigma^{-2}\sum_{i=1}^3\lambda_i a_i^2
 \le\eta^2/\sigma^2\le\gamma^2.
\]
Choosing $\eta\le\sigma\gamma$ proves alternative \textup{(i)}.

\emph{Case 3: $\|\partial_tP\|_G<\sigma$ and either $n=2$ or
$\lambda_3<\sigma^2$.}\par
\noindent
Choose $0<\sigma\le\sigma_*$, depending only on $n,L,\tau$, so that
\[
 C(n,L)(1+\sqrt{n-2})\sigma\le\tau,
\]
where $C(n,L)$ is the constant in \eqref{eq:cylinder-distance}.
Set $W=\operatorname{span}\{e_3,\ldots,e_n\}$ and
$V=W\times\R$, with $W=\{0\}$ when $n=2$.
By \eqref{eq:second-eigenvalue-gap},
\[
 d_P((a,b),V)^2=a_1^2+a_2^2
 \le c_*^{-1}\sum_{i=1}^2\lambda_i a_i^2
 \le\eta^2/c_*\le\gamma^2.
\]
Here we require $\eta\le\gamma\sqrt{c_*}$. Moreover,
\[
 \|\nabla_WP\|_G^2=\sum_{i=3}^n\lambda_i
 \le(n-2)\sigma^2,
 \qquad
 \|\partial_tP\|_G+\|\nabla_WP\|_G
 \le(1+\sqrt{n-2})\sigma\|P\|_G.
\]
Lemma~\ref{lem:cylinder-projection} gives the decomposition
$P=\phi+\psi$, with $\partial_t\phi=0$, $\nabla_W\phi=0$, and
\[
 \|\psi\|_G\le C(n,L)(1+\sqrt{n-2})\sigma\le\tau.
\]
This proves alternative \textup{(ii)}. With $\sigma$ fixed as in
Case 3, choose $\eta>0$ to satisfy all three cases. These choices
depend only on $n,L,\gamma,\tau$ and complete the proof.
\end{proof}

\section{Comparison of centers and localization of singular points}
\label{sec:multicenter-geometry}

We first establish forward growth and two-sided homogeneous approximation,
then compare centers and localize singular points using the polynomial
geometry of Section~\ref{sec:geometry}.

For a degree-$m$ Hermite polynomial $f$, let $\mathcal C_m f$ be
its homogeneous caloric extension, characterized by
$(\mathcal C_m f)(y,-1)=f(y)$. For $\mathbf z=(x_0,t_0)$, set
\[
 \Psi_{\mathbf z,s}
 :=\Pi_m\!\left[\vartheta(sy)u_\eps(x_0+sy,t_0-s^2)\right],
 \qquad
 \psi_{\mathbf z,s}
 :=\mathcal C_m\!\left(
 \frac{\Psi_{\mathbf z,s}}{\|\Psi_{\mathbf z,s}\|_{L^2(\nu_{0,1})}}
 \right).
\]
The denominator is positive under the pinching assumptions below.

\begin{lemma}\label{lem:forward-bound}
Let $u_\eps$ solve \eqref{eq:homogenization-equation} under
\textup{(A1)--(A4)} in $Q(\mathbf0,6)$ and satisfy
\eqref{eq:uniform-theta}. Set $d=\max\{0,(\beta_0(\Theta_1)-n)/2\}$.
There exist $\kappa_*\ge8$ and $\eta_*,C>0$, depending only on
$\Theta_1$ and structural constants, such that if
\[
 \mathbf z=(x_0,t_0)\in Q(\mathbf0,1),\qquad
 0<s\le1/\kappa_*,\qquad \eps/s\le\eta_*,
\]
then the normalized rescaling
\[
 w=\frac{T_{\mathbf z,s}u_\eps}{H_1(u_\eps;\mathbf z,s)^{1/2}}
\]
satisfies
\[
 |w(y,\tau)|\le C(1+|y|)^d
 \qquad(|y|\le2/s,\ 0\le\tau\le4).
\]
\end{lemma}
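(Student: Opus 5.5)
The approach is to bound $|u_\eps|$ on the forward region
\[
\{(x_0+sy,\,t_0+s^2\tau):\ |y|\le 2/s,\ 0\le\tau\le4\}
\]
by the slice mass at the earlier time $t_0-s^2$, which is comparable to $s^nH_1(u_\eps;\mathbf z,s)$. I will chain four inputs. (a) Interior estimates plus \eqref{eq:uniform-theta} at the later center $\mathbf z_+=(x_0,t_+)$, $t_+=t_0+4s^2$. (b) The spatial growth of Lemma~\ref{lem:spatial} on the slice $t_+$. (c) A forward $L^2$ energy inequality carrying the slice $t_+$ back to the slice $t_0-s^2$. (d) Lemma~\ref{lem:spatial} again on that slice.

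\emph{Lower bound for the normalization.} On $B(x_0,s)$ we have $\vartheta(\cdot-x_0)=1$ and $G_{x_0,s^2}\ge cs^{-n}$. Hence
\[
H_1(u_\eps;\mathbf z,s)\ge cs^{-n}F_-(s),\qquad F_\pm(\rho):=\int_{B(x_0,\rho)}u_\eps^2(y,t_\pm)\dd y,\quad t_-=t_0-s^2 .
\]
For $\kappa_*$ large, the points $(x_0,t_\pm)$ lie in $Q(\mathbf0,2)$. So \eqref{eq:uniform-theta} and Lemma~\ref{lem:spatial} give $F_\pm(\rho)\le C(\rho/s)^{\beta_0}F_\pm(s)$ for $s\le\rho\le 2$.

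\emph{Upper bound at a given $|y|\sim R$, with $1\le R\le 2/s$.} Put $\rho=2Rs$. By the backward interior estimate at $\mathbf z_+$, the supremum of $u_\eps^2$ over $B(x_0,Rs)\times[t_0,t_+]\subset Q^-(\mathbf z_+,\rho)$ is at most $C\rho^{-n-2}\int_{Q^-(\mathbf z_+,2\rho)}u_\eps^2$. By \eqref{eq:uniform-theta} and the spatial growth this is at most
\[
C\Theta_1\rho^{-n}F_+(\rho)\le C\rho^{-n}R^{\beta_0}F_+(s).
\]
If necessary I use a slightly larger center radius, or split off the range $Rs\gtrsim1$ where crude bounds suffice.

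\emph{Forward step (main obstacle).} The hypothesis $\Theta$ controls the past by the present, whereas here we need the present slice $t_+$ controlled by the past slice $t_-$. I would use the energy identity. With $\zeta$ a cutoff equal to $1$ on $B(x_0,Ms)$, supported in $B(x_0,2Ms)$, and $|\nabla\zeta|\le C/(Ms)$, symmetry and ellipticity of $A_\eps$ give
\[
\frac{d}{dt}\int u_\eps^2\zeta^2\le C\int u_\eps^2|\nabla\zeta|^2 .
\]
This bound is uniform in $\eps$. Integrating over $[t_-,t_+]$, a window of length $5s^2$, yields
\[
F_+(Ms)\le F_-(2Ms)+C\,\frac{s^2}{(Ms)^2}\,(Ms)^n\sup_{B(x_0,2Ms)\times[t_-,t_+]}u_\eps^2 .
\]
By the interior estimate and \eqref{eq:uniform-theta} at $\mathbf z_+$ with radius comparable to $Ms$, the supremum is at most $C\Theta_1(Ms)^{-n}F_+(4Ms)$. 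By the spatial doubling this is at most $C(Ms)^{-n}F_+(Ms)$. The error term is then $\le CM^{-2}F_+(Ms)$. Fixing $M=M(\Theta_1)$ large absorbs it, giving
\[
F_+(s)\le F_+(Ms)\le 2F_-(2Ms)\le CM^{\beta_0}F_-(s)\le Cs^nH_1(u_\eps;\mathbf z,s).
\]
I take $\kappa_*\ge 8M$ so that all radii stay $\le1$ and all centers remain in $Q(\mathbf0,2)$.

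\emph{Conclusion.} Combining the last two steps, for $|y|\le R$ and $0\le\tau\le4$,
\[
|w(y,\tau)|^2\le C(Rs)^{-n}R^{\beta_0}s^n=CR^{\beta_0-n}.
\]
This is $|w|\le C(1+|y|)^d$ with $d=\max\{0,(\beta_0-n)/2\}$. The parameter $\eta_*$ plays no essential role, since every step uses only (A1) and the $\eps$-independent bound \eqref{eq:uniform-theta}. The delicate point is the absorption in the forward step: it requires the lateral term to gain the factor $s^2/(Ms)^2$ from the short time window, while the spatial growth from Lemma~\ref{lem:spatial} contributes only a fixed factor $4^{\beta_0}$.
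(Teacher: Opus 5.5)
Your argument is correct, but it takes a genuinely different and more elementary route than the paper.

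The paper normalizes by the slice mass at time $t_0$. It gets polynomial growth of $\widehat w(\cdot,\tau)$ relative to $F(\tau)=\int_{B_1}\widehat w^2(\cdot,\tau)$ from \eqref{eq:uniform-theta} at the shifted centers $(x_0,t_0+s^2\tau)$. It then bounds $\max_{0\le\tau\le4}F(\tau)$ through a Duhamel representation of $\vartheta(\cdot/\kappa)\widehat w$, $\kappa=1/s$, using the whole-space fundamental solution of the oscillating operator. The cutoff sources sit on $\{\kappa\le|y|\le2\kappa\}$, so the Gaussian bounds for $\Gamma$ and $\nabla_y\Gamma$ and the uniform Lipschitz estimate of \cite{GS20} give an error $e^{-c\kappa^2}M_T^{1/2}$, which is absorbed. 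Finally, it converts to the $H_1$-normalization via the mass comparison \eqref{eq:fc-mass-bounds} of Lemma~\ref{lem:fc-approximation}. That step is where homogenization and the smallness of $\eps/s$ enter.

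You do the forward step with a local energy inequality at the mesoscopic radius $Ms$. The bound \eqref{eq:uniform-theta} at the later center $\mathbf z_+$ controls $u_\eps$ on a backward cylinder of time length $\sim M^2s^2$ containing your window of length $5s^2$. So the lateral flux carries a factor $M^{-2}$ against an $M$-independent doubling constant, and it can be absorbed. You then normalize with the elementary lower bound $H_1\ge cs^{-n}F_-(s)$, and never need the upper mass bound.

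Your route uses only (A1), the interior $L^\infty$--$L^2$ estimate, Lemma~\ref{lem:spatial} and \eqref{eq:uniform-theta}. Hence the conclusion holds for every $\eps$, and $\eta_*$ plays no role. The paper's route needs much more machinery, but it reuses the cutoff-and-Duhamel framework already built for Lemma~\ref{lem:fc-approximation}.

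The bookkeeping you flagged is harmless. Take $\rho=3Rs$ so that $[t_0,t_+]$ lies inside the half-open time interval; this covers $|y|\le 2/(3s)$. For $1/(2s)\le|y|\le2/s$, use $\Theta(u_\eps,\mathbf z_+,1)\le\Theta_1$ and Lemma~\ref{lem:spatial}: they give $\sup_{B(x_0,2)\times[t_0,t_+]}u_\eps^2\le C\Theta_1F_+(1)\le Cs^{-\beta_0}F_+(s)$, which already has the required rate $R^{\beta_0-n}$.
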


\begin{proof}
Put $T=4$ and choose the thresholds so that
Lemma~\ref{lem:fc-approximation} applies at scale $s$. Set
\[
 a_s^2=s^{-n}\int_{B(x_0,s)}u_\eps^2(x,t_0)\,dx,\qquad
 \kappa=1/s,\qquad \widehat w=a_s^{-1}T_{\mathbf z,s}u_\eps,
\]
and write the rescaled coefficient as
\[
 \widetilde A(y,\tau)
 =A\!\left(\frac{x_0+sy}{\eps},
             \frac{t_0+s^2\tau}{\eps^2}\right).
\]
The uniform hypothesis \eqref{eq:uniform-theta}, applied at the shifted
time centers, gives
\[
 \Theta(\widehat w,(0,\tau),q)\le\Theta_1
 \qquad(0\le\tau\le T,\ 0<q\le \kappa).
\]
Define
\[
 F(\tau)=\int_{B_1}\widehat w^2(y,\tau)\,dy,\qquad
 M_T=\max_{0\le\tau\le T}F(\tau).
\]
Then $F(0)=1$. Applying \eqref{eq:polynomial-spatial-growth} after translation to $(0,\tau)$, and then the interior $L^2$-to-$L^\infty$ estimate, yields
\[
 |\widehat w(y,\tau)|\le C(1+|y|)^dF(\tau)^{1/2}
 \qquad(|y|\le2\kappa,\ 0\le\tau\le T),
\]
and \cite[Theorem~2.6]{GS20} gives, on the cutoff annulus,
\begin{equation}\label{eq:two-sided-annular-bound}
 |\widehat w(y,\tau)|+\kappa|\nabla\widehat w(y,\tau)|
 \le C\kappa^dF(\tau)^{1/2}
 \quad(\kappa\le|y|\le2\kappa,\ 0\le\tau\le T).
\end{equation}
Thus it suffices to bound $M_T$. Write
$\vartheta_\kappa(y)=\vartheta(y/\kappa)$ and $W=\vartheta_\kappa\widehat w$.
The cutoff equation is
\[
 \partial_\tau W-\operatorname{div}(\widetilde A\nabla W)
 =\mathcal F+\operatorname{div}\mathcal G,
\]
where
\[
 \mathcal F=-\nabla\vartheta_\kappa\cdot\widetilde A\nabla\widehat w,
 \qquad
 \mathcal G=-\widehat w\widetilde A\nabla\vartheta_\kappa.
\]
Both sources are supported in $E_\kappa=\{\kappa\le|y|\le2\kappa\}$, and
\eqref{eq:two-sided-annular-bound} gives
\[
 |\mathcal F(y,\tau)|\le C\kappa^{d-2}M_T^{1/2},\qquad
 |\mathcal G(y,\tau)|\le C\kappa^{d-1}M_T^{1/2}.
\]
Let $\Gamma_{\mathbf z,s}$ be the whole-space fundamental solution for
$\widetilde A$, and put $f_0=W(\cdot,0)$.
Since $F(0)=1$, $|f_0(y)|\le C(1+|y|)^d$.
Duhamel's formula gives
\[
\begin{aligned}
 W(x,t)
 ={}&\int_{\R^n}\Gamma_{\mathbf z,s}(x,t;y,0)f_0(y)\,dy\\
 &+\int_0^t\int_{\R^n}
 \Gamma_{\mathbf z,s}(x,t;y,\tau)\mathcal F(y,\tau)\,dy\,d\tau\\
 &-\int_0^t\int_{\R^n}
 \nabla_y\Gamma_{\mathbf z,s}(x,t;y,\tau)
 \cdot\mathcal G(y,\tau)\,dy\,d\tau.
\end{aligned}
\]
For $x\in B_1$, the source annulus is at distance at least $\kappa-1$.
The Gaussian bounds for $\Gamma_{\mathbf z,s}$ and
$\nabla_y\Gamma_{\mathbf z,s}$ \cite[(1.6), Theorem~2.7]{GS20}
therefore give
\begin{align*}
 |W(x,t)|
 &\le\int_{\R^n}\Gamma_{\mathbf z,s}(x,t;y,0)|f_0(y)|\,dy
       +C\kappa^Be^{-c\kappa^2}M_T^{1/2}\\
 &\le C+Ce^{-c'\kappa^2}M_T^{1/2}
 \qquad(x\in B_1,\ 0<t\le T).
\end{align*}
Here the power $B$ is fixed by the dimension and the growth exponent.
Since $W=\widehat w$ on $B_1$, taking the $L^2(B_1)$ norm and the
supremum in time yields
\[
 M_T^{1/2}\le C+Ce^{-c'\kappa^2}M_T^{1/2}.
\]
Increasing $\kappa_*$ makes the last coefficient at most $1/2$,
so $M_T\le C$. Substitution into the pointwise growth estimate gives
\begin{equation}\label{eq:two-sided-growth}
 |\widehat w(y,\tau)|\le C(1+|y|)^d
 \quad(|y|\le2\kappa,\ 0\le\tau\le T).
\end{equation}
Finally, \eqref{eq:fc-mass-bounds}, applied at center $\mathbf z$ and scale $s$, gives
\[
 c a_s^2\le H_1(u_\eps;\mathbf z,s)\le C a_s^2,
 \qquad
 w=\frac{a_s}{H_1(u_\eps;\mathbf z,s)^{1/2}}\,\widehat w.
\]
The factor is uniformly bounded, so \eqref{eq:two-sided-growth}
proves the desired estimate.
\end{proof}

\begin{lemma}\label{lem:two-sided-pinching}
Let $u_\eps$ solve \eqref{eq:homogenization-equation} under
\textup{(A1)--(A4)} in $Q(\mathbf0,6)$ and satisfy
\eqref{eq:uniform-theta}. For every
$L\ge2$ and $\zeta>0$
there exist $\delta_*,\eta_*>0$ and $\kappa_*\ge2$ with the following
property. Suppose
\[
 \mathbf z=(x_0,t_0)\in Q(\mathbf0,1),\quad
 0<r\le1/\kappa_*,
 \quad\eps/r\le\eta_*,
\]
\[
 |D_1(u_\eps;\mathbf z,2^{-j}r)-m|\le\delta_*
 \qquad(0\le j\le3),
\]
where $2\le m\le L$. Set $s=r/4$ and
\[
 w=\frac{T_{\mathbf z,s}u_\eps}{H_1(u_\eps;\mathbf z,s)^{1/2}},
 \quad \psi=\psi_{\mathbf z,s},\quad
 J(y,t)=\mathcal J_A((x_0+sy)/\eps,(t_0+s^2t)/\eps^2).
\]
Then
\[
 \|w-\psi\|_{L^\infty(Q(\mathbf0,1))}
 +\|\nabla_yw-J\nabla_y\psi\|_{L^\infty(Q(\mathbf0,1))}
 \le\zeta.
\]
\end{lemma}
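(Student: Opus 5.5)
We argue by contradiction and compactness, following the pattern of Theorems~\ref{thm:truncated-doubling-drop} and \ref{thm:low-doubling-nondegeneracy}. Suppose that for fixed $L,\zeta$ no choice of $\delta_*,\eta_*,\kappa_*$ works. Then there are solutions $u_j$ (with coefficients $A_j$ satisfying \textup{(A1)--(A4)} with the same structural data), centers $\mathbf z_j\in Q(\mathbf0,1)$, radii $r_j\to0$, and $\eps_j/r_j\to0$. Along a subsequence we may take a fixed degree $m\in[2,L]$, and we have $|D_1(u_j;\mathbf z_j,2^{-i}r_j)-m|\le 1/j$ for $0\le i\le 3$, yet the asserted $L^\infty$ bound fails by at least $\zeta$. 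Set $s_j=r_j/4$ and $w_j=T_{\mathbf z_j,s_j}u_j/H_1(u_j;\mathbf z_j,s_j)^{1/2}$.

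\emph{Step 1: compactness on a two-sided cylinder.} Lemma~\ref{lem:forward-bound} gives $|w_j(y,\tau)|\le C(1+|y|)^d$ for $|y|\le 2/s_j$ and $0\le\tau\le4$. Combined with the backward bounds from the proof of Lemma~\ref{lem:fc-approximation}, this yields polynomial growth on $\{|y|\le 2/s_j\}\times[-16,4]$ with constants uniform in $j$. The Geng--Shen uniform Hölder/Lipschitz estimates and \cite[Theorem~3.6]{GS} then extract a locally uniform limit $v$. This $v$ is caloric on $\R^n\times(-16,4]$ with polynomial growth. The argument ending in \eqref{eq:corrected-gradient-error} also holds two-sidedly (apply \cite[Theorem~6.1]{GS20} on $Q(\mathbf0,2)$, now interior in time) and gives $\nabla w_j-J_j\nabla v\to0$ uniformly on $Q(\mathbf0,1)$.

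\emph{Step 2: identify the limit.} By \eqref{eq:truncated-scaling} and \eqref{eq:height-convergence}, with the tail control \eqref{eq:cutoff-tail} supplied by the polynomial envelope, the truncated masses converge, so $D(v;\mathbf0,q)=m$ for $q\in\{4,2,1,1/2\}$ after rescaling by $s_j=r_j/4$. Since $H(v;\mathbf0,1)=1$ is forced by the normalization, Lemma~\ref{lem:dominant-degree} with $\delta=0$ shows that $v=a_mp_m$ with $a_m=1$. Thus $v$ is a homogeneous caloric polynomial of degree $m$ with $\|v\|_G=1$. Because $v$ is a polynomial, it extends through $t=4$, and Lemma~\ref{lem:dominant-degree} applies on the full cylinder.

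\emph{Step 3: convergence of the projected profiles.} By definition $\psi_{\mathbf z_j,s_j}=\mathcal C_m(\Psi_j/\|\Psi_j\|)$, where $\Psi_j=\Pi_m[\vartheta(s_j\cdot)u_j(x_j+s_j\cdot,t_j-s_j^2)]$. After dividing by $H_1^{1/2}$, the function inside the projection converges in $L^2(\nu_{0,1})$ to $v(\cdot,-1)$, again by local uniform convergence together with the Gaussian tail control. Hence $\Psi_j/H_1^{1/2}\to\Pi_m v(\cdot,-1)=v(\cdot,-1)$, whose norm is $1$, and therefore $\Psi_j/\|\Psi_j\|\to v(\cdot,-1)$. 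Since $\mathcal C_m$ is a linear map on a finite-dimensional space, $\psi_{\mathbf z_j,s_j}\to v$ in $C^1(Q(\mathbf0,1))$. Combining with Step 1:
\[
\|w_j-\psi_j\|_{L^\infty(Q(\mathbf0,1))}+\|\nabla w_j-J_j\nabla\psi_j\|_{L^\infty(Q(\mathbf0,1))}\to0,
\]
where we use that $|J_j|$ is uniformly bounded. This contradicts the failure by $\zeta$.

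\emph{Main obstacle.} The delicate step is Step 1, specifically uniform control forward in time. Lemma~\ref{lem:compactness} only treats $t\le0$, so Lemma~\ref{lem:forward-bound} is exactly what is needed. One must also check that the corrected-gradient estimate of \cite{GS20} holds up to $t=1$ on the two-sided cylinder. A further point to verify is that the limit $v$, which is a priori only defined on $(-16,4]$, is a polynomial there. This follows from the Hermite expansion: $H(v;\mathbf0,\cdot)$ has exactly one nonzero mode, so $v$ equals $p_m$ on $t<0$, and hence on $(-16,4]$ by the uniqueness of the Cauchy problem for polynomially growing solutions.
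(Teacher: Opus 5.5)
Your proposal is correct and follows essentially the same route as the paper. The shared steps are: contradiction with $r_j\to0$, $\eps_j/r_j\to0$ and pinching errors $\to0$; two-sided polynomial envelopes from the backward compactness bounds and Lemma~\ref{lem:forward-bound}; identification of the limit as an $m$-homogeneous caloric polynomial, extended forward by uniqueness in the polynomial-growth class; convergence of the normalized projections $\psi_j$; and \cite[Theorem~6.1]{GS20} for the corrected gradient. The only cosmetic difference is that you identify the limit via Lemma~\ref{lem:dominant-degree}, whereas the paper uses the rigidity in Lemma~\ref{lem:caloric-gaussian-doubling}; since $\delta=0$ lies outside the range of Lemma~\ref{lem:dominant-degree}, you should apply it for every $\delta\in(0,1/2]$ to conclude $a_m=1$.
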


\begin{proof}
We argue by contradiction. Choose the thresholds so that Lemma~\ref{lem:forward-bound} applies at
$s=r/4$, noting that $\eps/s=4\eps/r$. Put $T=4$.
If the assertion failed, choose counterexamples with
$r_j\to0$, $\eps_j/r_j\to0$ and pinching errors $\delta_j\to0$,
fixing the degree $m$ along a subsequence. Put $s_j=r_j/4$ and let
$w_j$ be normalized as in the statement, so that
\[
 H_{1/s_j}(w_j;\mathbf0,1)=1.
\]
The mass comparison \eqref{eq:fc-mass-bounds} at scale $s_j$ gives
\[
 c\le\int_{B_1}|w_j(y,0)|^2\,dy
 =\frac{s_j^{-n}\displaystyle\int_{B(x_j,s_j)}
              |u_{\eps_j}(x,t_j)|^2\,dx}
        {H_1(u_{\eps_j};\mathbf z_j,s_j)}\le C,
 \qquad \mathbf z_j=(x_j,t_j).
\]
After normalizing this terminal $L^2$ mass, the argument giving
\eqref{eq:terminal-control}--\eqref{eq:polynomial-cylinder-bound} in
Lemma~\ref{lem:compactness} applies up to $R=1/s_j$, since here the
$\Theta$ bound holds at every scale in this range. Scaling back the bounded
normalizing factor yields
\begin{equation}\label{eq:two-sided-backward-growth}
 \int_{B_R}|w_j(y,0)|^2\,dy\le CR^{\beta_0},\qquad
 \sup_{Q^-(\mathbf0,3R)}|w_j|\le CR^d
 \quad(1\le R\le1/s_j).
\end{equation}
Here $d=\max\{0,(\beta_0-n)/2\}$. Lemma~\ref{lem:forward-bound}
provides the complementary estimate
\begin{equation}\label{eq:two-sided-forward-growth}
 |w_j(y,t)|\le C(1+|y|)^d
 \qquad(|y|\le2/s_j,\ 0\le t\le T).
\end{equation}
Interior H\"older estimates and homogenization compactness yield
\[
 w_j\to v\quad\text{locally uniformly on }\R^n\times(-\infty,T),
 \qquad \partial_tv-\Delta v=0.
\]
Equations~\eqref{eq:two-sided-backward-growth} and \eqref{eq:two-sided-forward-growth} control the Gaussian tails. Passing the
normalization and the pinching to the limit gives
\[
 H(v;\mathbf0,1)=1,\qquad
 D(v;\mathbf0,q)=m\quad(q=4,2,1,1/2).
\]
Lemma~\ref{lem:caloric-gaussian-doubling} in Appendix~\ref{sec:gaussian-appendix} shows that $v=P$ is an
$m$-homogeneous caloric polynomial for $t\le0$. Uniqueness for the
forward heat equation in the polynomial-growth class extends this
identity to $t<T$. In particular, $\|P\|_G=1$.

Local uniform convergence and the Gaussian tail bounds give
\[
 f_j:=\vartheta(s_j\,\cdot)w_j(\cdot,-1)
 \longrightarrow P(\cdot,-1)
 \quad\text{in }L^2(\nu_{0,1}).
\]
Since $P(\cdot,-1)$ belongs to the degree-$m$ Hermite space,
continuity of the orthogonal projection gives
\[
 \Pi_m f_j\longrightarrow P(\cdot,-1),\qquad
 \|\Pi_m f_j\|_{L^2(\nu_{0,1})}\longrightarrow\|P\|_G=1.
\]
In particular, these norms are nonzero for large $j$.
The positive amplitude factor $H_1(u_{\eps_j};\mathbf z_j,s_j)^{-1/2}$
cancels in the normalized projection. Hence the definition of
$\psi_j$ and continuity of the homogeneous caloric extension give
\[
 \psi_j=\mathcal C_m\!\left(
       \frac{\Pi_m f_j}{\|\Pi_m f_j\|_{L^2(\nu_{0,1})}}\right)
 \longrightarrow\mathcal C_m[P(\cdot,-1)]=P.
\]
By finite dimensionality, this convergence holds in every polynomial
norm, including the spatial derivative norms on fixed compact sets.

Since $\overline{Q(\mathbf0,3/2)}\Subset\R^n\times(-\infty,T)$,
local uniform convergence directly implies
\[
 \|w_j-P\|_{L^2(Q(\mathbf0,3/2))}\longrightarrow0.
\]
Cover $Q(\mathbf0,1)$ by finitely many backward cylinders of radius
$1/8$ whose doubled cylinders lie in $Q(\mathbf0,3/2)$.
Applying \cite[Theorem~6.1]{GS20} on this fixed cover gives, with
$\eta_j=\eps_j/s_j$,
\[
 \|\nabla w_j-J_j\nabla P\|_{L^\infty(Q(\mathbf0,1))}
 \le C\|w_j-P\|_{L^2(Q(\mathbf0,3/2))}
 +C_P\eta_j\log(2+\eta_j^{-1})\longrightarrow0.
\]
Since $J_j$ is uniformly bounded, the convergence of $w_j$ and $\psi_j$
to $P$ implies
\[
 \|w_j-\psi_j\|_{L^\infty(Q(\mathbf0,1))}
 +\|\nabla w_j-J_j\nabla\psi_j\|_{L^\infty(Q(\mathbf0,1))}
 \longrightarrow0.
\]
This contradicts the choice of counterexamples.
\end{proof}

\begin{lemma}\label{lem:two-center-qualitative}
Let $u_\eps$ solve \eqref{eq:homogenization-equation} under
\textup{(A1)--(A4)} in $Q(\mathbf0,6)$ and assume \eqref{eq:uniform-theta}. Fix
$L\ge2$ and $\zeta>0$.
There exist $\delta_*,\eta_*>0$ and $\kappa_*\ge2$, depending only on
$\Theta_1,L,\zeta$ and structural constants, with the following property.
Suppose
\[
 0<r\le1/\kappa_*,
 \quad \eps/r\le\eta_*,\quad
 \mathbf z_0,\mathbf z_1\in Q(\mathbf0,1),
 \quad d_P(\mathbf z_0,\mathbf z_1)\le r/4.
\]
If $2\le m\le L$ and
\[
 |D_1(u_\eps;\mathbf z_i,2^{-j}r)-m|\le\delta_*
 \qquad(i=0,1;\ j=0,1,2,3),
\]
then, writing $\psi_i=\psi_{\mathbf z_i,r/4}$,
\begin{align}
 &\|\psi_0-\psi_1\|_G\le\zeta,\\
 &\left\|\frac{x_1-x_0}{r}\cdot\nabla\psi_0\right\|_G
 +\frac{|t_1-t_0|}{r^2}\|\partial_t\psi_0\|_G\le\zeta.
\end{align}
\end{lemma}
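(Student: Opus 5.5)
The plan is a compactness argument modeled on the proof of Lemma~\ref{lem:two-sided-pinching}, applied to both centers simultaneously and using the same normalization at $\mathbf z_0$.

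Suppose the statement fails for some fixed $L,\zeta$. Then there are counterexamples with $r_j\to0$, $\eps_j/r_j\to0$, $\delta_j\to0$, and a fixed degree $m$ along a subsequence. Put $s_j=r_j/4$ and write
\[
 \mathbf z_1^j-\mathbf z_0^j=(s_j\xi_j,s_j^2\beta_j),\qquad |\xi_j|\le1,\ |\beta_j|\le1.
\]
After extraction, $(\xi_j,\beta_j)\to(\xi,\beta)$. Normalize both centers by the mass at $\mathbf z_0^j$:
\[
 w_j=\frac{T_{\mathbf z_0^j,s_j}u_{\eps_j}}{H_1(u_{\eps_j};\mathbf z_0^j,s_j)^{1/2}}.
\]
As in Lemma~\ref{lem:two-sided-pinching}, $w_j\to P_0$ locally uniformly on $\R^n\times(-\infty,4)$, where $P_0$ is $m$-homogeneous about $\mathbf0$ with $\|P_0\|_G=1$. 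Here \eqref{eq:two-sided-backward-growth} and Lemma~\ref{lem:forward-bound} supply the two-sided polynomial growth, and Lemma~\ref{lem:caloric-gaussian-doubling} supplies the homogeneity.

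Next consider the second center. The function $\widetilde w_j(y,t)=w_j(\xi_j+y,\beta_j+t)$ is, up to the scalar
\[
 \lambda_j=\frac{H_1(u_{\eps_j};\mathbf z_1^j,s_j)^{1/2}}{H_1(u_{\eps_j};\mathbf z_0^j,s_j)^{1/2}},
\]
exactly the normalized rescaling at $\mathbf z_1^j$. By \eqref{eq:fc-mass-bounds} and the $\Theta_1$ bound with $d_P\le s_j$, the ratio $\lambda_j$ lies in $[c,C]$. Applying the same compactness at $\mathbf z_1^j$ shows that $\lambda_j^{-1}\widetilde w_j$ converges to an $m$-homogeneous $P_1$ with $\|P_1\|_G=1$. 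But $\widetilde w_j\to P_0(\xi+\cdot,\beta+\cdot)$, so $P_0$ is a positive multiple of an $m$-homogeneous polynomial about $(\xi,\beta)$. Lemma~\ref{lem:parabolic-spine}(iii) then gives $(\xi,\beta)\in\mathcal I(P_0)$. In particular $\xi\cdot\nabla P_0\equiv0$ and $\beta\,\partial_tP_0\equiv0$, and hence $P_1=P_0$, since the translate equals $P_0$ and both have unit norm.

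It remains to transfer this to $\psi_0,\psi_1$. The convergence $\psi_{\mathbf z_i^j,s_j}\to P_i$ in every polynomial norm is exactly as at the end of the proof of Lemma~\ref{lem:two-sided-pinching}: the $L^2(\nu_{0,1})$ convergence of cutoff slices, continuity of $\Pi_m$ and $\mathcal C_m$, and the fact that the amplitude cancels in the normalization. This gives $\|\psi_0-\psi_1\|_G\to0$. Since $(x_1-x_0)/r=\xi_j/4$ and $|t_1-t_0|/r^2=|\beta_j|/16$, the second quantity converges to
\[
 \tfrac14\|\xi\cdot\nabla P_0\|_G+\tfrac1{16}|\beta|\,\|\partial_tP_0\|_G=0.
\]
Both quantities therefore eventually fall below $\zeta$, which is the desired contradiction.

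I expect the main obstacle to be the second step, namely justifying compactness at $\mathbf z_1^j$ with the limit identified as a translate of the limit at $\mathbf z_0^j$. This requires $\mathbf z_1^j$ to stay inside the domain where the forward and backward growth bounds hold uniformly. The time shift $\beta_j$ can be positive, so the forward bound of Lemma~\ref{lem:forward-bound} up to $\tau=4$ is essential. The scalar $\lambda_j$ must also be kept bounded away from $0$ and $\infty$; if the mass comparison proves awkward, I would instead argue directly that the limit of the $\mathbf z_1$-normalized functions equals $c\,P_0(\xi+\cdot,\beta+\cdot)$ by uniqueness of locally uniform limits.
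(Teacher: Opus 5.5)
Your proof is correct, and it ends the same way as the paper's: the limit is homogeneous about both centers, and then Lemma~\ref{lem:parabolic-spine}\textup{(iii)} applies. The compactness, however, is organized differently.

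The paper orders the two centers in time and normalizes at the later one, $\mathbf z_{+}$, at scale $r$. The other center then sits at $(a_j,b_j)$ with $b_j\le0$. So only the backward compactness of Lemma~\ref{lem:compactness} is needed, and there is a single limit $P$. The pinching at $\mathbf z_-$ is transferred directly to $P$ through $H_{K_j}(w_j;(a_j,b_j),q)\to H(P;(a,b),q)$, using a uniform tail bound on the moving cutoff supports.

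You instead rerun the compactness of Lemma~\ref{lem:two-sided-pinching} at each center separately and identify the two limits through $P_0(\xi+\cdot,\beta+\cdot)=\lambda P_1$. This reuses an existing argument verbatim and needs no moving-center tail estimates, at the cost of an extra identification step.

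One justification needs fixing. The $\Theta_1$ bound and \eqref{eq:fc-mass-bounds} only bound the mass at the earlier center by the mass at the later one. When $\beta_j>0$ they therefore give just one side of $\lambda_j\in[c,C]$. Your fallback repairs this cleanly: $\widetilde w_j$ and $\lambda_j^{-1}\widetilde w_j$ both converge locally uniformly to nonzero polynomials. Evaluating at a point where both limits are nonzero forces $\lambda_j\to\lambda\in(0,\infty)$.

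With that fallback you can compare the limits on $\{t<-1\}$, where $\beta_j+t<0$. All the Gaussian slices you use are also backward. So the forward bound of Lemma~\ref{lem:forward-bound}, which you call essential, is not actually needed in your route. This is exactly what the paper's choice of the later center as base point exploits.
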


\begin{proof}
We argue by contradiction. For fixed $L,\Theta_1,\zeta$, choose
counterexamples with $r_j\to0$, $\eps_j/r_j\to0$ and pinching errors
$\delta_j\to0$. Passing to a subsequence fixes their degree
$m\in[2,L]$. At least one asserted estimate fails, or one defining
projection vanishes. Denote the original pair
$\{\mathbf z_{0,j},\mathbf z_{1,j}\}$ by
$\{\mathbf z_{+,j},\mathbf z_{-,j}\}$, where
$\mathbf z_{\pm,j}=(x_{\pm,j},t_{\pm,j})$ and
$t_{+,j}\ge t_{-,j}$.
Multiplication by a positive constant preserves the equation,
$\Theta$, the doubling conditions and $\psi$. We therefore normalize
and rescale so that
\[
 r_j^{-n}\int_{B(x_{+,j},r_j)}u_{\eps_j}^2(x,t_{+,j})\,dx=1,
 \qquad w_j=T_{\mathbf z_{+,j},r_j}u_{\eps_j},\qquad K_j=r_j^{-1}.
\]
After extraction, the relative positions satisfy
\[
 (a_j,b_j):=\left(\frac{x_{-,j}-x_{+,j}}{r_j},
                  \frac{t_{-,j}-t_{+,j}}{r_j^2}\right)\to(a,b),
 \qquad |a_j|\le\tfrac14,\quad-\tfrac1{16}\le b_j\le0.
\]
Writing $A_j$ for the coefficient field of the $j$th counterexample,
we have
\[
 \partial_sw_j-\operatorname{div}_y\!\left[
 A_j\!\left(\frac{x_{+,j}+r_jy}{\eps_j},
                  \frac{t_{+,j}+r_j^2s}{\eps_j^2}\right)
 \nabla_yw_j\right]=0
 \quad\text{in }Q^-(\mathbf0,4K_j).
\]
Indeed, this region maps to
$Q^-(\mathbf z_{+,j},4)\subset Q(\mathbf0,6)$.
The translated coefficients satisfy \textup{(A1)--(A4)}, with
homogenization parameter $\eps_j/r_j$. By
\eqref{eq:uniform-theta} and \eqref{eq:theta-scale-invariance},
\[
 \Theta(w_j,\mathbf0,q)\le\Theta_1\quad(0<q\le K_j),
 \qquad \int_{B_1}w_j^2(y,0)\,dy=1.
\]

Lemma~\ref{lem:compactness} gives, after extraction,
\[
\begin{gathered}
 w_j\to v\quad\text{locally uniformly on }\R^n\times(-\infty,0],\\
 \partial_tv-\Delta v=0,\qquad \int_{B_1}v^2(y,0)\,dy=1.
\end{gathered}
\]
Equation~\eqref{eq:height-convergence} in Lemma~\ref{lem:compactness} gives convergence of the Gaussian masses at
$\mathbf0$, with positive limiting masses. Using
\eqref{eq:truncated-scaling}, the pinching at $\mathbf z_{+,j}$
passes to the limit:
\[
 D(v;\mathbf0,2^{-k})=m\qquad(k=0,1,2,3).
\]
Monotonicity then yields
\[
 D(v;\mathbf0,r)\equiv m\qquad(1/8\le r\le1).
\]
Lemma~\ref{lem:caloric-gaussian-doubling} in Appendix~\ref{sec:gaussian-appendix} implies that $v$ is a
caloric polynomial homogeneous of degree $m$. Set $P=v$.

For the moving centers, both cutoff supports lie in $B(0,3K_j)$.
The argument for \eqref{eq:polynomial-cylinder-bound}, applied using
the available $\Theta$ bound at every scale up to $K_j$, gives
$|w_j(y,b_j-q^2)|\le C(1+|y|)^d$ on these supports,
where $d=\max\{0,(\beta_0-n)/2\}$ and $1/16\le q\le1$.
Consequently, their Gaussian tails vanish uniformly. Local uniform
convergence and \eqref{eq:truncated-scaling} yield
\[
 H_1(u_{\eps_j};\mathbf z_{-,j},r_jq)
 =H_{K_j}(w_j;(a_j,b_j),q)
 \longrightarrow H(P;(a,b),q)>0.
\]
The last inequality holds because $P$ is a nonzero caloric polynomial.
Passing the pinching at $\mathbf z_{-,j}$ to the limit gives
$D(P;(a,b),2^{-k})=m$ for $k=0,1,2,3$; applying
Lemma~\ref{lem:caloric-gaussian-doubling} to $P(x+a,t+b)$ makes $P$ homogeneous of degree $m$ about $(a,b)$.
Lemma~\ref{lem:parabolic-spine}\textup{(iii), (i)} therefore gives
\[
 (a,b)\in\mathcal I(P),\qquad
 a\cdot\nabla P\equiv0,\qquad b\,\partial_tP\equiv0,
 \qquad P(x+a,t+b)=P(x,t).
\]

Local uniform convergence and the uniform Gaussian tail bound on the moving cutoff supports give Gaussian $L^2$ convergence. Homogeneity and translation
invariance of $P$, followed by the continuous projection $\Pi_m$, then give
\[
 \Psi_{\mathbf z_{\pm,j},r_j/4}
 \longrightarrow4^{-m}P(\cdot,-1)
 \quad\text{in }L^2(\nu_{0,1}).
\]
Since the limit is nonzero, normalization and homogeneous caloric
extension yield
\[
 \psi_{\pm,j}:=\psi_{\mathbf z_{\pm,j},r_j/4}
 \longrightarrow P/\|P\|_G.
\]
By finite dimensionality, the spatial and time derivatives also
converge in Gaussian norm. Hence
\[
\begin{aligned}
 &\|\psi_{+,j}-\psi_{-,j}\|_G\\
 &\quad+\max_{i\in\{+,-\}}\left\{
 \|a_j\cdot\nabla\psi_{i,j}\|_G
 +|b_j|\|\partial_t\psi_{i,j}\|_G\right\}\longrightarrow0.
\end{aligned}
\]
This proves both asserted estimates for the original centers, regardless
of their time ordering, and contradicts the choice of counterexamples.
\end{proof}

\begin{proposition}
\label{prop:multicenter-plane}
Let $u_\eps$ solve \eqref{eq:homogenization-equation} under
\textup{(A1)--(A5)} in $Q(\mathbf0,6)$ and satisfy
\eqref{eq:uniform-theta}.
Fix $L\ge2$, $0<\gamma<1/10$ and $0<\omega\le1/2$. There exist
$\delta,\eta_*>0$ and $\kappa_*\ge2$ such that the following holds.
Let
\[
 0<r_0\le1/\kappa_*,
 \quad \mathbf z_i\in Q(\mathbf z_*,r_0/8)\subset Q(\mathbf0,1),
 \quad \eta_*^{-1}\eps\le r_i\le r_0/64.
\]
Suppose $Q(\mathbf z_i,r_i/20)$ are mutually disjoint and
\[
 |D_1(u_\eps;\mathbf z_i,s)-m|\le\delta
 \quad(r_i\le s\le r_0),\qquad 2\le m\le L.
\]
Then there is a parabolic linear subspace $V$ with
\[
 d_P(\mathbf z_i-\mathbf z_k,V)
 \le\gamma d_P(\mathbf z_i,\mathbf z_k)
 \qquad(i,k),
\]
and one of the following alternatives holds:
\begin{enumerate}[label=\textup{(\roman*)},leftmargin=3em]
\item $\dim_PV\le n-1$;
\item $V=W\times\mathbb R$, $\dim W=n-2$, and there exist
$P,\phi\in\mathcal P_m$ such that
\begin{equation}\label{eq:mc-cylindrical-model}
 \|P\|_G=1,\qquad \partial_t\phi=0,\qquad \nabla_W\phi=0,
 \qquad \|P-\phi\|_G\le\omega,
\end{equation}
and
\begin{equation}\label{eq:mc-common-profile}
 \|\psi_{\mathbf z_i,2^{-k}r_0/4}-P\|_G\le\omega
 \qquad(k\in\mathbb N_0,\ 2^{-k}r_0/4\ge r_i).
\end{equation}
\end{enumerate}
\end{proposition}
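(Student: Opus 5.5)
Fix a single reference profile and apply the polynomial dichotomy of Lemma~\ref{lem:geometric-dichotomy} to it. Set $P:=\psi_{\mathbf z_*',r_0/4}$, where $\mathbf z_*'$ is one of the centers, say $\mathbf z_1$. The pinching at all scales $r_i\le s\le r_0$ makes the projections $\Psi$ nonzero, so $P$ is well defined, and $P\in\mathcal P_m$ with $\|P\|_G=1$. Apply Lemma~\ref{lem:geometric-dichotomy} with $\gamma/2$ in place of $\gamma$ and $\tau=\omega/2$. This gives $\eta_0=\eta(n,L,\gamma/2,\omega/2)$ and a plane $V$ satisfying (i) or (ii). In case (ii), Lemma~\ref{lem:cylinder-projection} gives $\phi$ with $\|P-\phi\|_G=\|\psi\|_G\le\omega/2$, which yields \eqref{eq:mc-cylindrical-model}. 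The remaining tasks are (a) to show that every difference $\mathbf z_i-\mathbf z_k$ is an approximate invariant direction at the right scale, and (b) to prove the common-profile bound \eqref{eq:mc-common-profile}.

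For (a), fix a pair $i\ne k$ and let $\rho=d_P(\mathbf z_i,\mathbf z_k)$. Disjointness of $Q(\mathbf z_i,r_i/20)$ gives $\rho\gtrsim\max\{r_i,r_k\}/40$, and both centers lie in $Q(\mathbf z_*,r_0/8)$, so $\rho\le r_0/4$. Choose a dyadic scale $r=2^{-k'}r_0$ with $4\rho\le r\le 8\rho$. Then $r\ge r_i,r_k$ up to a fixed factor, so the pinching holds at both centers on $r/8\le s\le r$; if necessary, shrink by a bounded number of dyadic factors using the factor $1/64$ in the hypothesis. Lemma~\ref{lem:two-center-qualitative}, with a small $\zeta$, then gives
\[
 \left\|\tfrac{x_k-x_i}{r}\cdot\nabla\psi_{\mathbf z_i,r/4}\right\|_G
 +\tfrac{|t_k-t_i|}{r^2}\|\partial_t\psi_{\mathbf z_i,r/4}\|_G\le\zeta .
\]
Setting $(a,b)=((x_k-x_i)/\rho,(t_k-t_i)/\rho^2)\in\partial Q(\mathbf0,1)$ and using $r/\rho\le8$, the same bound holds for $(a,b)$ with $64\zeta$. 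If $\|\psi_{\mathbf z_i,r/4}-P\|_G\le\zeta'$, then the bound transfers to $P$ with an extra error $C(L)\zeta'$, since derivatives are bounded operators on the finite-dimensional space $\mathcal P_m$. Taking $64\zeta+C\zeta'\le\eta_0$, \eqref{eq:approximate-directions} gives $d_P((a,b),V)\le\gamma/2$. Scaling back by $\rho$ yields $d_P(\mathbf z_i-\mathbf z_k,V)\le\gamma\rho$.

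For (b), and for the transfer step in (a), I need $\|\psi_{\mathbf z_i,2^{-k}r_0/4}-P\|_G\le\min\{\omega,\zeta'\}$ uniformly over all admissible $k$ and $i$. I expect this to be the main obstacle. It cannot come from chaining Lemma~\ref{lem:two-center-qualitative} across scales, because those errors are not summable. It must come from the summable turning estimate. Apply Theorem~\ref{prop:fc-turning} at the center $\mathbf z_i$, translated, with initial scale $\approx r_0/4$ and terminal scale $\approx r_i$. The pinching at $2r_j$ and $r_j/2$ is available because $r_i\le s\le r_0$ and $r_i\le r_0/64$. By \eqref{eq:turning-total}, the total variation of the normalized projections $\mathfrak p_m(r_j)$ is at most $\tfrac{16}{7}\delta+C(\eta_*+e^{-c'\kappa_*^2})$, which is made small by choosing $\delta,\eta_*$ small and $\kappa_*$ large. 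Since $\|\mathfrak p_m\|\ge 1-O(\sqrt\delta)$ (compare Lemma~\ref{lem:dominant-degree} via Lemma~\ref{lem:fc-approximation}), normalizing and applying $\mathcal C_m$, an isometry onto $\|\cdot\|_G$, gives $\|\psi_{\mathbf z_i,s}-\psi_{\mathbf z_i,r_0/4}\|_G$ small for all dyadic $s$ in the range. The remaining gap $\|\psi_{\mathbf z_i,r_0/4}-\psi_{\mathbf z_1,r_0/4}\|_G$ is small by Lemma~\ref{lem:two-center-qualitative} applied at scale $r_0$, since $d_P(\mathbf z_i,\mathbf z_1)\le r_0/4$. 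A triangle inequality then gives \eqref{eq:mc-common-profile}. The order of constants is $\gamma,\omega\mapsto\eta_0\mapsto(\zeta,\zeta')\mapsto(\delta,\eta_*,\kappa_*)$, each taken as the minimum or maximum of the thresholds of the cited lemmas.
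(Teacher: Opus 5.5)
Your proposal follows the paper's proof essentially step for step. You fix $P=\psi_{\mathbf z_1,r_0/4}$ and control $\|\psi_{\mathbf z_i,2^{-k}r_0/4}-P\|_G$ by the summable turning estimate of Theorem~\ref{prop:fc-turning} together with Lemma~\ref{lem:two-center-qualitative} at scale $r_0$. You then get pairwise approximate invariance from Lemma~\ref{lem:two-center-qualitative} at a dyadic scale comparable to $d_P(\mathbf z_i,\mathbf z_k)$, transfer it to $P$, and conclude with Lemma~\ref{lem:geometric-dichotomy}.

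Two small repairs are needed.

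\begin{enumerate}
\item The pairwise scale must be enlarged, not shrunk. Take the dyadic $r=2^{-j}r_0\in[M,2M)$ with $M=\max\{4\rho,8r_i,8r_k\}$. Since $\rho\le r_0/4$ and $r_i\le r_0/64$, you have $M\le r_0$, so this $r$ exists and satisfies $r/8\ge\max\{r_i,r_k\}$. With this choice only $r/\rho<320$ holds, not $r/\rho\le8$, so your $64\zeta$ becomes $320^2\zeta$. This is harmless, because $\zeta$ is chosen after $\eta_0$.
\item Lemma~\ref{lem:geometric-dichotomy} is stated only for $n\ge2$, so the case $n=1$ needs the paper's separate argument. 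There $\|\partial_xP\|_G=\sqrt{m/2}\ge1$ and $\|\partial_tP\|_G\ge c(L)>0$, so the approximate invariance is impossible for any $(a,b)\in\partial Q(\mathbf0,1)$. Hence there is at most one center, and $V=\{\mathbf0\}$ works.
\end{enumerate}
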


\begin{proof}
Put $s_0=r_0/4$ and define
$\mathfrak p_i(s)$ by \eqref{eq:turning-projected-profile}, with
$u_\eps$ replaced by $T_{\mathbf z_i,1}u_\eps$.
The translated periodic coefficients satisfy \textup{(A1)--(A4)}
with the same constants. Lemmas~\ref{lem:fc-approximation} and
\ref{lem:dominant-degree} give
\[
 \|\mathfrak p_i(s_0)\|\ge3/4.
\]
Theorem~\ref{prop:fc-turning} gives, at dyadic $s=2^{-k}s_0\ge r_i$,
\[
 \|\mathfrak p_i(s)-\mathfrak p_i(s_0)\|
 \le C\{\delta+\eta_*+e^{-c/r_0^2}\}\le1/4.
\]
Thus $\|\mathfrak p_i(s)\|\ge1/2$, and normalization yields
\begin{equation}\label{eq:mc-uniform-turning}
 \|\psi_{\mathbf z_i,r_0/4}
       -\psi_{\mathbf z_i,2^{-k}r_0/4}\|_G
 \le C\{\delta+\eta_*+e^{-c/r_0^2}\}
\end{equation}
whenever $2^{-k}r_0/4\ge r_i$.
Choose a center $\mathbf z_0$ from the family and set
\[
 P:=\psi_{\mathbf z_0,r_0/4}\in\mathcal P_m,
 \qquad\|P\|_G=1.
\]
Since $d_P(\mathbf z_i,\mathbf z_0)\le r_0/4$,
Lemma~\ref{lem:two-center-qualitative} at scale $r_0$, with
accuracy $\zeta>0$, gives
\begin{equation}\label{eq:mc-common-scale-comparison}
 \|\psi_{\mathbf z_i,r_0/4}-P\|_G\le\zeta
 \qquad\text{for all }i.
\end{equation}

Fix $i\ne k$ and write $d=d_P(\mathbf z_i,\mathbf z_k)$.
Disjointness and $\mathbf z_i,\mathbf z_k\in Q(\mathbf z_*,r_0/8)$ give
\[
 \max\{r_i,r_k\}\le20d,\qquad d\le r_0/4.
\]
Choose a dyadic scale $r=2^{-j}r_0$, $j\ge0$, such that
\[
 \max\{4d,8r_i,8r_k\}\le r<2\max\{4d,8r_i,8r_k\}.
\]
This is possible since the maximum is at most $r_0$. Then
\[
 r\le r_0,\qquad r/8\ge\max\{r_i,r_k\},\qquad
 d/r\le1/4,\qquad 4\le r/d<320.
\]
Lemma~\ref{lem:two-center-qualitative} applies at scale $r$ with
accuracy $\zeta$. Set $P_i=\psi_{\mathbf z_i,r/4}$ and
$a=(x_i-x_k)/d$, $b=(t_i-t_k)/d^2$. Changing the normalization
from $r$ to $d$ gives
\[
 \|a\cdot\nabla P_i\|_G+|b|\|\partial_tP_i\|_G
 \le320^2\zeta,
 \qquad\max\{|a|,|b|^{1/2}\}=1.
\]
Equations~\eqref{eq:mc-common-scale-comparison} and \eqref{eq:mc-uniform-turning} yield
\[
 \|P_i-P\|_G
 \le\zeta+C\{\delta+\eta_*+e^{-c/r_0^2}\}.
\]
Finite-dimensional derivative bounds therefore imply
\begin{equation}\label{eq:mc-invariance}
\begin{aligned}
 \|a\cdot\nabla P\|_G+|b|\|\partial_tP\|_G
 &\le C_L\{320^2\zeta+\delta+\eta_*+e^{-c/r_0^2}\}\\
 &\le\tau.
\end{aligned}
\end{equation}
For any prescribed $\tau>0$, the last inequality follows by choosing
$\zeta$, then $\delta,\eta_*$ sufficiently small and $\kappa_*$
sufficiently large.

For $n=1$, we argue by contradiction. If there were two distinct
centers, then
\[
 \|\partial_xP\|_G=\sqrt{m/2}\ge1,
 \qquad \|\partial_tP\|_G\ge c(L)>0.
\]
By \eqref{eq:mc-invariance},
\[
 1=\max\{|a|,\sqrt{|b|}\}
 \le\max\{\tau,\sqrt{\tau/c(L)}\}<1
\]
for $\tau<\min\{1,c(L)\}$. Hence there is at most one center, and
$V=\{\mathbf0\}$ suffices.

Let $n\ge2$. Take $\sigma=\omega$ and choose $\tau>0$ below the
threshold in Lemma~\ref{lem:geometric-dichotomy} for $\gamma,\sigma$.
Together with \eqref{eq:mc-invariance}, Lemma~\ref{lem:geometric-dichotomy} gives a parabolic
linear subspace $V$ such that
\[
 d_P(\mathbf z_i-\mathbf z_k,V)
 =d\cdot d_P((a,b),V)\le\gamma d.
\]
Either $\dim_P V\le n-1$,
or $V=W\times\mathbb R$, $\dim W=n-2$, and
\eqref{eq:mc-cylindrical-model} holds. Finally, choose
$\zeta,\delta,\eta_*$ sufficiently small and $\kappa_*$ sufficiently
large so that \eqref{eq:mc-invariance} holds and
\[
 \zeta+C\{\delta+\eta_*+e^{-c/r_0^2}\}\le\omega.
\]
Equations~\eqref{eq:mc-common-scale-comparison} and
\eqref{eq:mc-uniform-turning} then give \eqref{eq:mc-common-profile}.
\end{proof}

\begin{proposition}
\label{prop:cylindrical-singular-points}
There exists $\omega_0=\omega_0(L,\gamma)>0$ with the following
property. Under the assumptions of Proposition~\ref{prop:multicenter-plane},
take $0<\omega\le\omega_0$ and choose its thresholds also to satisfy
Lemma~\ref{lem:two-sided-pinching} with $\zeta=\mu\omega$.
If alternative \textup{(ii)} holds, then the subspace $W$ in that
alternative satisfies, for every $i$,
\[
 \operatorname{dist}(x-x_i,W)
 \le\gamma d_P(\mathbf z,\mathbf z_i)
\]
whenever
\[
 \mathbf z=(x,t)\in S(u_\eps),\qquad
 r_i/4\le d_P(\mathbf z,\mathbf z_i)\le r_0/64.
\]
\end{proposition}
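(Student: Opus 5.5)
We argue directly, at a fixed scale adapted to the distance. Fix $i$ and a singular point $\mathbf z=(x,t)\in S(u_\eps)$ with $\rho:=d_P(\mathbf z,\mathbf z_i)\in[r_i/4,r_0/64]$. Choose the dyadic scale $s=2^{-k}r_0/4$ with $s\ge r_i$ and $\rho\le s<8\rho$ (or, if needed, $s$ comparable to $\max\{\rho,r_i\}$; since $\rho\ge r_i/4$ the ratio $\rho/s$ stays in a fixed compact interval $[c_0,1]$). Set $r=4s$ and apply Lemma~\ref{lem:two-sided-pinching} at the center $\mathbf z_i$ and scale $r$. The pinching hypothesis holds because $r_i\le s$ and $r\le r_0$; we may arrange $r\le r_0$ by the upper bound $\rho\le r_0/64$. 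With $\zeta=\mu\omega$, the rescaled function $w=T_{\mathbf z_i,s}u_\eps/H_1^{1/2}$ satisfies $\|w-\psi\|_{L^\infty(Q(\mathbf0,1))}+\|\nabla w-J\nabla\psi\|_{L^\infty}\le\mu\omega$, where $\psi=\psi_{\mathbf z_i,s}$. The singular point corresponds to $\mathbf y=((x-x_i)/s,(t-t_i)/s^2)\in \overline{Q(\mathbf 0,1)}$, at which $w=0$ and $\nabla w=0$, and $d_P(\mathbf y,\mathbf0)=\rho/s\ge c_0$.

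Next we transfer the vanishing of the gradient to the model. By (A5), $\mu|\nabla\psi(\mathbf y)|\le|J\nabla\psi(\mathbf y)|=|\nabla w(\mathbf y)-J\nabla\psi(\mathbf y)|\le\mu\omega$, so $|\nabla\psi(\mathbf y)|\le\omega$. By \eqref{eq:mc-common-profile} and \eqref{eq:mc-cylindrical-model}, $\|\psi-\phi\|_G\le2\omega$, and finite dimensionality of $\mathcal P_m$ ($m\le L$) converts this into $\sup_{Q(\mathbf0,1)}|\nabla(\psi-\phi)|\le C_L\omega$. Hence $|\nabla\phi(\mathbf y)|\le(1+C_L)\omega$. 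Now \eqref{eq:cylinder-gradient} applies, with $\|\phi\|_G\ge1-\omega\ge1/2$:
\[
 c_L\operatorname{dist}(y,W)^{m-1}\le|\nabla\phi(y)|\le(1+C_L)\omega,
\]
so $\operatorname{dist}(y,W)\le(C\omega)^{1/(m-1)}$. Rescaling gives $\operatorname{dist}(x-x_i,W)\le s(C\omega)^{1/(m-1)}\le c_0^{-1}(C\omega)^{1/(m-1)}\rho$. Choosing $\omega_0$ so that $c_0^{-1}(C\omega_0)^{1/(m-1)}\le\gamma$ for all $2\le m\le L$ yields the claim.

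The main obstacle is the choice of scale. The point $\mathbf y$ must lie in $Q(\mathbf0,1)$, where Lemma~\ref{lem:two-sided-pinching} gives two-sided control; this is why the lemma includes forward times, since $t$ may exceed $t_i$. At the same time, $d_P(\mathbf y,\mathbf 0)$ must be bounded below so that the distance bound scales with $\rho$ rather than with $s$. The lower bound $\rho\ge r_i/4$ and the constraint $s\ge r_i$ make these compatible with a ratio $\rho/s\ge1/8$. The thresholds of Proposition~\ref{prop:multicenter-plane} must be chosen after $\omega$, which is consistent with the stated order of quantifiers.
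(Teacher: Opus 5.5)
Your proposal is correct and follows the paper's proof essentially step for step: a dyadic scale comparable to $\max\{d_P(\mathbf z,\mathbf z_i),r_i\}$, then Lemma~\ref{lem:two-sided-pinching} with $\zeta=\mu\omega$ together with \textup{(A5)} to get $|\nabla\psi_{\mathbf z_i,s}|\le\omega$ at the rescaled singular point, then the transfer to $\phi$ through \eqref{eq:mc-cylindrical-model}, \eqref{eq:mc-common-profile} and finite dimensionality, and finally \eqref{eq:cylinder-gradient} with $\|\phi\|_G\ge1/2$. The only slip is in the scale range: Lemma~\ref{lem:two-sided-pinching} at $r=4s$ requires pinching down to $r/8=s/2$, so you need $s\ge2r_i$ rather than $s\ge r_i$ (the paper takes $8\max\{d,r_i\}\le r<16\max\{d,r_i\}$, which gives $1/16\le d/s\le1/2$), and with this adjustment your argument goes through unchanged.
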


\begin{proof}
Let $P,\phi,W$ be as in alternative \textup{(ii)} of
Proposition~\ref{prop:multicenter-plane}. Fix $i$ and
$\mathbf z=(x,t)\in S(u_\eps)$ with
$r_i/4\le d=d_P(\mathbf z,\mathbf z_i)\le r_0/64$.
Choose $r=2^{-j}r_0$ with
\[
 8\max\{d,r_i\}\le r<16\max\{d,r_i\},\qquad s=r/4.
\]
Then $r/8\ge r_i$, $r\le r_0/4$ and
\[
 (y,t')=((x-x_i)/s,(t-t_i)/s^2)\in Q(\mathbf0,1),
 \qquad 1/16\le d/s\le1/2.
\]
Since $r\ge8r_i$, we have $\eps/r\le\eta_*/8$.
Lemma~\ref{lem:two-sided-pinching}, with $\zeta=\mu\omega$, and
\textup{(A5)} give at this singular point
\[
 |\nabla\psi_{\mathbf z_i,s}(y,t')|\le\omega.
\]
By \eqref{eq:mc-cylindrical-model}, \eqref{eq:mc-common-profile}
and the finite-dimensional gradient bound,
\[
 |\nabla\phi(y)|\le(2C_L+1)\omega.
\]
Set $c_m=m/(2^{m-1/2}\sqrt{m!})$.
Choose $0<\omega_0\le1/2$ so that
\begin{equation}\label{eq:mc-tolerance-choice}
 (2C_L+1)\omega_0
 \le\frac12\min_{2\le m\le L}c_m(\gamma/16)^{m-1}.
\end{equation}
By \eqref{eq:cylinder-gradient}, the stationary two-dimensional harmonic polynomial satisfies
\[
 |\nabla\phi(y)|=c_m\|\phi\|_G\operatorname{dist}(y,W)^{m-1}.
\]
Since $\|\phi\|_G\ge1-\omega\ge1/2$, the choice
\eqref{eq:mc-tolerance-choice} implies
$\operatorname{dist}(y,W)\le\gamma/16$. Hence
\[
 \operatorname{dist}(x-x_i,W)
 =s\operatorname{dist}(y,W)\le(\gamma/16)s\le\gamma d.\qedhere
\]
\end{proof}

\section{Covering and the singular-set estimate}\label{sec:measure}

For fixed $\Theta_1$ and coefficient data,
let $L\ge2$ be an integer and $\gamma=1/20$.
Fix $\omega=\omega_0(L,\gamma)/2$ from
Proposition~\ref{prop:cylindrical-singular-points}. Choose
$0<\delta\le1/16$ and the remaining thresholds jointly as in
Propositions~\ref{prop:multicenter-plane}--\ref{prop:cylindrical-singular-points}.
Choose $\kappa\ge8$ and
$0<\eps_0\le1/2$ such that $\kappa\ge\kappa_*$ and
$2\eps_0\le\eta_*$ for the thresholds in
Theorems~\ref{thm:ls}, \ref{thm:truncated-doubling-drop},
\ref{thm:low-doubling-nondegeneracy}, and
Propositions~\ref{prop:multicenter-plane}--\ref{prop:cylindrical-singular-points}.
For Theorem~\ref{thm:ls}, use all integers $1\le l\le L$ and both
signs with $\delta_0=\delta$, and also require $4L\eps_0\le\delta\eta_*$
for each of these thresholds. Set
\begin{equation}\label{eq:cover-fixed-parameters}
 \eta_0=\frac{\eps_0}{200},\qquad \kappa_0=4\kappa.
\end{equation}
These constants are fixed throughout this section. Dependence on the
fixed $\Theta_1$ and coefficient data is suppressed, including in $c(L)$.

\subsection{Stopping-scale cover}\label{sec:stopping}

We follow the covering construction of Lin and Shen
\cite[Lemma~7.1]{LS}.

\begin{lemma}\label{thm:localized-stopping-cover}
Let $u_\eps$ solve \eqref{eq:homogenization-equation} in
$Q(\mathbf0,6)$ under \textup{(A1)--(A5)} and satisfy
\eqref{eq:uniform-theta}. Let $l\in\{2,\ldots,L\}$, and suppose
\[
 0<R\le\frac1{\kappa_0},\quad 0<\eps\le\sigma<\eta_0R,
\]
with $Q(\mathbf z,2R)\subset Q(\mathbf0,1)$, and assume
\begin{equation}\label{eq:level-condition}
 D_1(u_\eps;\mathbf y,4R)\le l+\delta
 \qquad\bigl(\mathbf y\in S(u_\eps)\cap Q(\mathbf z,2R)\bigr).
\end{equation}
Put $E=S(u_\eps)\cap Q(\mathbf z,3R/2)$. There exist constants
$C>0$, $c=c(L)>0$ and finitely many balls $Q(\mathbf v_i,r_i)$, with
\begin{equation}\label{eq:localized-cover-radii}
 \mathbf v_i\in E,\qquad 0<r_i\le R/100,
\end{equation}
such that:
\begin{enumerate}[label=\arabic*.,leftmargin=2.5em]
\item
\begin{equation}\label{eq:localized-cover-inclusion}
 E\subset\bigcup_i Q(\mathbf v_i,r_i/4).
\end{equation}
\item
\begin{equation}\label{eq:localized-cover-content}
 \sum_i r_i^n\le CR^n.
\end{equation}
\item For every $i$, either
\[
 \frac{\sigma}{40\eps_0}\le r_i\le\frac{6\sigma}{\eps_0},
\]
or $r_i>6\sigma/\eps_0$ and
\begin{equation}\label{eq:localized-cover-drop}
 D_1(u_\eps;\mathbf y,cr_i)
 \le l-1+\delta
 \qquad\bigl(\mathbf y\in E\cap Q(\mathbf v_i,r_i)\bigr).
\end{equation}
\end{enumerate}
The constants are independent of $\eps$ and $\sigma$; $C$ depends
only on $\Theta_1,L$ and the coefficient data.
\end{lemma}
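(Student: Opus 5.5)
Following Lin--Shen \cite[Lemma~7.1]{LS}, the cover is built by a dyadic stopping-time construction driven by Theorem~\ref{thm:ls}, Theorem~\ref{thm:truncated-doubling-drop} and Proposition~\ref{prop:multicenter-plane}. The content bound then comes from a Reifenberg-type packing estimate in the spirit of Naber--Valtorta.

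\textbf{Stopping radius.} For each $\mathbf y\in E$, define the stopping radius
\[
r_{\mathbf y}:=\sup\{s\le R/100 \text{ dyadic}:\ D_1(u_\eps;\mathbf y,s')\ge l-\delta \text{ fails for some } s'\in[cs, s]\},
\]
or more concretely: the largest dyadic $s\in[\sigma/(20\eps_0),R/100]$ at which the doubling index at $\mathbf y$ first drops below $l-\delta$. If no such $s$ exists, set $r_{\mathbf y}=\sigma/(20\eps_0)$.

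First, by \eqref{eq:level-condition} and Theorem~\ref{thm:ls} with the plus sign, we have $D_1(u_\eps;\mathbf y,s)\le l+\delta$ for all $\eps/(2\eta_*)\le s\le 2R$. The choice $2\eps_0\le\eta_*$ and $\sigma\ge\eps$ make this range cover $[\sigma/(40\eps_0),2R]$.

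In the dropped case, $D_1(\mathbf y,r_{\mathbf y})\le l-\delta$. Theorem~\ref{thm:truncated-doubling-drop} then gives $D_1(\mathbf y,\delta r_{\mathbf y}/(4l))\le l-1+\delta$, and Theorem~\ref{thm:ls} with the minus sign propagates this to all smaller scales above the microscopic one. For nearby $\mathbf y'\in Q(\mathbf y,r_{\mathbf y})$, the same conclusion \eqref{eq:localized-cover-drop} with a smaller constant $c=c(L)$ would follow by a two-center comparison. Concretely, I would apply the contradiction compactness of Lemma~\ref{lem:two-center-qualitative} to transfer a doubling drop from $\mathbf y$ to $\mathbf y'$ at comparable scales. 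The alternative is to define the stopping time using the supremum of $D_1$ over the cylinder, as Lin--Shen do, which builds uniformity into the definition. The undropped case lands in the microscopic alternative of item~3, with $r_i\le 6\sigma/\eps_0$.

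\textbf{Cover.} A Vitali selection from $\{Q(\mathbf y,r_{\mathbf y}/20)\}$ produces centers $\mathbf v_i$ whose cylinders $Q(\mathbf v_i,r_i/20)$ are disjoint and whose cylinders $Q(\mathbf v_i,r_i/4)$ cover $E$, using $r_{\mathbf y}$ slowly varying (comparable for nearby points, again by the stopping definition). This gives items~1 and~3.

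\textbf{Content bound and main obstacle.} Item~2, the packing estimate $\sum r_i^n\le CR^n$, is the main obstacle. On all scales between $r_i$ and $R$, every center is $\delta$-pinched near $l$. For $l\ge2$, Proposition~\ref{prop:multicenter-plane} applies to every subfamily inside a cylinder $Q(\mathbf z_*,r_0/8)$. It shows the centers lie $\gamma$-close to a parabolic plane $V$, either of dimension $\le n-1$ or cylindrical.

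In the cylindrical case, I would invoke Proposition~\ref{prop:cylindrical-singular-points}. Since the centers $\mathbf v_i$ are singular points, they lie $\gamma$-close to $(x_i+W)\times\R$, which has parabolic dimension $n$. At this point I would apply a discrete parabolic Reifenberg theorem: approximating planes of dimension $\le n$ at all scales, with $\gamma=1/20$ small, give the packing bound $\sum r_i^n\le CR^n$. If no such theorem is available, I would prove the estimate by a direct induction on scales. At each dyadic scale $r$, count disjoint $r$-cylinders meeting the centers within a $\gamma r$-neighborhood of a dimension-$\le n$ plane inside a $16r$-cylinder. This count is at most $(1+C\gamma)2^n$ per halving, and summing gives a geometric bound. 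The subtle point is summability of the plane tilts across scales. Here the turning estimate \eqref{eq:mc-common-profile} gives a common profile $P$ at all scales, so the plane does not rotate, and the induction closes without logarithmic loss.
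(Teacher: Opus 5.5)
There is a genuine gap at the point you mention only in passing: the claim that $r_{\mathbf y}$ is ``slowly varying'', together with the transfer of the drop from a selected center to the rest of $E\cap Q(\mathbf v_i,r_i)$. Both fail.

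Take $l=3$ and $u_\eps$ close, over the relevant scales, to the harmonic polynomial $P=(x_1^2-x_2^2)x_3$. Its singular set contains the $x_3$-axis times $\R$.
\begin{itemize}
\item At the origin $P$ is homogeneous of degree $3$. So $D_1\approx3$ at every scale, and the stopping radius there is the microscopic $s_0$.
\item At $\mathbf y=((0,0,\rho),0)$ the translate is $\rho(x_1^2-x_2^2)+(x_1^2-x_2^2)x_3$. The Gaussian frequency is $(2\rho^2+6s^2)/(\rho^2+2s^2)$, which stays below $3-\delta$ up to $s=\rho\sqrt{(1-\delta)/(2\delta)}>\rho$.
\end{itemize}
Hence stopping radii of neighbouring singular points differ by the unbounded factor $\rho/s_0$. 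Every axis-centered cylinder $Q(\mathbf y,r_{\mathbf y})$ contains the origin, where no drop to $l-1+\delta$ ever occurs, so item~3 fails for such a cylinder.

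Lemma~\ref{lem:two-center-qualitative} cannot repair this. It assumes pinching at \emph{both} centers and only compares the profiles $\psi$; it says nothing about transferring a drop. Your fallback, a stopping time via the supremum of $D_1$ over the cylinder, buys uniformity but loses the pinching of each individual center on $[r_i,r_0]$. That pinching is exactly what Proposition~\ref{prop:multicenter-plane} needs for item~2.

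The missing idea is the paper's good-center/Whitney construction.
\begin{itemize}
\item The Vitali selection is restricted to $G=\{\mathbf x\in E: r_*(\mathbf w)\ge r_*(\mathbf x)/3\ \text{for all}\ \mathbf w\in E\cap Q(\mathbf x,r_*(\mathbf x))\}$.
\item A chaining argument shows that every $\mathbf y\in E\setminus G$ lies within $2r_*(\mathbf y)$ of a selected center whose radius is smaller than $r_*(\mathbf y)$.
\item The still-uncovered points are covered by Whitney cylinders $Q(\mathbf y_j,t_j)$ with $t_j=d_P(\mathbf y_j,X)/10$.
\end{itemize}
This guarantees that \emph{every} $\mathbf w$ in a cover cylinder of radius $h$ has its \emph{own} $r_*(\mathbf w)\ge h/3$ (respectively $\ge4t_j$). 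Then \eqref{eq:cover-stopping-low} and Theorem~\ref{thm:truncated-doubling-drop} are applied pointwise, with no transfer between centers.

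The price is a second packing estimate, for the Whitney radii. Their centers are \emph{not} pinched, so Proposition~\ref{prop:multicenter-plane} does not apply to them.
\begin{itemize}
\item When $\dim_PV_q\le n-1$, a net count suffices.
\item In the cylindrical case $\dim_PV_q=n$, the dyadic count diverges logarithmically. One needs Proposition~\ref{prop:cylindrical-singular-points}, which is where \textup{(A5)} enters, to place the unpinched singular points $\mathbf y_j$ near $V_q$, so that their projections to $V_q$ have bounded overlap.
\end{itemize}
That is the real role of Proposition~\ref{prop:cylindrical-singular-points}. For the pinched centers, the cone condition of Proposition~\ref{prop:multicenter-plane} already holds for all pairs with one fixed plane. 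The projection is therefore bi-Lipschitz, and no Reifenberg theorem or scale induction is needed. By contrast, your per-halving factor $(1+C\gamma)2^n$ would accumulate a polynomial loss.

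Minor: after the drop, the propagation uses Theorem~\ref{thm:ls} at level $l-1$ with the plus sign, not the minus sign.
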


\begin{proof}
\emph{Step 1. The stopping scale.}
Set $r_0=R/100$ and $s_0=\eps_0^{-1}\sigma<r_0/2$.
Theorem~\ref{thm:ls}, applied to \eqref{eq:level-condition} at scale
$4R$, gives
\begin{equation}\label{eq:cover-upper-propagation}
 D_1(u_\eps;\mathbf y,s)\le l+\delta
 \qquad(\mathbf y\in E,\ s_0\le s\le2R).
\end{equation}
For $\mathbf y\in E$, define
\begin{equation}\label{eq:cover-stopping-radius}
 r_*(\mathbf y)=\sup\bigl\{s_0<s\le r_0:
 D_1(u_\eps;\mathbf y,s)\le l-\delta\bigr\}.
\end{equation}
If the set is empty, we set $r_*(\mathbf y)=s_0$.
Continuity in $s$ and Theorem~\ref{thm:ls} imply
\begin{align}
 r_*(\mathbf y)<r_0\quad&\Longrightarrow\quad
 |D_1(u_\eps;\mathbf y,s)-l|\le\delta
 \quad(r_*(\mathbf y)\le s\le r_0),
 \label{eq:cover-stopping-pinching}\\
 r_*(\mathbf y)>s_0\quad&\Longrightarrow\quad
 D_1(u_\eps;\mathbf y,s)\le l-\delta
 \quad(s_0\le s\le r_*(\mathbf y)/2).
 \label{eq:cover-stopping-low}
\end{align}
The scale conditions follow from $4R\le\kappa^{-1}$ and
$\eps\le\sigma=\eps_0s_0$.

\emph{Step 2. Good centers.}
Let
\begin{equation}\label{eq:geometric-good-centers}
 G=\bigl\{\mathbf x\in E:r_*(\mathbf y)\ge r_*(\mathbf x)/3
 \text{ for all }\mathbf y\in E\cap Q(\mathbf x,r_*(\mathbf x))\bigr\}.
\end{equation}
The metric $5r$ covering lemma supplies points $\mathbf x_i\in G$,
with $\rho_i=r_*(\mathbf x_i)$, such that
\begin{equation}\label{eq:geometric-good-cover}
 G\subset\bigcup_iQ(\mathbf x_i,\rho_i/4),\qquad
 Q(\mathbf x_i,\rho_i/20)\cap Q(\mathbf x_k,\rho_k/20)=\varnothing
 \quad(i\ne k).
\end{equation}
There are finitely many selected balls, since $\rho_i\ge s_0$.
The balls with $\rho_i>r_0/64$ satisfy
\[
 \sum_{\rho_i>r_0/64}\rho_i^n
 \le(64/r_0)^2\sum_i \rho_i^{n+2}\le CR^n.
\]
Partition the other centers into a bounded number of groups
$X_q\subset Q(\mathbf z_q,r_0/8)$, with $\mathbf z_q\in E$.
For each group, \eqref{eq:cover-stopping-pinching} and
Proposition~\ref{prop:multicenter-plane} give a parabolic linear
subspace $V_q$ such that
\begin{equation}\label{eq:packing-fixed-cone}
 d_P(\mathbf x_i-\mathbf x_k,V_q)
 \le\gamma d_P(\mathbf x_i,\mathbf x_k)
 \qquad(\mathbf x_i,\mathbf x_k\in X_q).
\end{equation}
Either $d_q:=\dim_PV_q\le n-1$, or
$V_q=W_q\times\R$ with $\dim W_q=n-2$ and $d_q=n$.
Let $\pi_q$ be the coordinate projection onto $V_q$, and put
$d_{V_q}=d_P|_{V_q\times V_q}$ and $a_\gamma=(1-\gamma^2)^{1/2}$.
Then
\begin{equation}\label{eq:projection-bilipschitz-centers}
 a_\gamma d_P(\mathbf x_i,\mathbf x_k)
 \le d_{V_q}(\pi_q\mathbf x_i,\pi_q\mathbf x_k)
 \le d_P(\mathbf x_i,\mathbf x_k).
\end{equation}
If the spatial distance realizes $d_P$,
\eqref{eq:packing-fixed-cone} bounds its projection from below by
the factor $a_\gamma$. Otherwise, $V_q$ must be vertical, and
$\pi_q$ retains the time coordinate. Writing
$Q_{V_q}(\mathbf v,s)=V_q\cap Q(\mathbf v,s)$, we see that the balls $Q_{V_q}(\pi_q\mathbf x_i,a_\gamma \rho_i/80)$ are
disjoint. Lebesgue measure on $V_q$ scales as $s^{d_q}$, so
\[
 \sum_{\mathbf x_i\in X_q}\rho_i^{d_q}\le Cr_0^{d_q},\qquad
 \sum_{\mathbf x_i\in X_q}\rho_i^n\le Cr_0^n.
\]
If $d_q=0$, \eqref{eq:projection-bilipschitz-centers} gives
$\#X_q\le1$ and the latter estimate still holds. Summing yields
\begin{equation}\label{eq:good-parabolic-content}
 \sum_i \rho_i^n\le CR^n.
\end{equation}

\emph{Step 3. Covering the remaining points.}
For $\mathbf y\in E\setminus G$, start with $\mathbf z_0=\mathbf y$
and successively choose
\[
 \mathbf z_{k+1}\in E\cap Q(\mathbf z_k,r_*(\mathbf z_k)),
 \qquad r_*(\mathbf z_{k+1})<r_*(\mathbf z_k)/3.
\]
Since $r_*\ge s_0$, the process ends at some $\mathbf x\in G$, with
\[
 d_P(\mathbf y,\mathbf x)<\frac32r_*(\mathbf y),\qquad
 r_*(\mathbf x)<\frac13r_*(\mathbf y).
\]
Choose $i$ with $\mathbf x\in Q(\mathbf x_i,\rho_i/4)$.
Then $r_*(\mathbf x)\ge \rho_i/3$, whence $\rho_i<r_*(\mathbf y)$ and,
writing $X=\{\mathbf x_i\}$,
\begin{equation}\label{eq:distance-good-centers}
 d_P(\mathbf y,X)<2r_*(\mathbf y)
 \qquad(\mathbf y\in E\setminus G).
\end{equation}
For $\mathbf y\in E':=E\setminus\bigcup_iQ(\mathbf x_i,\rho_i/4)$,
set $t(\mathbf y)=d_P(\mathbf y,X)/10$. We have
\[
 s_0/40\le t(\mathbf y)<r_*(\mathbf y)/5\le r_0/5.
\]
Choose a finite disjoint family $Q(\mathbf y_j,t_j/20)$, where
$\mathbf y_j\in E'$ and $t_j=t(\mathbf y_j)$, whose fivefold
dilates cover $E'$. Consequently,
\begin{equation}\label{eq:cover-unpacked-balls}
 E\subset\bigcup_iQ(\mathbf x_i,\rho_i/4)
       \cup\bigcup_jQ(\mathbf y_j,t_j/4).
\end{equation}

\emph{Step 4. The decrease of the doubling level.}
For $\mathbf w\in E\cap Q(\mathbf y_j,t_j)$,
$d_P(\mathbf w,X)\ge9t_j$. If $\mathbf w\notin G$,
\eqref{eq:distance-good-centers} gives $r_*(\mathbf w)>9t_j/2$.
If $\mathbf w\in G$, choose $i$ with
$\mathbf w\in Q(\mathbf x_i,\rho_i/4)$; then
\[
 10t_j\le d_P(\mathbf y_j,\mathbf x_i)<t_j+\rho_i/4,
 \qquad r_*(\mathbf w)\ge \rho_i/3>12t_j.
\]
Together with \eqref{eq:geometric-good-centers}, this proves
\begin{equation}\label{eq:cover-collar-lower}
 \begin{aligned}
 r_*(\mathbf w)&\ge \rho_i/3
 &&(\mathbf w\in E\cap Q(\mathbf x_i,\rho_i)),\\
 r_*(\mathbf w)&\ge4t_j
 &&(\mathbf w\in E\cap Q(\mathbf y_j,t_j)).
 \end{aligned}
\end{equation}
Let $(\mathbf v,h)$ be any pair $(\mathbf x_i,\rho_i)$ or
$(\mathbf y_j,t_j)$ with $h>6s_0$.
Equations~\eqref{eq:cover-stopping-low} and
\eqref{eq:cover-collar-lower} give
\[
 D_1(u_\eps;\mathbf w,h/6)\le l-\delta
 \qquad(\mathbf w\in E\cap Q(\mathbf v,h)).
\]
Since $h/6\le\kappa^{-1}$ and $\eps\le\sigma<\eps_0h/6$,
Theorem~\ref{thm:truncated-doubling-drop} at scale $h/6$ gives
\[
 D_1\!\left(u_\eps;\mathbf w,\frac{\delta h}{24l}\right)
 \le l-1+\delta.
\]
Take $c=\delta/(48L)$, and let $\eta_*$ be the threshold in
Theorem~\ref{thm:ls} for level $l-1$ and $\delta_0=\delta$.
The choices of $\eps_0$ and $h$ give
\[
 \frac{\eps}{2\eta_*}
 <\frac{\eps_0h}{12\eta_*}
 \le ch\le\frac{\delta h}{48l}.
\]
Applying \eqref{eq:ls-iterated} with the plus sign, initial radius
$\delta h/(24l)$ and target radius $ch$ proves
\eqref{eq:localized-cover-drop}.

\emph{Step 5. The sum of the remaining radii.}
Put $c_*=2^{-12}$. By disjointness,
\[
 \sum_{t_j>c_*r_0}t_j^n
 \le(c_*r_0)^{-2}\sum_jt_j^{n+2}\le CR^n.
\]
For each remaining $j$, choose a nearest center
$\mathbf x_{i(j)}\in X$. Since $\mathbf y_j\in E'$,
\[
 \rho_{i(j)}/4\le d_P(\mathbf y_j,\mathbf x_{i(j)})=10t_j,
 \qquad \rho_{i(j)}\le40c_*r_0<r_0/64.
\]
Assign $j$ to the group $X_q$ containing $\mathbf x_{i(j)}$.
For all indices assigned to $q$,
\begin{equation}\label{eq:whitney-distance-radius}
 d_P(\mathbf y_j,X_q)=10t_j,\qquad
 \mathbf y_j\in Q(\mathbf z_q,r_0),\qquad
 10t_j\le r_0/64.
\end{equation}

Suppose first that $d_q\le n-1$. By
\eqref{eq:projection-bilipschitz-centers}, $X_q$ has a
$2^{-k}r_0$-net with at most $C2^{kd_q}$ points. Let
$J_{q,k}$ consist of the assigned indices with
$2^{-k-1}r_0<t_j\le2^{-k}r_0$.
Each $Q(\mathbf y_j,t_j/20)$, $j\in J_{q,k}$, lies in a ball of
radius $12\cdot2^{-k}r_0$ centered at a point of the net.
These disjoint balls have radii greater than $2^{-k}r_0/40$.
Ambient volume comparison therefore gives
\[
 \#J_{q,k}\le C2^{kd_q},\qquad
 \sum_{j\text{ assigned to }q}t_j^n
 \le Cr_0^n\sum_{k=0}^{\infty}2^{-k(n-d_q)}\le Cr_0^n.
\]

Suppose now that $d_q=n$, so $V_q=W_q\times\R$ and
$\dim W_q=n-2$. Proposition~\ref{prop:cylindrical-singular-points}
and \eqref{eq:whitney-distance-radius} give
\begin{equation}\label{eq:whitney-nearest-cone}
 d_P(\mathbf y_j-\mathbf x_{i(j)},V_q)\le10\gamma t_j.
\end{equation}
Write $y^\perp=\pi_{W_q^\perp}y$ and
$\theta=\gamma/a_\gamma$. For $h=10t_j$ and any
$\mathbf x_i\in X_q$, put
$p=d_{V_q}(\pi_q\mathbf y_j,\pi_q\mathbf x_i)$. Equations
\eqref{eq:packing-fixed-cone}--\eqref{eq:projection-bilipschitz-centers}
and \eqref{eq:whitney-nearest-cone} imply
\[
 \begin{aligned}
 |y_j^\perp-x_i^\perp|
 &\le\gamma h+\theta\,
 d_{V_q}(\pi_q\mathbf x_{i(j)},\pi_q\mathbf x_i)
 \le\gamma h+\theta(h+p),\\
 h&\le d_P(\mathbf y_j,\mathbf x_i)
 \le p+|y_j^\perp-x_i^\perp|
 \le(1+\theta)p+(\gamma+\theta)h.
 \end{aligned}
\]
Since $(1-\gamma-\theta)/(1+\theta)\ge1/2$, we obtain
\begin{equation}\label{eq:projected-whitney-distance}
 5t_j\le d_{V_q}(\pi_q\mathbf y_j,\pi_qX_q)\le10t_j.
\end{equation}
If $\mathbf v\in Q_{V_q}(\pi_q\mathbf y_j,t_j/20)$, then
\[
 \frac{99}{20}t_j
 \le d_{V_q}(\mathbf v,\pi_qX_q)
 \le\frac{201}{20}t_j.
\]
For any collection of projected balls meeting at one point, put
$T=\max_j t_j$. The preceding inequality gives $T/3\le t_j\le T$.
Their nearest centers in $X_q$ have projected
distances at most $21T$ from one another. By
\eqref{eq:projection-bilipschitz-centers}, the corresponding
disjoint balls $Q(\mathbf y_j,t_j/20)$ lie in one ball of radius
$43T$ and have radii at least $T/60$. Hence
\[
 \sum_{j\text{ assigned to }q}
 \mathbf1_{Q_{V_q}(\pi_q\mathbf y_j,t_j/20)}\le C.
\]
Integrating on $V_q$, whose parabolic dimension is $n$, gives
$\sum_{j\text{ assigned to }q}t_j^n\le Cr_0^n$.
Summing over the groups and adding the balls with $t_j>c_*r_0$,
we conclude that
\begin{equation}\label{eq:bad-parabolic-content}
 \sum_jt_j^n\le CR^n.
\end{equation}
Relabel the balls $Q(\mathbf x_i,\rho_i)$ and
$Q(\mathbf y_j,t_j)$ as $Q(\mathbf v_i,r_i)$. Their radii satisfy
$s_0/40\le r_i\le r_0$. Equations~\eqref{eq:cover-unpacked-balls},
\eqref{eq:good-parabolic-content}, \eqref{eq:bad-parabolic-content}
and Step~4 give the three conclusions.
\end{proof}

\subsection{Iteration to a prescribed scale}\label{subsec:cover-iteration}

As in \cite[Lemma~7.2]{LS}, we retain balls of radius comparable to
a prescribed scale $\sigma\ge\eps$ and apply the stopping cover to
the other balls with the
doubling level decreased by one. All indices in this iteration use
the fixed unit cutoff in \eqref{eq:truncated-doubling-index}.

\begin{lemma}\label{lem:level-induction}
Let $u_\eps$ solve \eqref{eq:homogenization-equation} in
$Q(\mathbf0,6)$ under \textup{(A1)--(A5)} and satisfy
\eqref{eq:uniform-theta}. Let $\delta,\eps_0,\eta_0,\kappa_0$ be as in
Section~\ref{sec:measure}, let $c$ be as in
Lemma~\ref{thm:localized-stopping-cover}, and put $a=c/(160\eps_0)$.
There exists $C_{\rm cov}\ge1$, depending only on $\Theta_1,L$ and
the coefficient data and independent of $\eps,\sigma$, with the
following property. Suppose
\[
 0<R\le\frac1{\kappa_0},\quad 0<\eps\le\sigma<\eta_0R,
\]
with $Q(\mathbf z,2R)\subset Q(\mathbf0,1)$. For an integer
$1\le l\le L$, assume \eqref{eq:level-condition}.
Then there are finitely many balls $Q(\mathbf z_i,\rho_i)$ such that
\begin{gather}
 \mathbf z_i\in S(u_\eps)\cap Q(\mathbf z,2R),\qquad
 a\sigma\le\rho_i\le\min\{\sigma/\eta_0,R/8\},
 \label{eq:terminal-cover-radii}\\
 S(u_\eps)\cap Q(\mathbf z,R)
 \subset\bigcup_iQ(\mathbf z_i,\rho_i),\qquad
 \sum_i\rho_i^n\le C_{\rm cov}^{\,l-1}R^n.
 \label{eq:terminal-cover-content}
\end{gather}
\end{lemma}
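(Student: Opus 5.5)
We argue by induction on $l$, following \cite[Lemma~7.2]{LS}.

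\emph{Base case $l=1$.} If some $\mathbf y\in S(u_\eps)\cap Q(\mathbf z,2R)$ existed, then \eqref{eq:level-condition} would give $D_1(u_\eps;\mathbf y,4R)\le1+\delta\le 3/2$. Since $4R\le\kappa_*^{-1}$ and $\eps<\eta_*\cdot 4R$, Theorem~\ref{thm:low-doubling-nondegeneracy} applies at scale $4R$ and forces $\nabla_xu_\eps(\mathbf y)\ne0$, a contradiction. Hence the singular set in $Q(\mathbf z,R)$ is empty, and the empty family of balls satisfies all three conclusions. (If one prefers to treat the case $\sigma$ comparable to $R$ uniformly, note that in all cases the bound $\sum\rho_i^n\le C_{\rm cov}^0R^n$ is trivial for the empty cover.)

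\emph{Inductive step, $l\ge2$.} Apply Lemma~\ref{thm:localized-stopping-cover} at level $l$ to get balls $Q(\mathbf v_i,r_i)$ with $\mathbf v_i\in E\subset Q(\mathbf z,3R/2)$, $r_i\le R/100$, $\sum r_i^n\le CR^n$, covering $E\supset S(u_\eps)\cap Q(\mathbf z,R)$ by the quarter balls. These balls fall into two types.

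For the \emph{terminal} balls, $\sigma/(40\eps_0)\le r_i\le6\sigma/\eps_0$, and we keep $Q(\mathbf v_i,r_i/4)$ as is. Its radius $r_i/4$ lies in $[\sigma/(160\eps_0),\,3\sigma/(2\eps_0)]$. Since $c\le1$, the lower bound exceeds $a\sigma$. The upper bound is at most $\sigma/\eta_0=200\sigma/\eps_0$, and it is also at most $r_i/4\le R/400\le R/8$.

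For the \emph{dropping} balls, $r_i>6\sigma/\eps_0$ and \eqref{eq:localized-cover-drop} holds. We apply the induction hypothesis at level $l-1$ with $R'=cr_i/4$ and center $\mathbf v_i$. The hypothesis \eqref{eq:level-condition} at level $l-1$ requires $D_1(u_\eps;\mathbf y,4R')=D_1(u_\eps;\mathbf y,cr_i)\le l-1+\delta$ for $\mathbf y\in S\cap Q(\mathbf v_i,2R')$. This is exactly \eqref{eq:localized-cover-drop}, provided $Q(\mathbf v_i,2R')\subset Q(\mathbf v_i,r_i)$, which holds since $c\le1$. Singular points there lie in $E$ because $2R'\le R/200$ and $\mathbf v_i\in Q(\mathbf z,3R/2)$ keep them inside $Q(\mathbf z,3R/2+R/200)$. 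If we need them strictly inside $Q(\mathbf z,3R/2)$, we apply the stopping cover with the enlarged region and instead use \eqref{eq:cover-upper-propagation}-type propagation; I expect this to be a bookkeeping matter.

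The remaining conditions of the induction hypothesis are checked as follows. We have $R'\le R\le1/\kappa_0$, and $Q(\mathbf v_i,2R')\subset Q(\mathbf z,2R)\subset Q(\mathbf0,1)$. The delicate one is $\sigma<\eta_0R'$, that is, $r_i>4\sigma/(c\eta_0)=800\sigma/(c\eps_0)$. This is \emph{stronger} than $r_i>6\sigma/\eps_0$, so dropping balls with $6\sigma/\eps_0<r_i\le800\sigma/(c\eps_0)$ cannot be recursed on. We treat them as terminal. Their radii $r_i/4$ are at least $\sigma/\eps_0\ge a\sigma$, and at most $200\sigma/(c\eps_0)$. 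That upper bound exceeds the required $\sigma/\eta_0$ by the factor $1/c$.

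This mismatch of constants is the main obstacle. I would first try to absorb it by covering each such ball with $C(c,n)$ balls of radius $\sigma/\eta_0$ centered at singular points, using a Vitali cover of the singular points in $Q(\mathbf v_i,r_i/4)$. This multiplies the content by a fixed factor, and it still gives the lower radius bound.

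For genuinely recursive balls, the induction gives a cover of $S\cap Q(\mathbf v_i,R')$ whose content is at most $C_{\rm cov}^{l-2}(cr_i/4)^n$. This cover reaches only $Q(\mathbf v_i,cr_i/4)$, not the quarter ball $Q(\mathbf v_i,r_i/4)$. So we first cover $S\cap Q(\mathbf v_i,r_i/4)$ by $C(c,n)$ balls $Q(\mathbf y,cr_i/4)$ with $\mathbf y\in S\cap Q(\mathbf v_i,r_i/4)\subset E\cap Q(\mathbf v_i,r_i)$. For each such $\mathbf y$, \eqref{eq:localized-cover-drop} still holds on $Q(\mathbf y,cr_i/2)\subset Q(\mathbf v_i,r_i)$, so the recursion can be run from every such $\mathbf y$.

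Summing the two types gives a total content of at most
\[
CR^n+C(c,n)\,C_{\rm cov}^{l-2}\,CR^n\le C_{\rm cov}^{l-1}R^n
\]
once $C_{\rm cov}\ge 2C(c,n)C$ and $C_{\rm cov}\ge 1$. The centers produced lie in $S\cap Q(\mathbf z,2R)$, and every radius is at most $R/8$, which completes the induction.
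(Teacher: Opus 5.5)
Your proof is correct and is essentially the paper's argument: induction on $l$, the empty cover at $l=1$ from Theorem~\ref{thm:low-doubling-nondegeneracy}, then refining the stopping cover into balls of radius $cr_i/4$ centered at singular points and recursing at level $l-1$ with the same $\sigma$. The only difference is organizational. The paper refines \emph{every} stopping ball to radius $s_j=cr_i/4$ and retains exactly those with $s_j\le\sigma/\eta_0$, which is why $a=c/(160\eps_0)$. Since $s_j>\sigma/\eta_0$ already forces $r_i>800\sigma/(c\eps_0)>6\sigma/\eps_0$, your ``mismatch'' case split never arises.

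One bookkeeping repair is needed, the issue you yourself flagged. The inclusion $S(u_\eps)\cap Q(\mathbf v_i,r_i/4)\subset E$ is false in general. Take the refining centers in $S(u_\eps)\cap Q(\mathbf z,R)\cap Q(\mathbf v_i,r_i/4)$, as the paper does. This gives $Q(\mathbf y,cr_i/2)\subset Q(\mathbf v_i,r_i)\cap Q(\mathbf z,3R/2)$, so \eqref{eq:localized-cover-drop} is available for the recursion.
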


\begin{proof}
First let $l\ge2$ and apply
Lemma~\ref{thm:localized-stopping-cover}. For each ball
$Q(\mathbf v_i,r_i/4)$,
cover
\[
 S(u_\eps)\cap Q(\mathbf z,R)\cap Q(\mathbf v_i,r_i/4)
\]
by at most $Cc^{-(n+2)}$ balls $Q(\mathbf y_j,s_j)$,
centered in this set, with $s_j=cr_i/4$. Then
\begin{gather}
 a\sigma\le s_j\le R/8,\qquad
 S(u_\eps)\cap Q(\mathbf z,R)
 \subset\bigcup_jQ(\mathbf y_j,s_j),
 \label{eq:refined-stopping-cover}\\
 \sum_js_j^n\le Cc^{-2}\sum_i r_i^n
 \le C_{\rm cov}R^n.
 \label{eq:refined-stopping-content}
\end{gather}
For a ball arising from $Q(\mathbf v_i,r_i/4)$, we have
\[
 Q(\mathbf y_j,2s_j)
 \subset Q(\mathbf v_i,r_i)\cap Q(\mathbf z,3R/2)
 \subset Q(\mathbf z,2R).
\]
If $s_j>\sigma/\eta_0$, then $r_i>6\eps_0^{-1}\sigma$. Since
$4s_j=cr_i$, \eqref{eq:localized-cover-drop} gives
\begin{equation}\label{eq:refined-stopping-drop}
 D_1(u_\eps;\mathbf y,4s_j)\le l-1+\delta
 \qquad\bigl(\mathbf y\in S(u_\eps)\cap Q(\mathbf y_j,2s_j)\bigr).
\end{equation}
The constants $a,\eta_0,C_{\rm cov}$ can be used for every
$2\le l\le L$.

We now argue by induction on $l$. If $l=1$, then
\eqref{eq:level-condition} and $\delta\le1/16$ give
$D_1(u_\eps;\mathbf y,4R)\le3/2$ at each singular point in
$Q(\mathbf z,2R)$. Since $4R\le\kappa^{-1}$ and
$\eps\le\sigma<\eta_0R<4\eps_0R$, Theorem~\ref{thm:low-doubling-nondegeneracy}
implies $S(u_\eps)\cap Q(\mathbf z,2R)=\varnothing$.

Suppose that the assertion holds at level $l-1$.
Retain the balls in \eqref{eq:refined-stopping-cover} with
$s_j\le \sigma/\eta_0$. For each $s_j>\sigma/\eta_0$, apply the induction
hypothesis with center $\mathbf y_j$, radius $s_j$, and the same
$\sigma$.
The required doubling bound is \eqref{eq:refined-stopping-drop};
also $s_j\le R/8\le \kappa_0^{-1}$ and
$Q(\mathbf y_j,2s_j)\subset Q(\mathbf z,2R)\subset Q(\mathbf0,1)$.
The retained balls and the covers obtained by induction satisfy
\eqref{eq:terminal-cover-radii}, and their radii obey
\[
 \begin{aligned}
 \sum_i\rho_i^n
 &\le\sum_{s_j\le \sigma/\eta_0}s_j^n
   +C_{\rm cov}^{\,l-2}\sum_{s_j>\sigma/\eta_0}s_j^n\\
 &\le C_{\rm cov}^{\,l-2}\sum_js_j^n
 \le C_{\rm cov}^{\,l-1}R^n.
 \end{aligned}
\]
Each decomposition reduces the integer level by one, so after at
most $l-1$ decompositions all remaining balls have radii between
$a\sigma$ and $\sigma/\eta_0$.
\end{proof}

\subsection{Microscopic estimates and proofs of the measure bounds}
\label{subsec:microscopic-proof}

\begin{proposition}
\label{prop:microscopic-measure}
Let $u_\eps$ solve \eqref{eq:homogenization-equation} in
$Q(\mathbf0,6)$ under \textup{(A1)--(A3)} and satisfy
\eqref{eq:uniform-theta}. For every $\kappa\ge1$, there are
$C=C(\Theta_1,\kappa)$ and $c=c(\Theta_1,\kappa)>0$ such that,
if
\[
 \mathbf z_0\in Q(\mathbf0,1),\qquad
 0<\rho\le\min\{1/32,\kappa\eps\},\qquad
 E=S(u_\eps)\cap Q(\mathbf z_0,\rho),
\]
then
\begin{align}
 \mathcal H_P^{n+2}\bigl(Q(E,s)\bigr)
 &\le C\rho^ns^2,\qquad 0<s\le c\rho,
 \label{eq:microscopic-tube}\\
 \mathcal H_P^n(E)&\le C\rho^n.
 \label{eq:microscopic-hausdorff}
\end{align}
The constants are independent of $\eps$.
\end{proposition}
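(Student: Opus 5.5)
The plan is to reduce Proposition~\ref{prop:microscopic-measure} to the fixed-coefficient singular-set estimates of Hallgren--Koirala--Ma \cite{HKM} by a parabolic rescaling at the microscopic scale. Set $\mathbf z_0=(x_0,t_0)$ and rescale by a factor comparable to $\rho$. Concretely, put
\[
 v(y,s)=T_{\mathbf z_0,R}u_\eps(y,s)=u_\eps(x_0+Ry,t_0+R^2s),\qquad R=32\rho\le1 .
\]
Then $v$ solves an equation with coefficient $A((x_0+Ry)/\eps,(t_0+R^2s)/\eps^2)$. By \textup{(A2)} its parabolic Lipschitz constant is at most $L_AR/\eps\le32\kappa L_A$, which is independent of $\eps$. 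Ellipticity is unchanged. Since $\mathbf z_0\in Q(\mathbf0,1)$ and $R\le1$, $v$ is defined on $Q(\mathbf0,5/R)\supset Q(\mathbf0,4)$. For the hypothesis transfer, \eqref{eq:uniform-theta} together with \eqref{eq:theta-scale-invariance} gives $\Theta(v,\mathbf w,q)\le\Theta_1$ for all $\mathbf w\in Q(\mathbf0,1)$ and $0<q\le1$, because the corresponding physical centers lie in $Q(\mathbf0,2)$ and the radii satisfy $Rq\le1$.

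Next I would feed this growth control into \cite{HKM}. Their Minkowski estimate for singular sets is stated under a doubling or frequency bound at unit scale, with constants depending on the ellipticity, the parabolic Lipschitz constant, and that bound. To obtain the doubling bound I would use Lemma~\ref{lem:EFV} at each center $\mathbf w\in Q(\mathbf0,1)$, together with Proposition~\ref{prop:theta-controls-truncated-doubling} applied to $v$. The latter works because its proof only used \textup{(A1)}, interior estimates and the $\Theta$ bound, and it yields $D_1(v;\mathbf w,q)\le L(\Theta_1)$ uniformly. If \cite{HKM} instead needs a Gaussian frequency bound, I would derive it from this doubling bound via the EFV two-sided doubling for the Lipschitz operator. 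Applying \cite{HKM} then gives
\[
 \mathcal H_P^{n+2}\bigl(Q(S(v)\cap Q(\mathbf0,1/32),\tau)\bigr)\le C\tau^2,\qquad 0<\tau\le c,
\]
with $C,c$ depending only on $\Theta_1,\kappa$ and the structural data.

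Scaling back, $S(u_\eps)\cap Q(\mathbf z_0,\rho)$ corresponds to $S(v)\cap Q(\mathbf0,1/32)$. Tubes of radius $s$ correspond to tubes of radius $\tau=s/R$, and $\mathcal H_P^{n+2}$ scales by $R^{n+2}$. This gives $\mathcal H_P^{n+2}(Q(E,s))\le CR^{n+2}(s/R)^2=C\rho^ns^2$ for $s\le cR$, which is \eqref{eq:microscopic-tube}. For \eqref{eq:microscopic-hausdorff}, I would cover $E$ at each scale $s$ by a maximal $s$-separated family. The tube bound limits this family to $C\rho^ns^{-n}$ points, since the disjoint cylinders $Q(\cdot,s/2)$ lie in $Q(E,s)$, so the parabolic content is at most $C\rho^n$. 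Letting $s\to0$ gives the Hausdorff bound, because the diameters are comparable to $s$.

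The main obstacle is matching the exact hypotheses of \cite{HKM}: their normalization of the growth quantity, their domain size, and whether they require a bound at every center in $Q(\mathbf0,1)$ or only at one. I expect the $\Theta$-to-doubling transfer via Lemma~\ref{lem:EFV} and Proposition~\ref{prop:theta-controls-truncated-doubling} to supply whatever uniform version is needed, possibly after shrinking $c$ and the factor $32$.
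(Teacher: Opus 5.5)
Your architecture is the paper's: rescale at a multiple of $\rho$ so the parabolic Lipschitz constant becomes $O(\kappa L_A)$, invoke the singular-set Minkowski estimate of \cite{HKM}, scale the tube back, and deduce \eqref{eq:microscopic-hausdorff} from a maximal $s$-separated set. The gap is in the step you leave open, and that is the only substantive verification in the proof.

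The growth hypothesis of \cite[Theorem~1.4]{HKM} is not a Gaussian doubling or frequency bound. It is the coarse quotient \cite[(1.4)]{HKM},
\[
 \sup_{\sigma\in[-2,2]}\frac{\int_{B_5\times[-25,\sigma]}|v|^2}{\int_{B_{1/2}}|v(y,\sigma)|^2\,dy},
\]
which has the same shape as \eqref{eq:rough}. Neither Lemma~\ref{lem:EFV} nor Proposition~\ref{prop:theta-controls-truncated-doubling} produces this quantity, so the proposed detour through $D_1$ and frequency does not close the step. Moreover, with $R=32\rho$ the quotient cannot be controlled at all when $\rho$ is near $1/32$. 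The physical region $B(x_0,5R)\times[t_0-25R^2,t_0+\sigma R^2]$ then reaches below time $-20$. But \eqref{eq:uniform-theta} only controls $u_\eps$ on $\bigcup_{\mathbf z\in Q(\mathbf0,2)}Q^-(\mathbf z,4)\subset B_6\times(-20,4)$. Your transferred bound (rescaled centers in $Q(\mathbf0,1)$, radii $q\le1$) also does not reach $B_5\times[-25,\sigma]$ or the slices with $|\sigma|>1$. So the factor $32$ must genuinely be reduced.

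The paper takes $h=8\rho\le1/4$ and $v=T_{\mathbf z_0,h}u_\eps$. The check then runs as follows.
\begin{itemize}
\item For $\sigma\in[-2,2]$, the shifted center $\mathbf z_\sigma=(x_0,t_0+h^2\sigma)$ lies in $Q(\mathbf0,2)$, and $B(x_0,5h)\times[t_0-25h^2,t_\sigma]\subset Q^-(\mathbf z_\sigma,8h)$.
\item Hence \eqref{eq:uniform-theta} at scale $2h\le1/2$ bounds the numerator by $4h^2\Theta_1\int_{B(x_0,2h)}u_\eps^2(\cdot,t_\sigma)$.
\item Two applications of \eqref{eq:spatial-single-step}, at $h$ and $h/2$, pass to $B(x_0,h/2)$. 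After rescaling this gives $\Theta_{\rm HKM}(v)\le C\Theta_1^3$.
\end{itemize}
Theorem~1.4 of \cite{HKM} then bounds tubes in $Q(\mathbf0,1/4)$ for $\tau\le\tau_0$. Since $Q(\mathbf z_0,\rho)$ corresponds to $Q(\mathbf0,1/8)$, one restricts to $\tau\le\min\{\tau_0,1/16\}$, and this fixes $c$. With this replacement for your middle step, the rest of your argument goes through as written.
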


\begin{proof}
Write $\mathbf z_0=(x_0,t_0)$ and set $h=8\rho\le1/4$ and
$v=T_{\mathbf z_0,h}u_\eps$. On $Q(\mathbf0,5)$, we have
\[
 \partial_\tau v-\operatorname{div}(A_h\nabla v)=0,\qquad
 A_h(y,\tau)=A\!\left(\frac{x_0+hy}{\eps},
                         \frac{t_0+h^2\tau}{\eps^2}\right).
\]
The ellipticity bounds are unchanged, and
\[
 |A_h(y,\tau)-A_h(y',\tau')|
 \le16\kappa L_A\,d_P\bigl((y,\tau),(y',\tau')\bigr).
\]
Together with \textup{(A1)}, this verifies the coefficient condition
\cite[(1.1)]{HKM}, with a constant depending only on the coefficient
data and $\kappa$.
For $\sigma\in[-2,2]$, write $t_\sigma=t_0+h^2\sigma$ and
$\mathbf z_\sigma=(x_0,t_\sigma)\in Q(\mathbf0,2)$.
The bound \eqref{eq:uniform-theta} at scale $2h$, followed by
\eqref{eq:spatial-single-step} at $h/2$ and $h$, gives
\begin{equation}\label{eq:microscopic-HKM-control}
 \begin{aligned}
 &\int_{B(x_0,5h)\times[t_0-25h^2,t_\sigma]}|u_\eps|^2\dd x\dd t\\
 &\qquad\le\int_{Q^-(\mathbf z_\sigma,8h)}|u_\eps|^2\dd x\dd t
 \le4h^2\Theta_1\int_{B(x_0,2h)}|u_\eps(x,t_\sigma)|^2\dd x\\
 &\qquad\le C\Theta_1^3h^2
       \int_{B(x_0,h/2)}|u_\eps(x,t_\sigma)|^2\dd x.
 \end{aligned}
\end{equation}
After rescaling, the quotient in \cite[(1.4)]{HKM} satisfies
\[
 \Theta_{\rm HKM}(v)
 :=\sup_{\sigma\in[-2,2]}
 \frac{\displaystyle\int_{B_5\times[-25,\sigma]}|v|^2
                         \dd\mathcal H_P^{n+2}}
      {\displaystyle\int_{B_{1/2}}|v(y,\sigma)|^2\dd y}
 \le C\Theta_1^3.
\]
Theorem~1.4 of \cite{HKM} yields
\[
 \mathcal H_P^{n+2}
 \bigl(Q(S(v),\tau)\cap Q(\mathbf0,1/4)\bigr)
 \le C(\Theta_1,\kappa)\tau^2,
 \qquad 0<\tau\le\tau_0(\Theta_1,\kappa).
\]
For $E_v=S(v)\cap Q(\mathbf0,1/8)$ and
$0<\tau\le\min\{\tau_0,1/16\}$,
$Q(E_v,\tau)\subset Q(\mathbf0,1/4)$.
Since the dilation of ratio $h$ maps $E_v$ onto $E$,
\[
 \mathcal H_P^{n+2}(Q(E,s))
 =h^{n+2}\mathcal H_P^{n+2}(Q(E_v,s/h))
 \le Ch^ns^2\le C\rho^ns^2,
\]
which proves \eqref{eq:microscopic-tube} for
$0<s\le8\min\{\tau_0,1/16\}\rho$.

Choose a maximal $s$-separated set
$\{\mathbf z_i\}_{i=1}^{N_s}\subset E$.
The balls $Q(\mathbf z_i,s)$ cover $E$, while the balls
$Q(\mathbf z_i,s/2)$ are disjoint and contained in $Q(E,s)$.
Thus
\[
 c_nN_ss^{n+2}\le\mathcal H_P^{n+2}(Q(E,s))
 \le C\rho^ns^2,\qquad
 N_s(2s)^n\le C\rho^n.
\]
Letting $s\downarrow0$ proves \eqref{eq:microscopic-hausdorff}.
\end{proof}

\begin{proof}[Proof of Theorem~\ref{thm:main-minkowski}]
By Corollary~\ref{thm:allcenters}, \eqref{eq:uniform-theta} holds
with $\Theta_1=\Theta_1(\Theta_0)$. Let $L_0=L(\Theta_1)$ be the
bound in Proposition~\ref{prop:theta-controls-truncated-doubling},
and fix the integer $L=\max\{2,\lceil L_0\rceil\}$.
Choose $\delta,\eps_0,a,\eta_0,\kappa,\kappa_0$ as in
Section~\ref{sec:measure} and Lemma~\ref{lem:level-induction}.
Put $R=\kappa_0^{-1}\le1/32$. For $\mathbf z\in Q(\mathbf0,1/2)$,
we have $Q(\mathbf z,2R)\subset Q(\mathbf0,1)$, and
\eqref{eq:all-truncated-doubling} gives
\[
 D_1(u_\eps;\mathbf y,4R)\le L_0\le L
 \qquad\bigl(\mathbf y\in S(u_\eps)\cap Q(\mathbf z,2R)\bigr).
\]
It suffices to estimate the tube about
$E_{\mathbf z}=S(u_\eps)\cap Q(\mathbf z,R)$ for
$\mathbf z\in Q(\mathbf0,1/2)$, since finitely many such balls cover
$Q(\mathbf0,1/2)$. The number of balls depends only on $\Theta_0$
and the fixed coefficient data. Write $c_{\rm mic}>0$ for the constant
in \eqref{eq:microscopic-tube} with $\kappa=\eta_0^{-1}$.

If $\eps\ge\eta_0R$, Proposition~\ref{prop:microscopic-measure}
with $\rho=R$ gives
\[
\mathcal H_P^{n+2}(Q(E_{\mathbf z},s))\le CR^ns^2
\qquad(0<s\le c_{\rm mic}R).
\]
For $c_{\rm mic}R<s\le1$, the same bound with a larger constant
depending on $R,c_{\rm mic}$ follows from
$Q(E_{\mathbf z},s)\subset Q(\mathbf z,R+s)$.

Suppose now that $\eps<\eta_0R$, and set
$b=\min\{1,c_{\rm mic}a\}$.
For $0<s<b\eps$, apply Lemma~\ref{lem:level-induction} with
$\sigma=\eps$. Its balls satisfy $\rho_i\ge a\eps$, so
$s\le c_{\rm mic}\rho_i$, and \eqref{eq:microscopic-tube} yields
\[
\begin{aligned}
\mathcal H_P^{n+2}(Q(E_{\mathbf z},s))
&\le\sum_i\mathcal H_P^{n+2}
 \bigl(Q(S(u_\eps)\cap Q(\mathbf z_i,\rho_i),s)\bigr)\\
&\le Cs^2\sum_i\rho_i^n\le CR^ns^2.
\end{aligned}
\]
For $b\eps\le s<\eta_0R$, apply the same lemma with
$\sigma=\max\{\eps,s\}<\eta_0R$. Then
\[
a s\le\rho_i\le\frac{s}{b\eta_0},\qquad
\sum_i\rho_i^n\le C_{\rm cov}^{L-1}R^n.
\]
Since $Q(E_{\mathbf z},s)\subset\bigcup_iQ(\mathbf z_i,\rho_i+s)$,
parabolic volume scaling gives
\[
\mathcal H_P^{n+2}(Q(E_{\mathbf z},s))
\le C_n\sum_i(\rho_i+s)^{n+2}
\le Cs^2\sum_i\rho_i^n\le CR^ns^2.
\]
Finally, for $\eta_0R\le s\le1$, the inclusion
$Q(E_{\mathbf z},s)\subset Q(\mathbf z,R+s)$ gives the bound
$C(\Theta_0)s^2$. Summing over the fixed finite cover proves
\eqref{eq:main-minkowski}.
\end{proof}

\begin{proof}[Proof of Theorem~\ref{thm:main-measure}]
Let $E_\eps=S(u_\eps)\cap Q(\mathbf0,1/2)$ and choose a maximal
$r$-separated set $\{\mathbf z_i\}_{i=1}^{N_r}\subset E_\eps$.
The balls $Q(\mathbf z_i,r)$ cover $E_\eps$, while
$Q(\mathbf z_i,r/2)$ are disjoint and contained in $Q(E_\eps,r)$.
By Theorem~\ref{thm:main-minkowski},
\[
 c_nN_rr^{n+2}\le\mathcal H_P^{n+2}(Q(E_\eps,r))
 \le C(\Theta_0)r^2,\qquad
 N_r(2r)^n\le C(\Theta_0).
\]
Letting $r\downarrow0$ gives the conclusion.
\end{proof}

\paragraph*{Acknowledgments.}
The author thanks Wenshuai Jiang and Jun Geng for suggesting this problem
and for their support.

\appendix
\section{Gaussian doubling for caloric functions}\label{sec:gaussian-appendix}

We record Gaussian doubling monotonicity and rigidity, the Gaussian
degree decomposition, and the large-scale degree formula for caloric
polynomials.

The frequency calculation in the proof of Lemma~\ref{lem:caloric-gaussian-doubling}
is due to Poon~\cite{Poon}; we reproduce it here for the reader's convenience.

\begin{lemma}\label{lem:caloric-gaussian-doubling}
Let $u\not\equiv0$ be a smooth caloric function on $\R^n\times(-\infty,0]$.
Assume that for every compact interval $I\subset(-\infty,0]$ and every $a>0$,
\[
|u(x,t)|\le C_{I,a}e^{a|x|^2}\qquad(x\in\R^n,\ t\in I).
\]
Then $D(u;r)$ is nondecreasing for $r>0$. If
$D(u;r)=D(u;R)$ for some $0<r<R$, then $u$ is a homogeneous
caloric polynomial, and this common value is its parabolic degree in $\mathbb N_0$.
\end{lemma}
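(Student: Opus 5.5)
We will prove the lemma through Poon's frequency formula, by way of the Hermite expansion \eqref{eq:finite-time-hermite}. We do not assume polynomial growth here, only subgaussian growth of arbitrarily small exponent. First we justify all Gaussian integrals and integrations by parts. For $r>0$ we have $t=-r^2$ in a compact interval. Interior estimates give bounds of the same form for $\nabla u$ and $\partial_tu$. Since $G_{0,r^2}$ decays like $e^{-|y|^2/(4r^2)}$, choosing $a<1/(8r^2)$ makes $H(u;r)$ and $E(u;r)$ finite, and allows differentiation under the integral sign. We also need $H(u;r)>0$ for all $r$. If $u(\cdot,-r^2)\equiv0$, then backward uniqueness for the heat equation in this growth class, together with forward uniqueness (Tychonoff class), gives $u\equiv0$ on $(-\infty,0]$, which is excluded.

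Next we derive the monotonicity. Put $f_r(y)=u(ry,-r^2)$, so that $H(u;r)=\|f_r\|^2_{L^2(\nu_{0,1})}$. A direct computation gives $r\partial_rf_r=-2\mathcal Lf_r$, where $\mathcal L=\Delta-\frac y2\cdot\nabla$ is self-adjoint on $L^2(\nu_{0,1})$. Hence $rH'=4\langle -\mathcal Lf_r,f_r\rangle=2E=2NH$. Differentiating once more,
\[
 rN'(r)=\frac{8}{H}\Bigl(\|\mathcal Lf_r\|^2-\frac{\langle\mathcal Lf_r,f_r\rangle^2}{H}\Bigr)\ge0
\]
by Cauchy--Schwarz. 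The factor $8$ is up to normalization; only the sign matters. Integrating $H'/H=2N/r$ over $[r/2,r]$ gives \eqref{eq:full-doubling-average}:
\[
 D(u;r)=\frac{1}{\log 2}\int_{r/2}^r N(q)\,\frac{dq}{q}=\frac{1}{\log2}\int_{1/2}^1N(r\sigma)\frac{d\sigma}{\sigma}.
\]
Since $N$ is nondecreasing, $D$ is nondecreasing.

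For rigidity, suppose $D(u;r)=D(u;R)$ with $r<R$. Since $D(u;s)$ is an average of the nondecreasing function $N$ over $[s/2,s]$, equality forces $N$ to be constant on $[r/2,R]$. Indeed, $D(u;R)-D(u;r)$ equals $\frac1{\log2}\int_{1/2}^1\bigl(N(R\sigma)-N(r\sigma)\bigr)\frac{d\sigma}{\sigma}$. The integrand is nonnegative, so $N(R\sigma)=N(r\sigma)$ for a.e.\ $\sigma$, hence for all $\sigma$ by continuity. By monotonicity, $N$ is then constant on $[r/2,R]$. Then $N'=0$, and equality in Cauchy--Schwarz gives $\mathcal Lf_s=-\frac{\kappa}{2}f_s$ for $s$ in this interval, with $\kappa=N$ constant. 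So $f_s$ is a Hermite eigenfunction of $-\mathcal L$ lying in $L^2(\nu_{0,1})$. Its eigenvalue is therefore $m/2$ with $m=\kappa\in\mathbb N_0$, and $f_s$ is the slice of a homogeneous caloric polynomial $P$ of degree $m$. Moreover $r\partial_sf_s=mf_s$, so $u(\cdot,-s^2)=P(\cdot,-s^2)$ on an interval of times.

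The main obstacle is extending this identity to all $t\le0$. Forward uniqueness in the growth class gives $u=P$ for $t\in[-s^2,0]$. For earlier times, $u-P$ is caloric, vanishes on a time slab, and has subgaussian growth. The backward uniqueness theorem of Escauriaza--Seregin--Šverák type, or simply the Hermite argument applied on every interval $[\rho/2,\rho]$, shows it vanishes identically. For the latter, note that at any $\rho$ we have $H(u-P;\rho)$; the expansion $f_\rho=\sum_k\Pi_kf_1\rho^k$ remains valid as long as $f_1$ is Gaussian-integrable. The coefficients are then determined by $f_1$, so $u=P$ for all $t<0$. Finally $D\equiv m$ follows from $H(P;r)=cr^{2m}$.
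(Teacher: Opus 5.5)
Your proof is correct. The monotonicity and rigidity parts match the paper. Your Cauchy--Schwarz form of $rN'$ is the paper's identity $N'=\frac{2r}{H}\int|\partial_rv_r-\frac Nr v_r|^2\dd\nu_{0,1}$, rewritten using $r\partial_rv_r=-2\mathcal Lv_r$. You also use the averaging formula for $D$ in the same way.

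The real difference is how you obtain $H(u;r)>0$ and $u=P$ for all times.

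\begin{itemize}
\item \textbf{Your route.} You import forward uniqueness in the Tychonoff class and a backward uniqueness theorem in the whole space.
\item \textbf{The paper's route.} The paper gets uniqueness from the frequency formula itself. Since $H'=2E/r\ge0$, positivity persists for $r\ge r_0$ once $H(r_0)>0$. On the component of $\{H>0\}$ containing $r_0$, monotonicity of $N$ gives $H(r)\ge H(r_0)(r/r_0)^{2N(r_0)}$, so $H$ cannot reach zero at a positive radius.
\item \textbf{The step you call the main obstacle.} The positivity statement holds for every nonzero caloric function in the growth class. Applying it to $u-P$, which vanishes on one slice, gives $u\equiv P$ at once. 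Your own Step~2 already settles this, so no separate forward/backward argument is needed.
\end{itemize}

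Your alternative Hermite argument is also valid and self-contained, but it should be stated differently. Phrase it as the ODE $s\,\partial_s\Pi_kf_s=k\,\Pi_kf_s$ for every $s>0$, justified by the growth hypothesis. This gives $\Pi_kf_s=s^k\Pi_kf_1$. If $f_1=0$, then all components vanish at every $s$. Since each $f_s\in L^2(\nu_{0,1})$ and Hermite polynomials are complete, $f_s=0$ for all $s$.

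The justification is not that the expansion ``remains valid as long as $f_1$ is Gaussian-integrable.'' For $s>1$, convergence of $\sum_k s^k\Pi_kf_1$ needs $f_s\in L^2(\nu_{0,1})$, which comes from the growth assumption. Also, your sentence ``at any $\rho$ we have $H(u-P;\rho)$'' is incomplete.

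The spectral route actually buys more than uniqueness. It gives $H(s)=\sum_ka_k^2s^{2k}$ for every $s>0$, as in \eqref{eq:finite-time-hermite}, and monotonicity of $N$ and rigidity can be read off from that directly. The paper's route keeps everything inside Poon's frequency computation and uses no spectral completeness beyond identifying the eigenvalue.
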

\begin{proof}
Write $H(r)=H(u;\mathbf0,r)$, $E(r)=E(u;\mathbf0,r)$,
$N(r)=N(u;\mathbf0,r)$ and $D(r)=D(u;\mathbf0,r)$.
Set $\dd\gamma=\dd\nu_{0,1}$ and $v_r(y)=u(ry,-r^2)$. Then
\[
\begin{gathered}
r\partial_rv_r=y\cdot\nabla v_r-2\Delta v_r,\\
H(r)=\int_{\R^n}v_r^2\dd\gamma,\qquad
E(r)=2\int_{\R^n}|\nabla v_r|^2\dd\gamma.
\end{gathered}
\]
The growth assumption and interior estimates justify differentiation
and Gaussian integration by parts. Since
$\nabla G_{0,1}=-\frac y2G_{0,1}$, we obtain
\[
\begin{aligned}
H'(r)
&=2\int_{\R^n}v_r\partial_rv_r\dd\gamma
=\frac4r\int_{\R^n}|\nabla v_r|^2\dd\gamma
=\frac{2E(r)}r,\\
E'(r)
&=4\int_{\R^n}\nabla v_r\cdot\nabla\partial_rv_r\dd\gamma\\
&=2\int_{\R^n}(y\cdot\nabla v_r-2\Delta v_r)\partial_rv_r\dd\gamma
=2r\int_{\R^n}|\partial_rv_r|^2\dd\gamma.
\end{aligned}
\]
On any interval where $H>0$, this gives
\[
N'(r)=\frac{E'(r)}{H(r)}-\frac{E(r)H'(r)}{H(r)^2}
=\frac{2r}{H(r)}\int_{\R^n}
\left|\partial_rv_r-\frac{N(r)}r v_r\right|^2\dd\gamma\ge0.
\]
Fix $r_0$ with $H(r_0)>0$. Monotonicity of $N$ implies
\[
H(r)\ge H(r_0)\left(\frac r{r_0}\right)^{2N(r_0)}
\qquad(0<r\le r_0,\ H(r)>0).
\]
Together with $H'\ge0$ and continuity, this proves $H(r)>0$ for every $r>0$.
Consequently,
\[
D(r)=\frac1{\log2}\int_{r/2}^rN(s)\frac{\dd s}{s},\qquad
D'(r)=\frac{N(r)-N(r/2)}{r\log2}\ge0.
\]

If $D(r)=D(R)$ for $r<R$, then $N(s)=N(s/2)$ for $r\le s\le R$.
Monotonicity makes $N\equiv m$ on $[r/2,R]$. The identity for $N'$
therefore gives, at any $r_0\in(r/2,R)$,
\[
y\cdot\nabla v_{r_0}-2\Delta v_{r_0}=mv_{r_0}.
\]
The spectrum of $-2\Delta+y\cdot\nabla$ in $L^2(\gamma)$ is
$\mathbb N_0$, and its eigenspaces are spanned by Hermite polynomials.
Thus $m\in\mathbb N_0$, and there is a caloric polynomial $P$,
homogeneous of parabolic degree $m$, such that
$P(r_0y,-r_0^2)=v_{r_0}(y)$.
If $u-P\not\equiv0$, the preceding argument gives
$H(u-P;r_0)>0$, contrary to $(u-P)(r_0\,\cdot,-r_0^2)=0$.
Thus $u=P$, and $D(u;s)=m$ for every $s>0$.
\end{proof}

\begin{lemma}\label{lem:gaussian-degrees}
Let $v\not\equiv0$ be a caloric polynomial of parabolic degree $K$.
Then
\begin{equation}\label{eq:caloric-degree-decomposition}
v=\sum_{k=0}^{K}P_k,\qquad
(\partial_t-\Delta)P_k=0,\qquad
P_k(rx,r^2t)=r^kP_k(x,t).
\end{equation}
Set
$\alpha_k:=\int_{\R^n}|P_k(y,-1)|^2\dd\nu_{0,1}(y)$. For $r>0$,
\begin{equation}\label{eq:gaussian-degree-identities}
H(v;\mathbf0,r)=\sum_{k=0}^{K}\alpha_kr^{2k},\qquad
N(v;\mathbf0,r)=
\frac{\sum_{k=0}^{K}k \alpha_kr^{2k}}{\sum_{k=0}^{K}\alpha_kr^{2k}},
\qquad \alpha_k\ge0.
\end{equation}
Moreover,
\begin{equation}\label{eq:lowest-caloric-degrees}
P_0=v(\mathbf0),\qquad P_1(x,t)=\nabla_xv(\mathbf0)\cdot x.
\end{equation}
\end{lemma}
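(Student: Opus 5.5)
We prove Lemma~\ref{lem:gaussian-degrees} in three steps: split $v$ into parabolically homogeneous pieces, use the Hermite structure of the time slice $t=-1$ to get orthogonality in $L^2(\nu_{0,1})$, and then read off the identities by scaling.

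\emph{Decomposition.} Write $v$ as a polynomial in $(x,t)$ and group its monomials $x^\alpha t^j$ by parabolic weight $|\alpha|+2j=k$; call the resulting pieces $P_k$. The heat operator $\partial_t-\Delta$ lowers parabolic weight by exactly $2$ on every monomial. Hence
\[
0=(\partial_t-\Delta)v=\sum_k(\partial_t-\Delta)P_k
\]
splits into components of distinct weights, and each component must vanish. This gives \eqref{eq:caloric-degree-decomposition}, with $P_k(rx,r^2t)=r^kP_k(x,t)$ by construction. Evaluating at the origin, $P_0$ is the constant $v(\mathbf0)$. The only weight-one monomials are the $x_i$, so $P_1=\nabla_xv(\mathbf0)\cdot x$. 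This is \eqref{eq:lowest-caloric-degrees}.

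\emph{Orthogonality.} Let $\mathcal L=\Delta-\frac y2\cdot\nabla$, which is self-adjoint in $L^2(\nu_{0,1})$. For a homogeneous caloric $P_k$, put $f_k(y)=P_k(y,-1)$. Homogeneity gives the Euler identity $x\cdot\nabla P_k+2t\partial_tP_k=kP_k$. At $t=-1$, substituting $\partial_tP_k=\Delta P_k$, this becomes
\[
y\cdot\nabla f_k-2\Delta f_k=kf_k,\qquad\text{that is,}\qquad -\mathcal Lf_k=\tfrac k2 f_k .
\]
So the $f_k$ are eigenfunctions of a self-adjoint operator for distinct eigenvalues, hence mutually orthogonal in $L^2(\nu_{0,1})$. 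This is the step that needs care: one must check that the Euler identity becomes exactly the eigen-equation after the substitution.

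\emph{Identities.} By scaling,
\[
v(ry,-r^2)=\sum_k r^kf_k(y).
\]
The change of variables $y\mapsto ry$ maps $\nu_{0,r^2}$ to $\nu_{0,1}$, so orthogonality gives
\[
H(v;\mathbf0,r)=\sum_k\alpha_kr^{2k},\qquad \alpha_k=\|f_k\|^2_{L^2(\nu_{0,1})}\ge0 .
\]
For the energy, use the Gaussian integration by parts
\[
\int|\nabla g|^2\dd\nu_{0,1}=\langle-\mathcal Lg,g\rangle_{L^2(\nu_{0,1})}
\]
(equivalently, the computation $H'=2E/r$ from Lemma~\ref{lem:caloric-gaussian-doubling}). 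Applied to $g=\sum_kr^kf_k$, it gives
\[
E(v;\mathbf0,r)=2\cdot\sum_k\tfrac k2\alpha_kr^{2k}=\sum_kk\alpha_kr^{2k}.
\]
Dividing by $H$ yields the formula for $N$ in \eqref{eq:gaussian-degree-identities}. The denominator is positive because $v\not\equiv0$ forces $H>0$: a polynomial vanishing on a whole time slice is zero, by the trace formula \eqref{eq:polynomial-trace}.
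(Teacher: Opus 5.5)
Your proof is correct and follows the paper's argument essentially step for step: you group monomials by parabolic weight, turn the Euler identity at $t=-1$ into the Hermite eigen-equation $-2\mathcal L f_k=kf_k$ to get orthogonality in $L^2(\nu_{0,1})$, and read off $H$ and $E$ by scaling. The only differences are minor. You obtain the energy from $\int|\nabla g|^2\dd\nu_{0,1}=\langle-\mathcal Lg,g\rangle$ with $g=\sum_kr^kf_k$, where the paper records the orthogonality of the gradients $\nabla f_k$ directly, and you also justify $H>0$ via the trace formula.
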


\begin{proof}
Grouping the monomials of $v$ by parabolic degree yields
\eqref{eq:caloric-degree-decomposition}, since $\partial_t-\Delta$
lowers that degree by two.

Set $f_k(y)=P_k(y,-1)$ and $\mathcal L=\Delta-\frac12y\cdot\nabla$.
Parabolic homogeneity and the heat equation give
\[
y\cdot\nabla P_k+2t\partial_tP_k=kP_k,
\qquad -2\mathcal Lf_k=kf_k.
\]
Since $\nabla G_{0,1}=-\frac y2G_{0,1}$, integration by parts yields
\[
\frac k2\int_{\R^n}f_if_k\dd\nu_{0,1}
=\int_{\R^n}\nabla f_i\cdot\nabla f_k\dd\nu_{0,1}
=\frac i2\int_{\R^n}f_if_k\dd\nu_{0,1}.
\]
Thus, for $i\ne k$,
\[
\int_{\R^n}f_if_k\dd\nu_{0,1}
=\int_{\R^n}\nabla f_i\cdot\nabla f_k\dd\nu_{0,1}=0,
\]
while
\[
2\int_{\R^n}|\nabla f_k|^2\dd\nu_{0,1}=k\alpha_k.
\]
By parabolic scaling,
\[
v(ry,-r^2)=\sum_{k=0}^K r^kf_k(y),\qquad
r\nabla_xv(ry,-r^2)=\sum_{k=0}^K r^k\nabla f_k(y).
\]
Changing variables $x=ry$ and using these orthogonality relations,
we obtain
\[
\begin{aligned}
H(v;\mathbf0,r)
&=\int_{\R^n}\left|\sum_{k=0}^K r^kf_k\right|^2\dd\nu_{0,1}
=\sum_{k=0}^K \alpha_kr^{2k},\\
E(v;\mathbf0,r)
&=2\int_{\R^n}\left|\sum_{k=0}^K r^k\nabla f_k\right|^2\dd\nu_{0,1}
=\sum_{k=0}^K k\alpha_kr^{2k}.
\end{aligned}
\]
Dividing the second identity by the first proves
\eqref{eq:gaussian-degree-identities}.
The terms of degrees zero and one give
\eqref{eq:lowest-caloric-degrees}.
\end{proof}

\Needspace{12\baselineskip}
\begin{lemma}\label{lem:polynomials}
Let $P\not\equiv0$ be a caloric polynomial of parabolic degree $d$. Then
\begin{equation}\label{eq:degree-at-infinity}
\lim_{r\to\infty}D(P;r)=d.
\end{equation}
\end{lemma}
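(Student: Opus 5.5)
The plan is to reduce everything to the explicit formula from Lemma~\ref{lem:gaussian-degrees}. Write $P=\sum_{k=0}^{d}P_k$ as in \eqref{eq:caloric-degree-decomposition}, with $\alpha_k=\|P_k(\cdot,-1)\|^2_{L^2(\nu_{0,1})}$. Then \eqref{eq:gaussian-degree-identities} gives
\[
H(P;\mathbf0,r)=\sum_{k=0}^d\alpha_kr^{2k},
\]
and hence
\[
D(P;r)=\log_4\frac{\sum_k\alpha_kr^{2k}}{\sum_k\alpha_k4^{-k}r^{2k}}.
\]
The entire argument is then a comparison of leading coefficients as $r\to\infty$.

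The one substantive step is to show that the top coefficient satisfies $\alpha_d>0$. By definition of parabolic degree, the homogeneous component $P_d$ is a nonzero polynomial. The injectivity of the time-slice map in \eqref{eq:polynomial-trace}, applied at $t_0=-1$, shows that $P_d(\cdot,-1)\not\equiv0$. Concretely, $P_d$ is recovered from its slice at $t=-1$ by the finite Taylor expansion in $t$ with coefficients $\Delta^jP_d(\cdot,-1)$. A nonzero polynomial has positive Gaussian $L^2$ norm, so $\alpha_d>0$.

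With this in hand, divide the numerator and denominator by $r^{2d}$:
\[
D(P;r)=\log_4\frac{\alpha_d+\sum_{k<d}\alpha_kr^{2(k-d)}}{4^{-d}\alpha_d+\sum_{k<d}\alpha_k4^{-k}r^{2(k-d)}}.
\]
As $r\to\infty$ the lower-order terms vanish, so the quotient tends to $\alpha_d/(4^{-d}\alpha_d)=4^d$. It follows that $D(P;r)\to d$.

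The only place requiring care is the positivity of $\alpha_d$, which rests on the injectivity of the time-slice map; everything else is routine limit algebra. One might also note that $H(P;\mathbf0,r)>0$ for every $r$, which is needed for $D$ to be defined, and this follows from the same observation.
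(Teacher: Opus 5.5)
Your proposal is correct and follows the paper's proof: both read off $H(P;\mathbf0,r)=\sum_k\alpha_kr^{2k}$ from Lemma~\ref{lem:gaussian-degrees} and compare the leading terms of $H$ at $r$ and at $r/2$. Your argument for $\alpha_d>0$, via the time-slice injectivity in \eqref{eq:polynomial-trace}, supplies a step that the paper simply asserts.
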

\begin{proof}
By \eqref{eq:gaussian-degree-identities} with $v=P$ and $K=d$,
there are $\alpha_k\ge0$ with $\alpha_d>0$ such that
\[
H(P;r)=\sum_{k=0}^d\alpha_kr^{2k}=\alpha_dr^{2d}(1+o(1))
\qquad(r\to\infty).
\]
Taking the ratio at $r$ and $r/2$ proves \eqref{eq:degree-at-infinity}.
\end{proof}

\section{A counterexample without (A5)}\label{sec:counterexample}

We adapt the observation of Lin--Shen \cite[Remark~2.9]{LS} to a
singular set by using the reflection-symmetric conductivity of
Briane--Milton--Nesi \cite[Section~5]{BMN}. The additional point is to
place many periodic copies of a critical point on the same level set.

\subsection{Periodic conducting rings}

Let $Y=(-1,1)^3$ and define two solid tori by
\begin{equation}\label{eq:counterexample-tori}
\begin{aligned}
T_1&=\left\{y\in\R^3:
 y_1^2+\left(\sqrt{y_2^2+y_3^2}-\frac34\right)^2<\frac1{400}\right\},\\
T_2&=\left\{y\in\R^3:
 y_2^2+\left(\sqrt{y_1^2+(y_3-1)^2}-\frac34\right)^2<\frac1{400}\right\},\\
\mathcal T&=\bigcup_{k\in\mathbb Z^3}
 \bigl[(T_1+2k)\cup(T_2+2k)\bigr],\qquad T_-=T_2-2e_3.
\end{aligned}
\end{equation}
The major radius is $3/4$ and the tube radius is $1/20$.
Distinct tori have disjoint closures. They form parallel periodic chains
of linked rings, with alternating axes parallel to $e_1$ and $e_2$.
This is a concrete choice of the geometry in \cite[Section~5.1]{BMN};
the numerical radii are chosen here for definiteness.
The set $\mathcal T$ is invariant under each coordinate reflection
$R_jy=y-2y_je_j$.

\begin{figure}[htbp]
\centering
\includegraphics[width=\textwidth]{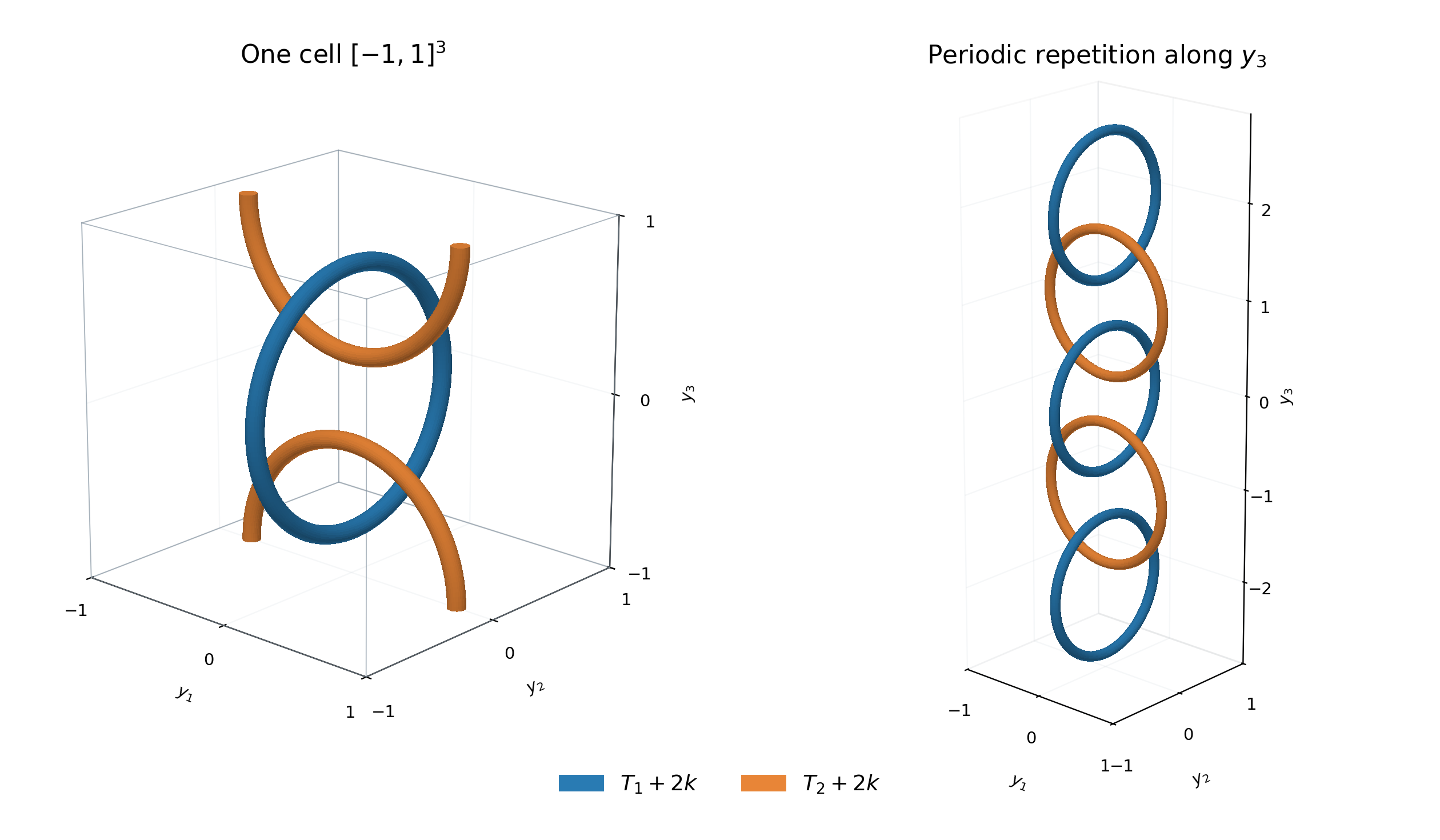}
\caption{The conducting rings in \eqref{eq:counterexample-tori}.
The left panel shows $\mathcal T\cap[-1,1]^3$: the complete ring
$T_1$ and the portions of $T_2$ and $T_-$ inside the cell.
Blue and orange indicate translates of $T_1$ and $T_2$, respectively.
The right panel shows one vertical chain; its translates by
$2k_1e_1+2k_2e_2$ give the full periodic structure.
The rings are linked but do not touch.}
\label{fig:conducting-rings}
\end{figure}

For $K>1$, set
\begin{equation}\label{eq:counterexample-coefficient}
 a_K=1+(K-1)\mathbf1_{\mathcal T}.
\end{equation}
Let $V_K=y_3+\chi_K$, where $\chi_K$ is the unique zero-mean
$2\mathbb Z^3$-periodic weak solution of
\[
 \operatorname{div}\bigl(a_K(e_3+\nabla\chi_K)\bigr)=0.
\]
Thus $\operatorname{div}(a_K\nabla V_K)=0$ and
\begin{equation}\label{eq:counterexample-symmetries}
\begin{gathered}
 V_K(y+2k)=V_K(y)+2k_3,\\
 V_K(R_1y)=V_K(y),\qquad V_K(R_2y)=V_K(y),\qquad
 V_K(R_3y)=-V_K(y).
\end{gathered}
\end{equation}
Indeed, $V_K\circ R_1$, $V_K\circ R_2$, and $-V_K\circ R_3$
solve the same equation as $V_K$ and have the same affine part and
zero-mean periodic correction. Uniqueness gives the reflection identities.

\subsection{A critical point on the symmetry axis}

We recall the perfect-conductor argument in \cite[Section~5.1, Proposition~1]{BMN}.
There is a smooth function $W$ such that $W-y_3$ is periodic and
$W$ is constant on each component of $\mathcal T$: assign the values
$2k_3$ on $T_1+2k$ and $1+2k_3$ on $T_2+2k$, and interpolate in
the gaps. The positive separation of the rings permits smooth cutoffs
with disjoint supports. Subtracting the mean of $W-y_3$ makes $W$
admissible in the corrector variational problem without changing its gradient.
Minimality gives
\begin{equation}\label{eq:counterexample-energy}
 \int_Y a_K|\nabla V_K|^2
 \le\int_Y a_K|\nabla W|^2
 =\int_{Y\setminus\mathcal T}|\nabla W|^2\le C.
\end{equation}
The periodic Poincar\'e inequality bounds $\chi_K$ in $H^1(Y)$.
Along a subsequence, $V_K\rightharpoonup V_\infty$ locally in $H^1$.
By \eqref{eq:counterexample-energy}, $\nabla V_\infty=0$ in each ring;
in the complement of the closed rings, $V_K$ and $V_\infty$ are harmonic,
and convergence holds locally in $C^1$. The identities
\eqref{eq:counterexample-symmetries} pass to the limit.

Write $c_0,c_+,c_-$ for the constant values of $V_\infty$ on
$T_1,T_2,T_-$. Since $R_3T_1=T_1$, $R_3T_2=T_-$, and
$T_2=T_-+2e_3$, respectively,
\[
 c_0=-c_0,\qquad c_-=-c_+,\qquad c_+=c_-+2.
\]
Consequently,
\begin{equation}\label{eq:counterexample-ring-values}
 V_\infty|_{T_1}=0,\qquad
 V_\infty|_{T_2}=1,\qquad V_\infty|_{T_-}=-1.
\end{equation}
Since $V_\infty\in H^1_{\mathrm{loc}}$, its interior and exterior
Sobolev traces agree on each ring boundary. Local boundary regularity
for harmonic functions with these constant Dirichlet data therefore
extends the values in \eqref{eq:counterexample-ring-values} continuously
to the ring boundaries.
For $f(s)=V_\infty(0,0,s)$ and
\[
 a_2=\frac15,\qquad b_2=\frac3{10},\qquad
 a_1=\frac7{10},\qquad b_1=\frac45,
\]
the intersections of the closed rings with the axis in $Y$
(see Figure~\ref{fig:axis-intersections}) are
$[-b_1,-a_1]\cup[a_1,b_1]$ for $T_1$, $[a_2,b_2]$ for $T_2$,
and $[-b_2,-a_2]$ for $T_-$.

\begin{figure}[htbp]
\centering
\includegraphics[width=0.82\textwidth]{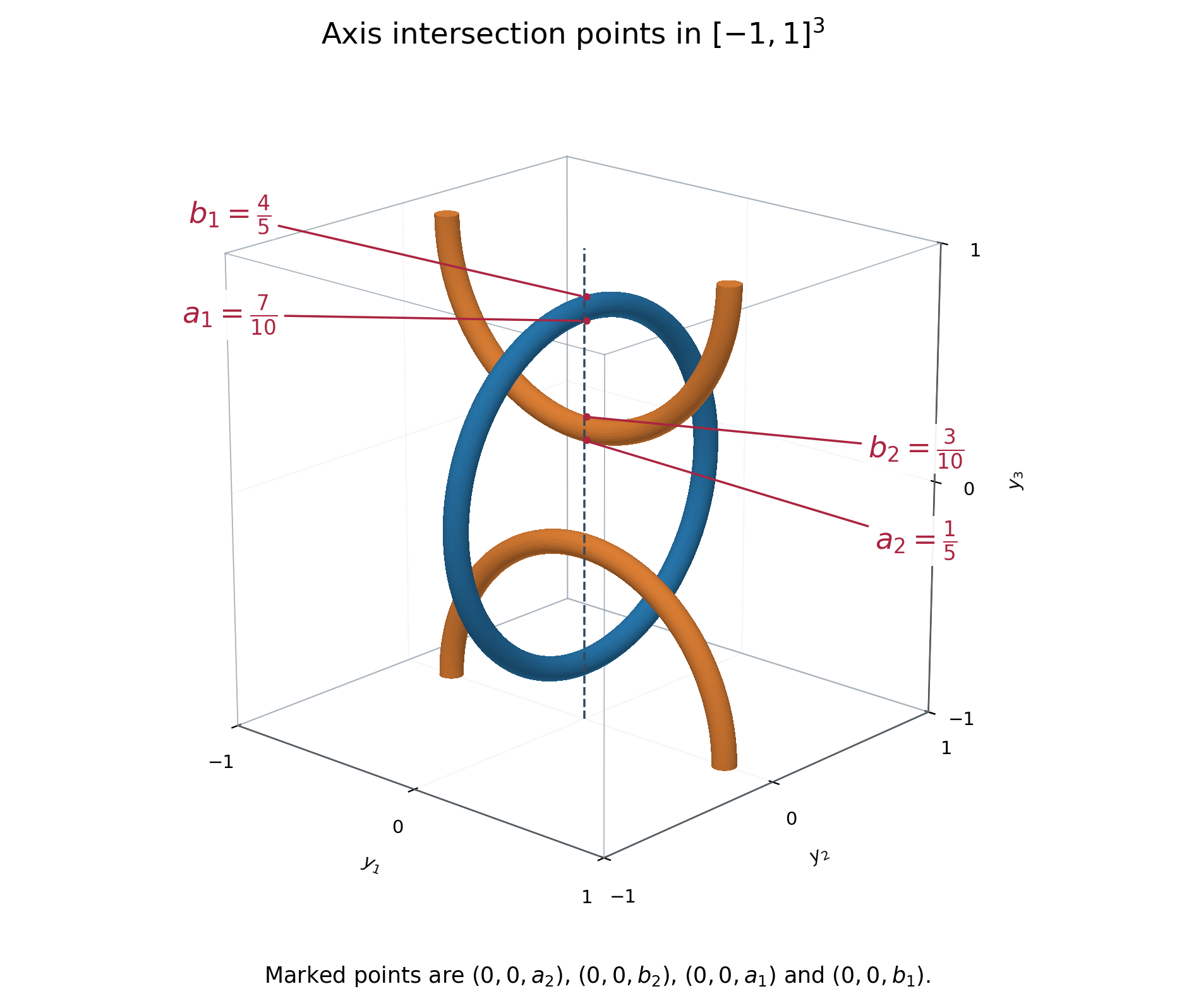}
\caption{The conducting region in the cell $[-1,1]^3$, with the four
points $(0,0,a_2)$, $(0,0,b_2)$, $(0,0,a_1)$, and $(0,0,b_1)$
marked on the $y_3$-axis.}
\label{fig:axis-intersections}
\end{figure}

Hence
\[
 f(-a_2)=-1,\qquad f(a_2)=f(b_2)=1,\qquad f(a_1)=0.
\]
The two intervening open intervals lie outside the rings, so the mean
value theorem gives points
\begin{equation}\label{eq:counterexample-opposite-signs}
 s_+\in(-a_2,a_2),\quad s_-\in(b_2,a_1),\qquad
 \partial_3V_\infty(0,0,s_+)>0>\partial_3V_\infty(0,0,s_-).
\end{equation}
Local $C^1$ convergence preserves these strict inequalities for a
sufficiently large finite $K$, which we now fix.

To obtain a smooth coefficient, choose a nonnegative radial mollifier
$\varrho\in C_c^\infty(B_1)$ with integral one, put
$\varrho_h(y)=h^{-3}\varrho(y/h)$, and set
\begin{equation}\label{eq:counterexample-smooth-coefficient}
 a_{K,h}=\varrho_h*a_K
 =1+(K-1)(\varrho_h*\mathbf1_{\mathcal T}).
\end{equation}
The coefficient is smooth, $2\mathbb Z^3$-periodic, reflection invariant,
and satisfies $1\le a_{K,h}\le K$; see also \cite[Section~2]{Cap15}.
Let $V_{K,h}=y_3+\chi_{K,h}$ be its harmonic coordinate.
Testing the difference of the two corrector equations yields
\[
 \|\nabla(\chi_{K,h}-\chi_K)\|_{L^2(Y)}
 \le\|(a_{K,h}-a_K)\nabla V_K\|_{L^2(Y)}\longrightarrow0
 \qquad(h\downarrow0).
\]
Near the two points in \eqref{eq:counterexample-opposite-signs},
$a_{K,h}=a_K=1$ for small $h$, so harmonic interior estimates give
local $C^1$ convergence there. Fix $h>0$ small enough that the two
strict signs persist, and write $a=a_{K,h}$ and $V=V_{K,h}$.
Both $K$ and $h$ remain fixed as $\eps\downarrow0$.
The reflection identities give
$\partial_1V(0,0,s)=\partial_2V(0,0,s)=0$.
Continuity of $\partial_3V$ therefore gives
\begin{equation}\label{eq:counterexample-critical-coordinate}
 y_0=(0,0,s_0),\qquad s_+<s_0<s_-,\qquad
 \nabla V(y_0)=\mathcal J_{aI}(y_0)e_3=0.
\end{equation}
Thus \textup{(A5)} fails.

\subsection{Failure of a uniform singular-set Minkowski bound}

Define the stationary solutions
\begin{equation}\label{eq:counterexample-solutions}
 u_\eps(x,t)=\eps\bigl[V(x/\eps)-V(y_0)\bigr].
\end{equation}
They satisfy
\[
 \partial_tu_\eps-\operatorname{div}_x
 \bigl(a(x/\eps)\nabla_xu_\eps\bigr)=0,
 \qquad
 \nabla_xu_\eps(x,t)=\nabla V(x/\eps).
\]
Since $V-y_3$ is bounded and periodic,
$u_\eps\to x_3$ locally uniformly. Thus the coarse-scale ratio in
\eqref{eq:rough} is bounded independently of $\eps$.
By \eqref{eq:counterexample-symmetries} and
\eqref{eq:counterexample-critical-coordinate},
\begin{equation}\label{eq:counterexample-singular-lattice}
 \bigl\{\eps(y_0+2k_1e_1+2k_2e_2):k_1,k_2\in\mathbb Z\bigr\}
 \times\R\subset S(u_\eps).
\end{equation}
For small $\eps$, at least $c\eps^{-2}$ such spatial points lie in
$B_{1/4}$, with separation $2\eps$. Taking their disjoint spatial
$r$-balls and the fixed time interval $(-1/8,1/8)$ gives
\begin{equation}\label{eq:counterexample-tube-lower}
 \bigl|Q(S(u_\eps)\cap Q(\mathbf0,1/2),r)
             \cap Q(\mathbf0,1/2)\bigr|
 \ge c\eps^{-2}r^3,
 \qquad 0<r\le c_0\eps,
\end{equation}
where $c_0>0$ is fixed and small. For $r=c_0\eps$, the ratio of
this volume to $r^2$ tends to infinity. Since parabolic
$\mathcal H_P^5$ is a dimensional constant times space-time Lebesgue
measure, the uniform bound in Theorem~\ref{thm:main-minkowski} fails.

The coefficient $aI$ satisfies \textup{(A1)--(A3)}, with all data fixed,
and violates \textup{(A5)} by
\eqref{eq:counterexample-critical-coordinate}. To impose
\textup{(A4)}, let $B=\widehat A^{1/2}$ for $A=aI$ and change variables
$x=B\widetilde x$. The new coefficient is
\[
 \widetilde A(y)=B^{-1}A(By)B^{-T},\qquad
 \widehat{\widetilde A}=I,
\]
with period lattice $B^{-1}(2\mathbb Z^3)$.
This fixed invertible change preserves gradient vanishing and the
failure of the tube bound, up to fixed constants. The transformed
solutions converge to the nonzero linear function $(B\widetilde x)_3$,
so their coarse-scale ratios remain bounded as well. Corrector
nondegeneracy still fails, since the transformed harmonic-coordinate
matrix is $B^T\mathcal J_A(By)B^{-T}$ and has the same rank.

For comparison, the construction in \cite[Remark~2.9]{LS} starts with
$v_\xi(y)=\xi\cdot(y+\chi(y))$ and $\nabla v_\xi(y_0)=0$.
Subtracting $v_\xi(y_0)$ gives, for every period vector $\ell$,
\[
 \nabla v_\xi(y_0+\ell)=0,\qquad
 v_\xi(y_0+\ell)-v_\xi(y_0)=\xi\cdot\ell.
\]
Only the vectors in $\mathcal L\cap\xi^\perp$ therefore give
singular points by this replication argument. In the present
construction, $\xi=e_3$ and $\mathcal L=2\mathbb Z^3$, so this
intersection contains the rank-two lattice used in
\eqref{eq:counterexample-singular-lattice}.
The argument proves failure of a uniform Minkowski estimate; it does
not prove failure of the Hausdorff estimate. Each time line in
\eqref{eq:counterexample-singular-lattice} has parabolic dimension two
and zero $\mathcal H_P^3$ measure.


\begin{thebibliography}{99}

\bibitem{ABG}
V.~Arya, A.~Banerjee, and N.~Garofalo,
\emph{\href{https://doi.org/10.1007/s00205-025-02139-3}{Sharp order of vanishing for parabolic equations, nodal set estimates and Landis type results}},
Arch. Ration. Mech. Anal. \textbf{249} (2025), no.~6, Paper No.~67.

\bibitem{AL87}
M.~Avellaneda and F.-H.~Lin,
\emph{\href{https://doi.org/10.1002/cpa.3160400607}{Compactness methods in the theory of homogenization}},
Comm. Pure Appl. Math. \textbf{40} (1987), no.~6, 803--847.

\bibitem{BLP}
A.~Bensoussan, J.-L.~Lions, and G.~Papanicolaou,
\emph{\href{https://www.sciencedirect.com/bookseries/studies-in-mathematics-and-its-applications/vol/5/suppl/C}{Asymptotic Analysis for Periodic Structures}},
Studies in Mathematics and its Applications, vol.~5,
North-Holland, Amsterdam, 1978.

\bibitem{BMN}
M.~Briane, G.~W.~Milton, and V.~Nesi,
\emph{\href{https://doi.org/10.1007/s00205-004-0315-8}{Change of sign of the corrector's determinant for homogenization in three-dimensional conductivity}},
Arch. Ration. Mech. Anal. \textbf{173} (2004), no.~1, 133--150.

\bibitem{Cap15}
Y.~Capdeboscq,
\emph{\href{https://doi.org/10.5802/jep.21}{On a counter-example to quantitative Jacobian bounds}},
J. \'Ec. polytech. Math. \textbf{2} (2015), 171--178.

\bibitem{CNV}
J.~Cheeger, A.~Naber, and D.~Valtorta,
\emph{\href{https://doi.org/10.1002/cpa.21518}{Critical sets of elliptic equations}},
Comm. Pure Appl. Math. \textbf{68} (2015), no.~2, 173--209.

\bibitem{DF}
H.~Donnelly and C.~Fefferman,
\emph{\href{https://doi.org/10.1007/BF01393691}{Nodal sets of eigenfunctions on Riemannian manifolds}},
Invent. Math. \textbf{93} (1988), no.~1, 161--183.

\bibitem{EFV}
L.~Escauriaza, F.~J.~Fern\'andez, and S.~Vessella,
\emph{\href{https://doi.org/10.1080/00036810500277082}{Doubling properties of caloric functions}},
Appl. Anal. \textbf{85} (2006), no.~1--3, 205--223.

\bibitem{GL86}
N.~Garofalo and F.-H.~Lin,
\emph{\href{https://doi.org/10.1512/iumj.1986.35.35015}{Monotonicity properties of variational integrals, $A_p$ weights and unique continuation}},
Indiana Univ. Math. J. \textbf{35} (1986), no.~2, 245--268.

\bibitem{GL87}
N.~Garofalo and F.-H.~Lin,
\emph{\href{https://doi.org/10.1002/cpa.3160400305}{Unique continuation for elliptic operators: a geometric-variational approach}},
Comm. Pure Appl. Math. \textbf{40} (1987), no.~3, 347--366.

\bibitem{GS}
J.~Geng and Z.~Shen,
\emph{\href{https://arxiv.org/abs/1308.5726}{Uniform regularity estimates in parabolic homogenization}},
Indiana Univ. Math. J. \textbf{64} (2015), no.~3, 697--733.

\bibitem{GS17}
J.~Geng and Z.~Shen,
\emph{\href{https://doi.org/10.1016/j.jfa.2016.10.005}{Convergence rates in parabolic homogenization with time-dependent periodic coefficients}},
J. Funct. Anal. \textbf{272} (2017), no.~5, 2092--2113.

\bibitem{GS20}
J.~Geng and Z.~Shen,
\emph{\href{https://doi.org/10.2140/apde.2020.13.147}{Asymptotic expansions of fundamental solutions in parabolic homogenization}},
Anal. PDE \textbf{13} (2020), no.~1, 147--170.

\bibitem{HKM}
M.~Hallgren, R.~Koirala, and Z.~Ma,
\emph{Structure theory of parabolic nodal and singular sets},
\href{https://arxiv.org/abs/2511.11570}{arXiv:2511.11570}, 2025.

\bibitem{Han94}
Q.~Han,
\emph{\href{https://doi.org/10.1512/iumj.1994.43.43043}{Singular sets of solutions to elliptic equations}},
Indiana Univ. Math. J. \textbf{43} (1994), no.~3, 983--1002.

\bibitem{HHL}
Q.~Han, R.~Hardt, and F.-H.~Lin,
\emph{Geometric measure of singular sets of elliptic equations},
Comm. Pure Appl. Math. \textbf{51} (1998), no.~11--12, 1425--1443.

\bibitem{HL94}
Q.~Han and F.-H.~Lin,
\emph{\href{https://doi.org/10.1002/cpa.3160470904}{Nodal sets of solutions of parabolic equations. II}},
Comm. Pure Appl. Math. \textbf{47} (1994), no.~9, 1219--1238.

\bibitem{HHHON}
R.~Hardt, M.~Hoffmann-Ostenhof, T.~Hoffmann-Ostenhof, and N.~Nadirashvili,
\emph{\href{https://doi.org/10.4310/jdg/1214425070}{Critical sets of solutions to elliptic equations}},
J. Differential Geom. \textbf{51} (1999), no.~2, 359--373.

\bibitem{HS}
R.~Hardt and L.~Simon,
\emph{\href{https://doi.org/10.4310/jdg/1214443599}{Nodal sets for solutions of elliptic equations}},
J. Differential Geom. \textbf{30} (1989), no.~2, 505--522.

\bibitem{HJ}
Y.~Huang and W.~Jiang,
\emph{Volume estimates for singular sets and critical sets of elliptic
equations with H\"older coefficients},
\href{https://arxiv.org/abs/2309.08089}{arXiv:2309.08089}, 2023.

\bibitem{HJ24}
Y.~Huang and W.~Jiang,
\emph{The nodal sets of solutions to parabolic equations},
\href{https://arxiv.org/abs/2406.05877}{arXiv:2406.05877}, 2024.

\bibitem{JKO}
V.~V.~Jikov, S.~M.~Kozlov, and O.~A.~Oleinik,
\emph{\href{https://doi.org/10.1007/978-3-642-84659-5}{Homogenization of Differential Operators and Integral Functionals}},
Springer-Verlag, Berlin, 1994.

\bibitem{KZZ}
C.~E.~Kenig, J.~Zhu, and J.~Zhuge,
\emph{\href{https://doi.org/10.1080/03605302.2021.1989699}{Doubling inequalities and nodal sets in periodic elliptic homogenization}},
Comm. Partial Differential Equations \textbf{47} (2022), no.~3, 549--584.

\bibitem{Kuk95}
I.~Kukavica,
\emph{\href{https://doi.org/10.1155/S1073792895000389}{Hausdorff measure of level sets for solutions of parabolic equations}},
Internat. Math. Res. Notices (1995), no.~13, 671--682.

\bibitem{Lin91}
F.-H.~Lin,
\emph{\href{https://doi.org/10.1002/cpa.3160440303}{Nodal sets of solutions of elliptic and parabolic equations}},
Comm. Pure Appl. Math. \textbf{44} (1991), no.~3, 287--308.

\bibitem{LS19}
F.~Lin and Z.~Shen,
\emph{\href{https://doi.org/10.1007/s10114-019-8228-5}{Nodal sets and doubling conditions in elliptic homogenization}},
Acta Math. Sin. (Engl. Ser.) \textbf{35} (2019), no.~6, 815--831.

\bibitem{LS}
F.~Lin and Z.~Shen,
\emph{\href{https://arxiv.org/abs/2203.13393}{Critical sets of solutions of elliptic equations in periodic homogenization}},
Comm. Pure Appl. Math. \textbf{77} (2024), 3143--3183
(published online in 2023).

\bibitem{LogUpper}
A.~Logunov,
\emph{\href{https://doi.org/10.4007/annals.2018.187.1.4}{Nodal sets of Laplace eigenfunctions: polynomial upper estimates of the Hausdorff measure}},
Ann. of Math. (2) \textbf{187} (2018), no.~1, 221--239.

\bibitem{LogLower}
A.~Logunov,
\emph{\href{https://doi.org/10.4007/annals.2018.187.1.5}{Nodal sets of Laplace eigenfunctions: proof of Nadirashvili's conjecture and of the lower bound in Yau's conjecture}},
Ann. of Math. (2) \textbf{187} (2018), no.~1, 241--262.

\bibitem{NV}
A.~Naber and D.~Valtorta,
\emph{\href{https://doi.org/10.1002/cpa.21708}{Volume estimates on the critical sets of solutions to elliptic PDEs}},
Comm. Pure Appl. Math. \textbf{70} (2017), no.~10, 1835--1897.

\bibitem{Poon}
C.-C.~Poon,
\emph{\href{https://doi.org/10.1080/03605309608821195}{Unique continuation for parabolic equations}},
Comm. Partial Differential Equations \textbf{21} (1996), no.~3--4,
521--539.

\bibitem{Shen18}
Z.~Shen,
\emph{\href{https://doi.org/10.1007/978-3-319-91214-1}{Periodic Homogenization of Elliptic Systems}},
Operator Theory: Advances and Applications, vol.~269,
Birkh\"auser/Springer, Cham, 2018.

\bibitem{Zhang21}
Y.~Zhang,
\emph{\href{https://doi.org/10.1137/20M1345323}{Approximate two-sphere
one-cylinder inequality in parabolic periodic homogenization}},
SIAM J. Math. Anal. \textbf{53} (2021), no.~5, 5835--5852.

\bibitem{Zhang25}
Y.~Zhang,
\emph{\href{https://doi.org/10.1007/s10231-025-01544-5}{Approximate
two-sphere one-cylinder inequality in parabolic periodic homogenization
with suitable lower-order terms}},
Ann. Mat. Pura Appl. (4) \textbf{204} (2025), 1689--1713.

\end{thebibliography}
\end{document}